\newtheorem{thm}{Theorem}[section]
\newtheorem{cor}[thm]{Corollary}
\newtheorem{prop}[thm]{Proposition}
\theoremstyle{definition}
\newtheorem{dfn}[thm]{Definition}
\newtheorem{ex}[thm]{Example}
\newtheorem{lem}[thm]{Lemma}
\newtheorem{question}[thm]{Question}
\theoremstyle{remark}
\newtheorem{rem}[thm]{Remark}
\newcommand{\C}{\Csc}               %%% C
\newcommand{\D}{\Dsc}               %%% D
\newcommand{\E}{\Ebb}               %%% E
\newcommand{\Ebb}{\mathbb{E}}        %%% mathbb E
\newcommand{\Fun}{\mathrm{Fun}}      %%% Fun
\newcommand{\End}{\mathrm{End}}      %%% End
\newcommand{\Ab}{\mathit{Ab}}        %%% Ab
\newcommand{\Sets}{\mathit{Set}}     %%% Set
\newcommand{\op}{^\mathrm{op}}       %%% op
\newcommand{\ssh}{_\sharp}           %%% _sharp
\newcommand{\id}{\mathrm{id}}         %%% id
\newcommand{\se}{\subseteq}           %%% inclusion
\newcommand{\ppr}{^{\prime}}          %%% '
\newcommand{\pprr}{^{\prime\prime}}   %%% ''
\newcommand{\co}{\colon}              %%% colon
\newcommand{\ci}{\circ}               %%% composition
\newcommand{\uas}{^{\ast}}            %%% ^*
\newcommand{\sas}{_{\ast}}            %%% _*
\newcommand{\lla}{\longleftarrow}     %%% long left arrow
\newcommand{\lra}{\longrightarrow}    %%% long right arrow
\newcommand{\tc}{\Rightarrow}         %%% two cell arrow (short)
\newcommand{\ltc}{\Longrightarrow}    %%% two cell arrow (long)
\newcommand{\EQ}{\Leftrightarrow}     %%% Equivalence
\newcommand{\dra}{\dashrightarrow}  %%% two cell arrow (short)
\newcommand{\afr}{\mathfrak{a}}  %%% frak
\newcommand{\bfr}{\mathfrak{b}}  %%% frak
\newcommand{\cfr}{\mathfrak{c}}  %%% frak
\newcommand{\dfr}{\mathfrak{d}}  %%% frak
\newcommand{\efr}{\mathfrak{e}}  %%% frak
\newcommand{\ffr}{\mathfrak{f}}  %%% frak
\newcommand{\gfr}{\mathfrak{g}}  %%% frak
\newcommand{\hfr}{\mathfrak{h}}  %%% frak
\newcommand{\vfr}{\mathfrak{v}}  %%% frak
\newcommand{\rfr}{\mathfrak{r}}  %%% frak
\newcommand{\sfr}{\mathfrak{s}}  %%% frak
\newcommand{\wfr}{\mathfrak{w}}  %%% frak
\newcommand{\xfr}{\mathfrak{x}}  %%% frak
\newcommand{\yfr}{\mathfrak{y}}  %%% frak
\newcommand{\Csc}{\mathscr{C}}   %%% scr
\newcommand{\Dsc}{\mathscr{D}}   %%% scr
\newcommand{\Esc}{\mathscr{E}}   %%% scr
\newcommand{\Xcal}{\mathcal{X}} %%% cal
\newcommand{\Ycal}{\mathcal{Y}} %%% cal
\newcommand{\Zcal}{\mathcal{Z}} %%% cal
\newcommand{\und}{\underline}   %%% underline
\newcommand{\ovl}{\overline}    %%% overline
\newcommand{\ov}{\overset}      %%% overset
\newcommand{\ti}{\times}        %%% times
\newcommand{\am}{\amalg}        %%% amalg
\newcommand{\spmatrix}[2]{\big[\!\!\!\begin{array}{c}{}_{#1}\\ {}^{#2}\end{array}\!\!\!\!\big]}
\newcommand{\SQ}[2]{({#1}\backslash {#2})}
\newcommand{\RT}[3]{\left[\begin{array}{c|c|c}{#1}&{#2}&{#3}\end{array}\right]}
\newcommand{\CB}[6]{\left[\begin{array}{c|c|c} {#1}&{#2}&{#3}\\{#4}&{#5}&{#6}\end{array}\right]}
\newcommand{\Penta}[9]{
\xy
(0,13)*+{#1}="0";
(-12.5,4.5)*+{#2}="1";
(-7.5,-10)*+{#3}="2";
(7.5,-10)*+{#4}="3";
(12.5,4.5)*+{#5}="4";
{\ar_{#6} "0";"1"};
{\ar^{} "0";"2"};
{\ar^{} "0";"3"};
{\ar^{} "0";"4"};
{\ar_{#7} "1";"2"};
{\ar^{} "1";"3"};
{\ar^{} "1";"4"};
{\ar_{#8} "2";"3"};
{\ar^{} "2";"4"};
{\ar_{#9} "3";"4"};
\endxy
}
\newcommand{\TwoSP}[7]{
\xy
(-7,7)*+{#1}="0";
(-7,-7)*+{#2}="4";
(7,-7)*+{#3}="6";
{\ar_{#4} "0";"4"};
{\ar_{#5} "4";"6"};
{\ar^{#6} "0";"6"};
(-3,-7)*+{}="10";
(-7,-3)*+{}="11";
{\ar@/_0.2pc/@{-}_{_{#7}} "10";"11"};
\endxy
}
\newcommand{\ThreeSP}[4]{
\xy
(-1,7)*+{#1}="0";
(-8,-2)*+{#2}="1";
(1,-10)*+{#3}="2";
(9,-1)*+{#4}="3";
{\ar_{} "0";"1"};
{\ar^{} "0";"2"};
{\ar_{} "0";"3"};
{\ar_{} "1";"2"};
{\ar^{}|!{"0";"2"}\hole "1";"3"};
{\ar_{} "2";"3"};
\endxy
}
\newcommand{\roundup}[2]{\lceil{#1}|{#2}\rceil}
\newcommand{\bsm}{\begin{smallmatrix}}
\newcommand{\esm}{\end{smallmatrix}}
\newcommand{\ff}{\mathrm{f}}     %%% f
\newcommand{\bb}{\mathrm{b}}     %%% b
\newcommand{\al}{\alpha}         %%% alpha
\newcommand{\be}{\beta}          %%% beta
\newcommand{\gam}{\gamma}        %%% gamma
\newcommand{\vp}{\varphi}        %%% varphi
\newcommand{\vt}{\vartheta}      %%% vartheta
\newcommand{\kap}{\kappa}        %%% kappa
\newcommand{\lam}{\lambda}       %%% lambda
\newcommand{\ups}{\upsilon}      %%% upsilon
\newcommand{\om}{\omega}         %%% omega
\newcommand{\thh}{\theta}        %%% theta
\newcommand{\del}{\delta}        %%% delta
\newcommand{\ze}{\zeta}          %%% zeta
\newcommand{\Thh}{\Theta}        %%% Theta
\newcommand{\Sig}{\Sigma}        %%% Sigma
\newcommand{\Om}{\Omega}         %%% Omega
\newcommand{\Cf}{\mathbf{C}}     %%% Cf
\newcommand{\Df}{\mathbf{D}}     %%% Cf
\newcommand{\Ff}{\mathbf{F}}     %%% Ff
\newcommand{\If}{\mathbf{I}}     %%% If
\newcommand{\Nf}{\mathbf{N}}     %%% Nf
\newcommand{\Of}{\mathbf{O}}     %%% Of
\newcommand{\Pf}{\mathbf{P}}     %%% Pf
\newcommand{\Qf}{\mathbf{Q}}     %%% Qf
\newcommand{\Rf}{\mathbf{R}}     %%% Rf
\newcommand{\Sf}{\mathbf{S}}     %%% Sf
\newcommand{\Tf}{\mathbf{T}}     %%% Sf
\newcommand{\Uf}{\mathbf{U}}     %%% Uf
\newcommand{\Vf}{\mathbf{V}}     %%% Vf
\newcommand{\Zf}{\mathbf{Z}}     %%% Zf
\newcommand{\infl}{\rightarrowtail}
\newcommand{\defl}{\twoheadrightarrow}
\numberwithin{equation}{section}
\begin{document}

\title[Homotopy category of exact quasi-category]{External triangulation of the homotopy category of exact quasi-category}

\author{Hiroyuki Nakaoka}
\address{Graduate School of Mathematics, Nagoya University, Furocho, Chikusaku, Nagoya 464-8602, Japan}
\email{nakaoka.hiroyuki@math.nagoya-u.ac.jp}
\author{Yann Palu}
\address{LAMFA, Universit\'e de Picardie Jules Verne, 33 rue Saint-Leu, Amiens, France}
\email{yann.palu@u-picardie.fr}
\urladdr{http://www.lamfa.u-picardie.fr/palu/}

% fAnother authorf

%\classification{18E30}
%\keywords{extriangulated category, exat quasi-category}
\thanks{}

\thanks{The first author is supported by JSPS KAKENHI Grant Numbers JP19K03468, JP20K03532. Both authors were partially supported by the French ANR grant SC3A~(15\,CE40\,0004\,01). Both authors wish to thank Gustavo Jasso for his interest, and for introducing them a possible relation with a future work of his student.}

\begin{abstract}
Extriangulated categories axiomatize extension-closed subcategories of triangulated categories. We show that the homotopy category of an exact quasi-category can be equipped with a natural extriangulated structure.
\end{abstract}

\maketitle

\tableofcontents

\section{Introduction}
For triangulated categories, several possible higher-categorical enhancements are known, one of which is the celebrated notion of a \emph{stable $\infty$-category} by Lurie (\cite{L2}). 
Recently, as a wider class of $\infty$-categories, the notion of an \emph{exact $\infty$-category} has appeared in Barwick's \cite{B1}, where it is used in order to prove that the $K$-theory of an idempotent complete stable quasi-category endowed with a bounded t-structure is weakly equivalent to the $K$-theory of its heart. A non-additive version of exact quasi-categories also appeared in \cite{DK} as a source of examples of 2-Segal spaces, via Waldhausen's S-construction. Instances of exact quasi-categories are given by
\begin{itemize}
\item the nerve of any ordinary exact category,
\item any extension-closed full additive subcategory of a stable $\infty$-category
\end{itemize}
as stated in \cite[Examples~3.3 and 3.5]{B1}.

Regarding its importance and naturality, it should be natural to expect that the homotopy category of an exact $\infty$-category would have a nice structure very close to triangulation.
In this article, we give a positive answer to this expectation, with the notion of an \emph{extriangulated category} which has been introduced as a unification of exact categories and triangulated categories in our previous article \cite{NP}. It is defined as an additive category equipped with a structure called an \emph{external triangulation}, which is more flexible than triangulations in the sense that it is naturally inherited by relative theories, extension-closed subcategories, and ideal quotients by some projective-injectives (\cite[Section~3.2]{HLN}, \cite[Remark~2.18, Proposition~3.30]{NP}). In particular, extension-closed subcategories of triangulated categories are typical examples of extriangulated categories, which dovetails the above-mentioned feature of exact $\infty$-categories.

Let $\C$ be an exact quasi-category (the definition is recalled in \cref{Section_ExactSequences}).
We say that two exact sequences in $\C$
\[
\xy
(-12,0)*+{\Sf =};
(-6,6)*+{A}="0";
(6,6)*+{B}="2";
(-6,-6)*+{O}="4";
(6,-6)*+{C}="6";
{\ar@{>->}^i "0"+(3.5,0);"2"};
{\ar "0";"4"};
{\ar@{->>}^p "2";"6"};
{\ar "4";"6"};
(24,0)*+{\Sf\ppr =};
(30,6)*+{A}="10";
(42,6)*+{B\ppr}="12";
(30,-6)*+{O\ppr}="14";
(42,-6)*+{C}="16";
{\ar@{>->} "10"+(3.5,0);"12"};
{\ar "10";"14"};
{\ar@{->>} "12";"16"};
{\ar "14";"16"};
\endxy
\]
are equivalent if there is a cube in $\C$ from $\Sf$ to $\Sf\ppr$
\[
\Cf=\ 
\xy
(-8,9)*+{A}="0";
(8,9)*+{B}="2";
(1,3)*+{O}="4";
(17,3)*+{C}="6";
(-8,-7)*+{A}="10";
(8,-7)*+{B\ppr}="12";
(1,-13)*+{O\ppr}="14";
(17,-13)*+{C}="16";
{\ar "0";"2"};
{\ar "0";"4"};
{\ar "2";"6"};
{\ar "4";"6"};
{\ar_{a} "0";"10"};
{\ar "2";"12"};
{\ar "4";"14"};
{\ar^{c} "6";"16"};
{\ar "10";"12"};
{\ar "10";"14"};
{\ar "12";"16"};
{\ar "14";"16"};
\endxy
\]
where $\ovl{a}=\id_A$ and $\ovl{c}=\id_C$ in the homotopy category $h\C$.
This is an equivalence relation (\cref{PropEquivExSeq}), and we define $\E(C,A)$ to be the set of equivalence classes of exact sequences with fixed end terms $A$ and $C$.
Then $\E\co(h\C)\op\ti(h\C)\to\Ab$ is an additive bifunctor (\cref{CorEAddFtr}).
We also define an additive realization $\sfr$ (\cref{LemET}) by sending the equivalence class of $\Sf$ to the equivalence class (defined similarly as for short exact sequences) of $A\ov{\ovl{i}}{\lra}B\ov{\ovl{p}}{\lra}C$ in $h\C$.

\begin{thm}[\cref{ThmET,propExactFunctors}]
 Let $\C$ be an exact quasi-category. Then
 \begin{enumerate}
  \item its homotopy category $(h\C,\E,\sfr)$ is extriangulated.
  \item Moreover, the functor $\C\to h\C$ sends exact sequences $\Sf$ as above to extriangles 
  $A\ov{\ovl{i}}{\infl}B\ov{\ovl{p}}{\defl}C\ov{\und{\Sf}}{\dashrightarrow}$.
  \item If $F\co\C\to\D$ is an exact functor of exact quasi-categories, then the induced functor $hF\colon h\C\to h\D$ is an exact functor of extriangulated categories.
 \end{enumerate}
\end{thm}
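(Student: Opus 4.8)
The plan is to check the axioms of an extriangulated category from \cite{NP} for the triple $(h\C,\E,\sfr)$. That $h\C$ is additive and that $\E$, $\sfr$ form a biadditive bifunctor together with an additive realization --- the data and the axioms (ET1) and (ET2) --- is already established (\cref{CorEAddFtr} and \cref{LemET}), so the remaining content is (ET3), (ET4) and their duals. The uniform strategy is \emph{lift and project}: given morphisms in $h\C$ and $\E$-extensions, choose representing edges and representing exact sequences in $\C$, build the diagram demanded by the axiom \emph{inside} $\C$ using the closure properties of an exact quasi-category recalled in \cref{Section_ExactSequences} (composites of inflations are inflations, inflations have cofibres and those are deflations, pushouts along inflations exist and are bicartesian, and dually), and then apply the projection $\C\to h\C$. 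The recurring subtlety is that a square commuting in $h\C$ commutes in $\C$ only up to a $2$-simplex, so at each step one must verify that this homotopy ambiguity is absorbed by the construction and, in particular, that the equivalence relation of \cref{PropEquivExSeq} defining $\E$ is respected.

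The technical core is a transport lemma: if an exact sequence $\Sf\co A\infl B\defl C$ realizes $\del\in\E(C,A)$ and $a\co A\to A\ppr$ is arbitrary, then the pushout $\Sf_a$ of $\Sf$ along $a$ exists in $\C$, is again an exact sequence, and realizes $a\sas\del$; dually, pulling an exact sequence back along $c\co C\ppr\to C$ produces an exact sequence realizing $c\uas$ of its class. Granting this, (ET3) runs as follows. Let $\Sf\co A\xrightarrow{i}B\xrightarrow{p}C$ and $\Sf\ppr\co A\ppr\xrightarrow{i\ppr}B\ppr\xrightarrow{p\ppr}C\ppr$ realize $\del$ and $\del\ppr$, and let $a\co A\to A\ppr$, $b\co B\to B\ppr$ in $h\C$ satisfy $\ovl{i\ppr}\,\ovl{a}=\ovl{b}\,\ovl{i}$. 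Form the pushout exact sequence $\Sf_a\co A\ppr\infl B_1\defl C$ in $\C$. The homotopy-commutative square $(a,b)$ exhibits a cocone under the span $A\ppr\xleftarrow{a}A\xrightarrow{i}B$ with vertex $B\ppr$, hence induces a morphism $\be\co B_1\to B\ppr$ over $A\ppr$; since $A\ppr\to B\ppr\defl C\ppr$ vanishes, $\be$ descends to a morphism $c\co C\to C\ppr$. After passing to $h\C$, the morphism of exact sequences $(\id_{A\ppr},\ovl{\be},\ovl{c})\co\Sf_a\to\Sf\ppr$ shows $a\sas\del=c\uas\del\ppr$, and the two squares of the resulting morphism of extriangles commute by construction. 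The dual of (ET3) follows symmetrically, using pullbacks.

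For (ET4), start from two extriangles $A\xrightarrow{\ovl{f}}B\lra D\dra$ and $B\xrightarrow{\ovl{g}}C\lra F\dra$ in $h\C$ and lift $f,g$ to composable inflations of $\C$. Then $gf\co A\to C$ is again an inflation, its cofibre $E$ fits into an exact sequence $A\infl C\defl E$, and the axioms of \cref{Section_ExactSequences} assemble the Noether $3\times 3$ diagram in $\C$: the square with edges $B\defl D$, $B\infl C$, $C\defl E$, $D\infl E$ is bicartesian, its two vertical arrows $B\infl C$ and $D\infl E$ are inflations with a common cofibre $F$, and all the rows and columns exhibited are exact. Passing to $h\C$ and reading off the $\E$-classes via the transport lemma produces the object $E$, the extriangle $A\xrightarrow{\ovl{gf}}C\lra E\ov{\del\pprr}{\dra}$, the morphisms $D\lra E\lra F$, and the extriangle $D\lra E\lra F\dra$; the compatibilities among these and the given $\del,\del\ppr$ demanded by (ET4) --- that the class of the lower row $D\lra E\lra F$ is the pushforward along $B\defl D$ of the class of $B\xrightarrow{\ovl{g}}C\lra F$, and so on --- are precisely the homotopy-categorical shadow of the bicartesianness of the squares in the $3\times 3$ diagram. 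The dual of (ET4) follows symmetrically, and this proves part~(1).

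Part~(2) is then immediate from the definition of $\sfr$: since $\sfr(\und{\Sf})$ is by construction the equivalence class of $A\xrightarrow{\ovl{i}}B\xrightarrow{\ovl{p}}C$, the sequence $A\xrightarrow{\ovl{i}}B\xrightarrow{\ovl{p}}C\ov{\und{\Sf}}{\dra}$ is by definition an extriangle of $(h\C,\E,\sfr)$. For part~(3), an exact functor $F\co\C\to\D$ preserves the zero object and finite biproducts, so $hF$ is additive; since $F$ carries exact sequences to exact sequences and preserves the equivalence relation between them, the rule $\Sf\mapsto F\Sf$ descends to a transformation $\Gamma\co\E\Rightarrow\E_\D\ci(hF\op\ti hF)$, which is natural because $F$ preserves the pushouts and pullbacks computing $a\sas$ and $c\uas$, and the compatibility $\sfr_\D\ci\Gamma=hF\ci\sfr$ holds because $F$ sends the middle row of $\Sf$ to that of $F\Sf$; thus $(hF,\Gamma)$ is an exact functor of extriangulated categories in the sense of \cite{NP}. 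I expect the main obstacle to be (ET4) together with the transport lemma it rests on: assembling the $3\times 3$ diagram is routine once bicartesian squares are available, but propagating $\E$-classes through the several pushouts and pullbacks involved, with only homotopy-commutativity at each stage, takes care; the cleanest route is to isolate beforehand a single statement asserting that a homotopy-commutative square having an inflation or a deflation as one of its edges is strictly realized in $\C$, up to the relevant equivalence.
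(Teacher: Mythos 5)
Your skeleton is the paper's: (ET1)/(ET2) are quoted from \cref{CorEAddFtr} and \cref{LemET}; (ET3) is obtained by strictifying the homotopy-commutative square against the given sequences, transporting along a pushout (your ``transport lemma'' is \cref{LemExSeqPO}), and invoking the criterion that $\afr\sas\del=\cfr\uas\del\ppr$ iff a cube ${}_a\Cf_c\co\Sf\to\Sf\ppr$ exists (\cref{LemET2}); (ET4) composes the two ingressives, takes a cofibre $E$ of the composite, pushes the second sequence forward along $B\defl D$, and reads off the compatibilities; parts (2) and (3) are handled exactly as in the paper (part (3) via $\Gamma(\und{\Sf})=\und{F\Sf}$, well defined and natural because $F$ preserves cubes and the pushouts/pullbacks computing $\afr\sas$ and $\cfr\uas$).

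The genuine gap is your justification of the (ET4) compatibilities, in particular $\E(E,\ovl{f})(\del\pprr)=\E(\ovl{e},B)(\del\ppr)$. You assert these are ``the homotopy-categorical shadow of the bicartesianness of the squares in the $3\times3$ diagram'', but bicartesianness in $h\C$ (weak (co)universality) cannot detect $\E$-classes: by the very definition of $\E$, the equality requires an actual morphism of exact sequences ${}_f\Cf\ppr_e\co\Sf\pprr\to\Sf\ppr$ in $\C$, i.e.\ a cube whose end components represent $\ovl{f}$ and $\ovl{e}$. Producing this cube is the technical heart of the paper's proof of \cref{ThmET}: the two cubes already in hand ($\Sf\to\Sf\pprr$ from \cref{PropMorphExSeq}, and $\Sf\ppr\to f\ppr\sas\Sf\ppr$ from \cref{LemExSeqPO}) are transposed and composed into a map $\Delta^1\ti\Delta^1\ti\Delta^2\to\C$ via \cref{PropComposeCubes}, the relevant $4$-simplices are extracted, and \cref{LemReplZero} is needed to reconcile the mismatched zero objects before $\Cf\ppr$ can be assembled. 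Likewise, your plan presupposes a coherent Noether $3\times3$ diagram in $\C$, which the axioms of an exact quasi-category do not hand you directly; assembling that coherence is precisely what the cube-composition machinery replaces. Your closing suggestion to isolate a strictification lemma is the right instinct (it is \cref{PropMorphExSeq} together with \cref{LemET2} in the paper), but as written the step establishing compatibility (iii) of (ET4) is asserted rather than proved.
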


When $\C$ is stable, this recovers the usual triangulated structure on $h\C$ (\cref{PropCompatStable}).

In Section~\ref{Section_NotationsAndTerminology}, we organize notations and terminology which will be used in the proceeding sections. In Section~\ref{Section_ExactSequences} we remind the definition of an exact $\infty$-category in \cite{B1}, and summarize properties of exact sequences.
In Section~\ref{Section_ExtriangulatedStructure}, after a brief introduction of the definition of an extriangulated category, we proceed to construct an external triangulation of the homotopy category. As the main theorem (Theorem~\ref{ThmET}), indeed we will show that the homotopy category of any exact $\infty$-category becomes an extriangulated category. In the last Subsection~\ref{Subsection_StableCase}, in the  stable case we will also see  that this extriangulated structure is compatible with the triangulation given in \cite{L2}.

This work came out of our exploration for an existing higher-categorical notion appropriate to enhance extriangulated categories. We hope that the existence of such a notion in turn guarantees the validity and naturality of the notion of an extriangulated category.

\section{Notations and terminology}\label{Section_NotationsAndTerminology}

Basically we follow the notations and terminology in \cite{L1} and \cite{B1}, while we use the term \emph{quasi-category} as in Joyal's \cite{J}, instead of $\infty$-category. Besides, we will also employ some ad hoc ones which we will introduce in this section, to facilitate the later argument dealing with exact sequences.

Throughout this article, $\C$ denotes an essentially small quasi-category (\cite[Notation~1.3]{B2}). By definition it is a simplicial set which admits every inner horn extensions (\cite[Definition~1.1.2.4.]{L1}). As being a simplicial set, it has the \emph{face maps} $d_k\co \C_{n+1}\to\C_n$ $(k=0,1,\ldots,n+1)$, and the \emph{degeneracy maps} $s_k\co\C_n\to\C_{n+1}$ $(k=0,1,\ldots,n)$ for any $n\ge0$, where each $\C_n$ denotes the set of its $n$-simplices. In particular $0$-simplices and $1$-simplices in $\C$ are called \emph{objects} and \emph{morphisms}. For any pair of objects $A,B\in\C_0$, we write $f\in\C_1(A,B)$ to indicate that $f$ is a morphism satisfying $d_1(f)=A$ and $d_0(f)=B$.
For an object $A\in\C_0$, we abbreviate $s_0(A)\in\C_1(A,A)$ to $1_A$ for simplicity.
The homotopy category of $\C$ will be denoted by $h\C$. For a morphism $a\in\C_1(A,B)$, its image in the homotopy category will be denoted by $\ovl{a}\in(h\C)(A,B)$. We use the symbol $\id_A=\ovl{1}_A$ exclusively for the identity morphisms in the homotopy category.

\subsection{Squares, rectangles and cubes}
As in \cite[Section~4.4.2]{L1}, a \emph{square} in $\C$ is a map $\Sf\co\Delta^1\ti\Delta^1\to\C$, namely an object in the quasi-category of functors $\Fun(\Delta^1\ti\Delta^1,\C)$ (see \cite[Notation~1.2.7.2, Proposition~1.2.7.3]{L1}). We will often denote $\Sf$ as a diagram in $\C$
\[
\xy
(-8,7)*+{\Sf(0,0)}="0";
(8,7)*+{\Sf(0,1)}="2";
(-8,-7)*+{\Sf(1,0)}="4";
(8,-7)*+{\Sf(1,1)}="6";
{\ar^{} "0";"2"};
{\ar_{} "0";"4"};
{\ar^{} "2";"6"};
{\ar_{} "4";"6"};
\endxy,
\]
or by labelling some of its simplices
\begin{equation}\label{Sq}
\xy
(-7,7)*+{A}="0";
(7,7)*+{B}="2";
(-7,-7)*+{D}="4";
(7,-7)*+{C}="6";
{\ar^{x} "0";"2"};
{\ar_{i} "0";"4"};
{\ar|*+{_z} "0";"6"};
{\ar^{y} "2";"6"};
{\ar_{j} "4";"6"};
(3,7)*+{}="00";
(7,3)*+{}="01";
{\ar@/_0.2pc/@{-}_{^{\xi}} "00";"01"};
(-3,-7)*+{}="10";
(-7,-3)*+{}="11";
{\ar@/_0.2pc/@{-}_{_{\eta}} "10";"11"};
\endxy
\ \ \text{or}\ \ 
\xy
(-7,7)*+{A}="0";
(7,7)*+{B}="2";
(-7,-7)*+{D}="4";
(7,-7)*+{C}="6";
{\ar^{x} "0";"2"};
{\ar_{i} "0";"4"};
{\ar_{z} "0";"6"};
{\ar^{y} "2";"6"};
{\ar_{j} "4";"6"};
\endxy
\ \ \text{or just}\ \ 
\xy
(-7,7)*+{A}="0";
(7,7)*+{B}="2";
(-7,-7)*+{D}="4";
(7,-7)*+{C}="6";
{\ar^{x} "0";"2"};
{\ar_{i} "0";"4"};
{\ar^{y} "2";"6"};
{\ar_{j} "4";"6"};
\endxy.
\end{equation}
It can be also identified with a pair of $2$-simplices $\xi$ and $\eta$ satisfying $d_1(\xi)=d_1(\eta)$, which correspond to
\[ \xi=\Sf|_{(\{0\}\ti\Delta^1)\ast\{(1,1)\}}\quad\text{and}
\quad
\eta=\Sf|_{\{(0,0)\}\ast(\{1\}\ti\Delta^1)}. \]
In this article we will also express a square as above by $\Sf=\SQ{\eta}{\xi}$. For a square $\Sf=\SQ{\eta}{\xi}$, its transpose $\Sf^t=\SQ{\xi}{\eta}$ is given by the below.
\[
\xy
(-7,7)*+{A}="0";
(7,7)*+{D}="2";
(-7,-7)*+{B}="4";
(7,-7)*+{C}="6";
{\ar^{i} "0";"2"};
{\ar_{x} "0";"4"};
{\ar|*+{_z} "0";"6"};
{\ar^{j} "2";"6"};
{\ar_{y} "4";"6"};
(3,7)*+{}="00";
(7,3)*+{}="01";
{\ar@/_0.2pc/@{-}_{^{\eta}} "00";"01"};
(-3,-7)*+{}="10";
(-7,-3)*+{}="11";
{\ar@/_0.2pc/@{-}_{_{\xi}} "10";"11"};
\endxy
\]

If we fix a map $F\co\Lambda^2_0\to\C$ denoted by a diagram
\begin{equation}\label{FixBD}
\xy
(-6,6)*+{A}="0";
(6,6)*+{B}="2";
(-6,-6)*+{D}="4";
{\ar^{x} "0";"2"};
{\ar_{i} "0";"4"};
%{\ar^{} "2";"6"};
%{\ar_{} "4";"6"};
%
\endxy
\end{equation}
in $\C$, then squares 
\[
\Sf=\ 
\xy
(-7,7)*+{A}="0";
(7,7)*+{B}="2";
(-7,-7)*+{D}="4";
(7,-7)*+{C}="6";
{\ar^{x} "0";"2"};
{\ar_{i} "0";"4"};
{\ar|*+{_z} "0";"6"};
{\ar^{y} "2";"6"};
{\ar_{j} "4";"6"};
(3,7)*+{}="00";
(7,3)*+{}="01";
{\ar@/_0.2pc/@{-}_{^{\xi}} "00";"01"};
(-3,-7)*+{}="10";
(-7,-3)*+{}="11";
{\ar@/_0.2pc/@{-}_{_{\eta}} "10";"11"};
\endxy
\quad\text{and}\quad
\Sf\ppr=\ 
\xy
(-7,7)*+{A}="0";
(7,7)*+{B}="2";
(-7,-7)*+{D}="4";
(7,-7)*+{C\ppr}="6";
{\ar^{x} "0";"2"};
{\ar_{i} "0";"4"};
{\ar|*+{_{z\ppr}} "0";"6"};
{\ar^{y\ppr} "2";"6"};
{\ar_{j\ppr} "4";"6"};
(3,7)*+{}="00";
(7,3)*+{}="01";
{\ar@/_0.2pc/@{-}_(0.4){^{\xi\ppr}} "00";"01"};
(-3,-7)*+{}="10";
(-7,-3)*+{}="11";
{\ar@/_0.2pc/@{-}_(0.4){_{\eta\ppr}} "10";"11"};
\endxy
\]
can be regarded as objects in the quasi-category under $(\ref{FixBD})$ (\cite[Remark 1.2.9.5.]{L1}).
In this undercategory, a morphism from $\Sf$ to $\Sf\ppr$ is nothing but a pair of $3$-simplices $\vp,\psi$ in $\C$ satisfying
\[ d_1(\vp)=d_1(\psi),d_2(\vp)=\eta\ppr,d_2(\psi)=\xi\ppr,d_3(\vp)=\eta,d_3(\psi)=\xi. \]
In this article, we denote this morphism by $\roundup{\vp}{\psi}\co\Sf\to\Sf\ppr$. We refer to such a pair simply as a morphism in \lq\lq the" undercategory, when $(\ref{FixBD})$ is clear from the context.

A \emph{rectangle} in this article means a map $\Rf\co\Delta^1\ti\Delta^2\to\C$ which will be denoted as a diagram
\[
\xy
(-16,7)*+{\Rf(0,0)}="0";
(0,7)*+{\Rf(0,1)}="2";
(16,7)*+{\Rf(0,2)}="4";
(-16,-7)*+{\Rf(1,0)}="10";
(0,-7)*+{\Rf(1,1)}="12";
(16,-7)*+{\Rf(1,2)}="14";
{\ar^{} "0";"2"};
{\ar_{} "2";"4"};
{\ar^{} "0";"10"};
{\ar_{} "2";"12"};
{\ar_{} "4";"14"};
{\ar^{} "10";"12"};
{\ar_{} "12";"14"};
\endxy
\]
often by labelling some of its simplices similarly as for squares.
It can be identified with a triplet of $3$-simplices $\Xcal,\Ycal,\Zcal$ satisfying $d_1(\Xcal)=d_1(\Ycal)$ and $d_2(\Ycal)=d_2(\Zcal)$, 
which correspond to
\[ \Xcal=\Rf|_{\{(0,0)\}\ast(\{1\}\ti\Delta^2)},\ \ \Ycal=\Rf|_{(\{0\}\ti [0,1])\ast(\{1\}\ti [1,2])},\ \ \Zcal=\Rf|_{(\{0\}\ti\Delta^2)\ast\{(1,2)\}}. \]
In this article, we express such a rectangle by $\Rf=\RT{\Xcal}{\Ycal}{\Zcal}$. The left, right, and outer squares in $\Rf$ will be denoted by $\Rf_{\mathrm{left}}=\Rf|_{\Delta^1\ti [0,1]}$, $\Rf_{\mathrm{right}}=\Rf|_{\Delta^1\ti [1,2]}$ and $\Rf_{\mathrm{out}}=\Rf|_{\Delta^1\ti \Lambda^2_1}$, respectively.
If $\Rf_{\mathrm{left}}$ is a push-out, then $\Rf_{\mathrm{out}}$ is a push-out if and only if $\Rf_{\mathrm{right}}$ is a push-out (\cite[Lemma~4.4.2.1]{L1}). Dually for pull-backs.

Remark that a square $(\ref{Sq})$ can be thought of as a morphism in the quasi-category $\Fun(\Delta^1,\C)$ from $A\ov{x}{\lra}B$ to $D\ov{j}{\lra}C$. In this view, a consecutive pair of squares can be filled into a $2$-simplex in $\Fun(\Delta^1,\C)$. Slightly more strictly, we may also designate the 2-simplices in $\C$ used as guides for this pasting operation, as follows.
\begin{prop}\label{PropComposeSquares}
Let
\[
\Sf=\ 
\xy
(-7,7)*+{A}="0";
(7,7)*+{B}="2";
(-7,-7)*+{A\ppr}="4";
(7,-7)*+{B\ppr}="6";
{\ar^{x} "0";"2"};
{\ar_{a} "0";"4"};
{\ar|*+{_z} "0";"6"};
{\ar^{b} "2";"6"};
{\ar_{x\ppr} "4";"6"};
(3,7)*+{}="00";
(7,3)*+{}="01";
{\ar@/_0.2pc/@{-}_{^{\xi}} "00";"01"};
(-3,-7)*+{}="10";
(-7,-3)*+{}="11";
{\ar@/_0.2pc/@{-}_{_{\eta}} "10";"11"};
\endxy
\quad\text{and}\quad
\Sf\ppr=\ 
\xy
(-7,7)*+{A\ppr}="0";
(7,7)*+{B\ppr}="2";
(-7,-7)*+{A\pprr}="4";
(7,-7)*+{B\pprr}="6";
{\ar^{x\ppr} "0";"2"};
{\ar_{a\ppr} "0";"4"};
{\ar|*+{_{z\ppr}} "0";"6"};
{\ar^{b\ppr} "2";"6"};
{\ar_{x\pprr} "4";"6"};
(3,7)*+{}="00";
(7,3)*+{}="01";
{\ar@/_0.2pc/@{-}_(0.4){^{\xi\ppr}} "00";"01"};
(-3,-7)*+{}="10";
(-7,-3)*+{}="11";
{\ar@/_0.2pc/@{-}_(0.4){_{\eta\ppr}} "10";"11"};
\endxy
\]
be any pair of squares such that $\Sf|_{\{1\}\ti\Delta^1}=\Sf\ppr|_{\{0\}\ti\Delta^1}=x\ppr$, and let
\[
\TwoSP{A}{A\ppr}{A\pprr}{a}{a\ppr}{a\pprr}{\al}
\ , \ 
\TwoSP{B}{B\ppr}{B\pprr}{b}{b\ppr}{b\pprr}{\be}
\]
be arbitrarily taken pair of $2$-simplices. Then there exists a rectangle
\[
\Rf=
\xy
(-16,7)*+{A}="0";
(0,7)*+{A\ppr}="2";
(16,7)*+{A\pprr}="4";
(-16,-7)*+{B}="10";
(0,-7)*+{B\ppr}="12";
(16,-7)*+{B\pprr}="14";
{\ar_{a} "0";"2"};
{\ar_{a\ppr} "2";"4"};
{\ar@/^1.0pc/^{a\pprr} "0";"4"};
{\ar_{x} "0";"10"};
{\ar_{x\ppr} "2";"12"};
{\ar^{x\pprr} "4";"14"};
{\ar^{b} "10";"12"};
{\ar^{b\ppr} "12";"14"};
{\ar@/_1.0pc/_{b\pprr} "10";"14"};
\endxy
\]
such that $\Rf_{\mathrm{left}}=\Sf^t$, $\Rf_{\mathrm{right}}=\Sf^{\prime t}$ and $\Rf|_{\{0\}\ti\Delta^2}=\al$, $\Rf|_{\{1\}\ti\Delta^2}=\be$.

Moreover, if $\Sf=\SQ{s_0(x\ppr)}{\xi}$ and $\al=s_0(a\ppr)$ (hence consequently $A\ppr=A, a=1_A, z=x\ppr, a\pprr=a\ppr$), then $\Rf$ can be chosen to be of the form $\Rf=\RT{\Xcal}{s_0(\xi\ppr)}{s_0(\eta\ppr)}$.
\end{prop}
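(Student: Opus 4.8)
The plan is to realize $\Rf$ as an extension, to all of $\Delta^1\ti\Delta^2$, of an evident partial map into $\C$. Put
\[ K = (\{0\}\ti\Delta^2)\cup(\{1\}\ti\Delta^2)\cup(\Delta^1\ti[0,1])\cup(\Delta^1\ti[1,2]) \se \Delta^1\ti\Delta^2 . \]
The hypotheses on $\Sf,\Sf\ppr$ and on $\al,\be$ say precisely that $\Sf^t$, $\Sf^{\prime t}$, $\al$ and $\be$ agree on the pairwise overlaps of these four subcomplexes (the overlap edges being $x\ppr$, $a$, $a\ppr$, $b$ and $b\ppr$), and hence glue to a single map $K\to\C$ restricting to $\Sf^t$ on $\Delta^1\ti[0,1]$, to $\Sf^{\prime t}$ on $\Delta^1\ti[1,2]$, to $\al$ on $\{0\}\ti\Delta^2$, and to $\be$ on $\{1\}\ti\Delta^2$. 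The inclusion $K\se\Delta^1\ti\Delta^2$ is inner anodyne---it is the pushout-product of the monomorphism $\partial\Delta^1\se\Delta^1$ with the inner anodyne inclusion $\Lambda^2_1\se\Delta^2$ (\cite{L1})---so, since $\C$ is a quasi-category, this map extends to some $\Rf\co\Delta^1\ti\Delta^2\to\C$. As each of the four subcomplexes above lies in $K$, this $\Rf$ automatically satisfies $\Rf_{\mathrm{left}}=\Sf^t$, $\Rf_{\mathrm{right}}=\Sf^{\prime t}$, $\Rf|_{\{0\}\ti\Delta^2}=\al$ and $\Rf|_{\{1\}\ti\Delta^2}=\be$; this proves the first assertion.

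For the ``Moreover'' I need control over the three non-degenerate $3$-simplices $\Xcal,\Ycal,\Zcal$ of $\Rf$, so I would instead build the extension by hand, writing $K\se\Delta^1\ti\Delta^2$ as a composite of pushouts of inner horn inclusions and filling the horns one at a time (this also reproves the first assertion without the pushout-product lemma). Apart from $\Xcal,\Ycal,\Zcal$ themselves, the non-degenerate simplices of $\Delta^1\ti\Delta^2$ not already in $K$ are the ``long diagonal'' edge from $(0,0)$ to $(1,2)$ together with three $2$-simplices. A workable order is: (i) fill a $\Lambda^2_1$-horn spanned by $a$ and $z\ppr$, producing the long diagonal edge together with a $2$-simplex $\mu$; (ii) fill the $\Lambda^3_1$-horn with faces $\eta$, $\mu$ and $\xi\ppr$, producing $\Ycal$ together with a new $2$-face $\nu$; (iii) fill the $\Lambda^3_2$-horn with faces $\xi$, $\be$ and $\nu$, producing $\Xcal$; (iv) fill the $\Lambda^3_1$-horn with faces $\eta\ppr$, $\mu$ and $\al$, producing $\Zcal$. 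At each stage one must check that the prescribed faces of the horn agree along their common subfaces---a direct application of the simplicial identities---and a count of non-degenerate simplices confirms that these four pushouts exhaust $\Delta^1\ti\Delta^2$.

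Under the hypotheses of the ``Moreover'' we have $\eta=s_0(x\ppr)$ and $\al=s_0(a\ppr)$ (so in particular $a=1_A$, $A\ppr=A$, $z=x\ppr$ and $a\pprr=a\ppr$). One then checks successively that the horns occurring in steps (i), (ii) and (iv) above admit, respectively, the degeneracies $s_0(z\ppr)$, $s_0(\xi\ppr)$ and $s_0(\eta\ppr)$ as fillers---each such degeneracy visibly has the prescribed faces, the non-degenerate ones among which are themselves degenerate in this case, and the step-(i) choice additionally pins the long diagonal edge down to $z\ppr$. Taking these as the fillers in steps (i), (ii) and (iv) gives $\Rf=\RT{\Xcal}{s_0(\xi\ppr)}{s_0(\eta\ppr)}$, in which $\Xcal$, the filler of the $\Lambda^3_2$-horn in step (iii), is the only genuinely free choice.

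I expect the only real difficulty to be bookkeeping: keeping the transpose and orientation conventions for $\xi$ and $\eta$ consistent, and verifying the horn-compatibility at each step. Conceptually nothing goes beyond iterated inner horn filling.
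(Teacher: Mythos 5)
Your proposal is correct and is essentially the paper's own argument: the existence statement is exactly the observation that $(\partial\Delta^1\ti\Delta^2)\cup(\Delta^1\ti\Lambda^2_1)\hookrightarrow\Delta^1\ti\Delta^2$ is inner anodyne, and your treatment of the ``Moreover'' part reduces it, as the paper does, to the single free choice of a $3$-simplex $\Xcal$ with $d_0(\Xcal)=\be$, $d_1(\Xcal)=\xi\ppr$, $d_3(\Xcal)=\xi$, the remaining cells being filled by the degeneracies $s_0(z\ppr)$, $s_0(\xi\ppr)$, $s_0(\eta\ppr)$. The only blemish is a harmless miscount in your bookkeeping sentence: besides the long diagonal edge and $\Xcal,\Ycal,\Zcal$, there are four (not three) nondegenerate $2$-simplices missing from $K$, namely $d_2(\Ycal)$, $d_1(\Ycal)$, $d_2(\Xcal)$ and $d_1(\Zcal)$, and your four horn fillings each add exactly two nondegenerate simplices, producing precisely these eight cells, so the filtration does exhaust $\Delta^1\ti\Delta^2$ and the argument is unaffected.
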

\begin{proof}
This is just rephrasing the fact that $(\Delta^1\ti\Lambda^2_1)\cup(\partial\Delta^1\ti\Delta^2)\hookrightarrow\Delta^1\ti\Delta^2$ is an inner anodyne map. (\cite{J}. See also \cite[Proposition~2.3.2.1]{L1}, \cite[Proposition~3.2.3]{C}).
We also remark that as for the latter part, we only have to take a $3$-simplex
\[
\Xcal=\ 
\xy
(-2,11)*+{A}="0";
(-12,-2)*+{B}="2";
(10,0)*+{B\pprr}="4";
(2,-12)*+{B\ppr}="6";
{\ar_{x} "0";"2"};
{\ar^{z\ppr} "0";"4"};
{\ar_(0.3){z} "0";"6"};
{\ar_(0.35){b\pprr}|!{"0";"6"}\hole "2";"4"};
{\ar_{b} "2";"6"};
{\ar_{b\ppr} "6";"4"};
\endxy
\]
such that $d_0(\Xcal)=\be, d_1(\Xcal)=\xi\ppr, d_3(\Xcal)=\xi$.
\end{proof}

If we are given a rectangle $\Rf\co\Delta^1\ti\Delta^2\to\C$ and a morphism $f\in\C_1(\Rf((1,2)),C)$ to some $C\in\C_0$, then we can extend them to obtain a map $F\co(\Delta^1\ti\Delta^2)^{\vartriangleright}\to\C$. More strictly, we may designate some of the simplices involved, as below.
This ad hoc lemma will be used later in the proof of Lemma~\ref{LemExSeqPO}.
\begin{lem}\label{LemRectProl}
Let
\[
\Rf=\RT{\psi}{\vp}{\thh}=\ 
\xy
(-16,7)*+{A}="0";
(0,7)*+{A\ppr}="2";
(16,7)*+{A\pprr}="4";
(-16,-7)*+{B}="10";
(0,-7)*+{B\ppr}="12";
(16,-7)*+{B\pprr}="14";
{\ar_{a} "0";"2"};
{\ar_{a\ppr} "2";"4"};
{\ar@/^1.0pc/^{a\pprr} "0";"4"};
{\ar_{x} "0";"10"};
{\ar_{x\ppr} "2";"12"};
{\ar^{x\pprr} "4";"14"};
{\ar^{b} "10";"12"};
{\ar^{b\ppr} "12";"14"};
{\ar@/_1.0pc/_{b\pprr} "10";"14"};
\endxy
\]
be any rectangle, let
\[ \roundup{\ze}{\ze\ppr}\co\Rf_{\mathrm{out}}\to
\Sf=\ 
\xy
(-7,7)*+{A}="0";
(7,7)*+{A\pprr}="2";
(-7,-7)*+{B}="4";
(7,-7)*+{C}="6";
{\ar^{a\pprr} "0";"2"};
{\ar_{x} "0";"4"};
%{\ar|*+{} "0";"6"};
{\ar^{} "2";"6"};
{\ar_{} "4";"6"};
\endxy
\]
be any morphism in the undercategory, and let
\[
\wp=
\xy
(-2,11)*+{A}="0";
(-12,-2)*+{A\ppr}="2";
(10,0)*+{C}="4";
(2,-12)*+{A\pprr}="6";
{\ar_{a} "0";"2"};
{\ar^{} "0";"4"};
{\ar_(0.3){a\pprr} "0";"6"};
{\ar_(0.35){}|!{"0";"6"}\hole "2";"4"};
{\ar_{a\ppr} "2";"6"};
{\ar_{} "6";"4"};
\endxy
\]
be any $3$-simplex in $\C$ satisfying $d_1(\wp)=d_2(\ze\ppr)$ and $d_3(\wp)=d_3(\thh)$. Then we can extend this to a map $(\Delta^1\ti\Delta^2)^{\vartriangleright}\to\C$. Namely, there is a triplet of $4$-simplices
\[
\Psi=\Penta{A}{B}{B\ppr}{B\pprr}{C}{x}{b}{b\ppr}{},\ 
\Phi=\Penta{A}{A\ppr}{B\ppr}{B\pprr}{C}{a}{x\ppr}{b\ppr}{},\ 
\Theta=\Penta{A}{A\ppr}{A\pprr}{B\pprr}{C}{a}{a\ppr}{x\pprr}{}
\]
such that $d_1(\Psi)=d_1(\Phi)$, $d_2(\Phi)=d_2(\Thh)$, $\roundup{\ze}{\ze\ppr}=\roundup{d_2(\Psi)}{d_1(\Thh)}$, $d_3(\Thh)=\wp$, and $\RT{\psi}{\vp}{\thh}=\RT{d_4(\Phi)}{d_4(\Psi)}{d_4(\Thh)}$.
\end{lem}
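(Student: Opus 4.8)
The plan is to recast the desired extension as a single lifting problem against an inner anodyne inclusion, exactly as in the proof of \cref{PropComposeSquares}. The right cone $(\Delta^1\ti\Delta^2)^{\vartriangleright}$ is the join $(\Delta^1\ti\Delta^2)\ast\Delta^0$; giving a map out of it amounts to giving the rectangle $\Rf$ on $\Delta^1\ti\Delta^2$, the object $C$ on the cone point, the morphism $\Rf((1,2))\to C$, and all higher simplices of the join that fill in between. The data we are handed — the morphism $\roundup{\ze}{\ze\ppr}\co\Rf_{\mathrm{out}}\to\Sf$ in the undercategory and the $3$-simplex $\wp$ — specify the restriction of the sought map to a certain subcomplex $K\se(\Delta^1\ti\Delta^2)^{\vartriangleright}$. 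First I would identify $K$ precisely: it consists of $\Delta^1\ti\Delta^2$ itself, together with those faces of the cone that are pinned down by $\ze,\ze\ppr,\wp$ — namely the simplices $((\{0\}\ti\Delta^2)\cup(\Delta^1\ti\Lambda^2_1))\ast\Delta^0$ carrying $\ze,\ze\ppr$, and the $3$-simplex $\wp$ sitting on $(\{0\}\ti[0,2])\ast\Delta^0$ subject to the stated matching conditions $d_1(\wp)=d_2(\ze\ppr)$, $d_3(\wp)=d_3(\thh)$. The point is that these pieces are mutually compatible by hypothesis, so they glue to a single map $K\to\C$.

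The second step is to check that the inclusion $K\hookrightarrow(\Delta^1\ti\Delta^2)^{\vartriangleright}$ is inner anodyne. Here I would use the standard decomposition of a join: $(L\ast\Delta^0)$ is built from $L$ by attaching the cone, and an inclusion $L'\ast\Delta^0\cup L\ast\partial\Delta^0\hookrightarrow L\ast\Delta^0$ is inner anodyne whenever $L'\hookrightarrow L$ is (this is the join-vs-anodyne compatibility, e.g.\ \cite[Lemma~2.1.2.3]{L1} together with the observation that joining with $\Delta^0$ on the right preserves inner anodyne maps, since the new simplices all have the cone point as their last vertex and hence the filled horns are never outer). Taking $L=\Delta^1\ti\Delta^2$ and $L'=(\{0\}\ti\Delta^2)\cup(\Delta^1\ti\Lambda^2_1)$, the inclusion $L'\hookrightarrow L$ is inner anodyne by the same fact invoked in \cref{PropComposeSquares} (namely $(\partial\Delta^1\ti\Delta^2)\cup(\Delta^1\ti\Lambda^2_1)\hookrightarrow\Delta^1\ti\Delta^2$ is inner anodyne, whose restriction to the relevant sub-prism we are using); the extra $3$-simplex $\wp$ only shrinks the gap further. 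Since $\C$ is a quasi-category, any lift exists, and restricting the lift to the three top-dimensional $4$-simplices of $(\Delta^1\ti\Delta^2)^{\vartriangleright}$ — the ones through the cone point on the three $3$-simplices $\Xcal,\Ycal,\Zcal$ of $\Rf$ — yields $\Psi,\Phi,\Theta$ with exactly the listed face identities (these identities are just the simplicial-identity bookkeeping for $d_k$ of a simplex adjacent to the cone point, together with $\Rf=\RT{\psi}{\vp}{\thh}$ being recovered as the $d_4$-faces, and $\roundup{\ze}{\ze\ppr}$, $\wp$ being recovered as prescribed).

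The main obstacle is purely combinatorial: verifying that the subcomplex $K$ on which the data is prescribed really does sit inside $(\Delta^1\ti\Delta^2)^{\vartriangleright}$ as an inner anodyne subcomplex, i.e.\ that no outer horn is secretly required to be filled and that the prescribed simplices $\Rf,\ze,\ze\ppr,\wp$ are pairwise compatible on overlaps. This amounts to a careful but routine inspection of the poset $[1]\ti[2]$ with an added top element: one must list the nondegenerate simplices of the cone, see which have the cone point as last vertex (all of them, by construction of the right cone), and confirm that the "missing" simplices relative to $K$ can be attached by a sequence of inner-horn fillings ordered by dimension. Once the inner anodyne claim is in hand, the existence of $\Psi,\Phi,\Theta$ and all the face relations are immediate from the lifting property of $\C$ and a direct reading-off, so I would not belabor those computations.
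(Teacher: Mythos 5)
Your overall strategy --- encode the data as a map on a subcomplex $K\subseteq(\Delta^1\ti\Delta^2)^{\vartriangleright}$ and extend along an inner anodyne inclusion --- is viable, and your general observation that right-coning preserves inner anodyne maps (the pushout-join of $\Lambda^n_i\subseteq\Delta^n$ with $\emptyset\subseteq\Delta^0$ is $\Lambda^{n+1}_i\subseteq\Delta^{n+1}$, still inner) is correct. But the step you lean on to conclude is wrong. The subcomplex of $\Delta^1\ti\Delta^2$ actually pinned down by the data $(\ze,\ze\ppr,\wp)$ is $L'=(\{0\}\ti\Delta^2)\cup(\Delta^1\ti\Delta^{\{0,2\}})$, the top face together with the \emph{outer} square (note that $\ze,\ze\ppr$ are the cones over the two triangles of $\Rf_{\mathrm{out}}$ only; they give you nothing over the left and right squares, so your description of $K$ as containing $((\{0\}\ti\Delta^2)\cup(\Delta^1\ti\Lambda^2_1))\ast\Delta^0$ is already off). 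This $L'\hookrightarrow\Delta^1\ti\Delta^2$ is \emph{not} inner anodyne: the vertex $(1,1)$ does not lie in $L'$, whereas every inner anodyne map is bijective on $0$-simplices. It is also not a ``restriction'' of the inclusion $(\partial\Delta^1\ti\Delta^2)\cup(\Delta^1\ti\Lambda^2_1)\hookrightarrow\Delta^1\ti\Delta^2$ used in \cref{PropComposeSquares} --- that is a different subcomplex, and in any case restricting an inner anodyne inclusion to a sub-prism does not preserve inner anodyne-ness. So the reduction ``join lemma applied to an inner anodyne $L'\subseteq L$'' collapses exactly at the point you declared routine.

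The conclusion you want, namely that $K\hookrightarrow(\Delta^1\ti\Delta^2)^{\vartriangleright}$ is inner anodyne, is nevertheless true, but it cannot be obtained by quoting \cref{PropComposeSquares}; it needs an explicit filling scheme, and that scheme is the actual content of the lemma. Concretely, one must first create the missing edge $B\ppr\to C$ and the cone triangle over $(1,1)(1,2)$ (an inner $\Lambda^2_1$ filling), then the four cone $3$-simplices over the triangles $(0,1)(0,2)(1,2)$, $(0,0)(1,1)(1,2)$, $(0,1)(1,1)(1,2)$, $(1,0)(1,1)(1,2)$ (inner $\Lambda^3_1$ and $\Lambda^3_2$ fillings), and only then the three $4$-simplices $\Theta,\Phi,\Psi$ (inner $\Lambda^4_2$ and $\Lambda^4_3$ fillings), checking at each stage that the required faces are already present; this ordered list of horn fillings is precisely the paper's proof. (Alternatively, one can show that $L'\hookrightarrow\Delta^1\ti\Delta^2$ lies in the saturated class generated by the horns $\Lambda^n_i$ with $i>0$ --- right \emph{and} inner horns, which is what survives the coning argument --- but that again requires listing an explicit cell decomposition, since $(1,1)$ must be adjoined via $\Lambda^1_1$ and several faces via $\Lambda^2_2$, $\Lambda^3_3$.) As written, your proposal defers exactly this verification and supports it with a false citation, so there is a genuine gap.
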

\begin{proof}
Take a $2$-simplex
\[ \TwoSP{B\ppr}{B\pprr}{C}{b\ppr}{}{}{\varpi} \]
such that $d_0(\varpi)=d_0d_1(\ze)=d_0d_1(\ze\ppr)$ and $d_2(\varpi)=b\ppr$. Take $3$-simplices
\[
\al=\ThreeSP{A\ppr}{A\pprr}{B\pprr}{C},\ 
\be=\ThreeSP{A}{B\ppr}{B\pprr}{C},\ 
\gam=\ThreeSP{A\ppr}{B\ppr}{B\pprr}{C},\ 
\del=\ThreeSP{B}{B\ppr}{B\pprr}{C}
\]
such that
\begin{eqnarray*}
&d_0(\al)=d_0(\ze\ppr),\ d_2(\al)=d_0(\wp),\ d_3(\al)=d_0(\thh),&\\
&d_0(\be)=\varpi,\ d_1(\be)=d_1(\ze),\ d_3(\be)=d_1(\vp)=d_1(\psi),&\\
&d_0(\gam)=\varpi,\ d_1(\gam)=d_1(\al),\ d_3(\gam)=d_0(\vp),&\\
&d_0(\del)=\varpi,\ d_1(\del)=d_0(\ze),\ d_3(\del)=d_0(\psi).&
\end{eqnarray*}
Then we can take $\Theta,\Phi,\Psi$ so that
\begin{eqnarray*}
&d_0(\Theta)=\al,\ d_1(\Theta)=\ze\ppr,\ d_3(\Theta)=\wp,\ d_4(\Theta)=\thh,&\\
&d_0(\Phi)=\gam,\ d_1(\Phi)=\be,\ d_2(\Phi)=d_2(\Theta),\ d_4(\Phi)=\vp,&\\
&d_0(\Psi)=\del,\ d_1(\Psi)=\be,\ d_2(\Psi)=\ze,\ d_4(\Psi)=\psi&
\end{eqnarray*}
hold, as desired.
\end{proof}

A \emph{cube} in $\C$ is a map $\Cf\co\Delta^1\ti\Delta^1\ti\Delta^1\to\C$, 
namely an object in $\Fun(\Delta^1\ti\Delta^1\ti\Delta^1,\C)$, which we denote by a diagram 
\[
\xy
(-11,15)*+{\Cf(0,0,0)}="0";
(11,15)*+{\Cf(0,1,0)}="2";
(0,5)*+{\Cf(1,0,0)}="4";
(22,5)*+{\Cf(1,1,0)}="6";
(-11,-5)*+{\Cf(0,0,1)}="10";
(11,-5)*+{\Cf(0,1,1)}="12";
(0,-15)*+{\Cf(1,0,1)}="14";
(22,-15)*+{\Cf(1,1,1)}="16";
{\ar^{} "0";"2"};
{\ar_{} "0";"4"};
{\ar^{} "2";"6"};
{\ar_{} "4";"6"};
{\ar^{} "0";"10"};
{\ar_{}|!{(9,5);(13,5)}\hole "2";"12"};
{\ar^{} "4";"14"};
{\ar_{} "6";"16"};
{\ar^{}|!{(0,-3);(0,-7)}\hole "10";"12"};
{\ar_{} "10";"14"};
{\ar^{} "12";"16"};
{\ar_{} "14";"16"};
\endxy
\]
in $\C$ or by labelling some of its simplices for example as follows,
\begin{equation}\label{Cb}
\Cf=\ 
\xy
(-11,12)*+{A}="0";
(11,12)*+{B}="2";
(0,2)*+{D}="4";
(22,2)*+{C}="6";
(-11,-8)*+{A\ppr}="10";
(11,-8)*+{B\ppr}="12";
(0,-18)*+{D\ppr}="14";
(22,-18)*+{C\ppr}="16";
{\ar^{x} "0";"2"};
{\ar_{i} "0";"4"};
{\ar^{y} "2";"6"};
{\ar_(0.7){j} "4";"6"};
{\ar_(0.4){z} "0";"6"};
{\ar_{a} "0";"10"};
{\ar^(0.7){b}|!{(9,2);(13,2)}\hole|!{(9,6.2);(13,4.2)}\hole "2";"12"};
{\ar_(0.3){d} "4";"14"};
{\ar^{c} "6";"16"};
{\ar^(0.7){x\ppr}|!{(0,-6);(0,-10)}\hole "10";"12"};
{\ar_{i\ppr} "10";"14"};
{\ar^{y\ppr} "12";"16"};
{\ar_{j\ppr} "14";"16"};
{\ar_{z\ppr} "10";"16"};
\endxy,
\end{equation}
similarly as for squares. We may identify it with a pair of rectangles
\[
\xy
(-28,4)*+{\Rf_{\ff}=}="-";
(-16,7)*+{A}="0";
(0,7)*+{B}="2";
(16,7)*+{C}="4";
(-16,-7)*+{A\ppr}="10";
(0,-7)*+{B\ppr}="12";
(16,-7)*+{C\ppr}="14";
{\ar_{x} "0";"2"};
{\ar_{y} "2";"4"};
{\ar@/^1.0pc/^{z} "0";"4"};
{\ar_{a} "0";"10"};
{\ar_{b} "2";"12"};
{\ar^{c} "4";"14"};
{\ar^{x\ppr} "10";"12"};
{\ar^{y\ppr} "12";"14"};
{\ar@/_1.0pc/_{z\ppr} "10";"14"};
\endxy
\quad\text{and}\quad
\xy
(-28,4)*+{\Rf_{\bb}=}="-";
(-16,7)*+{A}="0";
(0,7)*+{D}="2";
(16,7)*+{C}="4";
(-16,-7)*+{A\ppr}="10";
(0,-7)*+{D\ppr}="12";
(16,-7)*+{C\ppr}="14";
{\ar_{i} "0";"2"};
{\ar_{j} "2";"4"};
{\ar@/^1.0pc/^{z} "0";"4"};
{\ar_{a} "0";"10"};
{\ar_{d} "2";"12"};
{\ar^{c} "4";"14"};
{\ar^{i\ppr} "10";"12"};
{\ar^{j\ppr} "12";"14"};
{\ar@/_1.0pc/_{z\ppr} "10";"14"};
\endxy
\]
sharing the outer square in common.
Each of these rectangles is expressed as $\Rf_{\ff}=\RT{\Xcal_{\ff}}{\Ycal_{\ff}}{\Zcal_{\ff}}$ and $\Rf_{\bb}=\RT{\Xcal_{\bb}}{\Ycal_{\bb}}{\Zcal_{\bb}}$. In summary, a cube $\Cf$ can be expressed by a $6$-tuple of $3$-simplices in $\C$
\begin{equation}\label{Exp_Cube}
\Cf=\CB{\Xcal_{\ff}}{\Ycal_{\ff}}{\Zcal_{\ff}}{\Xcal_{\bb}}{\Ycal_{\bb}}{\Zcal_{\bb}}
\end{equation}
satisfying $d_1(\Xcal_{\ff})=d_1(\Ycal_{\ff}), d_2(\Ycal_{\ff})=d_2(\Zcal_{\ff}), d_2(\Xcal_{\ff})=d_2(\Xcal_{\bb}), d_1(\Zcal_{\ff})=d_1(\Zcal_{\bb})$, $d_1(\Xcal_{\bb})=d_1(\Ycal_{\bb}), d_2(\Ycal_{\bb})=d_2(\Zcal_{\bb})$.

\begin{rem}
From a cube $\Cf$ as above, we may obtain cubes
\[
\CB{\Ycal_{\bb}}{\Xcal_{\bb}}{\Xcal_{\ff}}{\Zcal_{\bb}}{\Zcal_{\ff}}{\Ycal_{\ff}}=\ 
\xy
(-8,9)*+{A}="0";
(8,9)*+{A\ppr}="2";
(1,3)*+{B}="4";
(17,3)*+{B\ppr}="6";
(-8,-7)*+{D}="10";
(8,-7)*+{D\ppr}="12";
(1,-13)*+{C}="14";
(17,-13)*+{C\ppr}="16";
{\ar^{a} "0";"2"};
{\ar_{x} "0";"4"};
{\ar^{x\ppr} "2";"6"};
{\ar_(0.3){b} "4";"6"};
%{\ar_(0.4){z} "0";"6"};
%
{\ar_{i} "0";"10"};
{\ar^(0.7){i\ppr}|!{(9,3);(13,3)}\hole "2";"12"};
{\ar_(0.3){y} "4";"14"};
{\ar^{y\ppr} "6";"16"};
{\ar^(0.3){d}|!{(1,-3);(1,-7)}\hole "10";"12"};
{\ar_{j} "10";"14"};
{\ar^{j\ppr} "12";"16"};
{\ar_{c} "14";"16"};
%{\ar_{z\ppr} "10";"16"};
%
\endxy
\ ,\ 
\CB{\Zcal_{\ff}}{\Zcal_{\bb}}{\Ycal_{\bb}}{\Ycal_{\ff}}{\Xcal_{\ff}}{\Xcal_{\bb}}=\ 
\xy
(-8,9)*+{A}="0";
(8,9)*+{D}="2";
(1,3)*+{A\ppr}="4";
(17,3)*+{D\ppr}="6";
(-8,-7)*+{B}="10";
(8,-7)*+{C}="12";
(1,-13)*+{B\ppr}="14";
(17,-13)*+{C\ppr}="16";
{\ar^{i} "0";"2"};
{\ar_{a} "0";"4"};
{\ar^{d} "2";"6"};
{\ar_(0.3){i\ppr} "4";"6"};
%{\ar_(0.4){z} "0";"6"};
%
{\ar_{x} "0";"10"};
{\ar^(0.7){j}|!{(9,3);(13,3)}\hole "2";"12"};
{\ar_(0.3){x\ppr} "4";"14"};
{\ar^{j\ppr} "6";"16"};
{\ar^(0.3){y}|!{(1,-3);(1,-7)}\hole "10";"12"};
{\ar_{b} "10";"14"};
{\ar^{c} "12";"16"};
{\ar_{y\ppr} "14";"16"};
%{\ar_{z\ppr} "10";"16"};
%
\endxy \]
by transpositions, and also
$\CB{\Xcal_{\bb}}{\Ycal_{\bb}}{\Zcal_{\bb}}{\Xcal_{\ff}}{\Ycal_{\ff}}{\Zcal_{\ff}}$,
$\CB{\Ycal_{\ff}}{\Xcal_{\ff}}{\Xcal_{\bb}}{\Zcal_{\ff}}{\Zcal_{\bb}}{\Ycal_{\bb}}$,
$\CB{\Zcal_{\bb}}{\Zcal_{\ff}}{\Ycal_{\ff}}{\Ycal_{\bb}}{\Xcal_{\bb}}{\Xcal_{\ff}}$. %by reflections.
\end{rem}

We often regard a cube $(\ref{Cb})$ as a morphism from $\Cf|_{\Delta^1\ti\Delta^1\ti\{0\}}$ to $\Cf|_{\Delta^1\ti\Delta^1\ti\{1\}}$ in $\Fun(\Delta^1\ti\Delta^1,\C)$. In this view, we have the following.
\begin{ex}\label{ExTrivCubes}
Let
\[
\Sf=\ 
\xy
(-7,7)*+{A}="0";
(7,7)*+{B}="2";
(-7,-7)*+{D}="4";
(7,-7)*+{C}="6";
{\ar^{x} "0";"2"};
{\ar_{i} "0";"4"};
{\ar|*+{_z} "0";"6"};
{\ar^{y} "2";"6"};
{\ar_{j} "4";"6"};
(3,7)*+{}="00";
(7,3)*+{}="01";
{\ar@/_0.2pc/@{-}_{^{\xi}} "00";"01"};
(-3,-7)*+{}="10";
(-7,-3)*+{}="11";
{\ar@/_0.2pc/@{-}_{_{\eta}} "10";"11"};

\endxy
\]
be any square.
\begin{enumerate}
\item There is a cube consisting of degenerated $3$-simplices
\[
\If_{\Sf}=\CB{s_0(\xi)}{s_1(\xi)}{s_2(\xi)}{s_0(\eta)}{s_1(\eta)}{s_2(\eta)}
\]
such that $\If_{\Sf}|_{\Delta^1\ti\Delta^1\ti\{0\}}=\If_{\Sf}|_{\Delta^1\ti\Delta^1\ti\{1\}}=\Sf$.
This is nothing but $1_{\Sf}=s_0(\Sf)$ in $\Fun(\Delta^1\ti\Delta^1,\C)$.
\item Suppose that $\C$ is pointed (\cite[Definition~2.1]{L2}). Then any square
\[
\Of=\ 
\xy
(-7,7)*+{O_A}="0";
(7,7)*+{O_B}="2";
(-7,-7)*+{O_D}="4";
(7,-7)*+{O_C}="6";
{\ar^{x_o} "0";"2"};
{\ar_{i_o} "0";"4"};
{\ar|*+{_{z_o}} "0";"6"};
{\ar^{y_o} "2";"6"};
{\ar_{j_o} "4";"6"};
\endxy
\]
in which $O_A,O_B,O_C,O_D$ are zero objects in $\C$, gives a zero object in $\Fun(\Delta^1\ti\Delta^1,\C)$. In particular, there exists a cube $\Cf$ such that $\Cf|_{\Delta^1\ti\Delta^1\ti\{0\}}=\Sf$ and $\Cf|_{\Delta^1\ti\Delta^1\ti\{1\}}=\Of$. Indeed, by using the definition of zero objects, such a cube can be taken compatibly with arbitrarily given $5$-tuple of squares
\[
\xy
(-5,5)*+{A}="0";
(5,5)*+{B}="2";
(-5,-5)*+{O_A}="4";
(5,-5)*+{O_B}="6";
{\ar^{x} "0";"2"};
{\ar_{a} "0";"4"};
{\ar^{b} "2";"6"};
{\ar_{x_o} "4";"6"};
\endxy\ ,\ 
\xy
(-5,5)*+{A}="0";
(5,5)*+{D}="2";
(-5,-5)*+{O_A}="4";
(5,-5)*+{O_D}="6";
{\ar^{i} "0";"2"};
{\ar_{a} "0";"4"};
{\ar^{d} "2";"6"};
{\ar_{i_o} "4";"6"};
\endxy\ ,\ 
\xy
(-5,5)*+{B}="0";
(5,5)*+{C}="2";
(-5,-5)*+{O_B}="4";
(5,-5)*+{O_C}="6";
{\ar^{y} "0";"2"};
{\ar_{b} "0";"4"};
{\ar^{c} "2";"6"};
{\ar_{y_o} "4";"6"};
\endxy\ ,\ 
\xy
(-5,5)*+{D}="0";
(5,5)*+{C}="2";
(-5,-5)*+{O_D}="4";
(5,-5)*+{O_C}="6";
{\ar^{j} "0";"2"};
{\ar_{c} "0";"4"};
{\ar^{d} "2";"6"};
{\ar_{j_o} "4";"6"};
\endxy\ ,\ 
\xy
(-5,5)*+{A}="0";
(5,5)*+{C}="2";
(-5,-5)*+{O_A}="4";
(5,-5)*+{O_C}="6";
{\ar^{z} "0";"2"};
{\ar_{a} "0";"4"};
{\ar^{c} "2";"6"};
{\ar_{z_o} "4";"6"};
\endxy
\]
sharing the morphisms with the same labels in common.
Similarly for the existence of a cube $\Cf\ppr$ with $\Cf\ppr|_{\Delta^1\ti\Delta^1\ti\{0\}}=\Of$ and $\Cf\ppr|_{\Delta^1\ti\Delta^1\ti\{1\}}=\Sf$.
\end{enumerate}
\end{ex}

Similarly as in Proposition~\ref{PropComposeSquares}, any consecutive pair of cubes can be filled into a $2$-simplex in $\Fun(\Delta^1\ti\Delta^1,\C)$. More strictly, the following holds.
\begin{prop}\label{PropComposeCubes}
Let
\[
\Cf=\ 
\xy
(-8,9)*+{A}="0";
(8,9)*+{B}="2";
(1,3)*+{D}="4";
(17,3)*+{C}="6";
(-8,-7)*+{A\ppr}="10";
(8,-7)*+{B\ppr}="12";
(1,-13)*+{D\ppr}="14";
(17,-13)*+{C\ppr}="16";
{\ar^{x} "0";"2"};
{\ar_{i} "0";"4"};
{\ar^{y} "2";"6"};
{\ar_(0.3){j} "4";"6"};
%{\ar_(0.4){z} "0";"6"};
%
{\ar_{a} "0";"10"};
{\ar^(0.7){b}|!{(9,3);(13,3)}\hole "2";"12"};
{\ar_(0.3){d} "4";"14"};
{\ar^{c} "6";"16"};
{\ar^(0.3){x\ppr}|!{(1,-3);(1,-7)}\hole "10";"12"};
{\ar_{i\ppr} "10";"14"};
{\ar^{y\ppr} "12";"16"};
{\ar_{j\ppr} "14";"16"};
%{\ar_{z\ppr} "10";"16"};
%
\endxy
\quad,\quad
\Cf\ppr=\ 
\xy
(-8,9)*+{A\ppr}="0";
(8,9)*+{B\ppr}="2";
(1,3)*+{D\ppr}="4";
(17,3)*+{C\ppr}="6";
(-8,-7)*+{A\pprr}="10";
(8,-7)*+{B\pprr}="12";
(1,-13)*+{D\pprr}="14";
(17,-13)*+{C\pprr}="16";
{\ar^{x\ppr} "0";"2"};
{\ar_{i\ppr} "0";"4"};
{\ar^{y\ppr} "2";"6"};
{\ar_(0.3){j\ppr} "4";"6"};
%{\ar_(0.4){z\ppr} "0";"6"};
%
{\ar_{a\ppr} "0";"10"};
{\ar^(0.7){b\ppr}|!{(9,3);(13,3)}\hole "2";"12"};
{\ar_(0.3){d\ppr} "4";"14"};
{\ar^{c\ppr} "6";"16"};
{\ar^(0.3){x\pprr}|!{(1,-3);(1,-7)}\hole "10";"12"};
{\ar_{i\pprr} "10";"14"};
{\ar^{y\pprr} "12";"16"};
{\ar_{j\pprr} "14";"16"};
%{\ar_{z\ppr} "10";"16"};
%
\endxy
\]
be a pair of cubes satisfying $\Cf|_{\Delta^1\ti\Delta^1\ti\{1\}}=\Cf\ppr|_{\Delta^1\ti\Delta^1\ti\{0\}}$, and let
\[
\TwoSP{A}{A\ppr}{A\pprr}{a}{a\ppr}{a\pprr}{\al},\ 
\TwoSP{B}{B\ppr}{B\pprr}{b}{b\ppr}{b\pprr}{\be},\ 
\TwoSP{C}{C\ppr}{C\pprr}{c}{c\ppr}{c\pprr}{\gam},\ 
\TwoSP{D}{D\ppr}{D\pprr}{d}{d\ppr}{d\pprr}{\del}
\]
be arbitrarily taken $2$-simplices.
Then there exists $F\co\Delta^1\ti\Delta^1\ti\Delta^2\to\C$ such that
$F|_{\Delta^1\ti\Delta^1\ti [0,1]}=\Cf, F|_{\Delta^1\ti\Delta^1\ti [1,2]}=\Cf\ppr$ and
\[ F|_{\{(0,0)\}\ti\Delta^2}=\al, F|_{\{(0,1)\}\ti\Delta^2}=\be, F|_{\{(1,1)\}\ti\Delta^2}=\gam, F|_{\{(1,0)\}\ti\Delta^2}=\del. \]
Especially we obtain a cube
\[
F|_{\Delta^1\ti\Delta^1\ti \Lambda^2_1}=\ 
\xy
(-8,9)*+{A}="0";
(8,9)*+{B}="2";
(1,3)*+{D}="4";
(17,3)*+{C}="6";
(-8,-7)*+{A\pprr}="10";
(8,-7)*+{B\pprr}="12";
(1,-13)*+{D\pprr}="14";
(17,-13)*+{C\pprr}="16";
{\ar^{x} "0";"2"};
{\ar_{i} "0";"4"};
{\ar^{y} "2";"6"};
{\ar_(0.3){j} "4";"6"};
%{\ar_(0.4){z} "0";"6"};
%
{\ar_{a\pprr} "0";"10"};
{\ar^(0.7){b\pprr}|!{(9,3);(13,3)}\hole "2";"12"};
{\ar_(0.3){d\pprr} "4";"14"};
{\ar^{c\pprr} "6";"16"};
{\ar^(0.3){x\pprr}|!{(1,-3);(1,-7)}\hole "10";"12"};
{\ar_{i\pprr} "10";"14"};
{\ar^{y\pprr} "12";"16"};
{\ar_{j\pprr} "14";"16"};
%{\ar_{z\pprr} "10";"16"};
%
\endxy
\]
compatible with the given data.
\end{prop}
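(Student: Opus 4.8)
The statement to prove is \cref{PropComposeCubes}, which asserts that a consecutive pair of cubes can be composed, compatibly with prescribed $2$-simplices on the four "corner edges" of the cube. The plan is to recognize this as an instance of the same anodyne-extension phenomenon used in \cref{PropComposeSquares}, but now one categorical dimension up: instead of filling $\Delta^1\ti\Delta^2$ relative to $(\Delta^1\ti\Lambda^2_1)\cup(\partial\Delta^1\ti\Delta^2)$, we must fill $\Delta^1\ti\Delta^1\ti\Delta^2$ relative to the subcomplex on which the data is already specified.

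First I would identify precisely the simplicial subset $K\se\Delta^1\ti\Delta^1\ti\Delta^2$ on which the map $F$ is prescribed by the hypotheses: along $\Delta^1\ti\Delta^1\ti[0,1]$ it is $\Cf$, along $\Delta^1\ti\Delta^1\ti[1,2]$ it is $\Cf\ppr$ (these agree on $\Delta^1\ti\Delta^1\ti\{1\}$ by the compatibility assumption $\Cf|_{\Delta^1\ti\Delta^1\ti\{1\}}=\Cf\ppr|_{\Delta^1\ti\Delta^1\ti\{0\}}$), and along each of the four edges $\{(0,0)\},\{(0,1)\},\{(1,1)\},\{(1,0)\}$ of the square $\Delta^1\ti\Delta^1$ times $\Delta^2$ it is the prescribed $2$-simplex $\al,\be,\gam,\del$ respectively; these four $2$-simplices are compatible with $\Cf$ and $\Cf\ppr$ along $\Delta^2\ti\{0\}$, $\{1\}$, $\{2\}$ because the $0$th/$1$st/$2$nd faces of $\al$ are $a\ppr a$, $a\pprr$, $a$ (the edges already present in $\Cf,\Cf\ppr$), and likewise for $\be,\gam,\del$. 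Thus $K=(\Delta^1\ti\Delta^1\ti\Lambda^2_1)\cup(\Sk_0(\Delta^1\ti\Delta^1)\ti\Delta^2)$, where $\Sk_0$ denotes the $0$-skeleton (the four vertices). The key point is then that the inclusion $K\hookrightarrow\Delta^1\ti\Delta^1\ti\Delta^2$ is inner anodyne, so the lifting exists against the inner fibration $\C\to\ast$ (equivalently, $\C$ being a quasi-category suffices).

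The main work, and the step I expect to be the real obstacle, is establishing that $K\hookrightarrow\Delta^1\ti\Delta^1\ti\Delta^2$ is inner anodyne. The cleanest route is to factor it through an intermediate complex and invoke known facts: by \cite[Proposition~2.3.2.1]{L1} (or \cite[Proposition~3.2.3]{C}), for an inner anodyne map $i$ and any monomorphism $j$, the pushout-product $i\mathbin{\widehat{\ti}}j$ is inner anodyne. Here one can write $\Sk_0(\Delta^1\ti\Delta^1)\hookrightarrow\Delta^1\ti\Delta^1$ and recall (or re-derive, exactly as in \cref{PropComposeSquares}) that $(\Delta^1\ti\Lambda^2_1)\cup(\partial\Delta^1\ti\Delta^2)\hookrightarrow\Delta^1\ti\Delta^2$ is inner anodyne. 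Then $K\hookrightarrow\Delta^1\ti\Delta^1\ti\Delta^2$ factors as a composite of pushout-products of these with further boundary inclusions of $\Delta^1$; since inner anodyne maps are closed under pushout-product with monomorphisms, composition, pushout, and retract, the result follows. One must be slightly careful that $\Sk_0(\Delta^1\ti\Delta^1)$ is the $0$-skeleton (four points) and not $\partial(\Delta^1\ti\Delta^1)$; the boundary version would give a weaker statement, so I would double-check against what the hypotheses actually prescribe — the $2$-simplices are only given on the four corner \emph{vertices'} copies of $\Delta^2$, not on the bounding edges, which is exactly the $0$-skeleton.

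Having established the anodyne inclusion, the rest is a formal consequence: define the prescribed map $K\to\C$ by the data (checking the finitely many overlap compatibilities listed above), extend along the inner anodyne $K\hookrightarrow\Delta^1\ti\Delta^1\ti\Delta^2$ to get $F$, and read off $F|_{\Delta^1\ti\Delta^1\ti\Lambda^2_1}$ as the asserted composite cube. Its front and back rectangles, and its outer square, are then automatically compatible with $\Cf,\Cf\ppr$ and with the $2$-simplices $\al,\be,\gam,\del$ by construction, since all of these are restrictions of $F$ to subcomplexes of $K$. The only genuinely non-formal ingredient is the anodyne lemma, which is standard; accordingly, the written proof can be quite short, mirroring the proof of \cref{PropComposeSquares} almost verbatim but with $\Delta^1\ti\Delta^1$ in place of $\Delta^1$ in the relevant pushout-product.
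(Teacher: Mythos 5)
Your proposal is correct and takes essentially the same route as the paper, which also reduces the statement to the fact that $((\Delta^1\ti\Delta^1)\ti\Lambda^2_1)\cup(\{(0,0),(0,1),(1,0),(1,1)\}\ti\Delta^2)\hookrightarrow\Delta^1\ti\Delta^1\ti\Delta^2$ is inner anodyne (cited there via \cite{J}, \cite[Proposition~2.3.2.1]{L1}, \cite[Proposition~3.2.3]{C}) and then lifts the prescribed data. Your identification of the prescribed subcomplex as the pushout-product of the inner anodyne $\Lambda^2_1\hookrightarrow\Delta^2$ with the monomorphism $\mathrm{Sk}_0(\Delta^1\ti\Delta^1)\hookrightarrow\Delta^1\ti\Delta^1$ already gives the anodyne fact in one step (the extra factorization through $\Delta^1$-boundary inclusions you sketch is unnecessary), so this is the paper's argument with that fact made explicit.
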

\begin{proof}
Similarly as Proposition~\ref{PropComposeSquares}, this is just rephrasing the fact that $((\Delta^1\ti\Delta^1)\ti\Lambda^2_1)\cup(\{(0,0),(0,1),(1,0),(1,1)\}\ti\Delta^2)\hookrightarrow\Delta^1\ti\Delta^1\ti\Delta^2$ is an inner anodyne map (\cite{J}, \cite[Proposition~2.3.2.1]{L1}, \cite[Proposition~3.2.3]{C}).
\end{proof}

\begin{cor}\label{CorComposeCubes}
Let $\Sf, \Sf\ppr$ be any pair of squares such that $\Sf((0,0))=\Sf\ppr((0,0))=A$ and $\Sf((1,1))=\Sf\ppr((1,1))=C$. Assume that there is a cube $\Cf$ such that
\[ \Cf|_{\Delta^1\ti\Delta^1\ti\{0\}}=\Sf,\ \Cf|_{\Delta^1\ti\Delta^1\ti\{1\}}=\Sf\ppr \]
and that $a=\Cf|_{\{(0,0)\}\ti\Delta^1}$ and $c=\Cf|_{\{(1,1)\}\ti\Delta^1}$ satisfies $\ovl{a}=\id_A$ and $\ovl{c}=\id_C$.
Then there exists a cube $\Cf\ppr$ such that
\[ \Cf\ppr|_{\Delta^1\ti\Delta^1\ti\{0\}}=\Sf,\ \Cf\ppr|_{\Delta^1\ti\Delta^1\ti\{1\}}=\Sf\ppr  \]
and $\Cf\ppr|_{\{(0,0)\}\ti\Delta^1}=1_A$, $\Cf\ppr|_{\{(0,1)\}\ti\Delta^1}=\Cf|_{\{(0,1)\}\ti\Delta^1}$, $\Cf\ppr|_{\{(1,0)\}\ti\Delta^1}=\Cf|_{\{(1,0)\}\ti\Delta^1}$, $\Cf\ppr|_{\{(1,1)\}\ti\Delta^1}=1_C$.
%\begin{eqnarray*}
%&\Cf|_{\{(0,0)\}\ti\Delta^1}=1_A,\ \ \Cf|_{\{(0,1)\}\ti\Delta^1}=\Cf\ppr|_{\{(0,1)\}\ti\Delta^1},&\\
%&\Cf|_{\{(1,0)\}\ti\Delta^1}=\Cf\ppr|_{\{(1,0)\}\ti\Delta^1},\ \ \Cf|_{\{(1,1)\}\ti\Delta^1}=1_C.&
%\end{eqnarray*}
\end{cor}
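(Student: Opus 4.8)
### Proof plan for Corollary~\ref{CorComposeCubes}

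The plan is to produce $\Cf\ppr$ as the composite of $\Cf$ with a ``correction'' cube that undoes the non-identity legs $a$ and $c$ at the two distinguished corners. First I would observe that since $\ovl{a}=\id_A$ and $\ovl{c}=\id_C$ in $h\C$, there exist $2$-simplices $\al_0\co a\cdot a'\Rightarrow 1_A$ and $\gam_0\co c\cdot c'\Rightarrow 1_C$ witnessing that $a$ and $c$ admit homotopy inverses; more precisely, as $h\C$ is the homotopy category, $\ovl{a}$ being an isomorphism means $a$ is an equivalence in $\C$, so we may choose $a'\in\C_1$ with a $2$-simplex exhibiting $a'\cdot a\simeq 1_A$ (and symmetrically for $c$). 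Actually, for the present purpose it suffices to take $a'=1_A$, $c'=1_C$ and to note that $\Cf$ itself, read as a morphism from $\Sf$ to $\Sf\ppr$ in $\Fun(\Delta^1\ti\Delta^1,\C)$, has the property that its components at $(0,0)$ and $(1,1)$ become identities in the homotopy category of $\Fun(\Delta^1\ti\Delta^1,\C)$ — but this is not quite strong enough, so instead I would work at the level of $\C$ directly.

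Here is the concrete construction. Choose a $2$-simplex $\al\in\C_2$ of the form $\TwoSP{A}{A}{A}{a}{a'}{1_A}{}$ for some chosen $a'$ with $\ovl{a'}=\ovl{a}^{-1}=\id_A$, and similarly $\gam\in\C_2$ of the form $\TwoSP{C}{C}{C}{c}{c'}{1_C}{}$. Next I would build a ``reversing'' cube $\Cf''$ with $\Cf''|_{\Delta^1\ti\Delta^1\ti\{0\}}=\Sf\ppr$, $\Cf''|_{\Delta^1\ti\Delta^1\ti\{1\}}=\Sf\ppr$, whose leg at $(0,0)$ is $a'$, whose leg at $(1,1)$ is $c'$, and whose legs at $(0,1)$ and $(1,0)$ are the (fixed) identity $1$-simplices on $\Sf\ppr((0,1))$ and $\Sf\ppr((1,0))$. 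Such a cube exists: it is the datum of a morphism $\Sf\ppr\to\Sf\ppr$ in $\Fun(\Delta^1\ti\Delta^1,\C)$, and since $\ovl{a'}$, $\ovl{c'}$, $\id$, $\id$ all become the relevant identities in $h(\Fun(\Delta^1\ti\Delta^1,\C))$, a standard lifting argument (fill the inner horn corresponding to the four prescribed legs together with the degenerate square $1_{\Sf\ppr}$ on one face) yields $\Cf''$. Then I would apply \cref{PropComposeCubes} to the consecutive pair $(\Cf,\Cf'')$ — they agree on $\Delta^1\ti\Delta^1\ti\{1\}$, namely on $\Sf\ppr$ — together with the composition $2$-simplices $\al$ at $(0,0)$, $\gam$ at $(1,1)$, and the degenerate $2$-simplices $s_0$ of the identity legs at $(0,1)$ and $(1,0)$. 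The resulting cube $\Cf|_{\Delta^1\ti\Delta^1\ti\Lambda^2_1}$, which I take as $\Cf\ppr$, then has source $\Sf$, target $\Sf\ppr$, and legs $1_A$ at $(0,0)$, $1_C$ at $(1,1)$, while its legs at $(0,1)$ and $(1,0)$ are unchanged from $\Cf$ — exactly as required.

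The main obstacle is the construction of the reversing cube $\Cf''$: one must check that the four prescribed boundary legs $a', 1, 1, c'$ can be assembled into an actual morphism $\Sf\ppr\to\Sf\ppr$ in $\Fun(\Delta^1\ti\Delta^1,\C)$, i.e.\ a genuine cube, not merely a homotopy-commuting diagram. Since the four legs are not coherently compatible on the nose (only up to the homotopies recorded by $\al,\gam$), the honest argument is to first produce an auxiliary cube whose $(0,0)$-leg is $a$ (namely $\Cf$, or rather the composite $\Cf$ then a cube realizing $a'$ after $a$), and then invoke \cref{PropComposeCubes} with the ``inverse'' homotopies as the filling $2$-simplices; the degenerate faces on the $(0,1)$- and $(1,0)$-edges guarantee those legs survive untouched. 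Once $\Cf''$ is in hand, everything else is a direct application of \cref{PropComposeCubes} and is routine; the bookkeeping of which $2$-simplex sits on which face of $\Delta^1\ti\Delta^1\ti\Delta^2$ is the only thing requiring care.
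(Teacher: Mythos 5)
Your overall strategy (compose $\Cf$ with a second cube via \cref{PropComposeCubes}, placing the homotopies witnessing $\ovl{a}=\id_A$, $\ovl{c}=\id_C$ at the corners $(0,0)$, $(1,1)$ and degenerate $2$-simplices at $(0,1)$, $(1,0)$) is essentially right, but the proof as written has a genuine gap at the point you yourself flag: the existence of the ``reversing'' cube $\Cf\pprr\co\Sf\ppr\to\Sf\ppr$ with both end faces equal to $\Sf\ppr$ and all four vertical legs prescribed on the nose as $a',1,1,c'$. This is not a horn-filling problem: the data you want to extend lives on $(\Delta^1\ti\Delta^1)\ti\partial\Delta^1$ together with the four vertical edges, and the inclusion of that subcomplex into $(\Delta^1\ti\Delta^1)\ti\Delta^1$ is not inner anodyne, so ``fill the inner horn corresponding to the four prescribed legs together with the degenerate square'' does not produce $\Cf\pprr$. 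Worse, a cube with two prescribed faces and four exactly prescribed legs that are only homotopic to identities is precisely the kind of statement the corollary itself asserts, so justifying $\Cf\pprr$ this way is circular; your fallback sentence (``first produce an auxiliary cube whose $(0,0)$-leg is $a$ \dots'') does not resolve it either.

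The fix is to drop the correction cube entirely, which is what the paper does: apply \cref{PropComposeCubes} once, to the pair $\Cf$, $\If_{\Sf\ppr}$ (the degenerate cube $s_0(\Sf\ppr)$, which exists for free). All of the correction is absorbed into the choice of composition $2$-simplices, which \cref{PropComposeCubes} allows to be arbitrary subject only to their $d_2$- and $d_0$-edges: at $(0,0)$ take a $2$-simplex with edges $a$, $1_A$ and long edge $1_A$, and at $(1,1)$ one with edges $c$, $1_C$ and long edge $1_C$ --- these exist precisely because $\ovl{a}=\id_A$ and $\ovl{c}=\id_C$ --- while at $(0,1)$ and $(1,0)$ take $s_1(b)$ and $s_1(d)$ (note $s_1$, not $s_0$, so that the $d_0$-edge is the identity and the long edge is $b$, resp.\ $d$). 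The restriction of the resulting map to $\Delta^1\ti\Delta^1\ti\Lambda^2_1$ is the desired $\Cf\ppr$. In fact your own observation that one may take $a'=1_A$, $c'=1_C$ already does this: with that choice your $\Cf\pprr$ is just $\If_{\Sf\ppr}$ and the obstacle you identified disappears.
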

\begin{proof}
This follows from Proposition~\ref{PropComposeCubes} applied to the cubes $\Cf,\If_{\Sf\ppr}$ and $2$-simplices of the following form,
\[
\TwoSP{A}{A}{A}{a}{1_A}{1_A}{},\ 
\TwoSP{B}{B\ppr}{B\ppr}{b}{1_{B\ppr}}{b}{s_1(b)},\ 
\TwoSP{C}{C}{C}{c}{1_C}{1_C}{},\ 
\TwoSP{D}{D\ppr}{D\ppr}{d}{1_{D\ppr}}{d}{s_1(d)},\ 
\]
where we denote $\Cf|_{\{(0,1)\}\ti\Delta^1}$ and $\Cf|_{\{(1,0)\}\ti\Delta^1}$ by $B\ov{b}{\lra}B\ppr$ and $D\ov{d}{\lra}D\ppr$, respectively.
\end{proof}

\section{Exact sequences in exact quasi-categories}\label{Section_ExactSequences}
First we recall the definition of an exact quasi-category. For the detail, see \cite{B1},\cite{B2}. 
A quasi-category $\C$ is called \emph{additive} (\cite[Definition~2.2]{B1}) if it is pointed, has all finite products and finite coproducts, and moreover if the homotopy category $h\C$ is additive as an ordinary category. In particular, for any pair of objects $X_1,X_2\in h\C$, the unique morphism $\vfr\in (h\C)(X_1\am X_2,X_1\ti X_2)$ which satisfies
\[ p_{k\ppr}\ci \vfr\ci i_k\co X_k\to X_{k\ppr}=
\begin{cases}
\id& \text{if}\ k=k\ppr\\
0  & \text{if}\ k\ne k\ppr
\end{cases}
\]
should become an isomorphism. Here, $X_1\ov{p_1}{\lla}X_1\ti X_2\ov{p_2}{\lra}X_2$ and $X_1\ov{i_1}{\lra}X_1\am X_2\ov{i_2}{\lla}X_2$ are product and coproduct of $X_1,X_2$ in $h\C$, respectively.

Let $\C$ be a quasi-category, and let $\C_{\dag},\C^{\dag}$ be two subcategories of $\C$ containing all homotopy equivalences. A morphism in $\C_{\dag}$ is called \emph{ingressive}, and a morphism in $\C^{\dag}$ is called \emph{egressive}.
A square
\begin{equation}\label{AmbPB'}
\xy
(-6,6)*+{A}="0";
(6,6)*+{B}="2";
(-6,-6)*+{D}="4";
(6,-6)*+{C}="6";
{\ar^{x} "0";"2"};
{\ar_{i} "0";"4"};
{\ar^{y} "2";"6"};
{\ar_{j} "4";"6"};
\endxy
\end{equation}
is called an \emph{ambigressive pull-back} if it is a pull-back square in which $j$ is ingressive and $y$ is egressive.
Dually, $(\ref{AmbPB'})$ is an \emph{ambigressive push-out} if it is a push-out square in which $x$ is ingressive and $i$ is egressive.
\begin{dfn}\label{DefExactQuasiCat}(\cite[Definition~3.1]{B1})
 The triplet $(\C,\C_{\dag},\C^{\dag})$ is an \emph{exact quasi-category} ($=$ exact $\infty$-category) if it satisfies the following conditions.
\begin{itemize}
\item[{\rm (Ex0)}] $\C$ is additive.
\item[{\rm (Ex1)}] If $O\in \C_0$ is a zero object, then any morphism $O\to X$ in $\C$ is ingressive. Dually, any morphism $X\to O$ in $\C$ is egressive.
\item[{\rm (Ex2)}] Push-outs of ingressive morphisms exist and are ingressive. Dually, pull-backs of egressive morphisms exist and are egressive.
\item[{\rm (Ex3)}] A square in $\C$ is an ambigressive pull-back if and only if it is an ambigressive push-out.
\end{itemize}
\end{dfn}

In the rest of this article, let $(\C,\C_{\dag},\C^{\dag})$ be an exact quasi-category defined as above. We also simply say that $\C$ is an exact quasi-category.
\begin{dfn}\label{DefExSeq}(\cite[Definition~3.1]{B1})
$\ \ $
\begin{enumerate}
\item
An \emph{exact sequence} starting from $A$ and ending in $C$ is an ambigressive pull-back (hence an ambigressive push-out)
\begin{equation}\label{ExSeq}
\Sf=\ 
\xy
(-7,7)*+{A}="0";
(7,7)*+{B}="2";
(-7,-7)*+{O}="4";
(7,-7)*+{C}="6";
{\ar^{x} "0";"2"};
{\ar_{i} "0";"4"};
{\ar|*+{_z} "0";"6"};
{\ar^{y} "2";"6"};
{\ar_{j} "4";"6"};
(3,7)*+{}="00";
(7,3)*+{}="01";
{\ar@/_0.2pc/@{-}_{^{\xi}} "00";"01"};
(-3,-7)*+{}="10";
(-7,-3)*+{}="11";
{\ar@/_0.2pc/@{-}_{_{\eta}} "10";"11"};
\endxy
\end{equation}
in which $O$ is a zero object in $\C$. When we emphasize the end-objects $A$ and $C$, we write ${}_A\Sf_C$ in this article.
\end{enumerate}
\end{dfn}

\begin{ex}\label{ExTrivExSeq}
If a morphism $x\in\C_1(X,X)$ satisfies $\ovl{x}=\id_X$, then any square
\[
\xy
(-6,6)*+{X}="0";
(6,6)*+{X}="2";
(-6,-6)*+{O}="4";
(6,-6)*+{O\ppr}="6";
{\ar^{x} "0";"2"};
{\ar_{} "0";"4"};
%{\ar|*+{} "0";"6"};
{\ar^{} "2";"6"};
{\ar_{} "4";"6"};
\endxy,\ \ 
\Bigg(\text{similarly,}\ 
\xy
(-6,6)*+{O}="0";
(6,6)*+{X}="2";
(-6,-6)*+{O\ppr}="4";
(6,-6)*+{X}="6";
{\ar^{} "0";"2"};
{\ar_{} "0";"4"};
%{\ar|*+{} "0";"6"};
{\ar^{x} "2";"6"};
{\ar_{} "4";"6"};
\endxy
\Bigg)
\]
in which $O,O\ppr$ are zero objects, is an exact sequence.
\end{ex}

\subsection{Morphisms of exact sequences}
\begin{dfn}\label{DefMorphExSeq}
Let $\Esc\se\Fun(\Delta^1\ti\Delta^1,\C)$ be the full subcategory spanned by exact sequences. In particular, for a pair of exact sequences
\begin{equation}\label{TwoExSeq}
\Sf=\ 
\xy
(-7,7)*+{A}="0";
(7,7)*+{B}="2";
(-7,-7)*+{O}="4";
(7,-7)*+{C}="6";
{\ar^{x} "0";"2"};
{\ar_{i} "0";"4"};
{\ar|*+{_z} "0";"6"};
{\ar^{y} "2";"6"};
{\ar_{j} "4";"6"};
(3,7)*+{}="00";
(7,3)*+{}="01";
{\ar@/_0.2pc/@{-}_{^{\xi}} "00";"01"};
(-3,-7)*+{}="10";
(-7,-3)*+{}="11";
{\ar@/_0.2pc/@{-}_{_{\eta}} "10";"11"};
\endxy
\quad
\text{and}
\quad
\Sf\ppr=\ 
\xy
(-7,7)*+{A\ppr}="0";
(7,7)*+{B\ppr}="2";
(-7,-7)*+{O\ppr}="4";
(7,-7)*+{C\ppr}="6";
{\ar^{x\ppr} "0";"2"};
{\ar_{i\ppr} "0";"4"};
{\ar|*+{_{z\ppr}} "0";"6"};
{\ar^{y\ppr} "2";"6"};
{\ar_{j\ppr} "4";"6"};
(3,7)*+{}="00";
(7,3)*+{}="01";
{\ar@/_0.2pc/@{-}_(0.4){^{\xi\ppr}} "00";"01"};
(-3,-7)*+{}="10";
(-7,-3)*+{}="11";
{\ar@/_0.2pc/@{-}_(0.4){_{\eta\ppr}} "10";"11"};
\endxy,
\end{equation}
a \emph{morphism of exact sequences from $\Sf$ to $\Sf\ppr$} means a cube
\begin{equation}\label{MorphExSeq}
\Cf=\ 
\xy
(-8,9)*+{A}="0";
(8,9)*+{B}="2";
(1,3)*+{O}="4";
(17,3)*+{C}="6";
(-8,-7)*+{A\ppr}="10";
(8,-7)*+{B\ppr}="12";
(1,-13)*+{O\ppr}="14";
(17,-13)*+{C\ppr}="16";
{\ar^{x} "0";"2"};
{\ar_{i} "0";"4"};
{\ar^{y} "2";"6"};
{\ar_(0.3){j} "4";"6"};
%{\ar_(0.4){z} "0";"6"};
%
{\ar_{a} "0";"10"};
{\ar^(0.7){b}|!{(9,3);(13,3)}\hole "2";"12"};
{\ar_(0.3){o} "4";"14"};
{\ar^{c} "6";"16"};
{\ar^(0.3){x\ppr}|!{(1,-3);(1,-7)}\hole "10";"12"};
{\ar_{i\ppr} "10";"14"};
{\ar^{y\ppr} "12";"16"};
{\ar_{j\ppr} "14";"16"};
%{\ar_{z\ppr} "10";"16"};
%
\endxy
\end{equation}
which satisfies $\Cf|_{\Delta^1\ti\Delta^1\ti\{0\}}=\Sf$ and $\Cf|_{\Delta^1\ti\Delta^1\ti\{1\}}=\Sf\ppr$. We abbreviate this to $\Cf\co\Sf\to\Sf\ppr$. When we indicate morphisms $a$ and $c$, we will also write ${}_a\Cf_c\co\Sf\to\Sf\ppr$ in this article.
\end{dfn}

\begin{prop}\label{PropWIsom}
Let ${}_A\Sf_C,{}_A\Sf\ppr_C,$ be any pair of exact sequences starting from $A$ and ending in $C$, and let ${}_a\Cf_c\co\Sf\to\Sf\ppr$ be a morphism such that $\ovl{a}=\id_A$ and $\ovl{c}=\id_C$, as depicted in $(\ref{CbPair})$ below.
Suppose that there also exists a morphism in the opposite direction ${}_{a\ppr}\Cf\ppr_{c\ppr}\co\Sf\ppr\to\Sf$ as below, such that $\ovl{a\ppr}=\id_A$ and $\ovl{c\ppr}=\id_C$.
Then $b$ is a homotopy equivalence.
\begin{equation}\label{CbPair}
\Cf=\ \xy
(-8,9)*+{A}="0";
(8,9)*+{B}="2";
(1,3)*+{O}="4";
(17,3)*+{C}="6";
(-8,-7)*+{A}="10";
(8,-7)*+{B\ppr}="12";
(1,-13)*+{O\ppr}="14";
(17,-13)*+{C}="16";
{\ar^{x} "0";"2"};
{\ar_{i} "0";"4"};
{\ar^{y} "2";"6"};
{\ar_(0.3){j} "4";"6"};
%{\ar_(0.4){z} "0";"6"};
%
{\ar_{a} "0";"10"};
{\ar^(0.7){b}|!{(9,3);(13,3)}\hole "2";"12"};
{\ar_(0.3){} "4";"14"};
{\ar^{c} "6";"16"};
{\ar^(0.3){x\ppr}|!{(1,-3);(1,-7)}\hole "10";"12"};
{\ar_{i\ppr} "10";"14"};
{\ar^{y\ppr} "12";"16"};
{\ar_{j\ppr} "14";"16"};
%{\ar_{z\ppr} "10";"16"};
%
\endxy%\ \co\Sf\to \Sf\ppr
\quad,\quad
\Cf\ppr=\ \xy
(-8,9)*+{A}="0";
(8,9)*+{B\ppr}="2";
(1,3)*+{O\ppr}="4";
(17,3)*+{C}="6";
(-8,-7)*+{A}="10";
(8,-7)*+{B}="12";
(1,-13)*+{O}="14";
(17,-13)*+{C}="16";
{\ar^{x\ppr} "0";"2"};
{\ar_{i\ppr} "0";"4"};
{\ar^{y\ppr} "2";"6"};
{\ar_(0.3){j\ppr} "4";"6"};
%{\ar_(0.4){z\ppr} "0";"6"};
%
{\ar_{a\ppr} "0";"10"};
{\ar^(0.7){b\ppr}|!{(9,3);(13,3)}\hole "2";"12"};
{\ar_(0.3){} "4";"14"};
{\ar^{c\ppr} "6";"16"};
{\ar^(0.3){x}|!{(1,-3);(1,-7)}\hole "10";"12"};
{\ar_{i} "10";"14"};
{\ar^{y} "12";"16"};
{\ar_{j} "14";"16"};
%{\ar_{z} "10";"16"};
%
\endxy
\end{equation}
\end{prop}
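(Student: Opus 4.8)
The plan is to use \cref{CorComposeCubes,PropComposeCubes} to reduce the statement to a short five lemma for a single exact sequence, and then to prove that lemma inside $\C$ by exploiting that $\Sf$ is an ambigressive pull-back. First I would apply \cref{CorComposeCubes} to each of $\Cf$ and $\Cf\ppr$ to arrange that their edges at the corners $A$ and $C$ are literally $1_A$ and $1_C$ (this alters neither the squares $\Sf,\Sf\ppr$ nor the edges $b,b\ppr$). Composing $\Cf$ with $\Cf\ppr$ by means of \cref{PropComposeCubes}, fed with the degenerate $2$-simplices $s_0(1_A),s_0(1_C)$ at the corners $A,C$ and with a choice of $2$-simplices filling $B\ov{b}{\lra}B\ppr\ov{b\ppr}{\lra}B$ and $O\ov{o}{\lra}O\ppr\ov{o\ppr}{\lra}O$ at the corners $B,O$, one obtains a morphism of exact sequences $\Dcal\co\Sf\to\Sf$ with $\Dcal|_{\{(0,0)\}\ti\Delta^1}=1_A$, $\Dcal|_{\{(1,1)\}\ti\Delta^1}=1_C$ and $\ovl{\Dcal|_{\{(0,1)\}\ti\Delta^1}}=\ovl{b\ppr}\ci\ovl{b}$ in $h\C$; composing in the opposite order gives $\Dcal\ppr\co\Sf\ppr\to\Sf\ppr$ of the same shape with $\ovl{\Dcal\ppr|_{\{(0,1)\}\ti\Delta^1}}=\ovl{b}\ci\ovl{b\ppr}$.

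It then suffices to prove the following short five lemma: \emph{if $\Ecal\co\Tcal\to\Tcal$ is a morphism of exact sequences whose edges at the two end corners are identities, then the edge $\be$ at the middle corner induces an isomorphism in $h\C$}. Granting this and applying it to $\Dcal$ and to $\Dcal\ppr$, both $\ovl{b\ppr}\ci\ovl{b}$ and $\ovl{b}\ci\ovl{b\ppr}$ are isomorphisms in $h\C$, hence so is $\ovl{b}$; and a morphism of $\C$ that becomes invertible in the homotopy category is a homotopy equivalence (\cite[Proposition~1.2.4.3]{L1}), so $b$ is a homotopy equivalence.

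To prove the short five lemma, write $A_0,C_0$ for the end terms of $\Tcal$, $B_0$ for its middle term, $x_0\co A_0\infl B_0$ and $y_0\co B_0\defl C_0$ for its two morphisms, and $\be\co B_0\to B_0$ for the middle edge of $\Ecal$. The side square $\Ecal|_{\{0\}\ti\Delta^1\ti\Delta^1}$ (both horizontal edges equal to $x_0$, vertical edges $1_{A_0}$ and $\be$) shows that $\ovl\be\ci\ovl{x_0}=\ovl{x_0}$ in $h\C$, and the side square $\Ecal|_{\Delta^1\ti\{1\}\ti\Delta^1}$ yields a homotopy $y_0\ci\be\simeq y_0$. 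Since $y_0$ is egressive, by (Ex2) the pull-back $P:=B_0\ti_{C_0}B_0$ of $y_0$ along itself exists, with projections $p_1,p_2\co P\defl B_0$; the pair $(1_{B_0},\be)$ together with the homotopy $y_0\ci 1_{B_0}\simeq y_0\ci\be$ determines a section $\gam\co B_0\to P$ of $p_1$ with $p_2\ci\gam\simeq\be$, while the diagonal $\Delta\co B_0\to P$ is a common section of $p_1$ and $p_2$. Pasting pull-back squares and using that $\Tcal$ is an ambigressive pull-back identifies the fibre of $p_1$ (which exists because $p_1$ is egressive) with $A_0$, via a map $\kap\co A_0\to P$ with $p_1\ci\kap\simeq 0$ and $p_2\ci\kap\simeq x_0$. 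Because $p_1$ has the section $\gam$, applying $(h\C)(Z,-)$ for $Z\in\C$ to the exact sequence $A_0\ov{\kap}{\infl}P\ov{p_1}{\defl}B_0$ gives a split short exact sequence of abelian groups (exactness in the middle since $\kap$ is the fibre of $p_1$ over a zero object; injectivity of $\kap\sas$ and surjectivity of $(p_1)\sas$ since $p_1$ admits the section $\gam$), so by Yoneda $(\ovl\kap,\ovl\gam)\co\ovl{A_0}\am\ovl{B_0}\ov{\sim}{\lra}\ovl P$ in $h\C$, and $\ovl\kap$ is a split monomorphism and the kernel of $\ovl{p_1}$. From $\ovl{p_1}\ci(\ovl\gam-\ovl\Delta)=\ovl{1_{B_0}}-\ovl{1_{B_0}}=0$ one gets $\ovl\gam-\ovl\Delta=\ovl\kap\ci t$ for a unique $t\co B_0\to A_0$ in $h\C$; composing with $\ovl{p_2}$ then gives $\ovl\be=\ovl{p_2\ci\gam}=\ovl{p_2\ci\Delta}+\ovl{p_2\ci\kap}\ci t=\ovl{1_{B_0}}+\ovl{x_0}\ci t$. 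Finally $\ovl\be\ci\ovl{x_0}=\ovl{x_0}$ forces $\ovl{x_0}\ci t\ci\ovl{x_0}=0$, so $(\ovl{x_0}\ci t)^{2}=0$ and $\ovl\be=\ovl{1_{B_0}}+\ovl{x_0}\ci t$ is invertible in $h\C$ with inverse $\ovl{1_{B_0}}-\ovl{x_0}\ci t$.

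I expect the main obstacle to lie not in the argument above but in the simplicial bookkeeping of the first two paragraphs: verifying that \cref{CorComposeCubes,PropComposeCubes} apply with the stated effect on the various edges, and identifying precisely which simplices of $\Dcal$ (respectively $\Ecal$) realise the two side squares and the homotopy $y_0\ci\be\simeq y_0$ used to build $\gam$. Once these are available, the only homotopical inputs to the short five lemma are the existence of $\gam$ and the relation $\ovl\be\ci\ovl{x_0}=\ovl{x_0}$, both coming directly from $\Ecal$ being a cube, and the rest is an elementary computation in the additive category $h\C$.
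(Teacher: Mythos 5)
Your argument is correct, but it follows a genuinely different and considerably heavier route than the paper's. The paper never composes the two cubes: it simply reads $\Cf$ and $\Cf\ppr$ as ladders in $h\C$ with identity maps at the ends, puts $\bfr=\ovl{b\ppr}\ci\ovl{b}$, uses that the row $A\ov{\ovl{x}}{\lra}B\ov{\ovl{y}}{\lra}C$ of an exact sequence is both a weak kernel and a weak cokernel in $h\C$ to write $\id_B-\bfr=\ovl{x}\ci\dfr=\efr\ci\ovl{y}$, and then observes $(\id_B-\bfr)\ci(\id_B-\bfr)=\efr\ci\ovl{y}\ci\ovl{x}\ci\dfr=0$, so $\bfr$ is invertible; the same applies to $\ovl{b}\ci\ovl{b\ppr}$, whence $\ovl{b}$ is invertible. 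You instead invoke \cref{CorComposeCubes,PropComposeCubes} to reduce to an endomorphism of a single exact sequence, and prove the resulting middle map is unipotent, $\ovl{\be}=\id+\ovl{x_0}\ci t$ with $(\ovl{x_0}\ci t)^2=0$, by constructing the pull-back $B_0\ti_{C_0}B_0$ in $\C$ and using the fibre sequence of mapping spaces. The homotopical input is comparable (your mid-term exactness for $A_0\ov{\kap}{\lra}P\ov{p_1}{\lra}B_0$ plays exactly the role of the weak kernel/cokernel property the paper asserts for its rows), and you correctly avoid the later splitting results, which would be circular here; but the paper's computation needs neither the cube compositions nor the auxiliary pull-back, so it is much shorter. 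What your approach buys is a self-contained ``short five lemma'' for endomorphisms of a single exact sequence, proved inside $\C$, which is of some independent interest but is not needed for this statement.

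One caveat: injectivity of $\kap\sas$ does not follow merely from $p_1$ admitting the section $\gam$, so the split short exactness, the decomposition $(\ovl{\kap},\ovl{\gam})\co A_0\am B_0\ov{\sim}{\lra}P$, and the uniqueness of $t$ are not justified as written. Fortunately none of this is load-bearing: the mid-term exactness you do establish gives the existence of some $t$ with $\ovl{\gam}-\ovl{\Delta}=\ovl{\kap}\ci t$, and existence alone already yields $\ovl{\be}=\id+\ovl{x_0}\ci t$ and $(\ovl{x_0}\ci t)^2=0$, so your proof survives once those superfluous claims are deleted.
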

\begin{proof}
It suffices to show that $\ovl{b}$ is an isomorphism in $h\C$.
Remark that $\Cf$ and $\Cf\ppr$ yield commutative diagrams
\[
\xy
(-12,6)*+{A}="2";
(0,6)*+{B}="4";
(12,6)*+{C}="6";
(-12,-6)*+{A}="12";
(0,-6)*+{B\ppr}="14";
(12,-6)*+{C}="16";
{\ar^{\ovl{x}} "2";"4"};
{\ar^{\ovl{y}} "4";"6"};
{\ar@{=} "2";"12"};
{\ar^{\ovl{b}} "4";"14"};
{\ar@{=} "6";"16"};
{\ar_{\ovl{x\ppr}} "12";"14"};
{\ar_{\ovl{y\ppr}} "14";"16"};
{\ar@{}|\circlearrowright "2";"14"};
{\ar@{}|\circlearrowright "4";"16"};
\endxy
\ \ ,\ \ 
\xy
(-12,6)*+{A}="2";
(0,6)*+{B\ppr}="4";
(12,6)*+{C}="6";
(-12,-6)*+{A}="12";
(0,-6)*+{B}="14";
(12,-6)*+{C}="16";
{\ar^{\ovl{x\ppr}} "2";"4"};
{\ar^{\ovl{y\ppr}} "4";"6"};
{\ar@{=} "2";"12"};
{\ar^{\ovl{b\ppr}} "4";"14"};
{\ar@{=} "6";"16"};
{\ar_{\ovl{x}} "12";"14"};
{\ar_{\ovl{y}} "14";"16"};
{\ar@{}|\circlearrowright "2";"14"};
{\ar@{}|\circlearrowright "4";"16"};
\endxy
\]
in $h\C$, whose rows are both weak kernel and weak cokernel. For $\bfr=\ovl{b\ppr}\ci\ovl{b}\in(h\C)(B,B)$, since we have $(\id_B-\bfr)\ci\ovl{x}=0$ and $\ovl{y}\ci(\id_B-\bfr)=0$, there are $\dfr\in(h\C)(B,A)$, $\efr\in(h\C)(C,B)$ such that $\ovl{x}\ci\dfr=\id_B-\bfr$ and $\efr\ci\ovl{y}=\id_B-\bfr$.
It follows that $(\id_B-\bfr)\ci(\id_B-\bfr)=\efr\ci\ovl{y}\ci\ovl{x}\ci\dfr=0$, equivalently $\bfr\ci(2\cdot\id_B-\bfr)=(2\cdot\id_B-\bfr)\ci\bfr=\id_B$. In particular $\bfr=\ovl{b\ppr}\ci\ovl{b}$ is an isomorphism.
The same argument shows that $\ovl{b}\ci\ovl{b\ppr}$ is also an isomorphism. Hence so is $\ovl{b}$. %, which implies $[A\ov{\ovl{x}}{\lra}B\ov{\ovl{y}}{\lra}C]=[A\ov{\ovl{x\ppr}}{\lra}B\ppr\ov{\ovl{y\ppr}}{\lra}C]$.
\end{proof}

\begin{rem}
Later in Proposition~\ref{PropInvert} we will see (in combination with Corollary~\ref{CorComposeCubes}) that the existence of $\Cf\ppr$ is not necessary to assume, and that it always holds automatically.
\end{rem}

\begin{prop}\label{PropMorphExSeq}
Let $\Sf=\SQ{\eta}{\xi},\Sf\ppr=\SQ{\eta\ppr}{\xi\ppr}$ be any pair of exact sequences as in $(\ref{TwoExSeq})$, and let
\begin{equation}\label{Square_AB}
\Tf=\ 
\xy
(-7,7)*+{A}="0";
(7,7)*+{B}="2";
(-7,-7)*+{A\ppr}="4";
(7,-7)*+{B\ppr}="6";
{\ar^{x} "0";"2"};
{\ar_{a} "0";"4"};
{\ar^{b} "2";"6"};
{\ar_{x\ppr} "4";"6"};
\endxy
\end{equation}
be any square. Then there exists a morphism of exact sequences $\Cf\co\Sf\to\Sf\ppr$ such that $\Cf|_{\Delta^1\ti\{0\}\ti\Delta^1}=\Tf$. More precisely, the following holds.
\begin{enumerate}
\item There exist rectangles of the form
\begin{eqnarray*}
&\Rf=\RT{s_2(\xi)}{s_2(\eta)}{\Zcal}=\ 
\xy
(-16,7)*+{A}="0";
(0,7)*+{O}="2";
(16,7)*+{O\ppr}="4";
(-16,-7)*+{B}="10";
(0,-7)*+{C}="12";
(16,-7)*+{C}="14";
{\ar_{i} "0";"2"};
{\ar_{} "2";"4"};
{\ar@/^1.0pc/^{i_p} "0";"4"};
{\ar_{x} "0";"10"};
{\ar_{j} "2";"12"};
{\ar^{} "4";"14"};
{\ar^{y} "10";"12"};
{\ar^{1_C} "12";"14"};
{\ar@/_1.0pc/_{y} "10";"14"};
\endxy,&\\
&\Rf\ppr=\RT{\Xcal\ppr}{\Ycal\ppr}{\Zcal\ppr}=\ 
\xy
(-16,7)*+{A}="0";
(0,7)*+{A\ppr}="2";
(16,7)*+{O\ppr}="4";
(-16,-7)*+{B}="10";
(0,-7)*+{B\ppr}="12";
(16,-7)*+{C\ppr}="14";
{\ar_{a} "0";"2"};
{\ar_{i\ppr} "2";"4"};
{\ar@/^1.0pc/^{i_p} "0";"4"};
{\ar_{x} "0";"10"};
{\ar_{x\ppr} "2";"12"};
{\ar^{j\ppr} "4";"14"};
{\ar^{b} "10";"12"};
{\ar^{y\ppr} "12";"14"};
{\ar@/_1.0pc/_{} "10";"14"};
\endxy&
\end{eqnarray*}
sharing some common morphism $i_p\in\C_1(A,O\ppr)$,
such that $\Sf=(\Rf_{\mathrm{left}})^t$, $\Sf\ppr=(\Rf\ppr_{\mathrm{right}})^t$ and $\Tf=(\Rf\ppr_{\mathrm{left}})^t$.

Moreover, if $A=A\ppr,a=1_A$ and $\Tf=\SQ{s_0(x\ppr)}{\xi}$, then we may take $i_p=i\ppr$, and $\Rf\ppr$ can be chosen to be of the form $\RT{\Xcal\ppr}{s_0(\xi\ppr)}{s_0(\eta\ppr)}$.
\item For any $\Rf=\RT{s_2(\xi)}{s_2(\eta)}{\Zcal}$ and $\Rf\ppr=\RT{\Xcal\ppr}{\Ycal\ppr}{\Zcal\ppr}$ as in {\rm (1)}, there exist three $3$-simplices $\Phi,\Psi_{\ff},\Psi_{\bb}$ which gives a morphism
\[ \Cf=\CB{\Ycal\ppr}{\Xcal\ppr}{\Psi_{\ff}}{\Zcal\ppr}{\Phi}{\Psi_{\bb}}\co\Sf\to\Sf\ppr \]
satisfying $d_3(\Phi)=d_3(\Zcal)$.
\end{enumerate}
\end{prop}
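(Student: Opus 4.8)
Throughout I write $\C$, $\Sf$, $\Sf\ppr$, $\Tf$, $\Rf$, $\Rf\ppr$ for the data in the statement. The plan is to settle part~(1) by purely formal gluing of squares and to concentrate the real work on part~(2), where the rectangle $\Rf\ppr$ gets folded into a cube and the only non-combinatorial ingredient is the push-out property of the exact sequence $\Sf$.

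\emph{Part~(1).} The bottom edge of $\Tf$ and the top edge of $\Sf\ppr$ are both $x\ppr\in\C_1(A\ppr,B\ppr)$, so \cref{PropComposeSquares}, applied to $\Tf$ and $\Sf\ppr$ (with freely chosen auxiliary $2$-simplices, or the degenerate ones permitted by its second clause), yields a rectangle $\Rf\ppr=\RT{\Xcal\ppr}{\Ycal\ppr}{\Zcal\ppr}$ with $(\Rf\ppr_{\mathrm{left}})^t=\Tf$ and $(\Rf\ppr_{\mathrm{right}})^t=\Sf\ppr$; I take $i_p\in\C_1(A,O\ppr)$ to be its long top edge, so $\ovl{i_p}=\ovl{i\ppr}\ci\ovl{a}$ (and $i_p=i\ppr$ in the degenerate case). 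Since $O$ and $O\ppr$ are zero objects, there is an edge $o\in\C_1(O,O\ppr)$ together with a $2$-simplex $\iota$ having $d_2(\iota)=i$, $d_1(\iota)=i_p$, $d_0(\iota)=o$ (the point being that $O\ppr$, being terminal, makes $\C_{/O\ppr}\to\C$ a trivial fibration); choosing also a $2$-simplex on $O\xrightarrow{o}O\ppr\to C$ lying over $j$, one fills the inner horn $\Lambda^3_1$ having those two triangles and $\eta$ as its $d_0,d_3,d_2$-faces to get a $3$-simplex $\Zcal$. Since $d_1(s_2(\xi))=s_1(z)=d_1(s_2(\eta))$ and $d_2(s_2(\eta))=\eta=d_2(\Zcal)$, the triple $\Rf:=\RT{s_2(\xi)}{s_2(\eta)}{\Zcal}$ is a rectangle; one reads off that it has the required shape, that $(\Rf_{\mathrm{left}})^t=\Sf$, and that its long top edge is $d_1(\iota)=i_p$, so $\Rf$ and $\Rf\ppr$ literally share $i_p$. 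The \lq\lq moreover'' clause follows similarly from the second part of \cref{PropComposeSquares}.

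\emph{Part~(2).} Unwinding the identifications of \cref{Section_NotationsAndTerminology}, the $3$-simplices $\Ycal\ppr,\Xcal\ppr,\Zcal\ppr$ of $\Rf\ppr$ are precisely the faces $\Xcal_{\ff},\Ycal_{\ff},\Xcal_{\bb}$ that a morphism of exact sequences $\Cf\co\Sf\to\Sf\ppr$ must carry; they already record $\Sf\ppr$ (its triangles being $d_0(\Ycal\ppr)=\xi\ppr$ and $d_0(\Zcal\ppr)=\eta\ppr$) and the face $\Tf$ (via $d_3(\Xcal\ppr)$ and $d_3(\Ycal\ppr)$), while $\Rf$ supplies the triangle $d_3(\Zcal)$, which we install as $d_3(\Phi)$. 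So it remains to produce $\Psi_{\ff}=\Zcal_{\ff}$ on $A,B,C,C\ppr$, $\Phi=\Ycal_{\bb}$ on $A,O,O\ppr,C\ppr$, and $\Psi_{\bb}=\Zcal_{\bb}$ on $A,O,C,C\ppr$, subject to $d_3(\Psi_{\ff})=\xi$, $d_3(\Psi_{\bb})=\eta$, $d_3(\Phi)=d_3(\Zcal)$ and the cube-gluing relations after~(\ref{Exp_Cube}). The one piece not already contained in $\Rf,\Rf\ppr$ is the edge $c\in\C_1(C,C\ppr)$, and it is produced from exactness: by \cref{DefExSeq}, $\Sf$ is an ambigressive push-out, so (as $O$ is a zero object) $C$ is the push-out of $O\xleftarrow{i}A\xrightarrow{x}B$, and the cocone on this span with apex $C\ppr$ whose legs are $B\xrightarrow{b}B\ppr\xrightarrow{y\ppr}C\ppr$ and the essentially unique map $O\to C\ppr$ is coherent over $A$, because $\ovl{y\ppr}\ci\ovl{b}\ci\ovl{x}=\ovl{y\ppr}\ci\ovl{x\ppr}\ci\ovl{a}=\ovl{z\ppr}\ci\ovl{a}=0$ and the witnessing $2$-simplices can be extracted from $\Tf$, $\Sf\ppr$ and $\Rf\ppr$. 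The universal property of the push-out then furnishes $c$ together with the $2$- and $3$-simplices expressing compatibility of $c$ with $b$, with $\Sf$, $\Sf\ppr$, $\Tf$ — in particular the triangles on $B\xrightarrow{y}C\xrightarrow{c}C\ppr$ and on $O\xrightarrow{j}C\xrightarrow{c}C\ppr$, hence also the right-hand squares of the front and back rectangles of the cube. With $c$ and $o$ in hand one may either assemble the front rectangle $\Rf_{\ff}$ from $\Tf^t$ and $\smash{\big[\begin{smallmatrix}B&\to&B\ppr\\\downarrow&&\downarrow\\C&\to&C\ppr\end{smallmatrix}\big]}$ (and $\Rf_{\bb}$ analogously) by \cref{PropComposeSquares}, arranging that the two share the outer square, or directly fill $\Phi,\Psi_{\bb},\Psi_{\ff}$ one at a time by inner horns: $\Phi$ as a $\Lambda^3_2$-horn with faces a $\Lambda^2_1$-composite $O\xrightarrow{o}O\ppr\xrightarrow{j\ppr}C\ppr$, $d_1(\Zcal\ppr)$ and $d_3(\Zcal)$; then $\Psi_{\bb}$ as a $\Lambda^3_1$-horn over $\eta$, $d_2(\Phi)$ and the triangle through $c$ on $O,C,C\ppr$; then $\Psi_{\ff}$ over $\xi$, $d_2(\Xcal\ppr)$ and the triangle through $c$ on $B,C,C\ppr$. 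Either way one finally checks that $\CB{\Ycal\ppr}{\Xcal\ppr}{\Psi_{\ff}}{\Zcal\ppr}{\Phi}{\Psi_{\bb}}$ is a cube, that it restricts to $\Sf$ and $\Sf\ppr$ at the two ends and to $\Tf$ on the relevant face, and that $d_3(\Phi)=d_3(\Zcal)$.

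\emph{The main obstacle.} The delicate point is the bookkeeping in part~(2): one must organise the construction so that the edge $c$ and the coherence simplices delivered by the push-out universal property are literally the faces demanded by the cube-gluing relations, and so that the remaining $3$-simplices can be filled in an order for which every occurring horn is inner while the cross-relations $d_1(\Psi_{\ff})=d_1(\Psi_{\bb})$ and $d_2(\Phi)=d_2(\Psi_{\bb})$ of~(\ref{Exp_Cube}) are met — this is where the contractibility of the space of push-out extensions is needed, to choose the two triangles through $c$ compatibly. The analogous, but lighter, subtlety in part~(1) is that $\Rf$ and $\Rf\ppr$ must share the edge $i_p$ on the nose rather than up to homotopy, which is why the construction of $\Zcal$ is routed through the already-fixed $i_p$ via the horn-filling above.
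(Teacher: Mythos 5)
Your part (1) is correct, and is the paper's argument with the two rectangles built in the opposite order: the paper first chooses $\Zcal$ (hence $i_p$) using the zero objects and then feeds a $2$-simplex $\al$ with $d_1(\al)=i_p$ into \cref{PropComposeSquares} to obtain $\Rf\ppr$, while you first obtain $\Rf\ppr$ and then thread $\Zcal$ through the resulting $i_p$ by a zero-object lift and an inner horn; the \lq\lq moreover'' clause is handled the same way in both. Likewise your construction of $\Phi$, as a $\Lambda^3_2$-filler of $d_1(\Zcal\ppr)$, $d_3(\Zcal)$ and a composite triangle on $O\to O\ppr\to C\ppr$ (glued along the shared $i_p$), coincides with the paper's.

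The gap is in the final step of part (2), the production of $\Psi_{\ff}$ and $\Psi_{\bb}$. The cube identities after $(\ref{Exp_Cube})$ demand the strict equality $d_1(\Psi_{\ff})=d_1(\Psi_{\bb})$, and your $\Lambda^3_1$-horn for $\Psi_{\bb}$ also needs the $O\to C\ppr$ edge of its $d_0$-face to be the edge already fixed inside $d_0(\Phi)$. Filling $\Psi_{\ff}$ and $\Psi_{\bb}$ \lq\lq one at a time'' prescribes only $d_0,d_2,d_3$ of each, so the two fillers produce independent $d_1$-faces on $A,C,C\ppr$ that need not agree on the nose; your appeal to contractibility of the space of push-out cocones names the right phenomenon but is not an argument. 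Relatedly, the witness for the $O$-leg of your cocone cannot be \lq\lq extracted from $\Tf,\Sf\ppr,\Rf\ppr$'': none of these contain both $O$ and $C\ppr$; that triangle has to be manufactured, and it is exactly $d_2(\Phi)$. The repair is the paper's key move, and it is available inside your own setup: invoke the push-out property not to produce the edge $c$ and loose triangles, but to produce a single morphism in the undercategory under the fixed span, i.e.\ a pair of $3$-simplices with a common $d_1$-face. The paper maps $\Sf^p=(\Rf_{\mathrm{out}})^t$ to $\Sf^q=\SQ{d_1(\Zcal\ppr)}{d_2(\Xcal\ppr)}$, getting $\roundup{\Gamma}{\Psi_{\ff}}$ with $d_1(\Gamma)=d_1(\Psi_{\ff})$, and then trades $O\ppr$ for $O$ by one further $3$-simplex $\nu$ and a $4$-simplex $\Thh$, setting $\Psi_{\bb}=d_2(\Thh)$. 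Alternatively, and closer to your plan, map $\Sf$ itself, as push-out of $O\ov{i}{\lla}A\ov{x}{\lra}B$, to the square $\SQ{d_2(\Phi)}{d_2(\Xcal\ppr)}$ (these two triangles do share the long diagonal $A\to C\ppr$ of $\Rf\ppr$, since $d_1(\Phi)=d_1(\Zcal\ppr)$ and both $\Zcal\ppr$ and $\Xcal\ppr$ contain that edge); the resulting pair $\roundup{\Psi_{\bb}}{\Psi_{\ff}}$ then satisfies all six gluing conditions on the nose, which even shortcuts the paper's $\nu$ and $\Thh$. Note finally that your route A (rebuilding the front and back rectangles via \cref{PropComposeSquares}) cannot yield the cube in the literal form $\CB{\Ycal\ppr}{\Xcal\ppr}{\Psi_{\ff}}{\Zcal\ppr}{\Phi}{\Psi_{\bb}}$ with $d_3(\Phi)=d_3(\Zcal)$ required by (2), since it replaces the given $\Xcal\ppr,\Ycal\ppr,\Zcal\ppr$ by new fillers.
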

\begin{proof}
{\rm (1)} By using the definition of a zero object, we may take a $3$-simplex
\[
\Zcal=\ 
\xy
(-2,11)*+{A}="0";
(-12,-2)*+{O}="2";
(10,0)*+{O\ppr}="4";
(2,-12)*+{C}="6";
{\ar_{i} "0";"2"};
{\ar^{i_p} "0";"4"};
{\ar_(0.3){z} "0";"6"};
{\ar_(0.35){}|!{"0";"6"}\hole "2";"4"};
{\ar_{j} "2";"6"};
{\ar_{} "4";"6"};
\endxy
\]
with arbitrarily chosen morphisms $A\ov{i_p}{\lra}O\ppr$, $O\to O\ppr$, $O\ppr\to C$, such that $d_2(\Zcal)=\eta$. This gives $\Rf$ as desired. By the definition of a zero object, there also exists a $2$-simplex $\al$ as below. Also, we may take a $2$-simplex $\be$ arbitrarily as below, with some $y_p\in\C_1(B,C\ppr)$.
\[ \TwoSP{A}{A\ppr}{O\ppr}{a}{i\ppr}{i_p}{\al},\ \ \TwoSP{B}{B\ppr}{C\ppr}{b}{y\ppr}{y_p}{\be} \]
Then the existence of $\Rf\ppr$ follows from Proposition~\ref{PropComposeSquares} applied to $\Tf,\Sf\ppr,\al,\be$. The latter part is also obvious from the construction, in which case we use $\al=s_0(i\ppr)$.

{\rm (2)} We may take a $3$-simplex
\[
\Phi=\ 
\xy
(-2,11)*+{A}="0";
(-12,-2)*+{O}="2";
(10,0)*+{C\ppr}="4";
(2,-12)*+{O\ppr}="6";
{\ar_{i} "0";"2"};
{\ar^{} "0";"4"};
{\ar_(0.3){i_p} "0";"6"};
{\ar_(0.35){}|!{"0";"6"}\hole "2";"4"};
{\ar_{} "2";"6"};
{\ar_{j\ppr} "6";"4"};
\endxy
\]
such that $d_1(\Phi)=d_1(\Zcal\ppr)$ and $d_3(\Phi)=d_3(\Zcal)$, with arbitrarily taken compatible $d_0(\Phi)$ by using the definition of a zero object.
Put $\Sf^p=(\Rf_{\mathrm{out}})^t=\SQ{d_1(\Zcal)}{\xi}$ and
\[ \Sf^q=\SQ{d_1(\Zcal\ppr)}{d_2(\Xcal\ppr)}=\ 
\xy
(-6,6)*+{A}="0";
(6,6)*+{B}="2";
(-6,-6)*+{O\ppr}="4";
(6,-6)*+{C\ppr}="6";
{\ar^{x} "0";"2"};
{\ar_{i_p} "0";"4"};
{\ar^{} "2";"6"};
{\ar_{j\ppr} "4";"6"};
\endxy.
\]
Since $(\Rf_{\mathrm{left}})^t,(\Rf_{\mathrm{right}})^t$ are exact sequences, so is $\Sf^p$. Thus by the definition of push-out, there exists a morphism $\roundup{\Gamma}{\Psi_{\ff}}$ from $\Sf^p$ to $\Sf^q$ in the undercategory, namely a pair of $3$-simplices $\Psi_{\ff},\Gamma$ in $\C$ satisfying
$d_2(\Psi_{\ff})=d_2(\Xcal\ppr)$, $d_3(\Psi_{\ff})=\xi$, $d_2(\Gamma)=d_1(\Zcal\ppr)$, $d_3(\Gamma)=d_1(\Zcal)$, and $d_1(\Gamma)=d_1(\Psi_{\ff})$.
 We may take a $3$-simplex $\nu$
such that $d_0(\nu)=d_0(\Gamma)$, $d_2(\nu)=d_0(\Phi)$, $d_3(\nu)=d_0(\Zcal)$ and then a $4$-simplex
\[
\Thh=\Penta{A}{O}{O\ppr}{C}{C\ppr}{i}{}{}{}
\]
such that $d_0(\Thh)=\nu$, $d_1(\Thh)=\Gamma$, $d_3(\Thh)=\Phi$, $d_4(\Thh)=\Zcal$. If we put $\Psi_{\bb}=d_2(\Thh)$, then $\Phi,\Psi_{\ff},\Psi_{\bb}$ satisfy the required conditions.
\end{proof}

The following will be used later to replace the zero objects appearing in exact sequences. 
\begin{lem}\label{LemReplZero}
Let
\[
\nu=\ \xy
(-1,7)*+{A}="0";
(-8,-2)*+{O}="1";
(1,-10)*+{O\ppr}="2";
(9,-1)*+{C}="3";
{\ar_{} "0";"1"};
{\ar^{} "0";"2"};
{\ar_{} "0";"3"};
{\ar_{} "1";"2"};
{\ar^{}|!{"0";"2"}\hole "1";"3"};
{\ar_{} "2";"3"};
\endxy
\quad\text{and}\quad
\nu\ppr=\ 
\xy
(-1,7)*+{A}="0";
(-8,-2)*+{O}="1";
(1,-10)*+{O\pprr}="2";
(9,-1)*+{C}="3";
{\ar_{} "0";"1"};
{\ar^{} "0";"2"};
{\ar_{} "0";"3"};
{\ar_{} "1";"2"};
{\ar^{}|!{"0";"2"}\hole "1";"3"};
{\ar_{} "2";"3"};
\endxy
\]
be any pair of $3$-simplices satisfying $d_2(\nu)=d_2(\nu\ppr)$, in which $O,O\ppr,O\pprr$ are zero objects. Let $o\in\C_1(O\pprr,O\ppr)$ be arbitrarily taken morphism. Then there exists a $3$-simplex
\[
\mu=\ 
\xy
(-1,7)*+{A}="0";
(-8,-2)*+{O\pprr}="1";
(1,-10)*+{O\ppr}="2";
(9,-1)*+{C}="3";
{\ar_{} "0";"1"};
{\ar^{} "0";"2"};
{\ar_{} "0";"3"};
{\ar_{o} "1";"2"};
{\ar^{}|!{"0";"2"}\hole "1";"3"};
{\ar_{} "2";"3"};
\endxy
\]
such that $d_1(\mu)=d_1(\nu)$ and $d_2(\mu)=d_1(\nu\ppr)$.
\end{lem}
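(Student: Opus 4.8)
The plan is to realize $\mu$ as a face of an auxiliary $4$-simplex. Concretely, I would construct a $4$-simplex $\Sigma\co\Delta^4\to\C$ whose vertices, in order, are $A,O,O\pprr,O\ppr,C$, which satisfies $d_2(\Sigma)=\nu$ and $d_3(\Sigma)=\nu\ppr$, and whose edge on the vertices $2,3$ is $o$; then I would set $\mu:=d_1(\Sigma)$. Granting this, the assertions are formal. By the simplicial identities $d_1d_2=d_1d_1$ and $d_1d_3=d_2d_1$,
\[ d_1(\mu)=d_1(d_1\Sigma)=d_1(d_2\Sigma)=d_1(\nu),\qquad d_2(\mu)=d_2(d_1\Sigma)=d_1(d_3\Sigma)=d_1(\nu\ppr), \]
and the edge of $\mu=d_1(\Sigma)$ on its vertices $1,2$ is the edge of $\Sigma$ on its vertices $2,3$, hence $o$. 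Moreover, for $\nu$ and $\nu\ppr$ to occur simultaneously as $d_2(\Sigma)$ and $d_3(\Sigma)$ the only requirement is that they agree on their common $2$-face, which amounts exactly to $d_2(\nu)=d_2(\nu\ppr)$ --- the hypothesis.

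To produce $\Sigma$ I would build it up face by face. First, fill in the three $2$-simplices of $\Sigma$ that are already pinned down --- all their edges are prescribed by $\nu$, $\nu\ppr$ and $o$ --- but that are faces of neither $\nu$ nor $\nu\ppr$: the faces on $\{A,O\pprr,O\ppr\}$, on $\{O,O\pprr,O\ppr\}$, and on $\{O\pprr,O\ppr,C\}$. Each of these has a zero object at one of its two extreme vertices ($O\ppr$ is the last vertex of the first, while $O$, resp.\ $O\pprr$, is the first vertex of the second, resp.\ third), so each extends over $\Delta^2$ because that object is terminal, resp.\ initial. Next, fill the two still-missing $3$-faces, $d_0(\Sigma)$ on $\{O,O\pprr,O\ppr,C\}$ and $d_4(\Sigma)$ on $\{A,O,O\pprr,O\ppr\}$: after the previous step each of these has a fully determined boundary $\partial\Delta^3\to\C$ (every one of their four $2$-faces is either one of the three just constructed, or a $2$-face of $\nu$ or of $\nu\ppr$), and $O$ is the first vertex of the $d_0$-face while $O\ppr$ is the last vertex of the $d_4$-face, so each such boundary extends over $\Delta^3$ by initiality, resp.\ terminality. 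Now $d_0(\Sigma),d_2(\Sigma),d_3(\Sigma),d_4(\Sigma)$ are specified, i.e.\ we have a map $\Lambda^4_1\to\C$, which extends to $\Sigma\co\Delta^4\to\C$ since $\C$ is a quasi-category. Finally put $\mu:=d_1(\Sigma)$.

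The only external input is the standard characterization of initial and terminal objects: for an initial object $O$ of a quasi-category the projection $\C_{O/}\to\C$ is a trivial Kan fibration (\cite[Proposition~1.2.12.4]{L1}), and hence every map $\partial\Delta^n\to\C$ with $n\ge1$ that carries the initial vertex to $O$ extends to $\Delta^n$; dually for terminal objects and the terminal vertex. The genuine obstacle --- and the reason for introducing the extra vertex $O$ at all --- is that $\mu$ itself has no zero object at either of its extreme vertices $A,C$: one can neither complete the boundary data of $\mu$ to a full $\partial\Delta^3$ and fill it using initiality/terminality, nor obtain $\mu$ as the interior face of an inner horn $\Lambda^3_1$ or $\Lambda^3_2$ while simultaneously prescribing both $d_1(\mu)$ and $d_2(\mu)$. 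Inserting $O$ between $A$ and $O\pprr$ puts a zero object at an extreme position of precisely those auxiliary simplices that need filling, and makes the final step an inner-horn extension whose output face is $\mu$. The rest --- checking that all prescribed faces and edges agree on overlaps, via the simplicial identities --- is routine bookkeeping.
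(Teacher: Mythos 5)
Your proposal is correct and follows essentially the same route as the paper: the paper's proof likewise builds a $4$-simplex $\Phi$ on the vertices $A,O,O\pprr,O\ppr,C$ with $d_2(\Phi)=\nu$, $d_3(\Phi)=\nu\ppr$ and $o$ on the edge $\{2,3\}$, filling the remaining faces $d_0(\Phi)$, $d_4(\Phi)$ via the initial/terminal-object characterization of zero objects, and sets $\mu=d_1(\Phi)$. You merely spell out the face-by-face filling and the final inner-horn extension that the paper leaves implicit.
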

\begin{proof}
We can show the existence of a $4$-simplex
\[ \Phi=\Penta{A}{O}{O\pprr}{O\ppr}{C}{}{}{o}{} \]
such that $d_2(\Phi)=\nu$ and $d_3(\Phi)=\nu\ppr$, with arbitrarily chosen compatible $3$-simplices $d_0(\Phi)$ and $d_4(\Phi)$ by using the definition of zero objects. This gives $\mu=d_1(\Phi)$ as desired.
\end{proof}

\begin{prop}\label{PropReplZero}
Let
\[
{}_A\Sf_C=\SQ{\eta}{\xi}=\ 
\xy
(-7,7)*+{A}="0";
(7,7)*+{B}="2";
(-7,-7)*+{O}="4";
(7,-7)*+{C}="6";
{\ar^{x} "0";"2"};
{\ar_{i} "0";"4"};
{\ar|*+{_z} "0";"6"};
{\ar^{y} "2";"6"};
{\ar_{j} "4";"6"};
(3,7)*+{}="00";
(7,3)*+{}="01";
{\ar@/_0.2pc/@{-}_{^{\xi}} "00";"01"};
(-3,-7)*+{}="10";
(-7,-3)*+{}="11";
{\ar@/_0.2pc/@{-}_{_{\eta}} "10";"11"};
\endxy
\quad\text{and}\quad
{}_A\Sf\ppr_C=\SQ{\eta\ppr}{\xi}=\ 
\xy
(-7,7)*+{A}="0";
(7,7)*+{B}="2";
(-7,-7)*+{O\ppr}="4";
(7,-7)*+{C}="6";
{\ar^{x} "0";"2"};
{\ar_{i\ppr} "0";"4"};
{\ar|*+{_z} "0";"6"};
{\ar^{y} "2";"6"};
{\ar_{j\ppr} "4";"6"};
(3,7)*+{}="00";
(7,3)*+{}="01";
{\ar@/_0.2pc/@{-}_{^{\xi}} "00";"01"};
(-3,-7)*+{}="10";
(-7,-3)*+{}="11";
{\ar@/_0.2pc/@{-}_{_{\eta\ppr}} "10";"11"};
\endxy
\]
be any pair of squares sharing common $\xi$ in the upper triangles. Suppose that there is a $3$-simplex $\nu$ such that $d_1(\nu)=\eta\ppr$ and $d_2(\nu)=\eta$.
Then the following holds.
\begin{enumerate}
\item There exists a $3$-simplex $\nu\ppr$ such that $d_1(\nu\ppr)=\eta$ and $d_2(\nu\ppr)=\eta\ppr$.
\item $\Sf$ is an exact sequence if and only if $\Sf\ppr$ is an exact sequence. 
\end{enumerate}
We also remark that in case {\rm (2)}, the given $\nu$ and $\nu\ppr$ obtained in {\rm (1)} are nothing but morphisms of exact sequences
\[ \CB{s_0(\xi)}{s_1(\xi)}{s_2(\xi)}{s_0(\eta\ppr)}{\nu}{s_2(\eta)}\co\Sf\to\Sf\ppr
\quad\text{and}\quad
\CB{s_0(\xi)}{s_1(\xi)}{s_2(\xi)}{s_0(\eta)}{\nu\ppr}{s_2(\eta\ppr)}\co\Sf\ppr\to\Sf. \]
\end{prop}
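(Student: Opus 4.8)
The plan is to reduce everything to manipulations of low-dimensional simplices as in \cref{LemReplZero}, using that $O,O\ppr$ are zero objects. For part (1), I would pick an arbitrary morphism $o\in\C_1(O\ppr,O)$ (such a morphism exists since $O$ is a zero object), and then apply \cref{LemReplZero} with the roles of $O\ppr$ and $O\pprr$ played by $O$ and with $\nu$ as given, obtaining a $3$-simplex whose face is $\eta$; but since I want $d_1(\nu\ppr)=\eta$ and $d_2(\nu\ppr)=\eta\ppr$, it is cleaner to argue directly: using the definition of a zero object repeatedly, one can fill a $4$-simplex $\Phi$ of the shape $\Penta{A}{O\ppr}{O}{O\ppr}{C}{}{}{}{}$ with $d_2(\Phi)=\nu$ (suitably transposed) and the two other relevant faces degenerate, and then set $\nu\ppr=d_1(\Phi)$ or a face thereof. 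The point is that any diagram with one or more zero objects among its vertices admits all the extensions one wants, because mapping spaces into or out of a zero object are contractible; so the existence of $\nu\ppr$ is automatic and requires no exactness hypothesis.

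For part (2), the key observation is the remark at the end of the statement: the data $\nu$ (resp.\ $\nu\ppr$) assembles into a cube
$\CB{s_0(\xi)}{s_1(\xi)}{s_2(\xi)}{s_0(\eta\ppr)}{\nu}{s_2(\eta)}$ whose front rectangle is built from degenerate simplices and whose back rectangle encodes the comparison $O\to O\ppr$; one checks the face-matching identities listed just before \eqref{Exp_Cube} hold by inspection (they amount to the simplicial identities among $d_i,s_j$ together with $d_1(\nu)=\eta\ppr$, $d_2(\nu)=\eta$). This cube is a morphism of squares $\Sf\to\Sf\ppr$ in $\Fun(\Delta^1\ti\Delta^1,\C)$ which is the identity on all four objects of $\Sf$ except that it replaces $O$ by $O\ppr$; since the map $O\to O\ppr$ appearing in it is a homotopy equivalence (any morphism between zero objects is), this cube is an equivalence in $\Fun(\Delta^1\ti\Delta^1,\C)$. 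Because the conditions defining an exact sequence — being a pull-back square, and having the relevant legs ingressive resp.\ egressive (note $O\to C$ and $O\ppr\to C$ are ingressive, $B\to C$ egressive, by (Ex1) applied to the zero objects, independently of which zero object we use) — are invariant under equivalence in the functor category, $\Sf$ is an exact sequence if and only if $\Sf\ppr$ is.

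I expect the main obstacle to be purely bookkeeping: verifying that the specific degenerate simplices $s_k(\xi),s_k(\eta^{(\prime)})$ together with $\nu$ (resp.\ $\nu\ppr$) really do satisfy all six compatibility identities preceding \eqref{Exp_Cube} needed to constitute a cube, and checking that the resulting cube restricts correctly to $\Sf$ on $\Delta^1\ti\Delta^1\ti\{0\}$ and to $\Sf\ppr$ on $\Delta^1\ti\Delta^1\ti\{1\}$. These are all routine applications of the simplicial identities, but they are numerous; I would organize them by writing out the two rectangles $\Rf_{\ff}$ and $\Rf_{\bb}$ explicitly and reading off the faces. Once the cube is in hand, the invariance-under-equivalence argument for (2) is immediate, and (1) is just the observation that the cube (or rather its inverse) exists by the universal property of the zero object, which also directly produces the promised $\nu\ppr$ as the appropriate face of a filler.
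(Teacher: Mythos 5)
Your argument is essentially correct, but it takes a different route from the paper's on both points, and two details need repair. For (1), the paper gets the statement as an immediate application of \cref{LemReplZero}: take the lemma's first $3$-simplex to be the degenerate simplex $s_1(\eta)$ and its second to be the given $\nu$ (with an arbitrary $o\in\C_1(O\ppr,O)$); the output $\mu$ then has $d_1(\mu)=d_1(s_1(\eta))=\eta$ and $d_2(\mu)=d_1(\nu)=\eta\ppr$, i.e.\ it is exactly the desired $\nu\ppr$. So the mismatch that pushed you to a direct $4$-simplex argument disappears once $s_1(\eta)$ is fed in; in your direct argument the face indices are off (for the vertex ordering $A,O\ppr,O,O\ppr,C$ one needs $d_1(\Phi)=\nu$ and then $\nu\ppr=d_3(\Phi)$, not $d_2$ and $d_1$), and the blanket claim that any diagram containing a zero vertex ``admits all the extensions one wants'' is too strong as stated --- the fillings used here exist because the zero object sits as an initial/terminal vertex of the simplices being filled, which is the same tacit use of the zero-object property as in the paper's proof of \cref{LemReplZero}. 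For (2), your route --- assemble the cube of the final remark (your verification of the six face conditions is the right bookkeeping), observe that its components are $1_A,1_B,1_C$ and a morphism $O\to O\ppr$ between zero objects, hence that it is a pointwise equivalence and so an equivalence in $\Fun(\Delta^1\ti\Delta^1,\C)$, then invoke invariance of limit diagrams under equivalence --- is valid and consonant with the machinery cited in \cref{RemCi}; the paper instead argues more elementarily, forming the rectangle $\RT{\nu}{s_0(\eta\ppr)}{s_0(\xi)}$, whose left square is the trivial exact sequence of \cref{ExTrivExSeq} and whose outer and right squares are $\Sf$ and $\Sf\ppr$, and transferring the pull-back property by \cite[Lemma~4.4.2.1]{L1}. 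Your approach buys a one-step conceptual argument at the cost of the pointwise criterion for equivalences and equivalence-invariance of limits; the paper's stays within the pasting lemma already in its toolkit. One small slip: $y\co B\to C$ is not egressive ``by (Ex1)'' --- (Ex1) only makes $j,j\ppr$ ingressive and $i,i\ppr$ egressive; this is harmless, since $y$ is literally the same edge of $\Sf$ and $\Sf\ppr$, so its egressivity is shared data rather than something to be transferred.
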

\begin{proof}
{\rm (1)} is immediate from Lemma~\ref{LemReplZero}, applied to $\nu$ and $s_1(\eta)$. {\rm (2)} follows from the existence of a rectangle $\Rf=\RT{\nu}{s_0(\eta\ppr)}{s_0(\xi)}$, in which $\Rf_{\mathrm{left}}$ is an exact sequence and $\Rf_{\mathrm{out}}=\Sf$, $\Rf_{\mathrm{right}}=\Sf\ppr$.
\end{proof}

\subsection{Decompositions of morphisms}

\begin{lem}\label{LemExSeqPO}
Let ${}_A\Sf_C=\SQ{\eta}{\xi}$ be any exact sequence as in $(\ref{ExSeq})$, and let $a\in\C_1(A,A\ppr)$ be any morphism. Let
\begin{equation}\label{SquareABPO}
\Pf=\ 
\xy
(-7,7)*+{A}="0";
(7,7)*+{B}="2";
(-7,-7)*+{A\ppr}="4";
(7,-7)*+{B_0}="6";
{\ar^{x} "0";"2"};
{\ar_{a} "0";"4"};
{\ar^{b_0} "2";"6"};
{\ar_{x_0} "4";"6"};
\endxy
\end{equation}
be arbitrarily chosen push-out square, which always exists by the definition of exact quasi-category. Then, we have the following.
\begin{enumerate}
\item There exists an exact sequence
\begin{equation}\label{ExSeqPO}
\Sf_0=\ 
\xy
(-7,7)*+{A\ppr}="0";
(7,7)*+{B_0}="2";
(-7,-7)*+{O}="4";
(7,-7)*+{C}="6";
{\ar^{x_0} "0";"2"};
{\ar_{i_0} "0";"4"};
{\ar|*+{_{z_0}} "0";"6"};
{\ar^{y_0} "2";"6"};
{\ar_{j} "4";"6"};
(3,7)*+{}="00";
(7,3)*+{}="01";
{\ar@/_0.2pc/@{-}_(0.4){^{\xi_0}} "00";"01"};
(-3,-7)*+{}="10";
(-7,-3)*+{}="11";
{\ar@/_0.2pc/@{-}_(0.4){_{\eta_0}} "10";"11"};
\endxy
\end{equation}
and a morphisms of exact sequences $\Df\co \Sf\to\Sf_0$ of the form
\[
\Df=\CB{\vp}{\psi}{s_2(\xi)}{\thh}{s_1(\eta)}{s_2(\eta)}\ =
\xy
(-8,9)*+{A}="0";
(8,9)*+{B}="2";
(1,3)*+{O}="4";
(17,3)*+{C}="6";
(-8,-7)*+{A\ppr}="10";
(8,-7)*+{B_0}="12";
(1,-13)*+{O}="14";
(17,-13)*+{C}="16";
{\ar^{x} "0";"2"};
{\ar_{i} "0";"4"};
{\ar^{y} "2";"6"};
{\ar_(0.3){j} "4";"6"};
%{\ar_(0.4){z} "0";"6"};
%
{\ar_{a} "0";"10"};
{\ar^(0.7){b_0}|!{(9,3);(13,3)}\hole "2";"12"};
{\ar_(0.3){1_O} "4";"14"};
{\ar^{1_C} "6";"16"};
{\ar^(0.3){x_0}|!{(1,-3);(1,-7)}\hole "10";"12"};
{\ar_{i_0} "10";"14"};
{\ar^{y_0} "12";"16"};
{\ar_{j} "14";"16"};
%{\ar_{z\pprr} "10";"16"};
%
\endxy
\]
such that $\Df|_{\Delta^1\ti\{0\}\ti\Delta^1}=\Pf$. We denote any such square $\Sf_0$ admitting $\Df$ by $a\sas\Sf$.

\item Let $\Df\co \Sf\to\Sf_0=a\sas\Sf$ be arbitrarily as in {\rm (1)}. For any morphism of exact sequences
\[ {}_a\Cf_c=\CB{\Xcal_{\ff}}{\Ycal_{\ff}}{\Zcal_{\ff}}{\Xcal_{\bb}}{\Ycal_{\bb}}{\Zcal_{\bb}}\co {}_A\Sf_C\to {}_{A\ppr}\Sf\ppr_{C\ppr}=\SQ{\eta\ppr}{\xi\ppr} \]
as in $(\ref{MorphExSeq})$, there exists a morphism of exact sequences ${}_{1_{A\ppr}}\Cf\ppr_{c}\co a\sas\Sf\to\Sf\ppr$ of the form
\begin{equation}\label{CubeFactorPO}
\Cf\ppr=\CB{s_0(\xi\ppr)}{\Ycal\ppr_{\ff}}{\Zcal\ppr_{\ff}}{s_0(\eta\ppr)}{\Ycal\ppr_{\bb}}{\Zcal\ppr_{\bb}}=\ 
\xy
(-8,9)*+{A\ppr}="0";
(8,9)*+{B_0}="2";
(1,3)*+{O}="4";
(17,3)*+{C}="6";
(-8,-7)*+{A\ppr}="10";
(8,-7)*+{B\ppr}="12";
(1,-13)*+{O\ppr}="14";
(17,-13)*+{C\ppr}="16";
{\ar^{x_0} "0";"2"};
{\ar_{i_0} "0";"4"};
{\ar^{y_0} "2";"6"};
{\ar_(0.3){j} "4";"6"};
%{\ar_(0.4){z} "0";"6"};
%
{\ar_{1_{A\ppr}} "0";"10"};
{\ar^(0.7){b\ppr}|!{(9,3);(13,3)}\hole "2";"12"};
{\ar_(0.3){o} "4";"14"};
{\ar^{c} "6";"16"};
{\ar^(0.3){x\ppr}|!{(1,-3);(1,-7)}\hole "10";"12"};
{\ar_{i\ppr} "10";"14"};
{\ar^{y\ppr} "12";"16"};
{\ar_{j\ppr} "14";"16"};
%{\ar_{z\pprr} "10";"16"};
%
\endxy
\end{equation}
for some $\Ycal\ppr_{\ff},\Ycal\ppr_{\bb},\Zcal\ppr_{\ff},\Zcal\ppr_{\bb}$, such that $\ovl{b}=\ovl{b\ppr}\ci\ovl{b}_0$ holds in $h\C$.
\end{enumerate}

Dually, for any ${}_A\Sf_C=\SQ{\eta}{\xi}$ as above and any $c\in\C(C\ppr,C)$, we may obtain a morphism of exact sequences of the form
\[ {}_{1_A}\Df\ppr_c=\CB{s_0(\xi)}{s_1(\xi)}{\Zcal_{\ff}}{s_0(\eta)}{\Ycal_{\bb}}{\Zcal_{\bb}}\co\Sf_0\ppr\to\Sf \]
in which $\Df\ppr|_{\Delta^1\ti\{1\}\ti\Delta^1}$ is a pull-back. In this case, we denote such $\Sf_0\ppr$ by $c\uas\Sf$. It has a property dual to {\rm (2)}.
\end{lem}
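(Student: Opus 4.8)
\textbf{Proof plan for \cref{LemExSeqPO}.}

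The plan is to establish part (1) first and then deduce part (2), using the rectangle-prolongation machinery of \cref{LemRectProl} as the main technical tool. For part (1), I would begin with the push-out square $\Pf$ of \eqref{SquareABPO}. Viewing $\Sf$ as the rectangle $\Rf_0 = \Sf^t$ (or rather building a suitable rectangle whose left square is $\Pf^t$ and whose outer square is $\Sf$-compatible), the key observation is that $\Rf_{\mathrm{left}} = \Pf^t$ is a pull-back (equivalently push-out, read in the right orientation), and $\Sf$ is an exact sequence, hence in particular an ambigressive push-out. I would set up a rectangle $\Rf$ of the form $\RT{\psi}{\vp}{s_2(\xi)}$ with top row $A \to A\ppr \to ?$ and bottom row $B \to B_0 \to ?$, constructed so that the left square is $\Pf$ and the outer square is $\Sf$ read appropriately. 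Then I apply \cref{LemRectProl} with the prolongation data chosen so that $C$ is the cokernel-end-object: the morphism $\roundup{\zeta}{\zeta\ppr}$ and the $3$-simplex $\wp$ are chosen to encode the identity maps $1_O$ and $1_C$ in the bottom-right. This yields the triple of $4$-simplices $\Psi,\Phi,\Theta$ whose faces assemble into the cube $\Df = \CB{\vp}{\psi}{s_2(\xi)}{\thh}{s_1(\eta)}{s_2(\eta)}$. The candidate $\Sf_0$ is then read off as $\Df|_{\Delta^1\ti\{1\}\ti\Delta^1}$ (the back square of the cube), and I must check it is an exact sequence.

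The crucial point — and the main obstacle — is verifying that $\Sf_0$ is genuinely an \emph{exact sequence}, i.e.\ an ambigressive pull-back (equivalently, by (Ex3), an ambigressive push-out). Here I would invoke the pasting lemma for push-outs \cite[Lemma~4.4.2.1]{L1} as quoted in the discussion of rectangles: in $\Rf$, the left square $\Pf$ is a push-out, and the outer square, being essentially $\Sf$ (which is an ambigressive push-out, hence a push-out), is a push-out; therefore by \cite[Lemma~4.4.2.1]{L1} the right square is a push-out. Combined with (Ex2), pushing out the ingressive $i$ along $a$ keeps the relevant map $i_0$ ingressive, and the $O \to C$ portion stays egressive since $O$ is a zero object and every map out of a zero object is ingressive while every map into a zero object is egressive — wait, more carefully: $x_0$ is ingressive as a push-out of the ingressive $x$ by (Ex2), and $j$ is already ingressive, so $\Sf_0$ is ambigressive; being a push-out it is then an ambigressive push-out, hence by (Ex3) an ambigressive pull-back, hence an exact sequence. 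The delicate bookkeeping is ensuring that the specific $3$-simplices produced by \cref{LemRectProl} really do restrict to the degenerate simplices $s_1(\eta), s_2(\eta), s_2(\xi)$ on the appropriate faces and to $1_O, 1_C$ on the appropriate edges; this is where one pays attention to the compatibility conditions $d_1(\wp)=d_2(\zeta\ppr)$, $d_3(\wp)=d_3(\thh)$ in \cref{LemRectProl}, choosing $\wp$ and the prolongation morphism accordingly.

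For part (2), I would argue as follows. Given the morphism ${}_a\Cf_c\co\Sf\to\Sf\ppr$, compose (in the sense of \cref{PropComposeCubes}) a suitable inverse-direction or factored cube. More precisely: since $\Df\co\Sf\to\Sf_0$ witnesses $\Sf_0 = a\sas\Sf$ with $\Df|_{\Delta^1\ti\{0\}\ti\Delta^1} = \Pf$ a push-out, and $\Cf\co\Sf\to\Sf\ppr$ has the same front-left square starting with $a$, I want to "divide" $\Cf$ by $\Df$. The left face of $\Cf$ is a square $A \to A\ppr$ over $x,x\ppr$; since $\Pf$ is a push-out, the universal property (in the quasi-categorical sense, i.e.\ contractibility of the relevant mapping space, or concretely via \cref{PropComposeSquares}/\cref{PropComposeCubes} applied to produce the mediating simplices) gives a morphism $\Cf\ppr\co\Sf_0\to\Sf\ppr$ with $\Cf\ppr|_{\{(0,0,-)\}} = 1_{A\ppr}$, and such that the pasting of $\Df$ with $\Cf\ppr$ recovers $\Cf$ up to the coherence data. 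The form \eqref{CubeFactorPO} with front squares $s_0(\xi\ppr), s_0(\eta\ppr)$ reflects that $\Cf\ppr$ is the identity on the $A\ppr$-column at the top. The relation $\ovl{b} = \ovl{b\ppr}\ci\ovl{b}_0$ in $h\C$ then follows by passing the composite cube $\Df$-then-$\Cf\ppr$ (equal to $\Cf$) to the homotopy category, where composition of cubes becomes composition of the middle morphisms. The main subtlety in (2) is extracting $\Cf\ppr$ from the universal property of the push-out square $\Pf$ inside a cube rather than a square; I would handle this by viewing everything one dimension up — regarding $\Cf$ as a morphism in $\Fun(\Delta^1\ti\Delta^1,\C)$ and $\Pf$ as inducing a push-out in $\Fun(\Delta^1,\C)$ — or, staying concrete, by repeatedly invoking \cref{PropComposeCubes} and the defining lifting property of push-outs to fill the required higher simplices. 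The dual statement for $c\uas\Sf$ is obtained by transposing cubes (as in the Remark following \eqref{Exp_Cube}) and dualizing (Ex2)–(Ex3); I would simply remark that it follows formally by applying the established part to $\C\op$, noting that $\C\op$ is again exact with ingressive and egressive morphisms swapped.
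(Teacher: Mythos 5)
Your global strategy coincides with the paper's (pasting of push-out squares plus (Ex1)--(Ex3) for exactness of $\Sf_0$ in (1); the universal property of the push-out $\Pf$ to factor $\Cf$ in (2)), but the execution has genuine gaps, and they sit exactly where the work is in this simplicial setting. In part (1), \cref{LemRectProl} is not the right tool and cannot be invoked where you invoke it: it is a pure filling lemma that takes an \emph{already given} rectangle, a morphism under its outer square and a compatible $3$-simplex, and extends them to a cone $(\Delta^1\ti\Delta^2)^{\vartriangleright}\to\C$; it does not manufacture the right-hand column of the rectangle, i.e.\ the new sequence $\Sf_0$ itself, and its cone point is a single vertex, so the prescription ``choose $\zeta,\zeta\ppr,\wp$ to encode $1_O$ and $1_C$'' does not parse. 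What is actually needed is (i) a zero-object filling producing a $3$-simplex $\thh$ with $d_1(\thh)=\eta$, which creates $i_0\in\C_1(A\ppr,O)$ and $z_0\in\C_1(A\ppr,C)$, and (ii) the universal property of $\Pf$ in the undercategory of $A\ppr\ov{a}{\lla}A\ov{x}{\lra}B$, which creates the pair $(\vp,\psi)$; the rectangle $\Rf=\RT{\psi}{\vp}{\thh}$ (note the third slot is $\thh$, not $s_2(\xi)$) and then the cube $\Df$ are assembled from these $3$-simplices and degeneracies of $\xi,\eta$, with no cone construction. Your exactness check for $\Sf_0$ (pasting lemma, (Ex2) for $x_0$, egressivity of $A\ppr\to O$ by (Ex1), then (Ex3)) is correct, but the construction you gloss with ``constructed so that the left square is $\Pf$ and the outer square is $\Sf$'' is precisely the content of (1).

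The more serious gap is in part (2). The push-out property of the square $\Pf$ only yields (essentially unique) maps from $\Pf$ to other \emph{squares} under the same $\Lambda^2_0$-diagram; it gives the morphism $B_0\to B\ppr$ with square-level coherence, but not a morphism of exact sequences $\Cf\ppr\co a\sas\Sf\to\Sf\ppr$, i.e.\ a full cube compatible with the simplices $\Zcal_{\ff},\Zcal_{\bb},\Ycal_{\bb}$ of $\Cf$ and with the zero-object and $C,C\ppr$ parts. Filling those remaining faces is exactly where the paper uses \cref{LemRectProl} (applied to the rectangle $\Rf$ of part (1), with $\wp$ extracted from a zero-object-filled $4$-simplex), followed by a second application of the push-out property to fill a $2$-simplex in the undercategory; the resulting $4$-simplices have $d_0$-faces that assemble into $\Cf\ppr$ and simultaneously provide the $2$-simplex on $B,B_0,B\ppr$ witnessing $\ovl{b}=\ovl{b\ppr}\ci\ovl{b}_0$. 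Your fallbacks do not close this: \cref{PropComposeCubes} only composes cubes that already exist, the claim that $\Pf$ induces a push-out in $\Fun(\Delta^1,\C)$ is left unproved, and your derivation of $\ovl{b}=\ovl{b\ppr}\ci\ovl{b}_0$ rests on the stronger unestablished assertion that the pasting of $\Df$ with $\Cf\ppr$ ``recovers'' $\Cf$, which the lemma neither claims nor needs. So the factorization step, the heart of (2), is asserted rather than proved.
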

\begin{proof}
{\rm (1)} By using the definition of a zero object, we can take a $3$-simplex
\[
\thh=\ 
\xy
(-3,7)*+{A}="0";
(-12,-6)*+{A\ppr}="2";
(10,-4)*+{C}="4";
(2,-16)*+{O}="6";
{\ar_{a} "0";"2"};
{\ar^{z} "0";"4"};
{\ar_(0.3){i} "0";"6"};
{\ar_(0.7){z_0}|!{"0";"6"}\hole "2";"4"};
{\ar_{i_0} "2";"6"};
{\ar_{j} "6";"4"};
\endxy
\]
such that $d_1(\thh)=\eta$, with some morphisms $i_0\in\C_1(A\ppr,O)$ and $z_0\in\C_1(A\ppr,C)$. Put $\nu=d_2(\thh)$. Since $\Pf$ is a push-out, we obtain a morphism
\[ \roundup{\vp}{\psi}\co\Pf\to
\xy
(-7,7)*+{A}="0";
(7,7)*+{B}="2";
(-7,-7)*+{A\ppr}="4";
(7,-7)*+{C}="6";
{\ar^{x} "0";"2"};
{\ar_{a} "0";"4"};
{\ar|*+{_z} "0";"6"};
{\ar^{y} "2";"6"};
{\ar_{z_0} "4";"6"};
(3,7)*+{}="00";
(7,3)*+{}="01";
{\ar@/_0.2pc/@{-}_{^{\xi}} "00";"01"};
(-3,-7)*+{}="10";
(-7,-3)*+{}="11";
{\ar@/_0.2pc/@{-}_{_{\nu}} "10";"11"};
\endxy
\]
in the undercategory.
Then this gives a rectangle $\Rf=\RT{\psi}{\vp}{\thh}$ with
$\Rf_{\mathrm{left}}=\Pf^t$, $\Rf_{\mathrm{out}}=\Sf^t$ and $\Rf_{\mathrm{right}}=(\Sf_0)^t$.
Since $\Pf$, $\Sf^t$ are push-outs, it follows that $x_0$ is ingressive and that $\Sf_0=(\Rf_{\mathrm{right}})^t$ is also a push-out. Thus $\Sf_0$ is an exact sequence, and $\Df=\CB{\vp}{\psi}{s_2(\xi)}{\thh}{s_1(\eta)}{s_2(\eta)}\co\Sf\to\Sf_0$ indeed gives a morphism with the stated property.

{\rm (2)} We may take a $4$-simplex
\[ \Om=\Penta{A}{A\ppr}{O}{O\ppr}{C\ppr}{}{}{}{}{} \]
such that $d_1(\Om)=\Ycal_{\bb}$ and $d_2(\Om)=\Xcal_{\bb}$, for arbitrarily chosen compatible $d_0(\Om)$ and $d_4(\Om)$ satisfying $d_3d_4(\Om)=d_3(\thh)$ using the definition of zero object.
Applying Lemma~\ref{LemRectProl} to
\[ \Rf=\RT{\psi}{\vp}{\thh},\ \ \roundup{\Zcal_{\ff}}{\Zcal_{\bb}}\co\Rf_{\mathrm{out}}\to\xy
(-6,6)*+{A}="0";
(6,6)*+{O}="2";
(-6,-6)*+{B}="4";
(6,-6)*+{C\ppr}="6";
{\ar^{i} "0";"2"};
{\ar_{x} "0";"4"};
%{\ar|*+{_z} "0";"6"};
{\ar^{} "2";"6"};
{\ar_{} "4";"6"};
%
%(3,7)*+{}="00";
%(7,3)*+{}="01";
%{\ar@/_0.2pc/@{-}_{^{\xi}} "00";"01"};
%%
%(-3,-7)*+{}="10";
%(-7,-3)*+{}="11";
%{\ar@/_0.2pc/@{-}_{_{\nu}} "10";"11"};
%
\endxy,\ \ \wp=d_3(\Om), \]
we obtain $4$-simplices $\Psi,\Phi,\Theta$ such that
$d_1(\Psi)=d_1(\Phi)$, $d_2(\Phi)=d_2(\Thh)$, $\roundup{\Zcal_{\ff}}{\Zcal_{\bb}}=\roundup{d_2(\Psi)}{d_1(\Thh)}$, $d_3(\Thh)=\wp$, and $\RT{\psi}{\vp}{\thh}=\RT{d_4(\Phi)}{d_4(\Psi)}{d_4(\Thh)}$.

If we put
\begin{eqnarray*}
&\Tf=\Cf|_{\{0\}\ti\Delta^1\ti\Delta^1}=\SQ{d_3(\Xcal_{\ff})}{d_3(\Ycal_{\ff})},&\\
&\Tf\ppr=\SQ{d_2(\Xcal_{\bb})}{d_2(\Zcal_{\ff})}=\SQ{d_2(\Xcal_{\ff})}{d_2(\Ycal_{\ff})},&
\end{eqnarray*}
Then $\Tf,\Tf\ppr$ and $\Pf$ can be regarded as objects in the quasi-category under
\[
\xy
(-6,6)*+{A}="0";
(6,6)*+{B}="2";
(-6,-6)*+{A\ppr}="4";
{\ar^{x} "0";"2"};
{\ar_{a} "0";"4"};
\endxy
\]
and there is a morphism $\roundup{\Xcal_{\ff}}{\Ycal_{\ff}}\co\Tf\to\Tf\ppr$.
By the construction so far, we also have $\roundup{d_3(\Phi)}{d_3(\Psi)}\co\Pf\to\Tf\ppr$ in this undercategory.
Since $\Pf$ is a push-out, we obtain another morphism $\roundup{\kap}{\lam}\co\Pf\to\Tf$.
Then, again by the definition of push-out, we obtain a $2$-simplex in this undercategory, namely a pair of $4$-simplices in $\C$
\[
\Gamma=\Penta{A}{A\ppr}{B_0}{B\ppr}{C\ppr}{a}{x_0}{}{}\ ,\ \ 
\Xi=\Penta{A}{B}{B_0}{B\ppr}{C\ppr}{x}{b_0}{}{}
\]
such that
\begin{eqnarray*}
&d_1(\Gamma)=d_1(\Xi), d_2(\Gamma)=\Xcal_{\ff}, d_2(\Xi)=\Ycal_{\ff},&\\
&d_3(\Gamma)=d_3(\Phi), d_3(\Xi)=d_3(\Psi), d_4(\Gamma)=\kap, d_4(\Xi)=\lam.&
\end{eqnarray*}
Now
\[ \Cf\ppr=\CB{s_0(\xi\ppr)}{d_0(\Gamma)}{d_0(\Phi)}{s_0(\eta\ppr)}{d_0(\Om)}{d_0(\Thh)}\co\Sf_0\to\Sf\ppr \]
gives a morphism of exact sequences with the stated properties.
\end{proof}

\begin{lem}\label{LemForSym}
Let ${}_A\Sf_C,{}_{A\ppr}\Sf\ppr_{C\ppr}$ be two exact sequences as in $(\ref{TwoExSeq})$, and let
\[ {}_a\Cf_c=\CB{\Xcal_{\ff}}{\Ycal_{\ff}}{\Zcal_{\ff}}{\Xcal_{\bb}}{\Ycal_{\bb}}{\Zcal_{\bb}}\co\Sf\to\Sf\ppr \]
be a morphism as in $(\ref{MorphExSeq})$. If $C=C\ppr$ and $\ovl{c}=\id_C$, then the following holds.
\begin{enumerate}
\item The square
\begin{equation}\label{CPB}
\Cf|_{\{0\}\ti\Delta^1\ti\Delta^1}=\ 
\xy
(-6,6)*+{A}="0";
(6,6)*+{B}="2";
(-6,-6)*+{A\ppr}="4";
(6,-6)*+{B\ppr}="6";
{\ar^{x} "0";"2"};
{\ar_{a} "0";"4"};
%{\ar^{} "0";"6"};
{\ar^{b} "2";"6"};
{\ar_{x\ppr} "4";"6"};
\endxy
\end{equation}
is a pull-back.
\item If moreover $b$ is egressive, then $(\ref{CPB})$ is an ambigressive push-out.
\end{enumerate}
\end{lem}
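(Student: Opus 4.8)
The substance is the first assertion, which I would prove by two applications of the pasting law for pull-backs (\cite[Lemma~4.4.2.1]{L1}); the second then follows at once from the first and (Ex3).

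For (1), I first record the ingredients. Since $\ovl{c}=\id_C$ the morphism $c$ is a homotopy equivalence, and $o\co O\to O\ppr$ is a homotopy equivalence, being a morphism between zero objects. Consequently the face $\Cf|_{\{1\}\ti\Delta^1\ti\Delta^1}$ --- the square on $O,C,O\ppr,C\ppr$, two of whose opposite edges are $o$ and $c$ --- is a pull-back, because a square two of whose opposite edges are homotopy equivalences is automatically a pull-back (and a push-out). Moreover $\Sf$ and $\Sf\ppr$ are pull-back squares, being exact sequences.

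Next I would glue, using the cube. Recall $\Cf\co\Delta^1\ti\Delta^1\ti\Delta^1\to\C$ with $\Cf|_{\Delta^1\ti\Delta^1\ti\{0\}}=\Sf$ and $\Cf|_{\Delta^1\ti\Delta^1\ti\{1\}}=\Sf\ppr$, so that $(\ref{CPB})=\Cf|_{\{0\}\ti\Delta^1\ti\Delta^1}$. Let $\tau,\tau\ppr\co\Delta^2\to\Delta^1\ti\Delta^1$ be the two non-degenerate $2$-simplices of the square spanned by the first and third coordinates --- the one through $(0,0),(1,0),(1,1)$ and the one through $(0,0),(0,1),(1,1)$ --- and note that they restrict to the \emph{same} $1$-simplex on $\Delta^{\{0,2\}}$, namely the diagonal $(0,0)\to(1,1)$. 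Precomposing $\Cf$ with $\tau$ (resp.\ $\tau\ppr$) in the first and third coordinates and with the identity in the second produces rectangles $\Rf^{\sharp},\Rf^{\flat}\co\Delta^1\ti\Delta^2\to\C$ with, up to transposition of the constituent squares,
\[ \Rf^{\sharp}_{\mathrm{left}}=\Sf,\quad \Rf^{\sharp}_{\mathrm{right}}=\Cf|_{\{1\}\ti\Delta^1\ti\Delta^1},\quad \Rf^{\flat}_{\mathrm{left}}=\Cf|_{\{0\}\ti\Delta^1\ti\Delta^1},\quad \Rf^{\flat}_{\mathrm{right}}=\Sf\ppr, \]
and, crucially, $\Rf^{\sharp}_{\mathrm{out}}=\Rf^{\flat}_{\mathrm{out}}$, since $\tau$ and $\tau\ppr$ agree on $\Delta^{\{0,2\}}$. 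The pasting law applied to $\Rf^{\sharp}$, whose left and right squares are pull-backs, shows that $\Rf^{\sharp}_{\mathrm{out}}$ is a pull-back; and the pasting law applied to $\Rf^{\flat}$, whose right square $\Sf\ppr$ and whose outer square $\Rf^{\flat}_{\mathrm{out}}=\Rf^{\sharp}_{\mathrm{out}}$ are pull-backs, then shows that $\Rf^{\flat}_{\mathrm{left}}=\Cf|_{\{0\}\ti\Delta^1\ti\Delta^1}$ is a pull-back --- which is (1). (Equivalently, $\Rf^{\sharp}$ and $\Rf^{\flat}$ could be assembled via \cref{PropComposeSquares}, with the guiding $2$-simplices taken to be the two triangulations of the faces $\Cf|_{\Delta^1\ti\{0\}\ti\Delta^1}$ and $\Cf|_{\Delta^1\ti\{1\}\ti\Delta^1}$ of $\Cf$; this is exactly what forces the two outer squares to coincide.)

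For (2), view $(\ref{CPB})$ as a square of the shape $(\ref{AmbPB'})$ with $A\ppr$ and $B\ppr$ in the positions of $D$ and $C$; then its edge ``$j$'' is $x\ppr$, which is ingressive because $\Sf\ppr$ is an exact sequence, and its edge ``$y$'' is $b$, which is egressive by hypothesis. Together with (1), this exhibits $(\ref{CPB})$ as an ambigressive pull-back, hence by (Ex3) as an ambigressive push-out. The one step I expect to require care is the simplicial bookkeeping in the gluing: one must set up $\Rf^{\sharp}$ and $\Rf^{\flat}$ so that they share their outer square on the nose rather than merely up to equivalence, which is precisely why both must be carved from the single cube $\Cf$ along complementary triangulations of one of its coordinate squares.
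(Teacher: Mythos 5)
Your proposal is correct and follows essentially the same route as the paper: the two rectangles $\Rf^{\sharp},\Rf^{\flat}$ you carve out of the cube along the two triangulations of the (first,third)-coordinate square are precisely the rectangles $\RT{\Zcal_{\ff}}{\Zcal_{\bb}}{\Ycal_{\bb}}$ and $\RT{\Ycal_{\ff}}{\Xcal_{\ff}}{\Xcal_{\bb}}$ used there, they share the outer square on the nose, and the two applications of \cite[Lemma~4.4.2.1]{L1} plus the appeal to (Ex3) for part (2) are identical. The only cosmetic difference is that the paper recognizes the face on $O,C,O\ppr,C$ as a trivial exact sequence via \cref{ExTrivExSeq}, whereas you invoke the (equally standard) fact that a square two of whose parallel edges are homotopy equivalences is a pull-back.
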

\begin{proof}
{\rm (1)} $\Cf$ contains rectangles
\begin{eqnarray*}
&\Rf=\RT{\Ycal_{\ff}}{\Xcal_{\ff}}{\Xcal_{\bb}}=\ 
\xy
(-12,6)*+{A}="0";
(0,6)*+{A\ppr}="2";
(12,6)*+{O\ppr}="4";
(-12,-6)*+{B}="10";
(0,-6)*+{B\ppr}="12";
(12,-6)*+{C}="14";
{\ar^{a} "0";"2"};
{\ar^{i\ppr} "2";"4"};
%{\ar@/^1.0pc/^{} "0";"4"};
%
{\ar_{x} "0";"10"};
{\ar_{x\ppr} "2";"12"};
{\ar^{j\ppr} "4";"14"};
{\ar_{b} "10";"12"};
{\ar_{y\ppr} "12";"14"};
%{\ar@/_1.0pc/_{y} "10";"14"};
%
\endxy,
&\\
&\Rf\ppr=\RT{\Zcal_{\ff}}{\Zcal_{\bb}}{\Ycal_{\bb}}=\ 
\xy
(-12,6)*+{A}="0";
(0,6)*+{O}="2";
(12,6)*+{O\ppr}="4";
(-12,-6)*+{B}="10";
(0,-6)*+{C}="12";
(12,-6)*+{C}="14";
{\ar^{i} "0";"2"};
{\ar^{o} "2";"4"};
%{\ar@/^1.0pc/^{} "0";"4"};
%
{\ar_{x} "0";"10"};
{\ar_{j} "2";"12"};
{\ar^{j\ppr} "4";"14"};
{\ar_{y} "10";"12"};
{\ar_{c} "12";"14"};
%{\ar@/_1.0pc/_{y} "10";"14"};
%
\endxy&
\end{eqnarray*}
which satisfies $(\Rf_{\mathrm{right}})^t=\Sf\ppr$, $\Rf_{\mathrm{out}}=\Rf\ppr_{\mathrm{out}}$ and $(\Rf\ppr_{\mathrm{left}})^t=\Sf$. Since $(\Rf_{\mathrm{right}})^t$ is an exact sequence as in Example~\ref{ExTrivExSeq}, it follows that $\Cf|_{\{0\}\ti\Delta^1\ti\Delta^1}=(\Rf_{\mathrm{left}})^t$ is a pull-back.
{\rm (2)} is immediate from the definition of exact quasi-category.
\end{proof}

\begin{prop}\label{PropForSym}
Let
\[
\Sf=\SQ{\eta}{\xi}=
\xy
(-7,7)*+{A}="0";
(7,7)*+{B}="2";
(-7,-7)*+{O}="4";
(7,-7)*+{C}="6";
{\ar^{x} "0";"2"};
{\ar_{i} "0";"4"};
{\ar|*+{_z} "0";"6"};
{\ar^{y} "2";"6"};
{\ar_{j} "4";"6"};
(3,7)*+{}="00";
(7,3)*+{}="01";
{\ar@/_0.2pc/@{-}_{^{\xi}} "00";"01"};
(-3,-7)*+{}="10";
(-7,-3)*+{}="11";
{\ar@/_0.2pc/@{-}_{_{\eta}} "10";"11"};
\endxy
\quad\text{and}\quad
\Sf\ppr=\SQ{\eta\ppr}{\xi\ppr}=
\xy
(-7,7)*+{A}="0";
(7,7)*+{B\ppr}="2";
(-7,-7)*+{O\ppr}="4";
(7,-7)*+{C}="6";
{\ar^{x\ppr} "0";"2"};
{\ar_{i\ppr} "0";"4"};
{\ar|*+{_{z\ppr}} "0";"6"};
{\ar^{y\ppr} "2";"6"};
{\ar_{j\ppr} "4";"6"};
(3,7)*+{}="00";
(7,3)*+{}="01";
{\ar@/_0.2pc/@{-}_(0.4){^{\xi\ppr}} "00";"01"};
(-3,-7)*+{}="10";
(-7,-3)*+{}="11";
{\ar@/_0.2pc/@{-}_(0.4){_{\eta\ppr}} "10";"11"};
\endxy
\]
be two exact sequences starting from $A$ and ending in $C$. Let
\begin{equation}\label{CubeForSym}
{}_{1_A}\Cf_{1_C}=\CB{\Xcal_{\ff}}{\Ycal_{\ff}}{\Zcal_{\ff}}{\Xcal_{\bb}}{\Ycal_{\bb}}{\Zcal_{\bb}}=\ 
\xy
(-8,9)*+{A}="0";
(8,9)*+{B}="2";
(1,3)*+{O}="4";
(17,3)*+{C}="6";
(-8,-7)*+{A}="10";
(8,-7)*+{B\ppr}="12";
(1,-13)*+{O\ppr}="14";
(17,-13)*+{C}="16";
{\ar^{x} "0";"2"};
{\ar_{i} "0";"4"};
{\ar^{y} "2";"6"};
{\ar_(0.3){j} "4";"6"};
%{\ar_(0.4){z} "0";"6"};
%
{\ar_{1_A} "0";"10"};
{\ar^(0.7){b}|!{(9,3);(13,3)}\hole "2";"12"};
{\ar_(0.3){o} "4";"14"};
{\ar^{1_C} "6";"16"};
{\ar^(0.3){x\ppr}|!{(1,-3);(1,-7)}\hole "10";"12"};
{\ar_{i\ppr} "10";"14"};
{\ar^{y\ppr} "12";"16"};
{\ar_{j\ppr} "14";"16"};
%{\ar_{z\ppr} "10";"16"};
%
\endxy\ \ \co\Sf\to\Sf\ppr
\end{equation}
be any morphism. 

If moreover $b=\Cf|_{\{(0,1)\}\ti\Delta^1}$ is a homotopy equivalence in $\C$, then there also exists a morphism in the opposite direction ${}_{1_A}(\Cf^{\prime})_{1_C}\co\Sf\ppr\to\Sf$. In addition, $b\ppr=\Cf\ppr|_{\{(0,1)\}\ti\Delta^1}$ becomes a homotopy equivalence in $\C$.
\end{prop}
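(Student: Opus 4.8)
The statement to prove is \cref{PropForSym}: given two exact sequences ${}_A\Sf_C$ and ${}_A\Sf\ppr_C$ and a morphism ${}_{1_A}\Cf_{1_C}\co\Sf\to\Sf\ppr$ whose middle component $b$ is a homotopy equivalence, one must produce a morphism ${}_{1_A}(\Cf\ppr)_{1_C}\co\Sf\ppr\to\Sf$ in the opposite direction, with middle component again a homotopy equivalence. The key idea is that since $b$ is a homotopy equivalence, the square $\Cf|_{\{0\}\ti\Delta^1\ti\Delta^1}$ is a pull-back by \cref{LemForSym}(1), so in the homotopy category $\ovl{b}$ is an isomorphism; one then builds the reverse cube by hand, filling horns, using a homotopy inverse $b^-$ of $b$ as the starting datum for its middle component.

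\smallskip

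\emph{First} I would fix a morphism $b^-\in\C_1(B\ppr,B)$ together with a $2$-simplex witnessing $\ovl{b^-}\ci\ovl{b}=\id_B$, i.e. a $2$-simplex $\sig$ with $d_0(\sig)=b^-$, $d_2(\sig)=b$, $d_1(\sig)=1_B$ (such data exist because $b$ is a homotopy equivalence). \emph{Next}, I would assemble the reverse cube one face at a time. The front and back faces of $\Cf\ppr$ must be the squares $\SQ{d_3(\cdots)}{\cdots}$ expressing the maps of the underlying $2$-term diagrams $A\ov{x\ppr}{\lra}B\ppr$ and $A\ov{i\ppr}{\lra}O\ppr$; these can be produced using the definition of zero object for the $O\ppr\to O$ component and by horn-filling (\cref{PropComposeSquares}) for the $B\ppr\to B$ component, taking $b^-$ as the vertical arrow and $\sig$ (together with $s_1$'s of the structure $2$-simplices) as the guiding $2$-simplices. \emph{Then} the two rectangles constituting $\Cf\ppr$ are obtained by \cref{PropComposeSquares} applied to these squares together with the exact sequences $\Sf\ppr,\Sf$; I would feed in $\sig$-type $2$-simplices at the $B$-corner so that the composite vertical arrow on the $B$-edge reads $1_B$ (or at least something with $\ovl{\phantom{x}}=\id_B$, which by \cref{CorComposeCubes} can be rectified to $1_B$), and $s_0$-degeneracies at the $A$- and $C$-corners so that the outer vertical arrows are $1_A$ and $1_C$. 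Gluing these rectangles along a common outer square produces the desired cube $\Cf\ppr\co\Sf\ppr\to\Sf$ with $a\ppr=1_A$, $c\ppr=1_C$, and middle arrow $b\ppr\simeq b^-$. \emph{Finally}, that $b\ppr$ is a homotopy equivalence is immediate: in $h\C$ we have $\ovl{b\ppr}=\ovl{b^-}=\ovl{b}^{-1}$, since $\ovl{b}$ is an isomorphism (being a map of weak-kernel--weak-cokernel sequences with identities on the ends, exactly as in the argument of \cref{PropWIsom}); alternatively one applies \cref{PropWIsom} directly to the pair $(\Cf,\Cf\ppr)$.

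\smallskip

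\emph{The main obstacle.} The hard part is not the homotopy-category bookkeeping — that is forced — but the strictness of the simplicial constructions: one must produce an honest cube, i.e. a genuine map $\Delta^1\ti\Delta^1\ti\Delta^1\to\C$, whose specified faces are the prescribed exact sequences $\Sf\ppr$ (on the front) and $\Sf$ (on the back) \emph{on the nose}, not merely up to equivalence, and whose two boundary edges on the $A$- and $C$-corners are literally the degenerate $1$-simplices $1_A$ and $1_C$. Getting the $A$/$C$ edges to be strictly degenerate is exactly what \cref{CorComposeCubes} is designed to fix, so the real work is choosing the guiding $2$- and $3$-simplices in the applications of \cref{PropComposeSquares} coherently enough that the two rectangles glue; this is a finite but delicate horn-filling computation, entirely parallel to (and no harder than) the construction of $\Cf\ppr$ in the proof of \cref{LemExSeqPO}(2), from which the needed combinatorial pattern can be transcribed.
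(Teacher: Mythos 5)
The final step of your construction has a genuine gap. You propose to build the two rectangles $\Rf_{\ff}$ and $\Rf_{\bb}$ of the reverse cube by two separate applications of \cref{PropComposeSquares} (with $b^-$ on the middle edge and $\xi\ppr,\xi$ resp.\ $\eta\ppr,\eta$ as the prescribed end triangles) and then to ``glue these rectangles along a common outer square''. But \cref{PropComposeSquares} gives you no control over the outer square: each application produces its own long diagonal $1$-simplex and its own pair of witnessing triangles, and two independent inner-anodyne fillings will not agree there. If instead you try to prescribe the outer square in advance (say, take it from $\Rf_{\bb}$ and then build $\Rf_{\ff}$ to match), you are no longer extending along an inner anodyne map: you would be filling $\Delta^1\ti\Delta^2$ from all of $\Delta^1\ti\partial\Delta^2$ together with its end triangles, and such boundary fillings carry genuine homotopical obstructions in a quasi-category. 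The same objection applies to the closing remark that the needed coherence is ``a finite but delicate horn-filling computation, entirely parallel to \cref{LemExSeqPO}(2)'': the fillers in the proof of \cref{LemExSeqPO}(2) are not produced by raw horn filling but by the universal property of the push-out square $\Pf$ (maps out of a colimit cone), and your sketch never invokes any universal property. Indeed, manufacturing an inverse morphism in $\Fun(\Delta^1\ti\Delta^1,\C)$ from a homotopy inverse $b^-$ of one component is exactly the content of the nontrivial fact that a natural transformation with componentwise equivalences is an equivalence in the functor quasi-category (this is the alternative proof recorded in \cref{RemCi}, via Cisinski); it is not something inner horn fillers give for free.

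By contrast, the paper never tries to build $\Cf\ppr$ face by face. It factors the given $\Cf$ as $\Sf\ov{\Cf^{(1)}}{\lra}\Sf^{(1)}\ov{\Cf^{(2)}}{\lra}\Sf^{(2)}\ov{\Cf^{(3)}}{\lra}\Sf\ppr$ through two auxiliary exact sequences obtained by zero-object replacement (\cref{PropReplZero}), reverses $\Cf^{(1)}$ and $\Cf^{(3)}$ by the universal-property mechanism of \cref{LemExSeqPO}(2) and its dual (applied with $\If_{\Sf^{(2)}}$, resp.\ $\If_{\Sf^{(1)}}$), reverses $\Cf^{(2)}$ by \cref{PropReplZero} again, and composes the three reverses with \cref{PropComposeCubes}. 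Note also where the hypothesis on $b$ actually enters: since homotopy equivalences are egressive, \cref{LemForSym}(2) shows the side square of $\Cf^{(3)}$ is an ambigressive push-out, which is what licenses the use of \cref{LemExSeqPO}; in your sketch the hypothesis is only used to choose $b^-$, which is not enough. Your final step (invoking \cref{PropWIsom} to see that $b\ppr$ is a homotopy equivalence) agrees with the paper and is fine once a genuine $\Cf\ppr$ is in hand, but as written the construction of $\Cf\ppr$ does not close up; to repair it you should either follow the push-out/pull-back factorization route or argue via \cref{RemCi}.
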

\begin{proof}
Put $\xi^{(1)}=d_2(\Ycal_{\ff}),\eta^{(1)}=d_2(\Ycal_{\bb}),\eta^{(2)}=d_1(\Xcal_{\bb})$, and put
\[
\Sf^{(1)}=\SQ{\eta^{(1)}}{\xi^{(1)}}=
\xy
(-6,6)*+{A}="0";
(6,6)*+{B}="2";
(-6,-6)*+{O}="4";
(6,-6)*+{C}="6";
{\ar^{x} "0";"2"};
{\ar_{i} "0";"4"};
%{\ar|*+{} "0";"6"};
{\ar^{} "2";"6"};
{\ar_{} "4";"6"};
\endxy,\ \ 
\Sf^{(2)}=\SQ{\eta^{(2)}}{\xi^{(1)}}=
\xy
(-6,6)*+{A}="0";
(6,6)*+{B}="2";
(-6,-6)*+{O\ppr}="4";
(6,-6)*+{C}="6";
{\ar^{x} "0";"2"};
{\ar_{} "0";"4"};
%{\ar|*+{} "0";"6"};
{\ar^{} "2";"6"};
{\ar_{j\ppr} "4";"6"};
\endxy.
\]
In the following rectangle, since $(\Rf_{\mathrm{left}})^t=\Sf$ and $(\Rf_{\mathrm{right}})^t$ are exact sequences,
\[
\Rf=\RT{\Zcal_{\ff}}{\Zcal_{\bb}}{s_1(\eta^{(1)})}=\ 
\xy
(-12,6)*+{A}="0";
(0,6)*+{O}="2";
(12,6)*+{O}="4";
(-12,-6)*+{B}="10";
(0,-6)*+{C}="12";
(12,-6)*+{C}="14";
{\ar^{i} "0";"2"};
{\ar^{1_O} "2";"4"};
%{\ar@/^1.0pc/^{} "0";"4"};
%
{\ar_{x} "0";"10"};
{\ar_{j} "2";"12"};
{\ar^{} "4";"14"};
{\ar_{y} "10";"12"};
{\ar_{1_C} "12";"14"};
%{\ar@/_1.0pc/_{y} "10";"14"};
%
\endxy
\]
so is $\Sf^{(1)}=(\Rf_{\mathrm{out}})^t$. Moreover,
\[ {}_{1_A}\Cf^{(1)}_{1_C}=
\CB{s_0(\xi^{(1)})}{s_1(\xi^{(1)})}{\Zcal_{\ff}}{s_0(\eta^{(1)})}{s_1(\eta^{(1)})}{\Zcal_{\bb}}
\co {}_A\Sf_C\to {}_A\Sf^{(1)}_C \]
is a morphism of exact sequences. By Proposition~\ref{PropReplZero}, the existence of the $3$-simplex $\Ycal_{\bb}$ shows that $\Sf^{(2)}$ is also an exact sequence, and that 
\[ {}_{1_A}\Cf^{(2)}_{1_C}=
\CB{s_0(\xi^{(1)})}{s_1(\xi^{(1)})}{s_2(\xi^{(1)})}{s_0(\eta^{(2)})}{\Ycal_{\bb}}{s_2(\eta^{(1)})}
\co {}_A\Sf^{(1)}_C\to {}_A\Sf^{(2)}_C \]
is a morphisms of exact sequences. Besides, there is another morphism
\[ {}_{1_A}\Cf^{(3)}_{1_C}=
\CB{\Xcal_{\ff}}{\Ycal_{\ff}}{s_2(\xi^{(1)})}{\Xcal_{\bb}}{s_1(\eta^{(2)})}{s_2(\eta^{(2)})}
\co {}_A\Sf^{(2)}_C\to {}_A\Sf\ppr_C, \]
hence we have a sequence of morphisms
$\Sf\ov{\Cf^{(1)}}{\lra}\Sf^{(1)}\ov{\Cf^{(2)}}{\lra}\Sf^{(2)}\ov{\Cf^{(3)}}{\lra}\Sf\ppr$.

By Lemma~\ref{LemForSym}, it follows that $\Cf^{(3)}|_{\{0\}\ti\Delta^1\ti\Delta^1}$ is an ambigressive push-out. Thus $\Cf^{(3)}$ satisfies the conditions required in Lemma~\ref{LemExSeqPO} {\rm (1)}. Thus by {\rm (2)} of the same lemma applied to $\Cf^{(3)}$ and $\If_{\Sf^{(2)}}\co\Sf^{(2)}\to\Sf^{(2)}$, we obtain a morphism ${}_{1_A}(\Cf^{(3)\prime})_{1_C}\co\Sf\ppr\to\Sf^{(2)}$. Similarly, by using the duals of Lemmas~\ref{LemExSeqPO} and \ref{LemForSym}, we obtain a morphism ${}_{1_A}(\Cf^{(1)\prime})_{1_C}\co\Sf^{(1)}\to\Sf$.
By Proposition~\ref{PropReplZero}, we also have a morphism ${}_{1_A}(\Cf^{(2)\prime})_{1_C}\co\Sf^{(2)}\to\Sf^{(1)}$.
Now by Proposition~\ref{PropComposeCubes} applied iteratively to the sequence of morphisms
\[ \Sf\ov{\Cf^{(1)\prime}}{\lla}\Sf^{(1)}\ov{\Cf^{(2)\prime}}{\lla}\Sf^{(2)}\ov{\Cf^{(3)\prime}}{\lla}\Sf\ppr, \]
we obtain a morphism $\Cf^{\prime}\co\Sf\ppr\to\Sf$ as desired.
The last assertion for $b\ppr$ follows from Proposition~\ref{PropWIsom}.
\end{proof}

\begin{rem}\label{RemCi}
Proposition~\ref{PropForSym} also follows immediately from \cite[Corollary~3.5.12]{C}. In fact, the morphism $\Cf\in\Fun(\Delta^1\ti\Delta^1,\C)(\Sf,\Sf\ppr)$ satisfying the assumptions in Proposition~\ref{PropForSym} becomes a homotopy equivalence in $\Fun(\Delta^1\ti\Delta^1,\C)(\Sf,\Sf\ppr)$ (hence in $\Esc$).
\end{rem}

\begin{rem}\label{RemForSym}
The assumption on $b$ in Proposition~\ref{PropForSym} is redundant. Indeed in Proposition~\ref{PropInvert} we will see that it holds automatically.
\end{rem}

\subsection{Splitting exact sequences}

\begin{dfn}\label{DefForSplit}
Let $A,C\in\C_0$ be any pair of objects, and let
\begin{equation}\label{2SimplexETA}
\TwoSP{A}{O}{C}{i}{j}{z}{\eta}
\end{equation}
be any $2$-simplex in $\C$, in which $O\in\C_0$ is a zero object. Let $A\ov{p_A}{\lla}A\ti C\ov{p_C}{\lra}C$ and $A\ov{i_A}{\lra}A\am C\ov{i_C}{\lla}C$ be a product and a coproduct of $A$ and $C$, respectively.
\begin{enumerate}
\item By the definition of product, we obtain a pair of $2$-simplices as below.
\begin{equation}\label{ProdAC}
\xy
(0,7)*+{A}="0";
(-18,-7)*+{A}="2";
(0,-7)*+{A\ti C}="4";
(18,-7)*+{C}="6";
{\ar_{1_A} "0";"2"};
{\ar^{u_A} "0";"4"};
{\ar^{z} "0";"6"};
%{\eta}
{\ar^{p_A} "4";"2"};
{\ar_{p_C} "4";"6"};
(-13,-2)*+{}="10";
(-11,-7.5)*+{}="11";
{\ar@/^0.2pc/@{-}^{_{\mu_{A,C}}} "10";"11"};
(13,-2)*+{}="20";
(11,-7.5)*+{}="21";
{\ar@/_0.2pc/@{-}_{_{\pi_{A,C}}} "20";"21"};
\endxy
\end{equation}
\item Dually, we obtain a pair of $2$-simplices as below.
\[
\xy
(0,-7)*+{C}="0";
(-18,7)*+{A}="2";
(0,7)*+{A\am C}="4";
(18,7)*+{C}="6";
{\ar_{z} "2";"0"};
{\ar^{j_C} "4";"0"};
{\ar^{1_C} "6";"0"};
%{\eta}
{\ar^(0.4){i_A} "2";"4"};
{\ar_(0.4){i_C} "6";"4"};
(-13,2)*+{}="10";
(-11,7.5)*+{}="11";
{\ar@/_0.2pc/@{-}_{_{\iota_{A,C}}} "10";"11"};
(13,2)*+{}="20";
(11,7.5)*+{}="21";
{\ar@/^0.2pc/@{-}^{_{\nu_{A,C}}} "20";"21"};
\endxy
\]
\end{enumerate}
We will use these symbols in the rest, often abbreviating $\pi_{A,C}$ to $\pi$ and so on.
\end{dfn}

\begin{prop}\label{PropForSplit}
Let $(\ref{2SimplexETA})$ be any $2$-simplex, in which $O\in\C_0$ is a zero object. Then the squares
\begin{equation}\label{SplitExSeq}
{}_A\Nf_C=\ 
\xy
(-7,7)*+{A}="0";
(7,7)*+{A\ti C}="2";
(-7,-7)*+{O}="4";
(7,-7)*+{C}="6";
{\ar^(0.4){u_A} "0";"2"};
{\ar_{i} "0";"4"};
{\ar|*+{_z} "0";"6"};
{\ar^{p_C} "2";"6"};
{\ar_{j} "4";"6"};
(3,7)*+{}="00";
(7,3)*+{}="01";
{\ar@/_0.2pc/@{-}_{^{\pi}} "00";"01"};
(-3,-7)*+{}="10";
(-7,-3)*+{}="11";
{\ar@/_0.2pc/@{-}_{_{\eta}} "10";"11"};
\endxy
\ \ ,\ \ 
{}_A\Nf\ppr_C=\ \xy
(-7,7)*+{A}="0";
(7,7)*+{A\am C}="2";
(-7,-7)*+{O}="4";
(7,-7)*+{C}="6";
{\ar^(0.4){i_A} "0";"2"};
{\ar_{i} "0";"4"};
{\ar|*+{_z} "0";"6"};
{\ar^{j_C} "2";"6"};
{\ar_{j} "4";"6"};
(3,7)*+{}="00";
(7,3)*+{}="01";
{\ar@/_0.2pc/@{-}_{^{\iota}} "00";"01"};
(-3,-7)*+{}="10";
(-7,-3)*+{}="11";
{\ar@/_0.2pc/@{-}_{_{\eta}} "10";"11"};
\endxy
\end{equation}
are exact sequences.
\end{prop}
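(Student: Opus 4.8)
For each of $\Nf$ and $\Nf'$ I would check that it is an ambigressive pull-back, which by \cref{DefExSeq} is exactly what it means to be an exact sequence. I would do this directly for $\Nf$, whereas for $\Nf'$ it is more convenient to check instead that it is an ambigressive push-out, which is equivalent by (Ex3). So what is needed is: $\Nf$ is a pull-back square with $j\co O\to C$ ingressive and $p_C\co A\ti C\to C$ egressive, and $\Nf'$ is a push-out square with $i_A\co A\to A\am C$ ingressive and $i\co A\to O$ egressive. The ingressive/egressive parts are immediate. By (Ex1), $j$ is ingressive and $i$ is egressive. The product $A\ti C$ sits in a pull-back square with corner $A\ti C$, projections $p_A,p_C$, and a zero object $O_1$ as the remaining vertex (such a square exists because $O_1$ is terminal); in it $p_C$ is a pull-back of $A\to O_1$, which is egressive by (Ex1), so $p_C$ is egressive by (Ex2). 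Dually, in the coproduct square over a zero object $O_2$ (corner $A\am C$, inclusions $i_A,i_C$), $i_A$ is a push-out of $O_2\to C$, ingressive by (Ex1), hence $i_A$ is ingressive by (Ex2). Thus the real content is that $\Nf$ is a pull-back and $\Nf'$ is a push-out.

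\textbf{$\Nf$ is a pull-back.} Using \cref{PropComposeSquares} I would glue $\Nf$ (having $p_C$ as its right edge) and the product pull-back square above (having $p_C$ as its left edge) along their common edge $p_C\co A\ti C\to C$, obtaining a rectangle $\Rf$ with $\Rf_{\mathrm{left}}=\Nf$ and $\Rf_{\mathrm{right}}$ the product square. Both guide $2$-simplices in \cref{PropComposeSquares} are at my disposal; I take the relevant one to be $\mu_{A,C}$ from \cref{DefForSplit} (so its long edge is $d_1(\mu_{A,C})=1_A$) and the other to be any $2$-simplex filling $O\ov{j}{\lra}C\lra O_1$. Then the outer square $\Rf_{\mathrm{out}}$ carries $1_A$ as its top edge, so it is one of the trivial exact sequences of \cref{ExTrivExSeq}; in particular $\Rf_{\mathrm{out}}$ is a pull-back. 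Since $\Rf_{\mathrm{right}}$, the product square, is also a pull-back, the pasting lemma for pull-backs \cite[Lemma~4.4.2.1]{L1} forces $\Rf_{\mathrm{left}}=\Nf$ to be a pull-back.

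\textbf{$\Nf'$ is a push-out.} Dually, using \cref{PropComposeSquares} I would glue the coproduct push-out square above (corner $A\am C$, inclusions $i_A,i_C$, a zero object $O_2$ as the remaining vertex) and $\Nf'$ along their common edge $i_A\co A\to A\am C$, obtaining a rectangle $\Rf'$ in which one sub-square is a transpose of the coproduct square (still a push-out) and the adjacent sub-square is $\Nf'^{\,t}$. Now I take the relevant guide $2$-simplex to be $\nu_{A,C}$ from \cref{DefForSplit}, whose long edge is $d_1(\nu_{A,C})=1_C$, so that the outer square $\Rf'_{\mathrm{out}}$ carries $1_C$ as its bottom edge. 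The transpose of $\Rf'_{\mathrm{out}}$ is then one of the trivial exact sequences of \cref{ExTrivExSeq}, hence a push-out, so $\Rf'_{\mathrm{out}}$ itself is a push-out; as the transposed coproduct square is a push-out too, \cite[Lemma~4.4.2.1]{L1} makes $\Nf'^{\,t}$, and hence $\Nf'$, a push-out. Together with the first paragraph, this shows $\Nf$ and $\Nf'$ are exact sequences.

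\textbf{Main obstacle.} The delicate part is purely bookkeeping: arranging that the rectangle returned by \cref{PropComposeSquares} carries the prescribed squares literally (so that the transposes built into that proposition cancel correctly) and, above all, choosing the guide $2$-simplices to be precisely $\mu_{A,C}$, resp.\ $\nu_{A,C}$, so that the outer square of the rectangle is \emph{literally} of the form in \cref{ExTrivExSeq} --- with the degeneracy $1_A$, resp.\ $1_C$, appearing on its top, resp.\ bottom, edge. One should also record the standard fact invoked above that, over a zero object, the product square is a pull-back square and the coproduct square is a push-out square, which is what supplies the hypotheses of the pasting lemma and the egressiveness of $p_C$ / ingressiveness of $i_A$.
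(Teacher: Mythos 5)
Your proof is correct and follows essentially the same route as the paper: you paste $\Nf$ (resp.\ $\Nf'$) against the product (resp.\ coproduct) square over a zero object so that the outer square is a trivial exact sequence as in \cref{ExTrivExSeq}, apply \cite[Lemma~4.4.2.1]{L1} to deduce the (co)cartesian property, and use (Ex1)/(Ex2) for the ambigressivity of $p_C$, $j$, $i_A$, $i$. The only cosmetic differences are that you assemble the rectangle via \cref{PropComposeSquares} with the guide $2$-simplices $\mu_{A,C}$, $\nu_{A,C}$, and that you write out the case of $\Nf'$ explicitly, which the paper dispatches with ``dually''.
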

\begin{proof}
By using the definition of a zero object we may obtain a rectangle $\Rf$ of the following form
\[
\xy
(-16,7)*+{A}="0";
(0,7)*+{A\ti C}="2";
(16,7)*+{A}="4";
(-16,-7)*+{O}="10";
(0,-7)*+{C}="12";
(16,-7)*+{O}="14";
{\ar_(0.4){u_A} "0";"2"};
{\ar_(0.6){p_A} "2";"4"};
{\ar@/^1.0pc/^{1_A} "0";"4"};
{\ar_{i} "0";"10"};
{\ar_{p_C} "2";"12"};
{\ar^{i} "4";"14"};
{\ar^{j} "10";"12"};
{\ar^{} "12";"14"};
{\ar@/_1.0pc/_{1_O} "10";"14"};
\endxy
\]
in which $\Rf_{\mathrm{left}}=\Nf$, 
and $\Rf_{\mathrm{right}}$ is a pull-back. As in Example~\ref{ExTrivExSeq} we know that $\Rf_{\mathrm{right}}$ is an exact sequence, thus it follows that $\Nf$ is also a pull-back and $p_C$ is egressive. This shows that $\Nf$ is an exact sequence. Dually for $\Nf\ppr$.
\end{proof}

\begin{rem}
The above definition of ${}_A\Nf_C$ and ${}_A\Nf\ppr_C$ depends on the choice of $(\ref{2SimplexETA})$ and simplices taken. Later we will introduce an equivalence relation for exact sequences in Section~\ref{Section_EquivalenceRelation}, to show that their equivalence classes ${}_A\und{\Nf}_C, {}_A\und{\Nf\ppr}_C$ do not depend on these choices, and moreover satisfy ${}_A\und{\Nf}_C={}_A\und{\Nf\ppr}_C$.
\end{rem}

\begin{lem}\label{LemRelSplit}
Let ${}_A\Sf_C=\SQ{\eta}{\xi}$ be any exact sequence as in $(\ref{ExSeq})$. Let ${}_A\Nf_C=\SQ{\eta}{\pi},{}_A\Nf\ppr_C=\SQ{\eta}{\iota}$ be as in Proposition~\ref{PropForSplit}, obtained by using the $2$-simplex $\eta$. The following holds.
\begin{enumerate}
\item If $\ovl{x}$ is a split monomorphism in $h\C$, then there is a $3$-simplex
\[
\vp=\ 
\xy
(-3,7)*+{B}="0";
(-12,-6)*+{A}="2";
(10,-4)*+{C}="4";
(2,-16)*+{A\ti C}="6";
{\ar^{x} "2";"0"};
{\ar^{y} "0";"4"};
{\ar_{} "0";"6"};
{\ar_{}|!{"0";"6"}\hole "2";"4"};
{\ar_{u_A} "2";"6"};
{\ar_{p_C} "6";"4"};
\endxy \]
such that $d_1(\vp)=\pi_{A,C}$ and $d_2(\vp)=\xi$.
\item Dually, if $\ovl{y}$ is a split epimorphism in $h\C$, then there is a $3$-simplex $\psi$ such that $d_1(\psi)=\xi$ and $d_2(\psi)=\iota_{A,C}$.
\end{enumerate}
\end{lem}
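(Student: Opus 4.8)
The plan is to read off $\vp$ from the universal property of the product $A\ti C$, using the split-monomorphism hypothesis only to supply the right morphism of cones. Let $\ovl{p}\co\{1,2\}^{\triangleleft}\to\C$ denote the limit cone exhibiting $A\ti C$ as a product of $A$ and $C$: it restricts on $\{1,2\}$ to the diagram $(A,C)$, its cone point is $A\ti C$, and its legs are $p_A\co A\ti C\to A$ and $p_C\co A\ti C\to C$. By the definition of a limit, the restriction functor $\C_{/\ovl{p}}\to\C_{/(A,C)}$ between slice quasi-categories is a trivial Kan fibration, so it has the right lifting property against every monomorphism of simplicial sets, in particular against $\{0\}\hookrightarrow\Delta^1$ and $\{1\}\hookrightarrow\Delta^1$. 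The $2$-simplices $\mu_{A,C}$ and $\pi_{A,C}$ of \cref{DefForSplit} share the edge $u_A\co A\to A\ti C$, and since $\{1,2\}$ is discrete there is no further coherence datum to provide; hence they assemble into an object $\widehat{u}$ of $\C_{/\ovl{p}}$, with underlying vertex $A$, legs $1_A$ and $z$, new edge $u_A$, and structure $2$-simplices $\mu_{A,C}$ (witnessing $p_A\ci u_A\simeq 1_A$) and $\pi_{A,C}$ (witnessing $p_C\ci u_A\simeq z$). Its image in $\C_{/(A,C)}$ is the object determined by the pair $(1_A\co A\to A,\ z\co A\to C)$.

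Since $\ovl{x}$ is a split monomorphism in $h\C$, choose $r\in\C_1(B,A)$ with $\ovl{r}\ci\ovl{x}=\id_A$ and a $2$-simplex $\rho$ with $d_2(\rho)=x$, $d_1(\rho)=1_A$, $d_0(\rho)=r$ (such $\rho$ exists because $\ovl{r}\ci\ovl{x}=\ovl{1_A}$ in $h\C$). The pair $(\rho,\xi)$ — $\rho$ for the first leg, $\xi$ for the second — is a $1$-simplex of $\C_{/(A,C)}$ from the object $(1_A,z)$ to the object $(r,y)$, lying over the edge $x\in\C_1(A,B)$. Lifting this $1$-simplex along $\C_{/\ovl{p}}\to\C_{/(A,C)}$ with source the object $\widehat{u}$ above produces a $1$-simplex $e$ of $\C_{/\ovl{p}}$; write $w\in\C_1(B,A\ti C)$ for the new edge of its target object.

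Unpacked, $e$ is a map $\Delta^2\star\{1,2\}\to\C$, and its restriction along $\Delta^3\cong\Delta^2\star\{2\}\hookrightarrow\Delta^2\star\{1,2\}$ is a $3$-simplex $\vp$. Chasing through the identifications, $\vp$ has vertices $A,B,A\ti C,C$ and edges $x,u_A,z,w,y,p_C$ as in the statement, its face $d_1(\vp)$ is the second-leg $2$-simplex of $\widehat{u}$, namely $\pi_{A,C}$, and its face $d_2(\vp)$ is the second-leg $2$-simplex carried by the edge $e$, namely $\xi$; so $\vp$ is as required (the faces $d_3(\vp)$ and $d_0(\vp)$ witness $w\ci x\simeq u_A$ and $p_C\ci w\simeq y$ and are not needed). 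This proves {\rm (1)}. Part {\rm (2)} is the exact dual: let $\ovl{q}\co\{1,2\}^{\triangleright}\to\C$ be the colimit cocone exhibiting $A\am C$, so $\C_{\ovl{q}/}\to\C_{(A,C)/}$ is a trivial Kan fibration; the split epimorphism $\ovl{y}$ supplies a section $s\in\C_1(C,B)$ of $y$ together with a $2$-simplex witnessing $y\ci s\simeq 1_C$; the $2$-simplices $\iota_{A,C}$ and $\nu_{A,C}$ assemble $j_C$ into an object of $\C_{\ovl{q}/}$; and lifting the $1$-simplex of $\C_{(A,C)/}$ built from $\xi$ and the witnessing $2$-simplex of the section, with target that object, yields a $1$-simplex whose appropriate $\Delta^3$-restriction is the required $\psi$, with $d_1(\psi)=\xi$ and $d_2(\psi)=\iota_{A,C}$.

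The point of routing the construction through the universal property — rather than filling an inner horn — is that $d_1(\vp)$ must be \emph{equal} to the prescribed $2$-simplex $\pi_{A,C}$, not merely a $2$-simplex with the same boundary, and likewise $d_2(\vp)=\xi$ on the nose; in a quasi-category two fillers of a given boundary need not be homotopic, so these equalities must be produced by the universal property, which does exactly that. The only genuinely fiddly part is the bookkeeping in the previous paragraph that matches the simplices making up a $1$-simplex of $\C_{/\ovl{p}}$ with the individual faces and edges of $\vp$.
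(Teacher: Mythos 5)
Your proof is correct and follows essentially the same route as the paper: both arguments invoke the universal property of the product (resp.\ coproduct) cone, in the form of the trivial fibration $\C_{/\ovl{p}}\to\C_{/(A,C)}$, to lift the $1$-simplex assembled from the splitting $2$-simplex ($\rho$, the paper's $\om$) and $\xi$, so that the faces $\pi_{A,C}$ and $\xi$ are obtained on the nose. The only cosmetic difference is that the paper first builds the comparison cone $b\co B\to A\ti C$ from $(r,y)$ and lifts with both endpoints prescribed, whereas you prescribe only the source $\widehat{u}$ and read off the target edge $w$ from the lift.
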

\begin{proof}
Since {\rm (2)} can be shown dually, it is enough to show {\rm (1)}. Suppose that $\ovl{x}$ is a split monomorphism in $h\C$. This means that there is a $2$-simplex $\om$ of the following form.
\[ \TwoSP{A}{B}{A}{x}{r}{1_A}{\om} \]
Then by the definition of product, we obtain a pair of $2$-simplices as below.
\begin{equation}\label{PAeKS1}
\xy
(0,7)*+{B}="0";
(-18,-7)*+{A}="2";
(0,-7)*+{A\ti C}="4";
(18,-7)*+{C}="6";
{\ar_{r} "0";"2"};
{\ar^{b} "0";"4"};
{\ar^{y} "0";"6"};
%{\eta}
{\ar^{p_A} "4";"2"};
{\ar_{p_C} "4";"6"};
(-13,-2)*+{}="10";
(-11,-7.5)*+{}="11";
{\ar@/^0.2pc/@{-}^{} "10";"11"};
(13,-2)*+{}="20";
(11,-7.5)*+{}="21";
{\ar@/_0.2pc/@{-}_{} "20";"21"};
\endxy
\end{equation}
Also, $\om$ and $\xi$ gives the following pair of $2$-simplices.
\begin{equation}\label{PAeKS2}
\xy
(0,7)*+{A}="0";
(-18,-7)*+{A}="2";
(0,-7)*+{B}="4";
(18,-7)*+{C}="6";
{\ar_{1_A} "0";"2"};
{\ar^{x} "0";"4"};
{\ar^{z} "0";"6"};
%{\eta}
{\ar^{r} "4";"2"};
{\ar_{y} "4";"6"};
(-13,-2)*+{}="10";
(-11,-7.5)*+{}="11";
{\ar@/^0.2pc/@{-}^{\om} "10";"11"};
(13,-2)*+{}="20";
(11,-7.5)*+{}="21";
{\ar@/_0.2pc/@{-}_{\xi} "20";"21"};
\endxy
\end{equation}
By the definition of product, $(\ref{PAeKS1})$ and $(\ref{PAeKS2})$ give a pair of $3$-simplices as below,
\[ 
\vp\ppr=\ 
\xy
(-2,7)*+{A}="0";
(-12,-6)*+{A}="2";
(10,-4)*+{B}="4";
(2,-16)*+{A\ti C}="6";
{\ar_{1_A} "0";"2"};
{\ar^{x} "0";"4"};
{\ar_(0.7){u_A} "0";"6"};
{\ar_(0.3){r}|!{"0";"6"}\hole "4";"2"};
{\ar^{p_A} "6";"2"};
{\ar^{b} "4";"6"};
\endxy\ \ ,\ \ \vp=\ 
\xy
(2,7)*+{A}="0";
(12,-6)*+{C}="2";
(-10,-4)*+{B}="4";
(-2,-16)*+{A\ti C}="6";
{\ar^{z} "0";"2"};
{\ar_{x} "0";"4"};
{\ar^(0.7){u_A} "0";"6"};
{\ar^(0.3){y}|!{"0";"6"}\hole "4";"2"};
{\ar_{p_C} "6";"2"};
{\ar_{b} "4";"6"};
\endxy
\]
such that $d_3(\vp)=d_3(\vp\ppr)$ holds and the pairs $(d_k(\vp\ppr),d_0(\vp))$ $(k=0,1,2)$ agree with $(\ref{PAeKS1}),(\ref{ProdAC}),(\ref{PAeKS2})$, respectively. This $\vp$ satisfies the stated properties.
\end{proof}

\begin{prop}\label{PropRelSplit}
Let  $(\ref{2SimplexETA})$ be any $2$-simplex in which $O\in\C_0$ is a zero object, and let ${}_A\Nf_C,{}_A\Nf\ppr_C$ be as in Proposition~\ref{PropForSplit}.
Then there exists a $3$-simplex
\[
\aleph_{A,C}=\ 
\xy
(-3,8)*+{A\am C}="0";
(-15,-9)*+{A}="2";
(13,-5)*+{C}="4";
(2,-19)*+{A\ti C}="6";
{\ar^{i_A} "2";"0"};
{\ar^{j_C} "0";"4"};
{\ar_(0.37){\ups_{A,C}} "0";"6"};
{\ar_(0.4){z}|!{"0";"6"}\hole "2";"4"};
{\ar_{u_A} "2";"6"};
{\ar_{p_C} "6";"4"};
\endxy
\]
such that $d_1(\aleph_{A,C})=\pi_{A,C}$, $d_2(\aleph_{A,C})=\iota_{A,C}$ and that $\ovl{\ups_{A,C}}$ is an isomorphism in $h\C$.
\end{prop}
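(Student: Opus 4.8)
The plan is to construct the $3$-simplex $\aleph_{A,C}$, together with its edge $\ups_{A,C}\co A\am C\to A\ti C$, by iterated use of the universal properties of the product $A\ti C$ and the coproduct $A\am C$ — in the same spirit as the $3$-simplices $\vp,\vp\ppr$ built in the proof of \cref{LemRelSplit} and the rectangle built in the proof of \cref{PropForSplit} — and then to verify that $\ovl{\ups_{A,C}}$ is an isomorphism by a short computation in the additive category $h\C$.

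First I would produce an auxiliary morphism $u_C\in\C_1(C,A\ti C)$ together with $2$-simplices witnessing $p_A\ci u_C\simeq 0$ and $p_C\ci u_C\simeq 1_C$, using the universal property of the product $A\ti C$ applied to a morphism $C\to A$ factoring through the zero object $O$ and to $1_C$. Applying next the universal property of the coproduct $A\am C$ to the pair $u_A\in\C_1(A,A\ti C)$ and $u_C\in\C_1(C,A\ti C)$, I obtain the edge $\ups_{A,C}$ together with $2$-simplices $\rho_A$ witnessing $\ups_{A,C}\ci i_A\simeq u_A$ (which will become the face $d_3(\aleph_{A,C})$) and $\rho_C$ witnessing $\ups_{A,C}\ci i_C\simeq u_C$. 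Combined with $\pi_{A,C}$, $\iota_{A,C}$, $\nu_{A,C}$ and the $2$-simplex of the first step, these yield the compatibilities $p_C\ci\ups_{A,C}\ci i_A\simeq p_C\ci u_A\simeq z\simeq j_C\ci i_A$ and $p_C\ci\ups_{A,C}\ci i_C\simeq p_C\ci u_C\simeq 1_C\simeq j_C\ci i_C$, which are exactly the data needed — once more via the universal properties of $A\am C$ and $A\ti C$ — to produce the remaining face $d_0(\aleph_{A,C})$ witnessing $p_C\ci\ups_{A,C}\simeq j_C$ and, at the same time, to glue $d_0$, $d_1=\pi_{A,C}$, $d_2=\iota_{A,C}$, $d_3=\rho_A$ into a single $3$-simplex $\aleph_{A,C}$.

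The gluing in the last step is where the real work lies and is what I expect to be the main obstacle. A bare inner-horn filling ($\Lambda^3_1\hookrightarrow\Delta^3$ or $\Lambda^3_2\hookrightarrow\Delta^3$) would leave one of the two inner faces undetermined, whereas the statement demands $d_1(\aleph_{A,C})=\pi_{A,C}$ and $d_2(\aleph_{A,C})=\iota_{A,C}$ on the nose, with $\pi_{A,C},\iota_{A,C}$ the fixed $2$-simplices chosen in \cref{DefForSplit}. One therefore has to use the full strength of the universal properties — that the projections of $A\ti C$ form a limit cone and the injections of $A\am C$ a colimit cone, so that the corresponding restriction functors between slice quasi-categories are trivial fibrations — and to carry along the auxiliary $3$- and $4$-simplices that these produce, exactly as in the proofs of \cref{PropForSplit,LemRelSplit,LemExSeqPO}. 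Everything else is routine bookkeeping with faces and degeneracies.

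It then remains to see that $\ovl{\ups_{A,C}}$ is an isomorphism. The faces $d_3=\rho_A$ and $d_0$ of $\aleph_{A,C}$, and the $2$-simplex $\rho_C$ together with the first step, give in $h\C$ the relations $\ovl{\ups_{A,C}}\ci\ovl{i_A}=\ovl{u_A}$, $\ovl{p_C}\ci\ovl{\ups_{A,C}}=\ovl{j_C}$ and $\ovl{p_A}\ci\ovl{\ups_{A,C}}\ci\ovl{i_C}=0$. Identifying $A\ti C$ and $A\am C$ with the biproduct $A\oplus C$ by means of $\mu_{A,C},\pi_{A,C}$ (which give $\ovl{p_A}\ci\ovl{u_A}=\id_A$ and $\ovl{p_C}\ci\ovl{u_A}=\ovl{z}$) and $\nu_{A,C},\iota_{A,C}$ (which give $\ovl{j_C}\ci\ovl{i_C}=\id_C$ and $\ovl{j_C}\ci\ovl{i_A}=\ovl{z}$), the above relations say precisely that $\ovl{\ups_{A,C}}$ is the matrix $\left(\begin{smallmatrix}\id_A&0\\ \ovl{z}&\id_C\end{smallmatrix}\right)$ with respect to these decompositions; this is invertible, with inverse $\left(\begin{smallmatrix}\id_A&0\\ -\ovl{z}&\id_C\end{smallmatrix}\right)$. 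Hence $\ovl{\ups_{A,C}}$ is an isomorphism in $h\C$, as required.
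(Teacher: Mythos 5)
Your argument is correct in substance, but it takes a different (and more laborious) route than the paper, and the step you yourself flag as the main obstacle is exactly the work the paper avoids redoing. The paper first uses the coproduct property, applied to $1_A$ and a morphism $C\to A$ factoring through a zero object, to produce a retraction $j_A$ of $i_A$; this makes $\ovl{i_A}$ a split monomorphism, so \cref{LemRelSplit}~(1) applied to the exact sequence ${}_A\Nf\ppr_C=\SQ{\eta}{\iota}$ (with $\xi=\iota_{A,C}$) immediately hands over a $3$-simplex with $d_1=\pi_{A,C}$ and $d_2=\iota_{A,C}$ on the nose; the proof of that lemma is where the delicate simultaneous prescription of the two inner faces is done, by filling a $\partial\Delta^2$ in the slice over the two-vertex diagram $\{A,C\}$ whose last vertex is the (terminal) product cone and carrying along the auxiliary simplices. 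You instead build $u_C$, then $\ups_{A,C}$ from the coproduct cone applied to $(u_A,u_C)$, and then must glue $\pi_{A,C}$, $\iota_{A,C}$, $\rho_A$ and a new face $d_0$ into one $3$-simplex. This gluing does go through with the tools you name: fill the sphere $\partial\Delta^2$ in the quasi-category under $\{A,C\}$ whose vertices are the cones $(A\am C;i_A,i_C)$, $(A\ti C;u_A,u_C)$, $(C;z,1_C)$ and whose edges are $(\rho_A,\rho_C)$, $(\iota_{A,C},\nu_{A,C})$, $(\pi_{A,C},\sigma)$ (with $\sigma$ your witness of $p_C\ci u_C\simeq 1_C$), using that the coproduct cone is an initial object there and sits at the first vertex; restricting the filler to $\{A\}\star\Delta^2$ yields $\aleph_{A,C}$ with the two prescribed inner faces, and its remaining face supplies $d_0$. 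So there is no gap in principle, but note that this is precisely the coproduct-side mirror of the proof of \cref{LemRelSplit}, so the cheaper move is to quote that lemma (either as the paper does, or dually via part~(2) applied to ${}_A\Nf_C$ using a section of $p_C$) rather than reprove it. Your final computation, exhibiting $\ovl{\ups_{A,C}}$ as the invertible matrix $\left[\bsm \id_A&0\\ \ovl{z}&\id_C\esm\right]$, is a fine and slightly more explicit alternative to the paper's argument that $\ovl{\ups_{A,C}}$ is the unique map compatible with the projections, hence the canonical coproduct-to-product comparison, which is invertible by additivity of $h\C$ (in fact $\ovl{z}=0$ here, since $z$ factors through the zero object, though neither argument needs this).
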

\begin{proof}
Take any $2$-simplex as below.
\[
\xy
(-7,7)*+{C}="0";
(-7,-7)*+{O}="4";
(7,-7)*+{A}="6";
{\ar_{i\ppr} "0";"4"};
{\ar^{z\ppr} "0";"6"};
{\ar_{j\ppr} "4";"6"};
(-3,-7)*+{}="10";
(-7,-3)*+{}="11";
{\ar@/_0.2pc/@{-}_{_{\eta\ppr}} "10";"11"};
\endxy
\]
By the definition of a coproduct, we obtain a pair of $2$-simplices
\[
\xy
(0,-7)*+{A}="0";
(-18,7)*+{A}="2";
(0,7)*+{A\am C}="4";
(18,7)*+{C}="6";
{\ar_{1_A} "2";"0"};
{\ar^{j_A} "4";"0"};
{\ar^{z\ppr} "6";"0"};
%{\eta}
{\ar^(0.4){i_A} "2";"4"};
{\ar_(0.4){i_C} "6";"4"};
(-13,2)*+{}="10";
(-11,7.5)*+{}="11";
{\ar@/_0.2pc/@{-}_{} "10";"11"};
(13,2)*+{}="20";
(11,7.5)*+{}="21";
{\ar@/^0.2pc/@{-}^{} "20";"21"};
\endxy
\]
similarly as in Definition~\ref{DefForSplit} {\rm (2)}.
Then we have $\ovl{j_A}\ci\ovl{i_A}=\id_A$ in $h\C$, hence by Lemma~\ref{LemRelSplit} we obtain a $2$-simplex $\aleph_{A,C}$ with $d_1(\aleph_{A,C})=\pi_{A,C},d_2(\aleph_{A,C})=\iota_{A,C}$. As in the proof of Lemma~\ref{LemRelSplit}, morphism $\ups_{A,C}=d_0d_3(\aleph_{A,C})$ is obtained as a morphism appearing in the following
\[
\xy
(0,8)*+{A\am C}="0";
(-18,-7)*+{A}="2";
(0,-7)*+{A\ti C}="4";
(18,-7)*+{C}="6";
{\ar_{j_A} "0";"2"};
{\ar_{\ups_{A,C}} "0";"4"};
{\ar^{j_C} "0";"6"};
%{\eta}
{\ar^{p_A} "4";"2"};
{\ar_{p_C} "4";"6"};
(-13,-2)*+{}="10";
(-11,-7.5)*+{}="11";
{\ar@/^0.2pc/@{-}^{} "10";"11"};
(13,-2)*+{}="20";
(11,-7.5)*+{}="21";
{\ar@/_0.2pc/@{-}_{} "20";"21"};
\endxy
\]
by construction. In the homotopy category $h\C$, morphism $\vfr=\ovl{\ups_{A,C}}$ is the unique one which makes
\begin{equation}\label{UniqueComm}
\xy
(0,8)*+{A\am C}="0";
(-18,-7)*+{A}="2";
(0,-7)*+{A\ti C}="4";
(18,-7)*+{C}="6";
(12,2)*+{}="3";
(-12,2)*+{}="5";
{\ar_{\ovl{j_A}} "0";"2"};
{\ar_{\vfr} "0";"4"};
{\ar^{\ovl{j_C}} "0";"6"};
%{\eta}
{\ar^{\ovl{p_A}} "4";"2"};
{\ar_{\ovl{p_C}} "4";"6"};
{\ar@{}|\circlearrowright "3";"4"};
{\ar@{}|\circlearrowright "4";"5"};
\endxy
\end{equation}
commutative, since the bottom row gives a product of $A$ and $C$ in $h\C$. By the additivity of $h\C$, this forces $\vfr$ to be an isomorphism.
\end{proof}

\begin{rem}\label{Remv}
In the sequel we will continue to use the symbol $\ups_{A,C}$ for a morphism which gives the unique (iso)morphism $\vfr=\ovl{\ups_{A,C}}$ in $h\C$ making $(\ref{UniqueComm})$ commutative.
\end{rem}

\begin{cor}\label{CorRelSplit}
Let ${}_A\Sf_C=\SQ{\eta}{\xi}, {}_A\Nf_C=\SQ{\eta}{\pi}$ and ${}_A\Nf\ppr_C=\SQ{\eta}{\iota}$ be as in Lemma~\ref{LemRelSplit}. Then, the following are equivalent.
\begin{itemize}
\item[{\rm (i)}] There exists a morphism
\begin{equation}\label{MorphSN1}
{}_a\Cf_c=\ 
\xy
(-9,10)*+{A}="0";
(9,10)*+{B}="2";
(2,4)*+{O}="4";
(20,4)*+{C}="6";
(-9,-8)*+{A}="10";
(9,-8)*+{A\ti C}="12";
(2,-14)*+{O}="14";
(20,-14)*+{C}="16";
{\ar^{x} "0";"2"};
{\ar_{i} "0";"4"};
{\ar^{y} "2";"6"};
{\ar_(0.3){j} "4";"6"};
%{\ar_(0.4){z} "0";"6"};
%
{\ar_{a} "0";"10"};
{\ar^(0.7){b}|!{(10,4);(14,4)}\hole "2";"12"};
{\ar_(0.3){o} "4";"14"};
{\ar^{c} "6";"16"};
{\ar^(0.3){u_A}|!{(2,-4);(2,-8)}\hole "10";"12"};
{\ar_{i} "10";"14"};
{\ar^(0.6){p_C} "12";"16"};
{\ar_{j} "14";"16"};
%{\ar_{z} "10";"16"};
%
\endxy\ \ \co\Sf\to\Nf
\end{equation}
such that $\ovl{a}=\id_A$, $\ovl{c}=\id_C$.
\item[{\rm (i)$\ppr$}] There exists a morphism ${}_{a\ppr}\Cf\ppr_{c\ppr}\co\Nf\ppr\to\Sf$ such that $\ovl{a\ppr}=\id_A$, $\ovl{c\ppr}=\id_C$.
\item[{\rm (ii)}] There exists a morphism ${}_{1_A}\Cf_{1_C}\co\Sf\to\Nf$ in which $b=\Cf|_{\{(0,1)\}\ti\Delta^1}$ is a homotopy equivalence.
\item[{\rm (ii)$\ppr$}] There exists a morphism ${}_{1_A}\Cf\ppr_{1_C}\co\Nf\ppr\to\Sf$ in which $b\ppr=\Cf\ppr|_{\{(0,1)\}\ti\Delta^1}$ is a homotopy equivalence.
\item[{\rm (iii)}] $\ovl{x}$ is a split monomorphism in $h\C$.
\item[{\rm (iii)$\ppr$}] $\ovl{y}$ is a split epimorphism in $h\C$.
\end{itemize}
Remark that the conditions {\rm (iii)} and {\rm (iii)$\ppr$} are independent of the choices of $\Nf$ and $\Nf\ppr$.
\end{cor}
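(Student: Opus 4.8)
The plan is to prove the six conditions equivalent by a cycle of implications: $\mathrm{(iii)}\Rightarrow\mathrm{(ii)}\Rightarrow\mathrm{(i)}\Rightarrow\mathrm{(iii)}$, the primed analogues obtained from these by passing to $\C\op$, and the two chains bridged by $\mathrm{(ii)}\Rightarrow\mathrm{(iii)}\ppr$ and $\mathrm{(ii)}\ppr\Rightarrow\mathrm{(iii)}$. All of these except $\mathrm{(iii)}\Rightarrow\mathrm{(ii)}$ and its dual are immediate. Indeed $\mathrm{(ii)}\Rightarrow\mathrm{(i)}$ and $\mathrm{(ii)}\ppr\Rightarrow\mathrm{(i)}\ppr$ only forget information, while each of the remaining implications is a one-line computation in $h\C$ read off from the commutative squares carried by the relevant cube. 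For $\mathrm{(i)}\Rightarrow\mathrm{(iii)}$, a morphism ${}_a\Cf_c\colon\Sf\to\Nf$ with $\ovl{a}=\id_A$ yields $\ovl{b}\ci\ovl{x}=\ovl{u_A}\ci\ovl{a}=\ovl{u_A}$ in $h\C$, so that $\ovl{p_A}\ci\ovl{b}$ is a retraction of $\ovl{x}$ (using $\ovl{p_A}\ci\ovl{u_A}=\id_A$); dually $\mathrm{(i)}\ppr\Rightarrow\mathrm{(iii)}\ppr$ extracts the section $\ovl{b\ppr}\ci\ovl{i_C}$ of $\ovl{y}$ from $\ovl{y}\ci\ovl{b\ppr}=\ovl{j_C}$ (using $\ovl{j_C}\ci\ovl{i_C}=\id_C$). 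For the bridges, a morphism ${}_{1_A}\Cf_{1_C}\colon\Sf\to\Nf$ with $b$ a homotopy equivalence gives $\ovl{y}=\ovl{p_C}\ci\ovl{b}$, so that $\ovl{b}^{-1}\ci\ovl{u_C}$ is a section of $\ovl{y}$, which is $\mathrm{(iii)}\ppr$; symmetrically ${}_{1_A}\Cf\ppr_{1_C}\colon\Nf\ppr\to\Sf$ with $b\ppr$ a homotopy equivalence gives $\ovl{x}=\ovl{b\ppr}\ci\ovl{i_A}$, a split monomorphism since $\ovl{i_A}$ admits a retraction $\ovl{j_A}$ by the coproduct property of $A\am C$ in $h\C$, which is $\mathrm{(iii)}$. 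Since each of $\mathrm{(i)},\mathrm{(ii)}$ thus implies $\mathrm{(iii)}$ and each of $\mathrm{(i)}\ppr,\mathrm{(ii)}\ppr$ implies $\mathrm{(iii)}\ppr$, and — granting the two remaining implications — $\mathrm{(iii)}$ and $\mathrm{(iii)}\ppr$ reach all the others, this will close the equivalence.

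Thus everything reduces to $\mathrm{(iii)}\Rightarrow\mathrm{(ii)}$, the dual $\mathrm{(iii)}\ppr\Rightarrow\mathrm{(ii)}\ppr$ then following by interchanging $\C$ and $\C\op$. Assume $\ovl{x}$ is a split monomorphism. Lemma~\ref{LemRelSplit}(1) produces a $3$-simplex $\vp$ with $d_1(\vp)=\pi$ and $d_2(\vp)=\xi$, and these two identities are exactly the compatibilities needed for $\vp$ to occur as the $\Ycal_{\ff}$-entry of the cube
\[
\Cf=\CB{s_0(\pi)}{\vp}{s_2(\xi)}{s_0(\eta)}{s_1(\eta)}{s_2(\eta)}\colon\Sf\to\Nf ,
\]
whose remaining five entries are the indicated degeneracies of $\eta,\xi,\pi$ (and whose back rectangle is the constant rectangle on $A\ov{i}{\lra}O\ov{j}{\lra}C$, since $\Sf$ and $\Nf$ share $\eta$). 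Inspecting the faces shows that $\Cf$ is a morphism of exact sequences with $a=1_A$, $o=1_O$, $c=1_C$, and with middle edge $b=d_0d_3(\vp)\in\C_1(B,A\ti C)$. Hence a morphism of exact sequences $\Sf\to\Nf$ fixing both end terms exists; the one genuine obstacle is to show that its middle morphism $b$ is a homotopy equivalence.

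For that I would argue as follows. Since $c=1_C$, Lemma~\ref{LemForSym}(1) applies to $\Cf$ and shows that the square $\Cf|_{\{0\}\ti\Delta^1\ti\Delta^1}$ — that is, $(\ref{CPB})$ with $x\ppr=u_A$ and with $b$ on its right edge — is a pull-back; up to transposition this is the left-hand square of the front rectangle $\Rf_{\ff}=\RT{s_0(\pi)}{\vp}{s_2(\xi)}$ of $\Cf$. The outer square of $\Rf_{\ff}$ is built entirely from degeneracies of the edge $z\in\C_1(A,C)$ and carries the identities of $A$ and $C$ on its vertical edges, hence is simultaneously a pull-back and a push-out. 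By the pull-back form of \cite[Lemma~4.4.2.1]{L1}, the right-hand square of $\Rf_{\ff}$ is therefore a pull-back as well; but that square is $B\ov{y}{\lra}C$ over $A\ti C\ov{p_C}{\lra}C$ with $b$ on the left edge and $1_C$ on the right, and a pull-back of $p_C$ along $\id_C$ is just $A\ti C$, so $b$ must be a homotopy equivalence. This gives $\mathrm{(ii)}$ and closes the cycle; the concluding remark then holds because $\mathrm{(iii)}$ and $\mathrm{(iii)}\ppr$ refer only to $\ovl{x}$ and $\ovl{y}$, which come from $\Sf$ alone. The fiddliest point will be keeping straight which face of the cube is which (and the transposes implicit in Lemma~\ref{LemForSym}), but beyond the pasting lemma no new idea is needed.
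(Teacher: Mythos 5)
Your overall architecture is sound, and the first two thirds are correct: the cube $\CB{s_0(\pi)}{\vp}{s_2(\xi)}{s_0(\eta)}{s_1(\eta)}{s_2(\eta)}$ assembled from the $3$-simplex of \cref{LemRelSplit} does satisfy the six gluing conditions and gives a morphism $\Sf\to\Nf$ with $a=1_A$, $o=1_O$, $c=1_C$ (this is exactly how the paper obtains $\mathrm{(iii)}\Rightarrow\mathrm{(i)}$), and your one-line $h\C$-computations for $\mathrm{(i)}\Rightarrow\mathrm{(iii)}$, the bridges, and the duals are fine. The genuine gap is the final step, where you claim $b$ is a homotopy equivalence. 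The pull-back form of \cite[Lemma~4.4.2.1]{L1} states: \emph{if the right square of a rectangle is a pull-back}, then the outer square is a pull-back if and only if the left square is. It does not let you deduce that the right square of $\Rf_{\ff}$ is a pull-back from the left square (obtained from \cref{LemForSym}) and the degenerate outer square being pull-backs; that direction is false already in ordinary categories. Concretely, the left square being a pull-back only says that $A$ is the fibre product of $u_A$ and $b$, which by itself puts no constraint forcing $b$ to be an equivalence; and the valid pasting statement with hypothesis on the left square is the push-out one, whereas \cref{LemForSym}~(1) gives you a pull-back (a push-out only if you already knew $b$ egressive). Nor can you quote \cref{PropInvert}, which would give the claim at once, since it is proved later and its proof depends on the present corollary (via \cref{LemInvert,CorSplitEachOther}).

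What you are missing is precisely the substantive content of the statement, namely $\mathrm{(i)}\Rightarrow\mathrm{(ii)}$, and the paper handles it by a different, non-formal argument in the additive category $h\C$: after normalising $a=1_A$, $c=1_C$ by \cref{CorComposeCubes}, one sets $\rfr=\ovl{p_A}\ci\ovl{b}$ and shows that $\left[\bsm\rfr\\ \ovl{y}\esm\right]\co B\to A\oplus C$ is an isomorphism, producing a left inverse $[\,\ovl{x}\ \wfr\,]$ from the weak cokernel property of $A\ov{\ovl{x}}{\lra}B\ov{\ovl{y}}{\lra}C$, and proving it is an epimorphism using that the relevant square is a weak push-out by the dual of \cref{LemForSym}~(1). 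Some argument of this kind, using both the weak (co)kernel property of the rows and the (weak) push-out/pull-back property of the comparison square, is needed; the two pull-back facts you invoke do not suffice.
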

\begin{proof}
Equivalence $\mathrm{(ii)}\EQ\mathrm{(ii)\ppr}$ follows from Proposition~\ref{PropComposeCubes}, Propositions~\ref{PropForSym} and \ref{PropRelSplit}. Since $\mathrm{(i)\ppr}\EQ\mathrm{(ii)\ppr}\EQ\mathrm{(iii)\ppr}$ can be shown dually, it is enough to show $\mathrm{(i)}\EQ\mathrm{(ii)}\EQ\mathrm{(iii)}$.
Remark that $\mathrm{(ii)}\tc\mathrm{(i)}$ is trivial, and that $\mathrm{(iii)}\tc\mathrm{(i)}$ follows from in Lemma~\ref{LemRelSplit}. 
Also, $\mathrm{(i)}\tc\mathrm{(iii)}$ is immediate. Indeed if there is a morphism $(\ref{MorphSN1})$ as in {\rm (i)}, then we have $\ovl{p_A}\ci\ovl{b}\ci\ovl{x}=\ovl{p_A}\ci\ovl{u_A}\ci\ovl{a}=\id_A$, which in particular means that $\ovl{x}$ is a split monomorphism.

It remains to show $\mathrm{(i)}\tc\mathrm{(ii)}$.
Suppose that we have a morphism $(\ref{MorphSN1})$ satisfying $\ovl{a}=\id_A$ and $\ovl{c}=\id_C$. By Corollary~\ref{CorComposeCubes}, we may assume $a=1_A$ and $c=1_C$ from the beginning. Let us show that $\ovl{b}$ becomes an isomorphism in $h\C$. Remark that the $2$-simplex $\pi_{A,C}$ in $(\ref{ProdAC})$ induces a split short exact sequence
\[ A\ov{\ovl{u_A}}{\lra}A\ti C\ov{\ovl{p_C}}{\lra}C \]
in $h\C$. Replacing it by the isomorphic one
$A\ov{\left[\bsm1\\0\esm\right]}{\lra}A\oplus C\ov{[0\ 1]}{\lra}C$,
we see that $\Cf$ induces the following commutative diagram in $h\C$,
\begin{equation}\label{PAeKSComm}
\xy
(-14,6)*+{A}="2";
(0,6)*+{B}="4";
(14,6)*+{C}="6";
(-14,-6)*+{A}="12";
(0,-6)*+{A\oplus C}="14";
(14,-6)*+{C}="16";
{\ar^{\ovl{x}} "2";"4"};
{\ar^{\ovl{y}} "4";"6"};
{\ar@{=} "2";"12"};
{\ar^{\left[\bsm \rfr\\\ovl{y}\esm\right]} "4";"14"};
{\ar@{=} "6";"16"};
{\ar_(0.4){\left[\bsm1\\0\esm\right]} "12";"14"};
{\ar_(0.6){[0\ 1]} "14";"16"};
{\ar@{}|\circlearrowright "2";"14"};
{\ar@{}|\circlearrowright "4";"16"};
\endxy
\end{equation}
where we put $\rfr=\ovl{p_A}\ci\ovl{b}\in (h\C)(B,A)$. It suffices to show that $\left[\bsm\rfr\\\ovl{y}\esm\right]$ is an isomorphism.

Since the top row of $(\ref{PAeKSComm})$ is a weak cokernel sequence, there is some $\wfr\in(h\C)(C,B)$ such that $\id_B-\ovl{x}\ci\rfr=\wfr\ci\ovl{y}$. This shows that $[\ovl{x}\ \wfr]\co A\oplus C\to B$ gives a left inverse of $\left[\bsm\rfr\\\ovl{y}\esm\right]$. On the other hand, since the right square of $(\ref{PAeKSComm})$ is a weak push-out by the dual of Lemma~\ref{LemForSym} {\rm (1)}, it can be verified that $\left[\bsm\rfr\\\ovl{y}\esm\right]$ is an epimorphism, by a straightforward argument in the ordinary additive category $h\C$. Thus $\left[\bsm\rfr\\\ovl{y}\esm\right]$ is an isomorphism.
\end{proof}

\begin{dfn}\label{DefSplit}
An exact sequence $\Sf$ is said to \emph{split}, if it satisfies condition {\rm (iii)} (or other equivalent conditions) in Corollary~\ref{CorRelSplit}.
\end{dfn}

\begin{cor}\label{CorSplitEachOther}
Let $A,C\in\C_0$ be any pair of objects, and let ${}_A\Sf_C,{}_A\Sf\ppr_C$ be exact sequences starting from $A$ and ending in $C$. Suppose that there exists a morphism ${}_{1_A}\Cf_{1_C}\co\Sf\to\Sf\ppr$. If one of ${}_A\Sf_C,{}_A\Sf\ppr_C$ splits, then so does the other.
\end{cor}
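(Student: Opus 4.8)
The plan is to push everything down to the ordinary additive category $h\C$, where the statement becomes the elementary fact that split monomorphisms and split epimorphisms propagate along the commutative ladder induced by $\Cf$. First I would apply the canonical functor $\C\to h\C$ to the cube $\Cf\co\Sf\to\Sf\ppr$: since $\ovl{1_A}=\id_A$ and $\ovl{1_C}=\id_C$, this yields a commutative diagram in $h\C$ whose rows are $A\ov{\ovl{x}}{\lra}B\ov{\ovl{y}}{\lra}C$ and $A\ov{\ovl{x\ppr}}{\lra}B\ppr\ov{\ovl{y\ppr}}{\lra}C$, whose two outer columns are the identities of $A$ and of $C$, and whose middle column is $\ovl{b}\co B\to B\ppr$; in particular $\ovl{x\ppr}=\ovl{b}\ci\ovl{x}$ and $\ovl{y}=\ovl{y\ppr}\ci\ovl{b}$ in $h\C$.

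Next I would recall, from \cref{DefSplit} together with \cref{CorRelSplit}, that an exact sequence with underlying $h\C$-data $A\ov{\ovl{x}}{\lra}B\ov{\ovl{y}}{\lra}C$ splits if and only if $\ovl{x}$ is a split monomorphism in $h\C$, equivalently if and only if $\ovl{y}$ is a split epimorphism in $h\C$ (and similarly for $\Sf\ppr$ with $\ovl{x\ppr},\ovl{y\ppr}$).

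The conclusion is then immediate in each direction. If $\Sf$ splits, choose $s\in(h\C)(C,B)$ with $\ovl{y}\ci s=\id_C$; then $\ovl{y\ppr}\ci(\ovl{b}\ci s)=\ovl{y}\ci s=\id_C$, so $\ovl{y\ppr}$ is a split epimorphism and $\Sf\ppr$ splits. Conversely, if $\Sf\ppr$ splits, choose $r\in(h\C)(B\ppr,A)$ with $r\ci\ovl{x\ppr}=\id_A$; then $(r\ci\ovl{b})\ci\ovl{x}=r\ci\ovl{x\ppr}=\id_A$, so $\ovl{x}$ is a split monomorphism and $\Sf$ splits.

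There is no genuine obstacle here; the argument is a few lines of diagram bookkeeping. The only points requiring care are reading off the correct commutative square in $h\C$ from the cube $\Cf$, and invoking the appropriate one of the equivalent splitting conditions of \cref{CorRelSplit} in each direction (the split-epimorphism condition when passing from $\Sf$ to $\Sf\ppr$, and the split-monomorphism condition when passing back). One could alternatively first replace $a$ and $c$ by strict identities $1_A,1_C$ via \cref{CorComposeCubes}, but this is unnecessary since only the images of $\Cf$ in $h\C$ intervene.
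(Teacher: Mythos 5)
Your argument is correct and is exactly the route the paper takes: its one-line proof ("use (iii) and (iii)$\ppr$ of \cref{CorRelSplit}") is precisely your ladder argument in $h\C$, transferring a section of $\ovl{y}$ along $\ovl{b}$ in one direction and a retraction of $\ovl{x\ppr}$ along $\ovl{b}$ in the other. You have simply written out the details the paper leaves implicit.
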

\begin{proof}
This can be easily checked by using {\rm (iii)} and {\rm (iii)$\ppr$} in Corollary~\ref{CorRelSplit}.
\end{proof}

\subsection{Equivalence of exact sequences}\label{Section_EquivalenceRelation}

\begin{lem}\label{LemInvert}
Let ${}_A\Sf_C$ be any exact sequence as in $(\ref{ExSeq})$. Then as $x\sas\Sf$ of Lemma~\ref{LemExSeqPO}, we may take a splitting one. Dually, $y\uas\Sf$ can be chosen to split.
\end{lem}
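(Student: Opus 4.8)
The plan is to prove the first assertion, the second following by the dual argument. The guiding idea is that pushing an exact sequence out along its own ingressive morphism produces a split exact sequence, the splitting being extracted from the universal property of the push-out.

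Concretely, I apply \cref{LemExSeqPO}(1) with $a=x\co A\to B$ (recall that $x$ is ingressive), choosing for the square $\Pf$ of that lemma some push-out of $x$ along itself:
\[
\Pf=\ \xy
(-7,7)*+{A}="0";
(7,7)*+{B}="2";
(-7,-7)*+{B}="4";
(7,-7)*+{B_0}="6";
{\ar^{x} "0";"2"};
{\ar_{x} "0";"4"};
{\ar^{b_0} "2";"6"};
{\ar_{x_0} "4";"6"};
\endxy.
\]
This yields an exact sequence $\Sf_0=x\sas\Sf$, whose ingressive morphism is some $x_0\co B\to B_0$, together with a morphism $\Df\co\Sf\to\Sf_0$. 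By \cref{DefSplit} (and the equivalence of condition (iii) in \cref{CorRelSplit}), it then suffices to show that $\ovl{x_0}$ is a split monomorphism in $h\C$.

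For this I compare $\Pf$ with the ``trivial'' square
\[
\Nf=\SQ{s_1(x)}{s_1(x)}=\ \xy
(-7,7)*+{A}="0";
(7,7)*+{B}="2";
(-7,-7)*+{B}="4";
(7,-7)*+{B}="6";
{\ar^{x} "0";"2"};
{\ar_{x} "0";"4"};
{\ar^{1_B} "2";"6"};
{\ar_{1_B} "4";"6"};
\endxy.
\]
The crucial point is that $\Pf$ and $\Nf$ restrict to \emph{literally the same} map $\Lambda^2_0\to\C$, namely the span $B\ov{x}{\lla}A\ov{x}{\lra}B$; so both may be regarded as objects of the quasi-category under this span. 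Since $\Pf$ is a push-out, there is a morphism $\roundup{\vp}{\psi}\co\Pf\to\Nf$ in this undercategory --- the very move used repeatedly in \cref{Section_ExactSequences}, e.g.\ in the proof of \cref{LemExSeqPO}. Unwinding the faces of the $3$-simplex $\vp$ yields a $1$-simplex $r\in\C_1(B_0,B)$ and a $2$-simplex $d_0(\vp)$ exhibiting $\ovl{r}\ci\ovl{x_0}=\id_B$ in $h\C$. Hence $\ovl{x_0}$ is a split monomorphism, so $\Sf_0=x\sas\Sf$ splits.

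The second assertion is symmetric. Applying the dual of \cref{LemExSeqPO} with $c=y\co B\to C$ (recall that $y$ is egressive) produces $y\uas\Sf$, whose egressive morphism is some $y_0\ppr\co B_0\ppr\to B$, together with a morphism $\Df\ppr\co y\uas\Sf\to\Sf$ whose restriction $\Df\ppr|_{\Delta^1\ti\{1\}\ti\Delta^1}$ is the pull-back of $y$ along itself. The trivial cone $(1_B,1_B)$ over that cospan, together with the universal property of the pull-back, gives a $1$-simplex $s\in\C_1(B,B_0\ppr)$ with $\ovl{y_0\ppr}\ci\ovl{s}=\id_B$; thus $\ovl{y_0\ppr}$ is a split epimorphism and $y\uas\Sf$ splits by condition (iii)$'$ of \cref{CorRelSplit}. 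There is no real obstacle here: the only substantive ingredient is the push-out (resp.\ pull-back) universal property, which is the workhorse of \cref{Section_ExactSequences}. The one point needing care is that $\Pf$ and $\Nf$ must agree \emph{on the nose} (not merely up to homotopy) after restriction to $\Lambda^2_0$ --- which is exactly why one takes $a=x$ to be the very same $1$-simplex and builds $\Nf$ out of degeneracies of $x$ --- and the rest is routine bookkeeping with low-dimensional faces.
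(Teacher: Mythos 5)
Your proof is correct, but it takes a different route from the paper's. The paper handles the pull-back case first: it invokes Barwick's \cite[Lemma~4.6]{B1}, which identifies the self-pull-back of the egressive $y$ as a square with vertex $A\ti B$ and projection $p_B$, so that the resulting $y\uas\Sf$ has the form $A\to A\ti B\ov{p_B}{\lra}B$ and visibly satisfies condition (iii)$\ppr$ of \cref{CorRelSplit}; the push-out case is then dual. You instead avoid identifying the object at all: taking the push-out of $x$ along the \emph{same} $1$-simplex $x$, you compare it in the undercategory of the span $B\ov{x}{\lla}A\ov{x}{\lra}B$ with the degenerate square built from $s_1(x)$, and the initiality of the push-out cocone hands you a retraction $r$ with $\ovl{r}\ci\ovl{x_0}=\id_B$ (your face computation via $d_0(\vp)$ is right), so (iii) holds; dually a section of the pulled-back egressive gives (iii)$\ppr$. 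Your argument is self-contained (no appeal to \cite{B1}) and in fact shows that \emph{every} choice of $x\sas\Sf$ and $y\uas\Sf$ splits, not just a convenient one — a fact the paper only recovers later via \cref{CorFunctoriality} (see \cref{RemInvert}); what the paper's citation buys is an explicit identification of the middle term, which is not needed for the lemma. The one delicate point — that the comparison square must restrict to literally the same map on $\Lambda^2_0$, which you arrange by using the same $1$-simplex $x$ twice and degeneracies of it — is exactly the right thing to check, and the invocation of \cref{CorRelSplit} (iii)/(iii)$\ppr$ causes no circularity since that corollary precedes this lemma.
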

\begin{proof}
By \cite[Lemma~4.6]{B1}, we have a pull-back of the following form.
\[
\xy
(-7,7)*+{A\ti B}="0";
(7,7)*+{B}="2";
(-7,-7)*+{B}="4";
(7,-7)*+{C}="6";
{\ar^(0.7){p_B} "0";"2"};
{\ar_{} "0";"4"};
{\ar^{y} "2";"6"};
{\ar_{y} "4";"6"};
\endxy
\]
If we use this pull-back, the resulting $y\uas\Sf$ becomes of the form
\[
\xy
(-7,7)*+{A}="0";
(7,7)*+{A\ti B}="2";
(-7,-7)*+{O}="4";
(7,-7)*+{B}="6";
{\ar^{} "0";"2"};
{\ar_{} "0";"4"};
{\ar^{p_B} "2";"6"};
{\ar_{} "4";"6"};
\endxy,
\]
which obviously satisfies {\rm (iii)$\ppr$} of Corollary~\ref{CorRelSplit}. Dually for $x\sas\Sf$.
\end{proof}

\begin{rem}\label{RemInvert}
Later in Corollary~\ref{CorFunctoriality} {\rm (1)} we will see that all $x\sas\Sf$ become \emph{equivalent}, and hence in particular always split, independently of the choices made. Similarly for $y\uas\Sf$.
\end{rem}

\begin{prop}\label{PropInvert}
For any morphism of exact sequences ${}_{1_A}\Cf_{1_C}\co\Sf\to\Sf\ppr$, the morphism $b=\Cf|_{\{(0,1)\}\ti\Delta^1}$ becomes a homotopy equivalence.

Thus the assumption on $b$ in Proposition~\ref{PropForSym} is always satisfied, hence there exists a morphism in the opposite direction ${}_{1_A}\Cf\ppr_{1_C}\co\Sf\ppr\to\Sf$.
\end{prop}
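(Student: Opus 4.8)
The plan is to reduce to the case of identity end terms, extract from the cube $\Cf$ a pull-back square and a push-out square in $\C$, and finish by a short exact version of the five lemma in $h\C$.

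First, by Corollary~\ref{CorComposeCubes} we may replace $\Cf$ by a morphism of exact sequences ${}_{1_A}\Cf_{1_C}\co\Sf\to\Sf\ppr$ with $a=1_A$ and $c=1_C$; this does not affect $b$ up to homotopy, so it suffices to show that $\ovl{b}$ is an isomorphism in $h\C$. Since $\ovl{c}=\id_C$, Lemma~\ref{LemForSym}~(1) applies to $\Cf$ and shows that the square $P:=\Cf|_{\{0\}\ti\Delta^1\ti\Delta^1}$, whose edges are $x$, $x\ppr$, $1_A$, $b$, is a pull-back in $\C$. Dually, since $\ovl{a}=\id_A$, the dual of Lemma~\ref{LemForSym}~(1) shows that $Q:=\Cf|_{\Delta^1\ti\{1\}\ti\Delta^1}$, whose edges are $y$, $y\ppr$, $b$, $1_C$, is a push-out in $\C$.

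Next, pass to $h\C$. The image in $h\C$ of a pull-back square of $\C$ is a weak pull-back (the comparison map to the honest pull-back is an epimorphism), and dually for push-outs. From the weak pull-back $P$ one reads off that $\ovl{b}$ is a monomorphism in $h\C$: if $\ovl{b}\ci f=0$ then the pair $(f,0)$ is compatible over the cospan of $P$, hence factors through $\ovl{x}$, which forces $f=\ovl{x}\ci 0=0$. From the weak push-out $Q$ one gets dually that $\ovl{b}$ is an epimorphism.

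Finally — and this is where the real work lies — one has to upgrade ``monic and epic'' to ``isomorphism''. Mere additivity of $h\C$ is not enough for this, and one must use that both rows $A\ov{\ovl{x}}{\lra}B\ov{\ovl{y}}{\lra}C$ and $A\ov{\ovl{x\ppr}}{\lra}B\ppr\ov{\ovl{y\ppr}}{\lra}C$ are simultaneously weak kernel and weak cokernel sequences (as recalled in the proof of Proposition~\ref{PropWIsom}). Using the weak cokernel property of the first row together with the weak push-out $Q$, one produces a map $B\ppr\to B$ that inverts $\ovl{b}$ up to a term lying in the image of $\ovl{x\ppr}$, and corrects that term using the weak pull-back $P$ and the weak kernel property of $\ovl{x\ppr}$; dually one builds an inverse on the other side. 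The delicate point, exactly as in Proposition~\ref{PropWIsom}, is that weak (co)kernels only supply non-unique factorizations, so the two candidate inverses must be assembled and compared by the same idempotent bookkeeping used there; I expect this bookkeeping, rather than any of the quasi-categorical preliminaries, to be the main obstacle. Once $\ovl{b}$ is known to be an isomorphism, $b$ is a homotopy equivalence, so the hypothesis of Proposition~\ref{PropForSym} is always satisfied and the existence of ${}_{1_A}\Cf\ppr_{1_C}\co\Sf\ppr\to\Sf$ follows from that proposition.
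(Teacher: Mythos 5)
There is a genuine gap, and it sits exactly where you flag the ``real work'': the upgrade from ``$\ovl{b}$ is monic and epic'' to ``$\ovl{b}$ is an isomorphism'' is not carried out, and the route you sketch cannot be completed with the data you have assembled. Your first two steps are fine: \cref{LemForSym}~(1) and its dual do show that the square on $A,B,A,B\ppr$ is a pull-back and the square on $B,C,B\ppr,C$ is a push-out, and their images in $h\C$ are weak (co)limits; but because one leg of each square is an identity, these weak universal properties amount precisely to cancellation statements, i.e.\ to $\ovl{b}$ being a monomorphism and an epimorphism in $h\C$ --- nothing more. Mono plus epi does not give invertibility in an additive category, and to produce an inverse you must manufacture some morphism $B\ppr\to B$ (or $B\ppr\to A$, or $C\to B$). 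None of the tools you list can do this: the weak cokernel property of the row $A\ov{\ovl{x}}{\lra}B\ov{\ovl{y}}{\lra}C$ only factors maps \emph{out of} $B$ through $C$, the weak kernel properties only factor maps \emph{into} $B$ or $B\ppr$ through $A$, and the weak push-out/pull-back of the two squares, as noted, yield no new morphisms at all. This is also why the ``idempotent bookkeeping'' of \cref{PropWIsom} is not available to you: that argument starts from a morphism $b\ppr$ in the opposite direction, which is exactly what is missing here (indeed, producing it is the content of the proposition, via \cref{PropForSym}).

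The paper's proof supplies the missing morphism by going back into the quasi-category rather than staying in $h\C$: it forms $x\sas\Sf$ and $x\sas\Sf\ppr$ (push-out along the ingressive $x$ itself), uses \cref{LemInvert} to choose $x\sas\Sf$ split, \cref{LemExSeqPO}~(2) and \cref{PropComposeCubes} to induce a morphism $x\sas\Sf\to x\sas\Sf\ppr$, and \cref{CorSplitEachOther} to conclude that $x\sas\Sf\ppr$ splits as well. The splitting gives a retraction $\rfr$ with $\rfr\ci\ovl{x_M}=\id_B$, and it is this extra map --- invisible from the $h\C$-level weak (co)limit data --- that allows the explicit left inverse $\bfr=\rfr\ci\ovl{b_M}+\dfr\ci\ovl{y\ppr}$ of $\ovl{b}$ to be written down; a dual argument with $y^{\prime\ast}$ gives a right inverse. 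If you want to complete your proof, you should either reproduce this push-forward/splitting argument or find some other mechanism that genuinely constructs a morphism out of $B\ppr$; as it stands, your final paragraph asserts the existence of the key map without any means of obtaining it. (Your opening reduction via \cref{CorComposeCubes} is harmless but unnecessary, since the hypothesis already has $a=1_A$ and $c=1_C$.)
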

\begin{proof}
Label $\Cf$ as in $(\ref{CubeForSym})$.
Take $x\sas\Sf$ and $x\sas\Sf\ppr$. By Lemma~\ref{LemInvert}, we may choose $x\sas\Sf$ to split. There are morphisms
\[ {}_x\Df_{1_C}\co\Sf\to x\sas\Sf
\quad\text{and}\quad
{}_x\Df\ppr_{1_C}\co\Sf\ppr\to x\sas\Sf\ppr \]
as in Lemma~\ref{LemExSeqPO} {\rm (1)}. By Proposition~\ref{PropComposeCubes} applied to $\Sf\ov{{}_{1_A}\Cf_{1_C}}{\lra}\Sf\ppr\ov{{}_x(\Df\ppr)_{1_C}}{\lra}x\sas\Sf\ppr$, we obtain a morphism ${}_{x}\Cf\ppr_{1_C}\co\Sf\to x\sas\Sf\ppr$. 
Thus by Lemma~\ref{LemExSeqPO} {\rm (2)}, we obtain a morphism ${}_{1_B}\Cf\pprr_{1_C}\co x\sas\Sf\to x\sas\Sf\ppr$. 
Since $x\sas\Sf$ splits, it follows that $x\sas\Sf\ppr$ also splits by Corollary~\ref{CorSplitEachOther}. 

Let us show that $\ovl{b}$ is an isomorphism in $h\C$. If we label simplices of $\Df\ppr$ as below,
\[
\xy
(-8,9)*+{A}="0";
(8,9)*+{B\ppr}="2";
(1,3)*+{O\ppr}="4";
(17,3)*+{C}="6";
(-8,-7)*+{B}="10";
(8,-7)*+{M}="12";
(1,-13)*+{O\ppr}="14";
(17,-13)*+{C}="16";
{\ar^{x\ppr} "0";"2"};
{\ar_{i\ppr} "0";"4"};
{\ar^{y\ppr} "2";"6"};
{\ar_(0.3){j\ppr} "4";"6"};
%{\ar_(0.4){z} "0";"6"};
%
{\ar_{x} "0";"10"};
{\ar^(0.7){b_M}|!{(9,3);(13,3)}\hole "2";"12"};
{\ar_(0.3){1_{O\ppr}} "4";"14"};
{\ar^{1_C} "6";"16"};
{\ar^(0.3){x_M}|!{(1,-3);(1,-7)}\hole "10";"12"};
{\ar_{} "10";"14"};
{\ar^{y_M} "12";"16"};
{\ar_{j\ppr} "14";"16"};
%{\ar_{z} "10";"16"};
%
\endxy
\]
then we have the following commutative diagram
\[
\xy
(-12,6)*+{A}="2";
(0,6)*+{B\ppr}="4";
(12,6)*+{C}="6";
(-12,-6)*+{B}="12";
(0,-6)*+{M}="14";
(12,-6)*+{C}="16";
{\ar^{\ovl{x\ppr}} "2";"4"};
{\ar^{\ovl{y\ppr}} "4";"6"};
{\ar_{\ovl{x}} "2";"12"};
{\ar^{\ovl{b_M}} "4";"14"};
{\ar@{=} "6";"16"};
{\ar_{\ovl{x_M}} "12";"14"};
{\ar_{\ovl{y_M}} "14";"16"};
{\ar@{}|\circlearrowright "2";"14"};
{\ar@{}|\circlearrowright "4";"16"};
\endxy
\]
in $h\C$. Since $x\sas\Sf\ppr$ splits, there is some $\rfr\in(h\C)(M,B)$ such that $\rfr\ci\ovl{x_M}=\id_B$. Since $A\ov{\ovl{x}}{\lra}B\ov{\ovl{y}}{\lra}C$ is a weak cokernel sequence, there exists $\dfr\in(h\C)(C,B)$ such that
$\id_B-\rfr\ci\ovl{b_M}\ci\ovl{b}=\dfr\ci\ovl{y}$.
If we put $\bfr=\rfr\ci\ovl{b_M}+\dfr\ci\ovl{y\ppr}\in(h\C)(B\ppr,B)$, then it satisfies
\[ \bfr\ci\ovl{b}=\rfr\ci\ovl{b_M}\ci\ovl{b}+\dfr\ci\ovl{y\ppr}\ci\ovl{b}=\id_B, \]
%\begin{eqnarray*}
%\bfr\ci\ovl{b}&=&\rfr\ci\ovl{b_M}\ci\ovl{b}+\dfr\ci\ovl{y\ppr}\ci\ovl{b}\\
%&=&\rfr\ci\ovl{b_M}\ci\ovl{b}+(\id_B-\rfr\ci\ovl{b_M}\ci\ovl{b})=\id_B,
%\end{eqnarray*}
which shows that $\bfr$ is a left inverse of $\ovl{b}$.

A dual argument using $y^{\prime\ast}\Sf$ and $y^{\prime\ast}\Sf\ppr$ shows that $\ovl{b}$ has also a right inverse, and thus it is an isomorphism.
\end{proof}

\begin{dfn}\label{DefEquivExSeq}
Let $A,C\in\C_0$ be any pair of objects. Let
\begin{equation}\label{TwoExSeqEquiv}
{}_A\Sf_C=\ 
\xy
(-7,7)*+{A}="0";
(7,7)*+{B}="2";
(-7,-7)*+{O}="4";
(7,-7)*+{C}="6";
{\ar^{x} "0";"2"};
{\ar_{i} "0";"4"};
{\ar^{y} "2";"6"};
{\ar_{j} "4";"6"};
\endxy
\ ,\ \ 
{}_A\Sf\ppr_C=\ 
\xy
(-7,7)*+{A}="0";
(7,7)*+{B\ppr}="2";
(-7,-7)*+{O\ppr}="4";
(7,-7)*+{C}="6";
{\ar^{x\ppr} "0";"2"};
{\ar_{i\ppr} "0";"4"};
{\ar^{y\ppr} "2";"6"};
{\ar_{j\ppr} "4";"6"};
\endxy
\end{equation}
be two exact sequences starting from $A$ and ending in $C$. We say \emph{$\Sf$ is equivalent to $\Sf\ppr$} if there is a morphism of exact sequences ${}_{1_A}\Cf_{1_C}\co\Sf\to\Sf\ppr$. In this case we write $\Sf\sim\Sf\ppr$.
By Corollary~\ref{CorComposeCubes}, this is equivalent to the existence of a morphism ${}_a\Cf\ppr_c\co \Sf\to\Sf\ppr$ satisfying $\ovl{a}=\id_A$ and $\ovl{c}=\id_C$.
\end{dfn}

\begin{prop}\label{PropEquivExSeq}
For any $A,C\in\C_0$, relation $\sim$ is an equivalence relation.
\end{prop}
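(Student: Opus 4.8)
The plan is to check reflexivity, symmetry and transitivity directly, relying on the cube-pasting lemmas of \cref{Section_NotationsAndTerminology} and on \cref{PropInvert}. Throughout I would use the reformulation recorded in \cref{DefEquivExSeq}: $\Sf\sim\Sf\ppr$ means there is a morphism of exact sequences ${}_{1_A}\Cf_{1_C}\co\Sf\to\Sf\ppr$, and equivalently, by \cref{CorComposeCubes}, that there is some ${}_a\Cf_c\co\Sf\to\Sf\ppr$ with $\ovl a=\id_A$ and $\ovl c=\id_C$.

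Reflexivity is immediate from \cref{ExTrivCubes}\,(1): the cube $\If_{\Sf}$ restricts to $\Sf$ on both $\Delta^1\ti\Delta^1\ti\{0\}$ and $\Delta^1\ti\Delta^1\ti\{1\}$ and, being the identity $1_{\Sf}$ in $\Fun(\Delta^1\ti\Delta^1,\C)$, has all its connecting edges degenerate; in particular $\If_{\Sf}|_{\{(0,0)\}\ti\Delta^1}=1_A$ and $\If_{\Sf}|_{\{(1,1)\}\ti\Delta^1}=1_C$, so $\If_{\Sf}$ witnesses $\Sf\sim\Sf$.

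Symmetry is where the substance lies, but it has already been arranged: if ${}_{1_A}\Cf_{1_C}\co\Sf\to\Sf\ppr$ witnesses $\Sf\sim\Sf\ppr$, then \cref{PropInvert} applies verbatim and tells us that its middle morphism $b$ is a homotopy equivalence and, consequently (through \cref{PropForSym}), that there is a morphism ${}_{1_A}\Cf\ppr_{1_C}\co\Sf\ppr\to\Sf$. Hence $\Sf\ppr\sim\Sf$, and nothing new is needed.

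For transitivity, given witnesses ${}_{1_A}\Cf_{1_C}\co\Sf\to\Sf\ppr$ and ${}_{1_A}\Cf\ppr_{1_C}\co\Sf\ppr\to\Sf\pprr$ --- so that $\Cf|_{\Delta^1\ti\Delta^1\ti\{1\}}=\Sf\ppr=\Cf\ppr|_{\Delta^1\ti\Delta^1\ti\{0\}}$ --- I would apply \cref{PropComposeCubes} to the composable pair $\Cf,\Cf\ppr$, feeding it at the corners $A$ and $C$ the degenerate $2$-simplices $s_1(1_A)$ and $s_1(1_C)$ (all of whose faces equal $1_A$, respectively $1_C$) and at the remaining two corners, the $B$-corner and the zero-object corner, arbitrary inner-horn fillers, which exist since $\C$ is a quasi-category. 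The resulting cube $F|_{\Delta^1\ti\Delta^1\ti\Lambda^2_1}\co\Sf\to\Sf\pprr$ then has $(0,0)$-edge $1_A$ and $(1,1)$-edge $1_C$, so $\Sf\sim\Sf\pprr$. (Even unnormalized fillers at those two corners would only produce ${}_a\Cf_c$ with $\ovl a=\id_A$ and $\ovl c=\id_C$, which already suffices by \cref{CorComposeCubes}.) I do not expect a genuine obstacle here: the one nontrivial point, reversibility of morphisms with identity end terms, is \cref{PropInvert}, and what remains is only the light bookkeeping of choosing degenerate $2$-simplices at the end corners so that the pasted cube again has end-term maps $1_A$ and $1_C$.
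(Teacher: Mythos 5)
Your proposal is correct and follows essentially the same route as the paper: reflexivity from the degenerate cube $\If_{\Sf}$ of \cref{ExTrivCubes}, symmetry from \cref{PropInvert} (via \cref{PropForSym}), and transitivity from \cref{PropComposeCubes}. Your extra bookkeeping (choosing $s_1(1_A)$, $s_1(1_C)$ at the end corners, or falling back on \cref{CorComposeCubes}) just makes explicit what the paper leaves implicit.
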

\begin{proof}
Reflexive law is obvious, since there is always $\If_{\Sf}\co\Sf\to\Sf$ for any exact sequence $\Sf$ as in Example~\ref{ExTrivCubes}. Symmetric law and transitive law follow from Propositions~\ref{PropInvert} and \ref{PropComposeCubes}, respectively.
\end{proof}

\subsection{(Co)products of exact sequences}
We begin with the following consequence of \cite[Corollary~5.1.2.3]{L1}.
Let
\begin{equation}\label{PairOfExSeqForProd}
\Sf_k=\ 
\xy
(-7,7)*+{A_k}="0";
(7,7)*+{B_k}="2";
(-7,-7)*+{O_k}="4";
(7,-7)*+{C_k}="6";
{\ar^{x_k} "0";"2"};
{\ar_{i_k} "0";"4"};
{\ar|*+{_{z_k}} "0";"6"};
{\ar^{y_k} "2";"6"};
{\ar_{j_k} "4";"6"};
(3,7)*+{}="00";
(7,3)*+{}="01";
{\ar@/_0.2pc/@{-}_(0.4){^{\xi_k}} "00";"01"};
(-3,-7)*+{}="10";
(-7,-3)*+{}="11";
{\ar@/_0.2pc/@{-}_(0.4){_{\eta_k}} "10";"11"};
\endxy
\quad(k=1,2)
\end{equation}
be any pair of squares. By \cite[Corollary~5.1.2.3]{L1}, a product $\Sf_1\ti\Sf_2$ of $\Sf_1,\Sf_2$ in $\Fun(\Delta^1\ti\Delta^1,\C)$ can be obtained by taking products on each vertex of $\Delta^1\ti\Delta^1$ as follows,
\begin{equation}\label{ProdExSeqPair}
\Sf_1\ti\Sf_2=\ 
\xy
(-11,11)*+{A_1\ti A_2}="0";
(11,11)*+{B_1\ti B_2}="2";
(-11,-11)*+{O_1\ti O_2}="4";
(11,-11)*+{C_1\ti C_2}="6";
{\ar^{x_1\ti x_2} "0";"2"};
{\ar_{i_1\ti i_2} "0";"4"};
{\ar|*+{_{z_1\ti z_2}} "0";"6"};
{\ar^{y_1\ti y_2} "2";"6"};
{\ar_{j_1\ti j_2} "4";"6"};
(3,11)*+{}="00";
(11,3)*+{}="01";
{\ar@/_0.2pc/@{-}_{^{\xi_1\ti \xi_2}} "00";"01"};
(-3,-11)*+{}="10";
(-11,-3)*+{}="11";
{\ar@/_0.2pc/@{-}_{_{\eta_1\ti \eta_2}} "10";"11"};
\endxy
\end{equation}
together with projections $\Qf_k\in\Fun(\Delta^1\ti\Delta^1,\C)_1(\Sf_1\ti\Sf_2,\Sf_k)$ $(k=1,2)$. In particular, for any square $\Sf$ and any pair of morphisms $\Cf_k\in\Fun(\Delta^1\ti\Delta^1,\C)_1(\Sf,\Sf_k)$ $(k=1,2)$, we obtain a morphism $(\Cf_1,\Cf_2)$ and a pair of $2$-simplices
\[
\xy
(0,12)*+{\Sf}="0";
(-22,-7)*+{\Sf_1}="2";
(0,-7)*+{\Sf_1\ti \Sf_2}="4";
(22,-7)*+{\Sf_2}="6";
{\ar_{\Cf_1} "0";"2"};
{\ar|*+{_{(\Cf_1,\Cf_2)}} "0";"4"};
{\ar^{\Cf_2} "0";"6"};
%{\eta}
{\ar^{\Qf_1} "4";"2"};
{\ar_{\Qf_2} "4";"6"};
(-16,-0.5)*+{}="10";
(-13,-7.5)*+{}="11";
{\ar@/^0.2pc/@{-}^{} "10";"11"};
(16,-0.5)*+{}="20";
(13,-7.5)*+{}="21";
{\ar@/_0.2pc/@{-}_{} "20";"21"};
\endxy
\]
in $\Fun(\Delta^1\ti\Delta^1,\C)$.
Dually, we have a coproduct
\[
\Sf_1\am\Sf_2=\ 
\xy
(-11,11)*+{A_1\am A_2}="0";
(11,11)*+{B_1\am B_2}="2";
(-11,-11)*+{O_1\am O_2}="4";
(11,-11)*+{C_1\am C_2}="6";
{\ar^{x_1\am x_2} "0";"2"};
{\ar_{i_1\am i_2} "0";"4"};
{\ar|*+{_{z_1\am z_2}} "0";"6"};
{\ar^{y_1\am y_2} "2";"6"};
{\ar_{j_1\am j_2} "4";"6"};
(3,11)*+{}="00";
(11,3)*+{}="01";
{\ar@/_0.2pc/@{-}_{^{\xi_1\am \xi_2}} "00";"01"};
(-3,-11)*+{}="10";
(-11,-3)*+{}="11";
{\ar@/_0.2pc/@{-}_{_{\eta_1\am \eta_2}} "10";"11"};
\endxy.
\]
For any square $\Sf$ and any $\Cf_k\in\Fun(\Delta^1\ti\Delta^1,\C)_1(\Sf_k,\Sf)$ $(k=1,2)$, we have $\Cf_1\cup\Cf_2\in\Fun(\Delta^1\ti\Delta^1,\C)_1(\Sf_1\am\Sf_2,\Sf)$ with a property dual to the above.

Again by \cite[Corollary~5.1.2.3]{L1}, if $\Sf_1,\Sf_2$ are pull-backs, then so is $\Sf_1\ti\Sf_2$. Similarly, if $\Sf_1,\Sf_2$ are push-outs, then so is $\Sf_1\am\Sf_2$.

\begin{prop}\label{PropPCrod}
If $\Sf_1,\Sf_2$ are exact sequences, then so are $\Sf_1\ti\Sf_2$ and $\Sf_1\am\Sf_2$.
\end{prop}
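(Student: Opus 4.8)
The plan is to check directly that $\Sf_1\ti\Sf_2$ is an ambigressive pull-back and that $\Sf_1\am\Sf_2$ is an ambigressive push-out; by the definition of an exact sequence --- and by (Ex3) in the second case --- these are exactly the two assertions. The two cases being dual, I will spell out the product one and merely indicate the changes for the coproduct.

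For $\Sf_1\ti\Sf_2$, first recall that it is again a pull-back square, since $\Sf_1$ and $\Sf_2$ are and products of pull-backs are pull-backs (\cite[Corollary~5.1.2.3]{L1}, as recorded just above the statement). Since $\C$ is pointed by (Ex0) and a finite product of zero objects is a zero object --- the projection $O_1\ti O_2\to O_1$ is an equivalence because $O_2$ is terminal --- the object $O_1\ti O_2$ is a zero object of $\C$. Hence $j_1\ti j_2\co O_1\ti O_2\to C_1\ti C_2$, being a morphism out of a zero object, is ingressive by (Ex1).

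It remains to see that $y_1\ti y_2\co B_1\ti B_2\to C_1\ti C_2$ is egressive, and I would deduce this by factoring it as $B_1\ti B_2\ov{y_1\ti\id_{B_2}}{\lra}C_1\ti B_2\ov{\id_{C_1}\ti y_2}{\lra}C_1\ti C_2$. The square with vertices $B_1\ti B_2,\ B_1,\ C_1\ti B_2,\ C_1$ exhibiting $B_1\ti B_2\simeq B_1\ti_{C_1}(C_1\ti B_2)$ is a pull-back (finite products commute with pull-backs, again an instance of \cite[Corollary~5.1.2.3]{L1}), so $y_1\ti\id_{B_2}$ is a pull-back of the egressive morphism $y_1$ and is therefore egressive by (Ex2); likewise $\id_{C_1}\ti y_2$ is a pull-back of $y_2$ and is egressive. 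As $\C^{\dag}$ is a subcategory, the composite $y_1\ti y_2$ is egressive, so $\Sf_1\ti\Sf_2$ is an ambigressive pull-back. Dually, $\Sf_1\am\Sf_2$ is a push-out square, $O_1\am O_2$ is a zero object (now via the coprojection $O_2\to O_1\am O_2$, since $O_1$ is initial), so $i_1\am i_2\co A_1\am A_2\to O_1\am O_2$ is egressive; and $x_1\am x_2$ factors as $x_1\am\id_{A_2}$ followed by $\id_{B_1}\am x_2$, each of which is a push-out of an ingressive morphism and hence ingressive by (Ex2). Thus $\Sf_1\am\Sf_2$ is an ambigressive push-out, which by (Ex3) is also an ambigressive pull-back.

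The only step that needs a little care is the claim that $y_1\ti\id_{B_2}$ is a pull-back of $y_1$ --- equivalently, that finite products commute with pull-backs in $\C$ --- together with its dual for push-outs; everything else is a routine bookkeeping with the axioms (Ex0)--(Ex3).
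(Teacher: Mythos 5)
Your proposal is correct and follows essentially the same route as the paper: the square is a (co)limit of exact sequences by \cite[Corollary~5.1.2.3]{L1}, $O_1\ti O_2$ is a zero object so $j_1\ti j_2$ is ingressive by (Ex1), and $y_1\ti y_2$ is egressive because it factors as $y_1\ti\mathrm{id}$ followed by $\mathrm{id}\ti y_2$, each a pull-back of an egressive morphism, which is exactly the paper's argument (with the dual statement for $\Sf_1\am\Sf_2$).
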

\begin{proof}
Let $\Sf_1,\Sf_2$ be as in $(\ref{PairOfExSeqForProd})$. Since egressive morphisms are stable under taking pull-backs, it follows that $y_1\ti 1_{A_2}$ and $1_{B_1}\ti y_2$ are egressive morphisms. Since there is a $2$-simplex
\[
\xy
(-12,11)*+{A_1\ti A_2}="0";
(-12,-6)*+{B_1\ti A_2}="4";
(12,-6)*+{B_1\ti B_2}="6";
{\ar_{y_1\ti 1_{A_2}} "0";"4"};
{\ar_{1_{B_1}\ti y_2} "4";"6"};
{\ar^{y_1\ti y_2} "0";"6"};
(-4,-6.2)*+{}="10";
(-12.2,1.8)*+{}="11";
{\ar@/_0.3pc/@{-}_{_{}} "10";"11"};
\endxy
\]
in $\C$, it follows that $y_1\ti y_2$ is also egressive. Since $j_1\ti j_2$ in $(\ref{ProdExSeqPair})$ is ingressive because $O_1\ti O_2$ is a zero object, we see that $\Sf_1\ti\Sf_2$ is an exact sequence. Dually for $\Sf_1\am\Sf_2$.
\end{proof}

\begin{ex}\label{ExProdExSeq}
Let ${}_A\Sf_C$ be any exact sequence as in $(\ref{ExSeq})$. From $\If_{\Sf}\co\Sf\to\Sf$, we obtain the following morphisms of exact sequences, which we denote by $\nabla_{\Sf}=\If_{\Sf}\cup\If_{\Sf}\co\Sf\am\Sf\to\Sf$ and $\Delta_{\Sf}=(\If_{\Sf},\If_{\Sf})\co\Sf\to\Sf\ti\Sf$, respectively.
\[
\xy
(-11,18)*+{A\am A}="0";
(11,18)*+{B\am B}="2";
(0,8)*+{O\am O}="4";
(22,8)*+{C\am C}="6";
(-11,-2)*+{A}="10";
(11,-2)*+{B}="12";
(0,-12)*+{O}="14";
(22,-12)*+{C}="16";
{\ar^{x\am x} "0";"2"};
{\ar^(0.6){i\am i} "0";"4"};
{\ar^(0.6){y\am y} "2";"6"};
{\ar_{} "4";"6"};
{\ar_{\nabla_A} "0";"10"};
{\ar^(0.75){\nabla_B}|!{(9,8);(13,8)}\hole "2";"12"};
{\ar_(0.3){\nabla_O} "4";"14"};
{\ar^{\nabla_C} "6";"16"};
{\ar_(0.7){x}|!{(0,0);(0,-4)}\hole "10";"12"};
{\ar_{i} "10";"14"};
{\ar_(0.4){y} "12";"16"};
{\ar_{j} "14";"16"};
\endxy
\quad,\quad
\xy
(-11,18)*+{A}="0";
(11,18)*+{B}="2";
(0,8)*+{O}="4";
(22,8)*+{C}="6";
(-11,-2)*+{A\ti A}="10";
(11,-2)*+{B\ti B}="12";
(0,-12)*+{O\ti O}="14";
(22,-12)*+{C\ti C}="16";
{\ar^{x} "0";"2"};
{\ar^(0.6){i} "0";"4"};
{\ar^(0.6){y} "2";"6"};
{\ar^(0.3){j} "4";"6"};
{\ar_{\Delta_A} "0";"10"};
{\ar^(0.75){\Delta_B}|!{(9,8);(13,8)}\hole "2";"12"};
{\ar_(0.25){\Delta_O} "4";"14"};
{\ar^{\Delta_C} "6";"16"};
{\ar_(0.6){}|!{(0,0);(0,-4)}\hole "10";"12"};
{\ar_{i\ti i} "10";"14"};
{\ar_(0.4){y\ti y} "12";"16"};
{\ar_{j\ti j} "14";"16"};
\endxy
\]
\end{ex}

\begin{prop}\label{PropProdExSeq1}
Let
\[
\Cf_k=\ \xy
(-8,9)*+{A_k}="0";
(8,9)*+{B_k}="2";
(1,3)*+{O_k}="4";
(17,3)*+{C_k}="6";
(-8,-7)*+{A_k\ppr}="10";
(8,-7)*+{B_k\ppr}="12";
(1,-13)*+{O_k\ppr}="14";
(17,-13)*+{C_k\ppr}="16";
{\ar^{x_k} "0";"2"};
{\ar_{i_k} "0";"4"};
{\ar^{y_k} "2";"6"};
{\ar_(0.3){j_k} "4";"6"};
%{\ar_(0.4){z_k} "0";"6"};
%
{\ar_{a_k} "0";"10"};
{\ar^(0.7){b_k}|!{(9,3);(13,3)}\hole "2";"12"};
{\ar_(0.3){o_k} "4";"14"};
{\ar^{c_k} "6";"16"};
{\ar^(0.3){x_k\ppr}|!{(1,-3);(1,-7)}\hole "10";"12"};
{\ar_{i_k\ppr} "10";"14"};
{\ar^{y_k\ppr} "12";"16"};
{\ar_{j_k\ppr} "14";"16"};
%{\ar_{z_k\ppr} "10";"16"};
%
\endxy
\ \ \co\Sf_k\to\Sf_k\ppr\quad(k=1,2)
\]
be any pair of morphisms of exact sequences.
Then we have the following morphism of exact sequences $\Sf_1\am\Sf_2\to\Sf_1\ppr\ti\Sf_2\ppr$
\begin{equation}\label{MorphUps}
\xy
(-12,20)*+{A_1\am A_2}="0";
(12,20)*+{B_1\am B_2}="2";
(3,10)*+{O_1\am O_2}="4";
(27,10)*+{C_1\am C_2}="6";
(-12,-2)*+{A_1\ppr\ti A_2\ppr}="10";
(12,-2)*+{B_1\ppr\ti B_2\ppr}="12";
(3,-12)*+{O_1\ppr\ti O_2\ppr}="14";
(27,-12)*+{C_1\ppr\ti C_2\ppr}="16";
{\ar^{x_1\am x_2} "0";"2"};
{\ar^(0.6){i_1\am i_2} "0";"4"};
{\ar^(0.6){y_1\am y_2} "2";"6"};
{\ar_(0.6){j_1\am j_2} "4";"6"};
{\ar_{\ups_{a_1,a_2}} "0";"10"};
{\ar^(0.75){\ups_{b_1,b_2}}|!{(9,10);(13,10)}\hole "2";"12"};
{\ar_(0.25){\ups_{o_1,o_2}} "4";"14"};
{\ar^{\ups_{c_1,c_2}} "6";"16"};
{\ar^(0.4){x_1\ppr\ti x_2\ppr}|!{(4,0);(4,-4)}\hole "10";"12"};
{\ar_{i_1\ppr\ti i_2\ppr} "10";"14"};
{\ar^{y_1\ppr\ti y_2\ppr} "12";"16"};
{\ar_{j_1\ppr\ti j_2\ppr} "14";"16"};
\endxy
\end{equation}
in which $\ups_{a_1,a_2}$ gives the unique morphism $\ovl{\ups_{a_1,a_2}}=\ovl{a_1}\oplus\ovl{a_2}\in(h\C)(A_1\oplus A_2,A_1\ppr\oplus A_2\ppr)$ in the homotopy category, and similarly for $\ups_{b_1,b_2},\ups_{c_1,c_2},\ups_{o_1,o_2}$.
\end{prop}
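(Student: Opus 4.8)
The plan is to realize $(\ref{MorphUps})$ by a universal-property argument, using that products and coproducts of exact sequences are again exact and are formed vertex-wise in $\Fun(\Delta^1\ti\Delta^1,\C)$ (Proposition~\ref{PropPCrod} together with \cite[Corollary~5.1.2.3]{L1}); in particular no cube has to be constructed by hand.

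First I would produce zero morphisms among the exact sequences involved. Since $\C$ is additive, hence pointed, a square $\Of$ all of whose vertices are zero objects is an exact sequence and is a zero object of $\Fun(\Delta^1\ti\Delta^1,\C)$ by Example~\ref{ExTrivCubes}~(2). Consequently, for $i,j\in\{1,2\}$ there is a zero morphism $\mathbf{0}_{\Sf_i,\Sf_j\ppr}\co\Sf_i\to\Sf_j\ppr$, namely a morphism of exact sequences factoring through $\Of$ (obtained by composing cubes $\Sf_i\to\Of$ and $\Of\to\Sf_j\ppr$ supplied by the same example, via Proposition~\ref{PropComposeCubes}). Each of its four vertical edges factors through a zero object of $\C$, hence represents the zero morphism in $h\C$.

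Then I would set
\[
\Cf \;=\; \bigl(\,\Cf_1\cup\mathbf{0}_{\Sf_2,\Sf_1\ppr}\,,\ \mathbf{0}_{\Sf_1,\Sf_2\ppr}\cup\Cf_2\,\bigr)\co\Sf_1\am\Sf_2\lra\Sf_1\ppr\ti\Sf_2\ppr ,
\]
first using the universal property of the coproduct $\Sf_1\am\Sf_2$ to form the two cup-morphisms into $\Sf_1\ppr$ and $\Sf_2\ppr$, and then the universal property of the product $\Sf_1\ppr\ti\Sf_2\ppr$, both recalled just before Proposition~\ref{PropPCrod}. As a $1$-simplex of $\Fun(\Delta^1\ti\Delta^1,\C)$ this $\Cf$ is a cube whose two extreme faces are its source $\Sf_1\am\Sf_2$ and target $\Sf_1\ppr\ti\Sf_2\ppr$; since (co)products are formed vertex-wise these faces are exactly the exact sequences displayed in $(\ref{MorphUps})$, so $\Cf$ is a morphism of exact sequences (Proposition~\ref{PropPCrod}).

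It remains to identify the four vertical edges of $\Cf$ in $h\C$. Evaluation $\Fun(\Delta^1\ti\Delta^1,\C)\to\C$ at a vertex of $\Delta^1\ti\Delta^1$ preserves finite products and coproducts by \cite[Corollary~5.1.2.3]{L1}, hence carries the two universal-property morphisms above to the analogous ones in $\C$; passing to the additive category $h\C$ and using the canonical identifications $A_1\am A_2=A_1\oplus A_2$, $A_1\ppr\ti A_2\ppr=A_1\ppr\oplus A_2\ppr$, the edge over the vertex $(0,0)$ becomes the unique morphism whose composites to $A_1\ppr$ and to $A_2\ppr$ are $[\,\ovl{a_1}\ \ 0\,]$ and $[\,0\ \ \ovl{a_2}\,]$, that is $\ovl{a_1}\oplus\ovl{a_2}$; so, writing $\ups_{a_1,a_2}$ for this edge, $\ovl{\ups_{a_1,a_2}}=\ovl{a_1}\oplus\ovl{a_2}$. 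The identical computation over $(0,1),(1,1),(1,0)$ gives $\ups_{b_1,b_2},\ups_{c_1,c_2},\ups_{o_1,o_2}$ with the asserted images. I do not expect a real obstacle: the only point requiring care is confirming that the assembled cube restricts to the prescribed exact sequences on its faces and that its vertical edges are the vertex-wise composites of the two universal-property morphisms, which is precisely what the vertex-wise description of (co)products provides.
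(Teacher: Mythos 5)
Your proposal is correct and follows essentially the same route as the paper: zero morphisms between exact sequences are obtained from the zero square $\Of$ of \cref{ExTrivCubes}~(2) and composed via \cref{PropComposeCubes}, the morphism is then assembled from the universal properties of the vertex-wise (co)products recalled before \cref{PropPCrod}, and the vertical edges are identified in $h\C$ using additivity. The only (immaterial) difference is the order of assembly: you apply the coproduct universal property first and then the product, whereas the paper first forms $\Ff_k\co\Sf_k\to\Sf_1\ppr\ti\Sf_2\ppr$ via the product and then takes $\Ff_1\cup\Ff_2$.
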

\begin{proof}
As in Example~\ref{ExTrivCubes}, we have morphisms
\[ \If_k\ppr=\If_{\Sf\ppr_k}\co\Sf\ppr_k\to\Sf\ppr_k,\quad\ \Cf_k\ppr\co\Sf\ppr_k\to\Of,\quad\Cf\pprr_k\co\Of\to\Sf\ppr_k. \]
By Proposition~\ref{PropComposeCubes} applied to $\Sf\ppr_1\ov{\Cf_1\ppr}{\lra}\Of\ov{\Cf_2\pprr}{\lra}\Sf\ppr_2$, we obtain a morphism $\Zf_1\ppr\co\Sf_1\ppr\to\Sf_2\ppr$. Similarly for $\Zf_2\ppr\co\Sf_2\ppr\to\Sf_1\ppr$.

Then we have morphisms
\[ (\If_1\ppr,\Zf_1\ppr)\co\Sf_1\ppr\to\Sf_1\ppr\ti\Sf_2\ppr\quad\text{and}\quad(\Zf_2\ppr,\If_2\ppr)\co\Sf_2\ppr\to\Sf_1\ppr\ti\Sf_2\ppr. \]
By Proposition~\ref{PropComposeCubes} applied to $\Sf_1\ov{\Cf_1}{\lra}\Sf\ppr_1\ov{(\If_1\ppr,\Zf_1\ppr)}{\lra}\Sf\ppr_1\ti\Sf\ppr_2$ and $\Sf_2\ov{\Cf_2}{\lra}\Sf\ppr_2\ov{(\Zf_2\ppr,\If_2\ppr)}{\lra}\Sf\ppr_1\ti\Sf\ppr_2$ respectively, we obtain morphisms $\Ff_1\co\Sf_1\to\Sf\ppr_1\ti\Sf\ppr_2$ and $\Ff_2\co\Sf_2\to\Sf\ppr_1\ti\Sf\ppr_2$. 
Then we obtain a morphism $\Ff_1\cup\Ff_2\co\Sf_1\am\Sf_2\to\Sf\ppr_1\ti\Sf\ppr_2$ as in $(\ref{MorphUps})$ with the stated property.
\end{proof}

\begin{cor}\label{CorProdExSeq1}
For any pair of squares $\Sf_k$ $(k=1,2)$ as in $(\ref{PairOfExSeqForProd})$, we obtain the following morphism $\Sf_1\am\Sf_2\to\Sf_1\ti\Sf_2$
\[
\xy
(-12,20)*+{A_1\am A_2}="0";
(12,20)*+{B_1\am B_2}="2";
(3,10)*+{O_1\am O_2}="4";
(27,10)*+{C_1\am C_2}="6";
(-12,-2)*+{A_1\ti A_2}="10";
(12,-2)*+{B_1\ti B_2}="12";
(3,-12)*+{O_1\ti O_2}="14";
(27,-12)*+{C_1\ti C_2}="16";
{\ar^{x_1\am x_2} "0";"2"};
{\ar^(0.6){i_1\am i_2} "0";"4"};
{\ar^(0.6){y_1\am y_2} "2";"6"};
{\ar_(0.6){j_1\am j_2} "4";"6"};
{\ar_{\ups_{A_1,A_2}} "0";"10"};
{\ar^(0.75){\ups_{B_1,B_2}}|!{(9,10);(13,10)}\hole "2";"12"};
{\ar_(0.25){\ups_{O_1,O_2}} "4";"14"};
{\ar^{\ups_{C_1,C_2}} "6";"16"};
{\ar^(0.4){x_1\ti x_2}|!{(4,0);(4,-4)}\hole "10";"12"};
{\ar_{i_1\ti i_2} "10";"14"};
{\ar^{y_1\ti y_2} "12";"16"};
{\ar_{j_1\ti j_2} "14";"16"};
\endxy
\]
in which, the vertical morphisms are homotopy equivalences as in Remark~\ref{Remv}.
\end{cor}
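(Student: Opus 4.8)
The plan is to specialise the construction in the proof of \cref{PropProdExSeq1} to the identity morphisms $\Cf_k=\If_{\Sf_k}\co\Sf_k\to\Sf_k$ ($k=1,2$). First I note that nothing in that construction uses exactness of the squares involved: it invokes only the existence of the zero square $\Of$ as a zero object of $\Fun(\Delta^1\ti\Delta^1,\C)$ (\cref{ExTrivCubes}~(2), available since $\C$, being additive by (Ex0), is pointed), the composition of cubes (\cref{PropComposeCubes}), and the universal properties of $\Sf_1\ti\Sf_2$ and $\Sf_1\am\Sf_2$ in $\Fun(\Delta^1\ti\Delta^1,\C)$ recalled before \cref{PropPCrod}, all of which hold for arbitrary squares. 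So I would rerun it verbatim: choose zero morphisms $\Zf_1\co\Sf_1\to\Sf_2$ and $\Zf_2\co\Sf_2\to\Sf_1$ factoring through $\Of$, form $(\If_{\Sf_1},\Zf_1)\co\Sf_1\to\Sf_1\ti\Sf_2$ and $(\Zf_2,\If_{\Sf_2})\co\Sf_2\to\Sf_1\ti\Sf_2$ via the product, and take the copairing $\Cf=(\If_{\Sf_1},\Zf_1)\cup(\Zf_2,\If_{\Sf_2})\co\Sf_1\am\Sf_2\to\Sf_1\ti\Sf_2$.

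Next I would identify the four vertex components of $\Cf$ with the canonical comparison maps. Since products, coproducts and copairings in $\Fun(\Delta^1\ti\Delta^1,\C)$ are computed vertexwise, after passing to the ordinary additive category $h\C$ the component $f\co A_1\am A_2\to A_1\ti A_2$ satisfies $\ovl{p_{A_k}}\ci\ovl{f}\ci\ovl{i_{A_l}}=\id_{A_k}$ when $k=l$ and $=0$ when $k\ne l$; the diagonal entries arise from the $\If_{\Sf_k}$-components, and the off-diagonal ones from the $\Zf$-components, which factor through a zero object. These relations determine a morphism $A_1\oplus A_2\to A_1\oplus A_2$ uniquely, and they are exactly the relations defining $\vfr=\ovl{\ups_{A_1,A_2}}$ of \cref{Remv}; hence $\ovl{f}=\ovl{\ups_{A_1,A_2}}$, so $f$ itself may serve as the representative written $\ups_{A_1,A_2}$ in the statement. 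The same argument applies verbatim at the vertices $B_1\am B_2$, $O_1\am O_2$ and $C_1\am C_2$, giving the labels $\ups_{B_1,B_2}$, $\ups_{O_1,O_2}$, $\ups_{C_1,C_2}$.

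Finally, since $h\C$ is additive, $\vfr=\ovl{\ups_{A_1,A_2}}$ is automatically an isomorphism (as already observed in the proof of \cref{PropRelSplit}), and a morphism of a quasi-category whose image in the homotopy category is an isomorphism is a homotopy equivalence; likewise at the $B$-, $O$- and $C$-vertices, so all four vertical morphisms are homotopy equivalences. I do not expect a genuine obstacle here: the only point needing care is the bookkeeping that the exactness hypothesis of \cref{PropProdExSeq1} is never used in the part of its proof being reused, and the rest is a routine computation in the ordinary additive category $h\C$.
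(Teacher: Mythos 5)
Your proposal is correct and takes essentially the same route as the paper, whose entire proof is to apply \cref{PropProdExSeq1} to the identity cubes $\If_{\Sf_1}\co\Sf_1\to\Sf_1$ and $\If_{\Sf_2}\co\Sf_2\to\Sf_2$. Your additional bookkeeping---checking that the construction in the proof of \cref{PropProdExSeq1} never uses exactness (relevant since the corollary is stated for arbitrary squares) and that the vertexwise components satisfy the defining relations of the morphisms $\ups$ from \cref{Remv}, hence are homotopy equivalences---only makes explicit what the paper leaves implicit.
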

\begin{proof}
This follows from Proposition~\ref{PropProdExSeq1} applied to $\If_{\Sf_1}\co\Sf_1\to\Sf_1$ and $\If_{\Sf_2}\co\Sf_2\to\Sf_2$.
\end{proof}

\section{Extriangulated structure on the homotopy category}\label{Section_ExtriangulatedStructure}

By definition, an extriangulated category is an additive category equipped with a structure $(\E,\sfr)$ satisfying some conditions. For the convenience of the reader, we write them down in the case of the homotopy category $h\C$. For the detail, see \cite{NP}.

\begin{dfn}\label{DefEquivSeq}$($\cite[Definition~2.7]{NP}$)$
For a pair of objects $A,C\in h\C$, two consecutive morphisms $A\ov{\xfr}{\lra}B\ov{\yfr}{\lra}C$ and $A\ov{\xfr\ppr}{\lra}B\ppr\ov{\yfr\ppr}{\lra}C$ are said to be \emph{equivalent} if there is an isomorphism $\bfr\in(h\C)(B,B\ppr)$ such that $\bfr\ci\xfr=\xfr\ppr$ and $\yfr\ppr\ci\bfr=\yfr$.

In the sequel, the equivalence class to which a sequence $A\ov{\xfr}{\lra}B\ov{\yfr}{\lra}C$ belongs will be denoted by $[A\ov{\xfr}{\lra}B\ov{\yfr}{\lra}C]$. 
\end{dfn}

\begin{dfn}\label{DefAddReal}$($\cite[Definitions~2.9 and 2.10]{NP}$)$
Suppose that there is a biadditive functor $\E\co(h\C)\op\ti(h\C)\to\Ab$ to the category of abelian groups $\Ab$. Let $\sfr$ be a correspondence which associates an equivalence class $\mathfrak{s}(\delta)=[A\ov{\xfr}{\lra}B\ov{\yfr}{\lra}C]$ to each element $\del\in\E(C,A)$. Such $\sfr$ is called an \emph{additive realization of $\E$} if it satisfies the following conditions.
\begin{itemize}
\item[{\rm (i)}] Let $\del\in\E(C,A)$ and $\del\ppr\in\E(C\ppr,A\ppr)$ be any pair of elements, with
\begin{equation}\label{PairReal}
\sfr(\del)=[A\ov{\xfr}{\lra}B\ov{\yfr}{\lra}C],\ \ \sfr(\del\ppr)=[A\ppr\ov{\xfr\ppr}{\lra}B\ppr\ov{\yfr\ppr}{\lra}C\ppr].
\end{equation}
%\[ \sfr(\delta)=[A\ov{\xfr}{\lra}B\ov{\yfr}{\lra}C]\ \ \text{and}\ \ \sfr(\del\ppr)=[A\ppr\ov{\xfr\ppr}{\lra}B\ppr\ov{\yfr\ppr}{\lra}C\ppr]. \]
Then, for any pair of morphisms $\afr\in(h\C)(A,A\ppr),\cfr\in(h\C)(C,C\ppr)$ satisfying $\E(C,\afr)(\del)=\E(\cfr,A\ppr)(\del\ppr)$, there exists a morphism $\bfr\in(h\C)(B,B\ppr)$ such that $\bfr\ci\xfr=\xfr\ppr\ci\afr$ and $\yfr\ppr\ci\bfr=\cfr\ci\yfr$.
\item[{\rm (ii)}] For any $A,C\in h\C$, the zero element $0\in\E(C,A)$ satisfies
\[ \sfr(0)=[A\ov{\left[\bsm 1\\0\esm\right]}{\lra}A\oplus C\ov{[0\ 1]}{\lra}C]. \]
\item[{\rm (iii)}] For any pair of elements $\del\in\E(C,A)$ and $\del\ppr\in\E(C\ppr,A\ppr)$ with $(\ref{PairReal})$, we have
\[ \sfr(\del\oplus\del\ppr)=[A\oplus A\ppr\ov{\xfr\oplus\xfr\ppr}{\lra}B\oplus B\ppr\ov{\yfr\oplus\yfr\ppr}{\lra}C\oplus C\ppr]. \]
Here $\del\oplus\del\ppr\in\E(C\oplus C\ppr,A\oplus A\ppr)$ denotes the element which corresponds to $(\delta,0,0,\delta^{\prime})$ through the natural isomorphism
\[ \E(C\oplus C\ppr,A\oplus A\ppr)\cong\E(C,A)\oplus\E(C,A\ppr)\oplus\E(C\ppr,A)\oplus\E(C\ppr,A\ppr) \]
induced from the biadditivity of $\E$.
\end{itemize}

If $\sfr$ is an additive realization, any pair $(A\ov{\xfr}{\lra}B\ov{\yfr}{\lra}C,\del)$ of a sequence $A\ov{\xfr}{\lra}B\ov{\yfr}{\lra}C$ and an element $\del\in\E(C,A)$ satisfying $\sfr(\del)=[A\ov{\xfr}{\lra}B\ov{\yfr}{\lra}C]$ is called an \emph{$\sfr$-triangle} (or simply an \emph{extriangle} if $\sfr$ is obvious from the context), and abbreviately expressed by $A\ov{\xfr}{\lra}B\ov{\yfr}{\lra}C\ov{\del}{\dra}$.
\end{dfn}

\begin{dfn}\label{DefExtriangulation}$($\cite[Definition~2.12]{NP}$)$
A triplet $(h\C,\E,\sfr)$ is called an \emph{extriangulated category} if it satisfies the following conditions. In this case, the pair $(\E,\sfr)$ is called an \emph{external triangulation} of $h\C$.
\begin{itemize}
\item[{\rm (ET1)}] $\E\co(h\C)\op\ti(h\C)\to\Ab$ is a biadditive functor.
\item[{\rm (ET2)}] $\sfr$ is an additive realization of $\E$.
\item[{\rm (ET3)}] Let $\del\in\E(C,A)$ and $\del\ppr\in\E(C\ppr,A\ppr)$ be any pair of elements with $(\ref{PairReal})$.
Then, for any commutative square
\begin{equation}\label{SquareForET3}
\xy
(-12,6)*+{A}="0";
(0,6)*+{B}="2";
(12,6)*+{C}="4";
(-12,-6)*+{A\ppr}="10";
(0,-6)*+{B\ppr}="12";
(12,-6)*+{C\ppr}="14";
{\ar^{\xfr} "0";"2"};
{\ar^{\yfr} "2";"4"};
{\ar_{\afr} "0";"10"};
{\ar^{\bfr} "2";"12"};
{\ar_{\xfr\ppr} "10";"12"};
{\ar_{\yfr\ppr} "12";"14"};
{\ar@{}|{\circlearrowright} "0";"12"};
\endxy
\end{equation}
in $h\C$, there exists $\cfr\in(h\C)(C,C\ppr)$ such that $\cfr\ci\yfr=\yfr\ppr\ci\bfr$ and $\E(C,\afr)(\del)=\E(\cfr,A\ppr)(\del\ppr)$.
\item[{\rm (ET3)$\op$}] Dual of {\rm (ET3)}.
\item[{\rm (ET4)}] Let $\del\in\E(D,A)$ and $\del\ppr\in\E(F,B)$ be any pair of elements, with
\[ \sfr(\del)=[A\ov{\ffr}{\lra}B\ov{\ffr\ppr}{\lra}D]\ \ \text{and}\ \ \sfr(\del\ppr)=[B\ov{\gfr}{\lra}C\ov{\gfr\ppr}{\lra}F]. \]
Then there exist an object $E\in h\C$, a commutative diagram
\begin{equation}\label{DiagET4}
\xy
(-18,6)*+{A}="0";
(-6,6)*+{B}="2";
(6,6)*+{D}="4";
(-18,-6)*+{A}="10";
(-6,-6)*+{C}="12";
(6,-6)*+{E}="14";
(-6,-18)*+{F}="22";
(6,-18)*+{F}="24";
{\ar^{\ffr} "0";"2"};
{\ar^{\ffr\ppr} "2";"4"};
{\ar@{=} "0";"10"};
{\ar_{\gfr} "2";"12"};
{\ar^{\dfr} "4";"14"};
{\ar_{\hfr} "10";"12"};
{\ar_{\hfr\ppr} "12";"14"};
{\ar_{\gfr\ppr} "12";"22"};
{\ar^{\efr} "14";"24"};
{\ar@{=} "22";"24"};
{\ar@{}|{\circlearrowright} "0";"12"};
{\ar@{}|{\circlearrowright} "2";"14"};
{\ar@{}|{\circlearrowright} "12";"24"};
\endxy
\end{equation}
in $h\C$, and an element $\del\pprr\in\E(E,A)$ such that $\sfr(\del\pprr)=[A\ov{\hfr}{\lra}C\ov{\hfr\ppr}{\lra}E]$, which satisfy the following compatibilities.
\begin{itemize}
\item[{\rm (i)}] $\sfr(\E(F,\ffr\ppr)(\del\ppr))=[D\ov{\dfr}{\lra}E\ov{\efr}{\lra}F]$,
\item[{\rm (ii)}] $\E(\dfr,A)(\del\pprr)=\delta$,

\item[{\rm (iii)}] $\E(E,\ffr)(\del\pprr)=\E(\efr,B)(\del\ppr)$. 
\end{itemize}

\item[{\rm (ET4)$\op$}] Dual of {\rm (ET4)}.
\end{itemize}
\end{dfn}

In the rest of this section, we will show that $h\C$ can be equipped with an external triangulation $(\E,\sfr)$, for any exact quasi-category $\C$.

\subsection{Construction of the functor $\Ebb$}

Firstly, in this subsection we construct a functor $\E\co(h\C)\op\ti(h\C)\to\Sets$ to the category $\Sets$ of sets.
\begin{dfn}\label{DefEofCA}
Let $A,C\in\C_0$. Define $\Ebb(C,A)$ to be the set of equivalence classes of exact sequences $\Sf$ starting from $A$ and ending in $C$, modulo the equivalence relation $\sim$ defined in Definition~\ref{DefEquivExSeq}.

For each exact sequence ${}_A\Sf_C$, let $\del=\und{\Sf}$ denote its equivalence class with respect to $\sim$. When we emphasize the end-objects $A$ and $C$, we write ${}_A\del_C={}_A\und{\Sf}_C$.
\end{dfn}

We make use of the notation $a\sas\Sf$ from \cref{LemExSeqPO}.

\begin{prop}\label{PropFunctoriality}
Suppose that we have ${}_A\Sf_C\sim {}_A\Sf\ppr_C$, and let
\[
\xy
(-7,7)*+{A}="0";
(-7,-7)*+{A\ppr}="4";
(7,-7)*+{A\pprr}="6";
{\ar_{a_1} "0";"4"};
{\ar^{a_3} "0";"6"};
{\ar_{a_2} "4";"6"};
(-3,-7)*+{}="10";
(-7,-3)*+{}="11";
{\ar@/_0.2pc/@{-}_{_{\al}} "10";"11"};
\endxy
\]
be any $2$-simplex in $\C$. Then we have $a_{2\ast}a_{1\ast}\Sf\ppr\sim a_{3\ast}\Sf$.
\end{prop}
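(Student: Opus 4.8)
The plan is to reduce the statement to two facts already established: first, that $\sim$ is an equivalence relation (\cref{PropEquivExSeq}), so that it suffices to produce a \emph{single} morphism of exact sequences from $a_{3\sas}\Sf$ to $a_{2\sas}a_{1\sas}\Sf\ppr$ (or the reverse); and second, the functoriality built into \cref{LemExSeqPO} {\rm (1)--(2)}, which lets us transport a morphism of exact sequences along a pushout. First I would unwind the hypotheses: from ${}_A\Sf_C\sim{}_A\Sf\ppr_C$ we may, by \cref{DefEquivExSeq}, fix a morphism ${}_{1_A}\Cf_{1_C}\co\Sf\to\Sf\ppr$. Next I choose a push-out square for $a_1$ applied to (the relevant edge of) $\Sf\ppr$, obtaining $a_{1\sas}\Sf\ppr$ together with a morphism ${}_{a_1}\Df_1\co\Sf\ppr\to a_{1\sas}\Sf\ppr$ as in \cref{LemExSeqPO} {\rm (1)}; then I push out again along $a_2$ to get $a_{2\sas}a_{1\sas}\Sf\ppr$ with ${}_{a_2}\Df_2\co a_{1\sas}\Sf\ppr\to a_{2\sas}a_{1\sas}\Sf\ppr$. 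Composing cubes by \cref{PropComposeCubes} along $\Sf\ov{\Cf}{\to}\Sf\ppr\ov{\Df_1}{\to}a_{1\sas}\Sf\ppr\ov{\Df_2}{\to}a_{2\sas}a_{1\sas}\Sf\ppr$, and using the $2$-simplex $\al$ (together with $s_0$-degeneracies on the $A$-column to record $a_3=a_2\ci a_1$ up to the chosen homotopy) as the compatibility data fed into \cref{PropComposeCubes}, produces a morphism ${}_{a_3}\Gf_{1_C}\co\Sf\to a_{2\sas}a_{1\sas}\Sf\ppr$.

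Now this $\Gf$ has $\ovl{c}=\id_C$ and its $A$-edge equal to $a_3$, i.e.\ it exhibits the source restricted to the $\Lambda^2_0$-horn $(\ref{FixBD})$-data with leg $a_3$; so $\Gf$ satisfies precisely the hypotheses of \cref{LemExSeqPO} {\rm (1)} with the morphism $a_3$, provided its restriction $\Gf|_{\Delta^1\ti\{0\}\ti\Delta^1}$ is a push-out square. To arrange this I would instead first form $a_{3\sas}\Sf$ via a chosen push-out square $\Pf$ for $a_3$ on $\Sf$, getting ${}_{a_3}\Df_3\co\Sf\to a_{3\sas}\Sf$, and then apply \cref{LemExSeqPO} {\rm (2)} to the pair $(\Df_3,\Gf)$: since $\Df_3$ is a $\Sf_0=a_{3\sas}\Sf$ as in part {\rm (1)} and $\Gf\co\Sf\to a_{2\sas}a_{1\sas}\Sf\ppr$ is an arbitrary morphism of exact sequences with the same $a_3$ on the $A$-edge, part {\rm (2)} yields a morphism ${}_{1}\Cf\ppr_{1_C}\co a_{3\sas}\Sf\to a_{2\sas}a_{1\sas}\Sf\ppr$. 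By \cref{DefEquivExSeq} this means $a_{3\sas}\Sf\sim a_{2\sas}a_{1\sas}\Sf\ppr$, which is the claim.

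The main obstacle I anticipate is purely bookkeeping: making sure the $2$-simplex $\al$ is threaded correctly through the iterated applications of \cref{PropComposeCubes} so that the composite cube genuinely has $a_3$ (and not merely something homotopic to $a_2\ci a_1$) on its $A$-edge, and that the resulting morphism lands in the undercategory over the correct horn so that \cref{LemExSeqPO} {\rm (2)} applies verbatim. A secondary point is that \cref{LemExSeqPO} {\rm (2)} produces a specific representative of $a_{3\sas}\Sf$ (the one coming from whatever push-out $\Pf$ we fixed); but since $\sim$ is an equivalence relation and \cref{PropInvert} gives that any two choices of $a\sas\Sf$ are $\sim$-equivalent (this is exactly \cref{RemInvert}/\cref{CorSplitEachOther}-type reasoning, formalized just after), the statement is independent of the choices, so no real difficulty arises there. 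I would also note that the symmetry of $\sim$ (\cref{PropInvert}) lets me freely swap the roles of $\Sf$ and $\Sf\ppr$ whenever that simplifies which push-outs need to be chosen.
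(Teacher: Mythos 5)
Your proposal is correct and takes essentially the same route as the paper: compose the cube $\Cf\co\Sf\to\Sf\ppr$ with the push-out cubes along $a_1$ and $a_2$ via \cref{PropComposeCubes}, threading $\al$ (and degenerate $2$-simplices on the $C$-side) so the composite has $a_3$ and $1_C$ on its edges, and then apply \cref{LemExSeqPO}~(2) against the push-out cube defining $a_{3\sas}\Sf$ to get a morphism ${}_{1}\Cf\pprr_{1_C}\co a_{3\sas}\Sf\to a_{2\sas}a_{1\sas}\Sf\ppr$. Your closing appeal to \cref{PropInvert} for choice-independence is unnecessary (the argument works verbatim for whichever push-outs were fixed), but harmless.
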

\begin{proof}
By the assumption, there is a morphism ${}_{1_A}\Cf_{1_C}\co\Sf\to\Sf\ppr$.
By the definition of $a_{1\ast}\Sf\ppr$, $a_{2\ast}a_{1\ast}\Sf\ppr$ and $a_{3\ast}\Sf$, there are morphisms of exact sequences
\[
{}_{a_1}\Df^{(1)}_{1_C}\co\Sf\ppr\to a_{1\ast}\Sf\ppr,\ \ 
{}_{a_2}\Df^{(2)}_{1_C}\co a_{1\ast}\Sf\ppr\to a_{2\ast}a_{1\ast}\Sf\ppr,\ \ 
{}_{a_3}\Df^{(3)}_{1_C}\co\Sf\to a_{3\ast}\Sf
\]
in which $\Df^{(k)}|_{\{0\}\ti\Delta^1\ti\Delta^1}$ $(k=1,2,3)$ are push-outs.
By Proposition~\ref{PropComposeCubes} applied iteratively to $\Sf\ov{\Cf}{\lra}\Sf\ppr\ov{\Df^{(1)}}{\lra}a_{1\ast}\Sf\ppr\ov{\Df^{(2)}}{\lra}a_{2\ast}a_{1\ast}\Sf\ppr$, we obtain a morphism ${}_{a_3}\Cf\ppr_{1_C}\co\Sf\to a_{2\ast}a_{1\ast}\Sf\ppr$.
Then by Lemma~\ref{LemExSeqPO} {\rm (2)}, we obtain a morphism ${}_{1_{A\pprr}}\Cf\pprr_{1_C}\co a_{3\ast}\Sf\to a_{2\ast}a_{1\ast}\Sf\ppr$.
This shows $a_{3\ast}\Sf\sim a_{2\ast}a_{1\ast}\Sf\ppr$.
\end{proof}

\begin{cor}\label{CorFunctoriality}
We have the following.
\begin{enumerate}
\item If morphisms $a,a\ppr\in\C_1(A,A\ppr)$ and exact sequences ${}_A\Sf_C,{}_A\Sf\ppr_C$ satisfy $\ovl{a}=\ovl{a\ppr}$ and $\Sf\sim\Sf\ppr$, then $a\sas\Sf\sim a\ppr\sas\Sf\ppr$. Thus for any $\afr=\ovl{a}\in(h\C)(A,A\ppr)$ and any $C\in\C_0$, the map
\[ \afr\sas\co\E(C,A)\to\E(C,A\ppr)\ ;\ \del=\und{\Sf}\mapsto\afr\sas\del=\und{a\sas\Sf} \]
is well-defined.
\item The maps obtained in {\rm (1)} satisfy the following.
\begin{itemize}
\item[{\rm (i)}] For any $A,C\in\C_0$, we have $(\id_A)\sas=\id_{\E(C,A)}\co\E(C,A)\to\E(C,A)$.
\item[{\rm (ii)}] For any $\afr\in (h\C)(A,A\ppr),\afr\ppr\in (h\C)(A\ppr,A\pprr)$ and any $C\in\C_0$, we have $(\afr\ppr\ci\afr)\sas=\afr\ppr\sas\ci\afr\sas\co\E(C,A)\to\E(C,A\pprr)$.
\end{itemize}
\end{enumerate}
\end{cor}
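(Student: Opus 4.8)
The plan is to deduce all the assertions from \cref{PropFunctoriality}, \cref{LemExSeqPO}(1), and the fact (\cref{PropEquivExSeq}) that $\sim$ is an equivalence relation. First I would record the special case $\Sf\sim(1_A)\sas\Sf$, valid for every exact sequence ${}_A\Sf_C$: applying \cref{LemExSeqPO}(1) with $a=1_A$ produces a morphism of exact sequences ${}_{1_A}\Df_{1_C}\co\Sf\to(1_A)\sas\Sf$, so the relation is immediate from \cref{DefEquivExSeq}. Once $\afr\sas$ is known to be well defined, this special case is exactly part~(2)(i): $(\id_A)\sas\und{\Sf}=\und{(1_A)\sas\Sf}=\und{\Sf}$.

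For part~(1), I fix $a,a\ppr\in\C_1(A,A\ppr)$ with $\ovl{a}=\ovl{a\ppr}$ and exact sequences $\Sf\sim\Sf\ppr$ starting from $A$ and ending in $C$. Since $a$ and $a\ppr$ represent the same morphism in $h\C$, they are homotopic, so there is a $2$-simplex $\alpha$ in $\C$ with $d_2(\alpha)=a$, $d_0(\alpha)=1_{A\ppr}$ and $d_1(\alpha)=a\ppr$. Feeding $\Sf\sim\Sf\ppr$ and $\alpha$ into \cref{PropFunctoriality} (with $a_1=a$, $a_2=1_{A\ppr}$, $a_3=a\ppr$ in the notation there) yields $(1_{A\ppr})\sas(a\sas\Sf\ppr)\sim a\ppr\sas\Sf$. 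Combined with the relation $a\sas\Sf\ppr\sim(1_{A\ppr})\sas(a\sas\Sf\ppr)$ from the first paragraph and transitivity, this gives $a\sas\Sf\ppr\sim a\ppr\sas\Sf$; since $\sim$ is symmetric, interchanging $\Sf$ and $\Sf\ppr$ turns it into $a\sas\Sf\sim a\ppr\sas\Sf\ppr$. Taking $a=a\ppr$ and $\Sf=\Sf\ppr$ then shows that the $\sim$-class of $a\sas\Sf$ does not depend on any of the choices made, so $\afr\sas\co\E(C,A)\to\E(C,A\ppr)$, $\und{\Sf}\mapsto\und{a\sas\Sf}$, is a well-defined map depending only on $\afr=\ovl{a}$.

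For part~(2)(ii), given $\afr=\ovl{a}\in(h\C)(A,A\ppr)$ and $\afr\ppr=\ovl{a\ppr}\in(h\C)(A\ppr,A\pprr)$, I fill the inner horn $A\ov{a}{\lra}A\ppr\ov{a\ppr}{\lra}A\pprr$ to a $2$-simplex $\alpha$ with $d_2(\alpha)=a$, $d_0(\alpha)=a\ppr$ and $d_1(\alpha)=a_3$, so that $\ovl{a_3}=\afr\ppr\ci\afr$. Applying \cref{PropFunctoriality} to the reflexive relation $\Sf\sim\Sf$ and $\alpha$ gives $a\ppr\sas(a\sas\Sf)\sim(a_3)\sas\Sf$; passing to $\sim$-classes and using the well-definedness just established, $\afr\ppr\sas(\afr\sas\und{\Sf})=\und{a\ppr\sas(a\sas\Sf)}=\und{(a_3)\sas\Sf}=(\afr\ppr\ci\afr)\sas\und{\Sf}$, that is, $(\afr\ppr\ci\afr)\sas=\afr\ppr\sas\ci\afr\sas$.

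I expect the only delicate point to be the bookkeeping in part~(1): in the $2$-simplex $\alpha$ the identity $1_{A\ppr}$ must be taken as the \emph{outer} (second) map $a_2$, not as $a_1$, so that the auxiliary relation $T\sim(1_{A\ppr})\sas T$ can be applied to $T=a\sas\Sf\ppr$; had one instead put the identity as $a_1$, closing the argument would require $a\sas(-)$ to preserve $\sim$, which is not yet available, making the reasoning circular. Everything else is a routine assembly of the quoted results, using only the standard quasi-categorical facts that homotopic morphisms are joined by such $2$-simplices and that inner horns can be filled.
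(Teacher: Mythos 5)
Your proof is correct and takes essentially the same route as the paper, which simply declares (1) and (2)(ii) ``immediate from \cref{PropFunctoriality}''; your homotopy $2$-simplex with $a_2=1_{A\ppr}$, the inner-horn filler for composition, and the auxiliary relation $\Sf\sim(1_A)\sas\Sf$ are exactly the details being left implicit there. The only (harmless) variation is in (2)(i), where the paper invokes the identity cube $\If_{\Sf}$ to exhibit $\Sf$ itself as a choice of $(1_A)\sas\Sf$, while you instead use the morphism ${}_{1_A}\Df_{1_C}$ supplied by \cref{LemExSeqPO}\,(1); both witnesses work.
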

\begin{proof}
{\rm (1)} and {\rm  (ii)} of {\rm (2)} are immediate from Proposition~\ref{PropFunctoriality}. {\rm  (i)} of {\rm (2)} is also obvious by the existence of $\If_{\Sf}$ for any ${}_A\Sf_C$.
\end{proof}

\begin{lem}\label{LemET2}
Let ${}_A\Sf_C,{}_{A\ppr}\Sf\ppr_{C\ppr}$ be any pair of exact sequences, and put $\del=\und{\Sf},\del\ppr=\und{\Sf\ppr}$. Let $\afr\in (h\C)(A,A\ppr)$ and $\cfr\in (h\C)(C,C\ppr)$ be any pair of morphisms in the homotopy category. The following are equivalent.
\begin{enumerate}
\item $\afr\sas\del=\cfr\uas\del\ppr$.
\item There exists a morphism ${}_a\Cf_c\co\Sf\to\Sf\ppr$ %as in $(\ref{MorphExSeq})$ 
such that $\ovl{a}=\afr$ and $\ovl{c}=\cfr$.
\end{enumerate}
\end{lem}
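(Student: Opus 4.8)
The plan is to deduce both implications formally from \cref{LemExSeqPO} — which factors a morphism of exact sequences through the pushout $a\sas\Sf$ of its domain — together with its dual — which factors such a morphism through the pullback $c\uas\Sf\ppr$ of its codomain — and from \cref{PropComposeCubes}. First I would fix a representative $a\in\C_1(A,A\ppr)$ of $\afr$ and a representative $c\in\C_1(C,C\ppr)$ of $\cfr$, a choice of $a\sas\Sf$ with its canonical morphism ${}_a\Df_{1_C}\co\Sf\to a\sas\Sf$ as in \cref{LemExSeqPO}~(1), and a choice of $c\uas\Sf\ppr$ with its canonical morphism ${}_{1_{A\ppr}}\Df\ppr_c\co c\uas\Sf\ppr\to\Sf\ppr$ as in the dual of \cref{LemExSeqPO}~(1). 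By \cref{CorFunctoriality}~(1) we have $\afr\sas\del=\und{a\sas\Sf}$ and $\cfr\uas\del\ppr=\und{c\uas\Sf\ppr}$, so condition~(1) is equivalent to $a\sas\Sf\sim c\uas\Sf\ppr$, i.e.\ to the existence of a morphism ${}_{1_{A\ppr}}\Cf\pprr_{1_C}\co a\sas\Sf\to c\uas\Sf\ppr$ with identity end components.

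For $(2)\Rightarrow(1)$, starting from a morphism ${}_a\Cf_c\co\Sf\to\Sf\ppr$, I would apply \cref{LemExSeqPO}~(2) to $\Df$ and $\Cf$ to get a morphism ${}_{1_{A\ppr}}\Cf^{(1)}_c\co a\sas\Sf\to\Sf\ppr$. Regarding $\Cf^{(1)}$ as a morphism into $\Sf\ppr$ with last component $c$, the dual of \cref{LemExSeqPO}~(2), applied to $\Df\ppr$ and $\Cf^{(1)}$, then produces a morphism ${}_{1_{A\ppr}}\Cf^{(2)}_{1_C}\co a\sas\Sf\to c\uas\Sf\ppr$ whose two end components are identities. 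This exhibits $a\sas\Sf\sim c\uas\Sf\ppr$, which is~(1).

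For $(1)\Rightarrow(2)$, given a morphism ${}_{1_{A\ppr}}\Cf\pprr_{1_C}\co a\sas\Sf\to c\uas\Sf\ppr$ as above, the cubes $\Df$, $\Cf\pprr$, $\Df\ppr$ form a composable chain $\Sf\ov{\Df}{\lra}a\sas\Sf\ov{\Cf\pprr}{\lra}c\uas\Sf\ppr\ov{\Df\ppr}{\lra}\Sf\ppr$. Applying \cref{PropComposeCubes} to $\Df$ and $\Cf\pprr$, choosing the corner $2$-simplex at the source vertex $(0,0)$ to be $s_1(a)$ and the one at the target vertex $(1,1)$ to be $s_0(1_C)$, I get a morphism $\Sf\to c\uas\Sf\ppr$ whose end components are $a$ and $1_C$. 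A second application of \cref{PropComposeCubes} to this morphism and $\Df\ppr$, now with $s_1(a)$ at the source vertex and $s_0(c)$ at the target vertex, yields a morphism $\Cf={}_a\Cf_c\co\Sf\to\Sf\ppr$ with $\ovl a=\afr$ and $\ovl c=\cfr$, which is~(2).

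The only points requiring attention are to invoke the dual of \cref{LemExSeqPO}~(2) with the first and last components in the correct roles — a morphism into $\Sf\ppr$ factors through the pullback $c\uas\Sf\ppr$, keeping the first component and trivializing the last — and, in the $(1)\Rightarrow(2)$ direction, to check that the corner $2$-simplices required by \cref{PropComposeCubes} can be taken to be the indicated degeneracies, so that the composite has end components literally $a$ and $c$ rather than merely homotopic to them; the remaining work is routine cube bookkeeping.
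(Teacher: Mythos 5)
Your proof is correct and follows essentially the same route as the paper: $(2)\Rightarrow(1)$ via \cref{LemExSeqPO}~(2) and its dual, and $(1)\Rightarrow(2)$ by composing the chain $\Sf\to a\sas\Sf\to c\uas\Sf\ppr\to\Sf\ppr$ using \cref{PropComposeCubes}. Your extra care in choosing the corner degeneracies is valid but not strictly necessary, since condition~(2) only requires $\ovl{a}=\afr$ and $\ovl{c}=\cfr$ rather than equality with the chosen representatives on the nose.
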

\begin{proof}
$(1)\Rightarrow(2)$.
Take any $a\in\C_1(A,A\ppr)$ and $c\in\C_1(C,C\ppr)$ such that $\afr=\ovl{a}$ and $\cfr=\ovl{c}$. By definition, there exist morphisms $\Sf\ov{{}_a\Df_{1_C}}{\lra}a\sas\Sf\ov{{}_{1_{A\ppr}}\Cf^{(1)}_{1_C}}{\lra}c\uas\Sf\ppr\ov{{}_{1_{A\ppr}}\Df\ppr_c}{\lra}\Sf\ppr$ of exact sequences. By Proposition~\ref{PropComposeCubes}, we obtain a morphism ${}_a\Cf_c\co\Sf\to\Sf\ppr$ as desired.

$(2)\Rightarrow(1)$. Let ${}_a\Cf_c\co\Sf\to\Sf\ppr$ be a morphism as in $(\ref{MorphExSeq})$ such that $\ovl{a}=\afr$, $\ovl{c}=\cfr$. By Lemma~\ref{LemExSeqPO}, we obtain a morphism ${}_{1_{A\ppr}}\Cf\ppr_{c}\co a\sas\Sf\to\Sf\ppr$. 
Then by the dual of Lemma~\ref{LemExSeqPO} applied to $\Cf\ppr$ and a pull-back along $c$, we obtain a morphism ${}_{1_{A\ppr}}\Cf\pprr_{1_C}\co a\sas\Sf\to c\uas\Sf\ppr$.
Thus $a\sas\Sf\sim c\uas\Sf\ppr$ follows.
\end{proof}

\begin{prop}\label{PropEFtr}
For any $\afr\in(h\C)(A,A\ppr)$ and $\cfr\in(h\C)(C\ppr,C)$, we have
\[ \afr\sas\cfr\uas=\cfr\uas\afr\sas\co\E(C,A)\to\E(C\ppr,A\ppr). \]
\end{prop}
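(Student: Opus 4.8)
The plan is to derive this from \cref{LemET2}. Since every element of $\E(C,A)$ has the form $\del=\und{\Sf}$ for some exact sequence ${}_A\Sf_C$, it suffices to show $\afr\sas\cfr\uas\del=\cfr\uas\afr\sas\del$ for such a $\del$. Fix representatives $a\in\C_1(A,A\ppr)$ of $\afr$ and $c\in\C_1(C\ppr,C)$ of $\cfr$. Using \cref{LemExSeqPO} and its dual, form $a\sas\Sf$ together with a morphism of exact sequences ${}_a\Df_{1_C}\co\Sf\to a\sas\Sf$, and $c\uas\Sf$ together with ${}_{1_A}\Df\ppr_c\co c\uas\Sf\to\Sf$. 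By \cref{CorFunctoriality} and its dual we have $\und{a\sas\Sf}=\afr\sas\del$ and $\und{c\uas\Sf}=\cfr\uas\del$.

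Next I would compose these two morphisms. Observe that $c\uas\Sf$ is an exact sequence starting from $A$ and ending in $C\ppr$, while $a\sas\Sf$ starts from $A\ppr$ and ends in $C$. Applying \cref{PropComposeCubes} to $c\uas\Sf\ov{\Df\ppr}{\lra}\Sf\ov{\Df}{\lra}a\sas\Sf$ — taking, for instance, $s_0(a)$ and $s_1(c)$ as the gluing $2$-simplices at the corners $A$ and $C$ (and arbitrary ones at the remaining two corners) — produces a morphism of exact sequences ${}_a\Cf_c\co c\uas\Sf\to a\sas\Sf$ whose left and right edges are the composites of $1_A$ with $a$, and of $c$ with $1_C$, hence have homotopy classes $\afr$ and $\cfr$ respectively.

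Finally, I apply \cref{LemET2} to the pair of exact sequences $c\uas\Sf$ (ends $A$, $C\ppr$) and $a\sas\Sf$ (ends $A\ppr$, $C$), with the morphisms $\afr\in(h\C)(A,A\ppr)$ and $\cfr\in(h\C)(C\ppr,C)$. The morphism ${}_a\Cf_c$ just constructed is exactly condition (2) of that lemma, so condition (1) gives
\[ \afr\sas\und{c\uas\Sf}=\cfr\uas\und{a\sas\Sf}. \]
Substituting the two identities from the first paragraph yields $\afr\sas\cfr\uas\del=\cfr\uas\afr\sas\del$, which is the claim. I do not anticipate a genuine obstacle here; the one point that needs care is the bookkeeping of end objects — because $c\uas\Sf$ and $a\sas\Sf$ have different starting and ending objects, \cref{LemET2} must be applied to this mixed pair rather than to $\Sf$ itself, with $\afr$ running from $A$ to $A\ppr$ and $\cfr$ from $C\ppr$ to $C$.
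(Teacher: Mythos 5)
Your proof is correct and follows essentially the same route as the paper: compose the canonical morphisms $c\uas\Sf\ov{{}_{1_A}\Df\ppr_c}{\lra}\Sf\ov{{}_a\Df_{1_C}}{\lra}a\sas\Sf$ from \cref{LemExSeqPO} and its dual via \cref{PropComposeCubes}, then apply \cref{LemET2} to the resulting cube ${}_a\Cf_c\co c\uas\Sf\to a\sas\Sf$. Your explicit choice of gluing $2$-simplices $s_0(a)$ and $s_1(c)$ is a harmless refinement (any choice would do, since the long edges represent the composites $\afr$ and $\cfr$ in $h\C$), and your bookkeeping of the end objects in the application of \cref{LemET2} is exactly right.
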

\begin{proof}
Take any $a\in\C_1(A,A\ppr)$ and $c\in\C_1(C\ppr,C)$ so that $\afr=\ovl{a}$ and $\cfr=\ovl{c}$ hold. Let ${}_A\Sf_{C}$ be any exact sequence starting from $A$ and ending in $C$. By Lemma~\ref{LemExSeqPO} {\rm (1)} and its dual, we obtain morphisms of exact sequences
\[ c\uas\Sf\ov{{}_{1_A}\Df\ppr_{c}}{\lra}\Sf\ov{{}_{a}\Df_{1_{C}}}{\lra}a\sas\Sf. \]
By Proposition~\ref{PropComposeCubes}, we obtain a morphism ${}_{a}\Cf\pprr_{c}\co c\uas\Sf\to a\sas\Sf$. Lemma~\ref{LemET2} shows
$\afr\sas(\cfr\uas\und{\Sf})=\cfr\uas(\afr\sas\und{\Sf})$.
\end{proof}

The argument so far gives the following functor.
\begin{dfn}\label{DefEFtr}
Functor $\Ebb\co(h\C)\op\ti h\C\to\Sets$ is defined by the following.
\begin{itemize}
\item For an object $(C,A)\in(h\C)\op\ti(h\C)$, the set $\E(C,A)$ is the one defined in Definition~\ref{DefEofCA}.
\item For a morphism $(\cfr,\afr)\in\big((h\C)\op\ti(h\C)\big)\big((C,A),(C\ppr,A\ppr)\big)$, the map
\[ \E(\cfr,\afr)\co\E(C,A)\to\E(C\ppr,A\ppr) \]
is the one defined by $\E(\cfr,\afr)=\cfr\uas\afr\sas=\afr\sas\cfr\uas$.
\end{itemize}
\end{dfn}

\subsection{Biadditivity of $\E$}
In this subsection we will show that the functor $\E$ obtained in Definition~\ref{DefEFtr} indeed factors through $\Ab$.
\begin{dfn}\label{DefZeroElement}
By Corollary~\ref{CorSplitEachOther}, the equivalence class of the splitting exact sequences forms one element in $\E(C,A)$ for each $A,C\in h\C$. We denote this particular element by ${}_A0_C$. Especially we have ${}_A0_C={}_A\und{\Nf}_C= {}_A\und{\Nf\ppr}_C$. 
\end{dfn}

\begin{rem}\label{RemZeroElement}
It is not hard to see that $\Ebb(O,O)$ consists of only one object for each zero object $O\in\C_0$, which necessarily agrees with ${}_O0_O$. Remark that a morphism $a\in\C_1(A,A\ppr)$ satisfies $\ovl{a}=0$ in $h\C$ if and only if it factors through some (equivalently, any) zero object. For any $\del\in\Ebb(C,A)$ and any $a\in\C_1(A,A\ppr)$ such that $\ovl{a}=0$, we have $a\sas\del={}_{A\ppr}0_C$ by Lemma~\ref{LemInvert}. Similarly we have $0\uas\del={}_A0_{C\ppr}$ for the zero morphism $0\in (h\C)(C\ppr,C)$.
\end{rem}

\begin{prop}\label{PropSumExtension}
The following holds.
\begin{enumerate}
\item For any $A,A\ppr\in\C_0$, there is a natural isomorphism
\[ (\ups_{A,A\ppr})\sas\co\Ebb(-,A\am A\ppr)\ov{\cong}{\ltc}\Ebb(-,A\ti A\ppr) \]
of functors $(h\C)\op\to\Sets$.
\item For any $C,C\ppr\in\C_0$, there is a natural isomorphism
\[ (\ups_{C,C\ppr})\uas\co\Ebb(C\ti C\ppr,-)\ov{\cong}{\ltc}\Ebb(C\am C\ppr,-) \]
of functors $h\C\to\Sets$.
\item For any $A,A\ppr,C,C\ppr\in\C_0$, there is an isomorphism 
\[ \vt=\vt_{A\ppr,C\ppr}^{A,C}\co\Ebb(C\am C\ppr,A\am A\ppr)\ov{\cong}{\ltc}\Ebb(C\ti C\ppr,A\ti A\ppr) \]
given by $\vt_{A\ppr,C\ppr}^{A,C}=((\ups_{C,C\ppr})\uas)^{-1}\ci(\ups_{A,A\ppr})\sas$. This sends $\del\am\del\ppr=\und{\Sf\am\Sf\ppr}$ to $\del\ti\del\ppr=\und{\Sf\ti\Sf\ppr}$ for any $\del=\und{\Sf}\in\Ebb(C,A)$ and $\del\ppr=\und{\Sf\ppr}\in\Ebb(C\ppr,A\ppr)$.
\end{enumerate}
\end{prop}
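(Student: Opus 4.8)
The plan is to derive all three statements from the functoriality of $\E$ established in \cref{CorFunctoriality,PropEFtr}, together with the explicit morphism of exact sequences supplied by \cref{CorProdExSeq1}. Throughout, $(\ups_{A,A\ppr})\sas$ abbreviates $(\ovl{\ups_{A,A\ppr}})\sas$, and we recall from \cref{Remv} that $\ovl{\ups_{A,A\ppr}}$ and $\ovl{\ups_{C,C\ppr}}$ are isomorphisms in $h\C$.

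For (1): since $\ovl{\ups_{A,A\ppr}}$ is invertible, \cref{CorFunctoriality}\,(2) shows that $\big((\ovl{\ups_{A,A\ppr}})^{-1}\big)\sas$ is a two-sided inverse of $(\ups_{A,A\ppr})\sas\co\E(C,A\am A\ppr)\to\E(C,A\ti A\ppr)$, so the latter is bijective for every $C\in\C_0$; and naturality in the contravariant variable $C$ is precisely the commutation $(\ups_{A,A\ppr})\sas\ci\cfr\uas=\cfr\uas\ci(\ups_{A,A\ppr})\sas$ of \cref{PropEFtr}. Statement (2) is the dual, proved in the same way using that $\cfr\uas$ is functorial (dual of \cref{CorFunctoriality}\,(2)) and \cref{PropEFtr}.

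For (3): as a composite of the two isomorphisms of (1) and (2), $\vt=\vt^{A,C}_{A\ppr,C\ppr}$ is an isomorphism, so only the formula $\vt(\del\am\del\ppr)=\del\ti\del\ppr$ needs proof. First, $\Sf\am\Sf\ppr$ and $\Sf\ti\Sf\ppr$ are exact sequences by \cref{PropPCrod}. Applying \cref{CorProdExSeq1} with $\Sf_1=\Sf$ and $\Sf_2=\Sf\ppr$ produces a morphism of exact sequences ${}_{\ups_{A,A\ppr}}\Cf_{\ups_{C,C\ppr}}\co\Sf\am\Sf\ppr\to\Sf\ti\Sf\ppr$ (its legs in the third direction over the start- and end-vertices being $\ups_{A,A\ppr}$ and $\ups_{C,C\ppr}$). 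By \cref{LemET2} this means $(\ups_{A,A\ppr})\sas\und{\Sf\am\Sf\ppr}=(\ups_{C,C\ppr})\uas\und{\Sf\ti\Sf\ppr}$ in $\E(C\am C\ppr,A\ti A\ppr)$, i.e. $(\ups_{A,A\ppr})\sas(\del\am\del\ppr)=(\ups_{C,C\ppr})\uas(\del\ti\del\ppr)$; applying the isomorphism $((\ups_{C,C\ppr})\uas)^{-1}$ to both sides yields $\vt(\del\am\del\ppr)=\del\ti\del\ppr$.

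Finally, one should observe that $\del\am\del\ppr=\und{\Sf\am\Sf\ppr}$ and $\del\ti\del\ppr=\und{\Sf\ti\Sf\ppr}$ depend only on $\del$ and $\del\ppr$: by \cite[Corollary~5.1.2.3]{L1} coproducts of cubes are formed componentwise, so a pair of equivalences ${}_{1_A}\Cf_{1_C}\co\Sf\to\Sf_1$ and ${}_{1_{A\ppr}}\Cf\ppr_{1_{C\ppr}}\co\Sf\ppr\to\Sf\ppr_1$ induces an equivalence $\Cf\am\Cf\ppr\co\Sf\am\Sf\ppr\to\Sf_1\am\Sf\ppr_1$, and dually for products. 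The argument presents no real obstacle, since all the substantive work was already carried out in the preceding lemmas; the one point demanding attention is keeping the various $\ups$-morphisms and their classes in $h\C$ consistently identified when invoking \cref{CorProdExSeq1,LemET2}.
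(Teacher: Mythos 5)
Your proof is correct and follows essentially the same route as the paper: (1) and (2) are formal consequences of the functoriality of $\E$ (\cref{CorFunctoriality}, \cref{PropEFtr}) since $\ovl{\ups_{A,A\ppr}}$, $\ovl{\ups_{C,C\ppr}}$ are isomorphisms, and the formula in (3) comes from the morphism of \cref{CorProdExSeq1} combined with \cref{LemET2}, exactly as in the paper. Your closing remark on the well-definedness of $\und{\Sf\am\Sf\ppr}$ and $\und{\Sf\ti\Sf\ppr}$ is a harmless (and reasonable) addition that the paper leaves implicit.
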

\begin{proof}
Since $\ovl{\ups_{A,A\ppr}}$ is an isomorphism in $h\C$, {\rm (1)} is a formal consequence of the functoriality of $\Ebb$. Dually for {\rm (2)}.

The former part of {\rm (3)} is immediate from {\rm (1)} and {\rm (2)}. The latter part follows from Corollary~\ref{CorProdExSeq1}, since it implies $\und{(\ups_{A,A\ppr})\sas(\Sf\am\Sf\ppr)}=\und{(\ups_{C,C\ppr})\uas(\Sf\ti\Sf\ppr)}$ by Lemma~\ref{LemET2}.
\end{proof}

\begin{rem}\label{RemIdentify}
In the rest we will identify $A\oplus A\ppr=A\am A\ppr$ with $A\ti A\ppr$ through the isomorphism $\ovl{\ups_{A,A\ppr}}\co A\oplus A\ppr\ov{\cong}{\lra}A\ti A\ppr$ in $h\C$.

Also we put $\del\oplus\del\ppr=\del\am\del\ppr\in\Ebb(C\oplus C\ppr,A\oplus A\ppr)$. By the above identification and Proposition~\ref{PropSumExtension} {\rm (3)}, we will often identify $\del\oplus\del\ppr$ with $\del\ti \del\ppr=\vt(\del\oplus\del\ppr)$.
This agrees with the one in {\rm (iii)} of Definition~\ref{DefAddReal} via the above identification.
\end{rem}

\begin{lem}\label{LemForAddExtension}
We have the following.
\begin{enumerate}
\item For any $\del\in\Ebb(C,A)$, we have
\begin{equation}
(\nabla_A)\sas(\del\oplus\del)=(\nabla_C)\uas\del
\quad\text{and}\quad
(\Delta_A)\sas\del=(\Delta_C)\uas(\del\oplus\del).
\end{equation}
\item For any $\del_k\in\Ebb(C_k,A_k),\afr_k\in(h\C)(A_k,A_k\ppr),\cfr_k\in(h\C)(C_k,C_k\ppr)$ $(k=1,2)$, we have
\[ (\afr_1\oplus \afr_2)\sas(\del_1\oplus\del_2)=\afr_{1\ast}\del_1\oplus \afr_{2\ast}\del_2\ \ \text{and}\ \ 
(\cfr_1\oplus \cfr_2)\uas(\del_1\oplus\del_2)=\cfr_1\uas\del_1\oplus \cfr_2\uas\del_2.
 \]
\end{enumerate}
\end{lem}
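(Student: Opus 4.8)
The plan is to derive all four equalities from \cref{LemET2}, which converts an identity between twisted copies of an extension class into the existence of a morphism of exact sequences with prescribed end-term components in $h\C$. Throughout, write $\del=\und{\Sf}$ (and $\del_k=\und{\Sf_k}$) for exact sequences, and recall from \cref{PropSumExtension} and \cref{RemIdentify} that, under the identification of $A\oplus A$ with $A\ti A$, one has $\del\oplus\del=\und{\Sf\am\Sf}=\und{\Sf\ti\Sf}$ and $\del_1\oplus\del_2=\und{\Sf_1\am\Sf_2}$; moreover $\afr_{k\ast}\del_k=\und{a_{k\ast}\Sf_k}$ by \cref{CorFunctoriality} whenever $\ovl{a_k}=\afr_k$.

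For part (1) I would use the two morphisms of exact sequences $\nabla_{\Sf}\co\Sf\am\Sf\to\Sf$ and $\Delta_{\Sf}\co\Sf\to\Sf\ti\Sf$ from \cref{ExProdExSeq}. By their construction as $\If_{\Sf}\cup\If_{\Sf}$ and $(\If_{\Sf},\If_{\Sf})$, the components of $\nabla_{\Sf}$ (resp.\ $\Delta_{\Sf}$) on the end terms are the morphisms out of the coproduct (resp.\ into the product) induced by the pair $(\id,\id)$; since the (co)products formed in $\C$ are also (co)products in $h\C$, their images in $h\C$ are the honest codiagonals $\nabla_A,\nabla_C$ and diagonals $\Delta_A,\Delta_C$. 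Applying \cref{LemET2} to $\nabla_{\Sf}$ (with source $\Sf\am\Sf$ and target $\Sf$, $\afr=\nabla_A$, $\cfr=\nabla_C$) gives $(\nabla_A)\sas\und{\Sf\am\Sf}=(\nabla_C)\uas\und{\Sf}$, i.e.\ $(\nabla_A)\sas(\del\oplus\del)=(\nabla_C)\uas\del$; applying it to $\Delta_{\Sf}$ gives $(\Delta_A)\sas\del=(\Delta_C)\uas\und{\Sf\ti\Sf}=(\Delta_C)\uas(\del\oplus\del)$.

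For part (2) I would prove the first identity and obtain the second by the dual argument. Fix lifts $a_k\in\C_1(A_k,A_k\ppr)$ of $\afr_k$, and let $\Cf_k\co\Sf_k\to a_{k\ast}\Sf_k$ be a morphism as in \cref{LemExSeqPO}(1), with $a$-component $a_k$ and $c$-component $1_{C_k}$. Composing the $\Cf_k$ with the coproduct-inclusion morphisms $a_{k\ast}\Sf_k\to(a_{1\ast}\Sf_1)\am(a_{2\ast}\Sf_2)$ by means of \cref{PropComposeCubes}, and then forming the induced morphism out of the coproduct $\Sf_1\am\Sf_2$ (alternatively one may package this step using \cref{PropProdExSeq1}), one obtains a morphism of exact sequences $\Sf_1\am\Sf_2\to(a_{1\ast}\Sf_1)\am(a_{2\ast}\Sf_2)$ whose end-term components have images $\afr_1\oplus\afr_2$ and $\id_{C_1\oplus C_2}$ in $h\C$. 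By \cref{LemET2} this yields $(\afr_1\oplus\afr_2)\sas\und{\Sf_1\am\Sf_2}=\und{(a_{1\ast}\Sf_1)\am(a_{2\ast}\Sf_2)}$, which is precisely $(\afr_1\oplus\afr_2)\sas(\del_1\oplus\del_2)=\afr_{1\ast}\del_1\oplus\afr_{2\ast}\del_2$; the dual identity for pull-backs follows the same way, using $c_k\uas\Sf_k$, the dual of \cref{LemExSeqPO}(1), and the coproduct inclusions into $\Sf_1\am\Sf_2$.

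The only mildly delicate point — and I do not expect a genuine obstacle — is the routine verification that these structural morphisms of squares induce on the end terms exactly the block maps $\afr_1\oplus\afr_2$, $\nabla_A$, $\Delta_A$ in $h\C$; this is bookkeeping with the universal properties of the (co)products of squares from \cref{PropPCrod} (and the paragraph preceding it) together with the fact that those compute (co)products in $h\C$. One must also keep consistent the identification $A\oplus A=A\ti A$ of \cref{RemIdentify} when reading off the components of $\Delta_{\Sf}$ and $\nabla_{\Sf}$. Once \cref{LemET2} is in hand, each of the four equalities is simply the assertion that a morphism of exact sequences carrying the required labels exists.
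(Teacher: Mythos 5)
Your proof is correct and takes essentially the same route as the paper: part (1) is exactly \cref{LemET2} applied to the morphisms $\nabla_{\Sf}$ and $\Delta_{\Sf}$ of \cref{ExProdExSeq}, and part (2) is the combination of the morphisms ${}_{a_k}(\Df_k)_{1_{C_k}}\co\Sf_k\to a_{k\ast}\Sf_k$ from \cref{LemExSeqPO} with \cref{PropProdExSeq1} and then \cref{LemET2}, which is precisely the paper's argument. The only cosmetic difference is that in (2) the paper maps $\Sf_1\am\Sf_2$ into the product $(a_{1\ast}\Sf_1)\ti(a_{2\ast}\Sf_2)$ via \cref{PropProdExSeq1}, while your default construction targets the coproduct $(a_{1\ast}\Sf_1)\am(a_{2\ast}\Sf_2)$; these are interchangeable under the identification of \cref{RemIdentify}, and you note the \cref{PropProdExSeq1} packaging yourself.
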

\begin{proof}
{\rm (1)}
This follows from Lemma~\ref{LemET2} applied to the morphisms in Example~\ref{ExProdExSeq}.

{\rm (2)}
Take representatives $\del_k=\und{\Sf_k}$, $\afr_k=\ovl{a_k}$ $(k=1,2)$. By the definition of $a_{k\ast}\Sf_k$, there are morphisms of exact sequences
\[ {}_{a_k}(\Df_k)_{1_{C_k}}\co {}_{A_k}(\Sf_k)_{C_k}\to {}_{A_k\ppr}(a_{k\ast}\Sf_k)_{C_k} \]
for $k=1,2$. By Proposition~\ref{PropProdExSeq1}, we have a morphism ${}_{\ups}\Cf_{\ups\ppr}\co\Sf_1\am\Sf_2\to\Sf_1\ti\Sf_2$ in which, under the identifications in Remark~\ref{RemIdentify}, the morphisms $\ups\in\C_1(A_1\am A_2,A_1\ppr\ti A_2\ppr)$ and $\ups\ppr\in\C_1(C_1\am C_2,C_1\ti C_2)$ induce
\begin{eqnarray*}
\ovl{\ups}=\ovl{a_1}\oplus\ovl{a_2}=\afr_1\oplus\afr_2&\in&(h\C)(A_1\oplus A_2,A_1\ppr\oplus A_2\ppr),\\
\ovl{\ups\ppr}=\ovl{1_{C_1}}\oplus\ovl{1_{C_2}}=\id_{C_1\oplus C_2}&\in&(h\C)(C_1\oplus C_2,C_1\oplus C_2)
\end{eqnarray*}
in the homotopy category, thus implies
\[
(\afr_1\oplus\afr_2)\sas(\del_1\oplus\del_2)
=\und{\ups\sas(\Sf_1\am\Sf_2)}
=\und{\ups^{\prime\ast}(a_{1\ast}\Sf_1\ti a_{2\ast}\Sf_2)}
=\afr_{1\ast}\del_1\oplus\afr_{2\ast}\del_2
\]
by Lemma~\ref{LemET2}. Similarly for $(\cfr_1\oplus\cfr_2)\uas(\del_1\oplus\del_2)=\cfr_1\uas\del_1\oplus\cfr_2\uas\del_2$.
\end{proof}

\begin{dfn}\label{DefAddExtension}
For any $\del_1,\del_2\in\Ebb(C,A)$, define $\del_1+\del_2\in\Ebb(C,A)$ by
\[ \del_1+\del_2=(\nabla_A)\sas(\Delta_C)\uas(\del_1\oplus\del_2). \]
Obviously this operation is commutative.
\end{dfn}

\begin{prop}\label{PropAddExtMor}
Let $\del\in\Ebb(C,A)$ be any element.
\begin{enumerate}
\item  For any pair of endomorphisms $\afr,\afr\ppr\in\End_{h\C}(A)$ of $A$ in $h\C$, we have $(\afr+\afr\ppr)\sas\del=\afr\sas\del+\afr\ppr\sas\del$.
\item  For any pair of endomorphisms $\cfr,\cfr\ppr\in\End_{h\C}(C)$ of $C$ in $h\C$, we have $(\cfr+\cfr\ppr)\uas\del=\cfr\uas\del+\cfr^{\prime\ast}\del$.
\end{enumerate}
\end{prop}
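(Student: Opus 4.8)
The plan is to derive both identities by formal manipulation of the bifunctor $\Ebb$, using only facts already established: the functoriality of $\Ebb$ in each variable (Corollary~\ref{CorFunctoriality}), the commutation $\afr\sas\cfr\uas=\cfr\uas\afr\sas$ of Proposition~\ref{PropEFtr}, Lemma~\ref{LemForAddExtension}, and Definition~\ref{DefAddExtension}. The one extra input is the identity
\[ \afr+\afr\ppr=\nabla_A\ci(\afr\oplus\afr\ppr)\ci\Delta_A, \]
valid in any additive category and hence in $h\C$; here $\nabla_A$ and $\Delta_A$ denote the codiagonal and diagonal, that is, the images in $h\C$ of the like-named morphisms appearing in Example~\ref{ExProdExSeq}, under the identification $A\oplus A=A\am A$ of Remark~\ref{RemIdentify}. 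The analogous identity $\cfr+\cfr\ppr=\nabla_C\ci(\cfr\oplus\cfr\ppr)\ci\Delta_C$ holds for endomorphisms of $C$.

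For {\rm (1)}, given $\del\in\Ebb(C,A)$, I would apply functoriality to the displayed identity to obtain $(\afr+\afr\ppr)\sas\del=(\nabla_A)\sas(\afr\oplus\afr\ppr)\sas(\Delta_A)\sas\del$. By Lemma~\ref{LemForAddExtension} {\rm (1)} we may rewrite $(\Delta_A)\sas\del=(\Delta_C)\uas(\del\oplus\del)$; then Proposition~\ref{PropEFtr} lets us commute $(\afr\oplus\afr\ppr)\sas$ past $(\Delta_C)\uas$; and Lemma~\ref{LemForAddExtension} {\rm (2)} gives $(\afr\oplus\afr\ppr)\sas(\del\oplus\del)=\afr\sas\del\oplus\afr\ppr\sas\del$. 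Altogether
\[ (\afr+\afr\ppr)\sas\del=(\nabla_A)\sas(\Delta_C)\uas(\afr\sas\del\oplus\afr\ppr\sas\del), \]
and the right-hand side is precisely $\afr\sas\del+\afr\ppr\sas\del$ by Definition~\ref{DefAddExtension}. Statement {\rm (2)} follows dually: starting from $(\cfr+\cfr\ppr)\uas\del=(\Delta_C)\uas(\cfr\oplus\cfr\ppr)\uas(\nabla_C)\uas\del$, one replaces $(\nabla_C)\uas\del$ by $(\nabla_A)\sas(\del\oplus\del)$ using Lemma~\ref{LemForAddExtension} {\rm (1)}, applies Proposition~\ref{PropEFtr} twice to bring $(\nabla_A)\sas$ to the front, and concludes with Lemma~\ref{LemForAddExtension} {\rm (2)} and Definition~\ref{DefAddExtension}.

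I do not expect a genuine obstacle here: the whole argument is bookkeeping on top of the preceding lemmas. The only points deserving a moment's attention are checking that the $A$- and $C$-legs of the morphisms $\nabla_\Sf,\Delta_\Sf$ of Example~\ref{ExProdExSeq} really do induce the codiagonal and diagonal in $h\C$, so that the additive-category identity above is applicable, and keeping the identification $A\oplus A=A\am A$ of Remark~\ref{RemIdentify} consistent throughout. Commutativity of $+$ on $\Ebb(C,A)$ has already been recorded in Definition~\ref{DefAddExtension}, so nothing more is needed on that point.
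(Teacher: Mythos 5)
Your proposal is correct and is essentially the paper's own argument: the paper proves (1) by the same chain of equalities (Definition~\ref{DefAddExtension}, Lemma~\ref{LemForAddExtension}~(2), Proposition~\ref{PropEFtr}, Lemma~\ref{LemForAddExtension}~(1), functoriality, and the identity $\afr+\afr\ppr=\nabla_A\ci(\afr\oplus\afr\ppr)\ci\Delta_A$), merely read in the opposite direction, and then says ``similarly for (2)'', which your dual argument spells out.
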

\begin{proof}
{\rm (1)} By Lemma~\ref{LemForAddExtension}, we have
\begin{eqnarray*}
\afr\sas\del+\afr\ppr\sas\del&=&(\nabla_A)\sas(\Delta_C)\uas(\afr\sas\del\oplus\afr\ppr\sas\del)
\ =\ (\nabla_A)\sas(\Delta_C)\uas(\afr\oplus\afr\ppr)\sas(\del\oplus\del)\\
&=&(\nabla_A)\sas(\afr\oplus\afr\ppr)\sas(\Delta_C)\uas(\del\oplus\del)
\ =\ (\nabla_A)\sas(\afr\oplus\afr\ppr)\sas(\Delta_A)\sas\del\\
&=&(\nabla_A\circ (\afr\oplus\afr\ppr)\circ\Delta_A)\sas\del\ =\ (\afr+\afr\ppr)\sas\del.
\end{eqnarray*}
Similarly for {\rm (2)}.
\end{proof}

In the following corollary, remark that $h\C$ is an additive category by definition, hence there are $-\id_A\in (h\C)(A,A)$ and $-\id_C\in (h\C)(C,C)$. 
\begin{cor}\label{CorAddExtMor}
For any $\del\in\Ebb(C,A)$, the following holds in $\Ebb(C,A)$.
\begin{enumerate}
\item $\del+{}_A0_C=\del$.
\item $\del+(-\id_C)\uas\del=\del+(-\id_A)\sas\del={}_A0_C$.
\end{enumerate}
In particular it follows that $(-\id_C)\uas\del=(-\id_A)\sas\del$ gives the additive inverse of $\del$.
\end{cor}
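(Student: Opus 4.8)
The plan is to deduce all three assertions formally from the additivity of the operations $(-)\sas$ and $(-)\uas$ established in \cref{PropAddExtMor}, together with the behaviour of zero morphisms recorded in \cref{RemZeroElement}; no simplicial input is needed, everything taking place inside the ordinary additive category $h\C$.

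For {\rm (1)} I would let $0_A\in\End_{h\C}(A)$ denote the zero endomorphism of $A$ and apply \cref{PropAddExtMor}~{\rm (1)} to the pair $\id_A,0_A\in\End_{h\C}(A)$, which gives $(\id_A)\sas\del=(\id_A+0_A)\sas\del=(\id_A)\sas\del+(0_A)\sas\del$. Now $(\id_A)\sas\del=\del$ by \cref{CorFunctoriality}, and $(0_A)\sas\del={}_A0_C$ by \cref{RemZeroElement} since $0_A$ factors through a zero object; hence $\del+{}_A0_C=\del$.

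For {\rm (2)}, using that $h\C$ is additive so that $-\id_A\in\End_{h\C}(A)$ makes sense, I would apply \cref{PropAddExtMor}~{\rm (1)} to the pair $\id_A,-\id_A$:
\[ \del+(-\id_A)\sas\del=(\id_A)\sas\del+(-\id_A)\sas\del=(\id_A+(-\id_A))\sas\del=(0_A)\sas\del={}_A0_C, \]
the last equality again by \cref{RemZeroElement}. The identity $\del+(-\id_C)\uas\del={}_A0_C$ is obtained in the strictly dual way, from \cref{PropAddExtMor}~{\rm (2)} applied to $\id_C,-\id_C\in\End_{h\C}(C)$ together with the fact (also in \cref{RemZeroElement}) that the zero morphism $0\in(h\C)(C,C)$ satisfies $0\uas\del={}_A0_C$.

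For the final sentence: by {\rm (1)} and {\rm (2)} both $(-\id_A)\sas\del$ and $(-\id_C)\uas\del$ are inverses of $\del$ for the commutative operation $+$ on $\Ebb(C,A)$ with neutral element ${}_A0_C$, so once $+$ is known to be associative the inverse is unique and the two expressions must coincide (and give $-\del$). Associativity is itself a routine check: unwinding \cref{DefAddExtension}, commuting $(-)\sas$ past $(-)\uas$ via \cref{PropEFtr}, and using \cref{LemForAddExtension}~{\rm (2)}, one rewrites both $(\del_1+\del_2)+\del_3$ and $\del_1+(\del_2+\del_3)$ as $(\nabla)\sas(\Delta)\uas(\del_1\oplus\del_2\oplus\del_3)$, where $\nabla\co A\oplus A\oplus A\to A$ and $\Delta\co C\to C\oplus C\oplus C$ are the threefold codiagonal and diagonal, the only genuine verification being the object-level identities $\nabla_A\ci(\nabla_A\oplus\id_A)=\nabla_A\ci(\id_A\oplus\nabla_A)$ in $h\C$ and their duals. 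I do not expect any real obstacle: the whole statement is purely formal manipulation with \cref{PropAddExtMor} and \cref{RemZeroElement}; the only points needing a moment's care are recognising that ${}_A0_C=(0_A)\sas\del$, so that \cref{PropAddExtMor} applies in {\rm (1)}, and the tacit appeal to associativity of $+$ in the last claim.
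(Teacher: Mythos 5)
Your proofs of (1) and (2) are exactly the paper's argument: the paper's entire proof is the single line that the corollary ``follows from Remark~\ref{RemZeroElement} and Proposition~\ref{PropAddExtMor}'', instantiated just as you do with the pairs $(\id_A,0_A)$ and $(\id_A,-\id_A)$ and their duals. Concerning the final equality $(-\id_C)\uas\del=(-\id_A)\sas\del$, you are right that uniqueness of inverses needs associativity; the paper does not address this inside the corollary but only secures the full group structure afterwards, in \cref{CorEAddFtr}, as a formal consequence of \cref{PropEAddFtr} together with this corollary, and your direct verification via threefold (co)diagonals is a workable alternative provided you also justify that $(\del_1\oplus\del_2)\oplus\del_3$ and $\del_1\oplus(\del_2\oplus\del_3)$ agree under the canonical identifications (e.g.\ via \cref{PropEAddFtr} or \cref{LemForAddExtension}), a point your sketch passes over silently.
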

\begin{proof}
This follows from Remark~\ref{RemZeroElement} and Proposition~\ref{PropAddExtMor}.
\end{proof}

\begin{prop}\label{PropEAddFtr}
The following holds.
\begin{enumerate}
\item For any $A_1,A_2,C\in\C_0$, 
\begin{equation}\label{CanBij}
([1\ 0]\sas,[0\ 1]\sas)\co\Ebb(C,A_1\oplus A_2)\to\Ebb(C,A_1)\ti\Ebb(C,A_2)
\end{equation}
is bijective.
\item For any $A,C_1,C_2\in\C_0$, 
\[ (\spmatrix{1}{0}\uas,\spmatrix{0}{1}\uas)\co\Ebb(C_1\oplus C_2,A)\to\Ebb(C_1,A)\ti\Ebb(C_2,A) \]
is bijective.
\end{enumerate}
\end{prop}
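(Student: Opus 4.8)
The plan is to prove (1); part (2) is entirely dual, with $\afr\sas$ replaced by $\cfr\uas$ and the inclusions/projections of $A_1\oplus A_2$ replaced by those of $C_1\oplus C_2$. Write $A=A_1\oplus A_2$, let $p_k\in(h\C)(A,A_k)$ be the projections (so $p_1=[1\ 0]$, $p_2=[0\ 1]$) and $i_k\in(h\C)(A_k,A)$ the inclusions (so $i_1=\left[\bsm1\\0\esm\right]$, $i_2=\left[\bsm0\\1\esm\right]$), so that in $h\C$ one has $p_k\ci i_l=\id_{A_k}$ if $k=l$ and $0$ otherwise, and $i_1\ci p_1+i_2\ci p_2=\id_A$. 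The map in $(\ref{CanBij})$ is $\Phi=(p_{1\sas},p_{2\sas})$, and I would exhibit
\[ \Psi\co\Ebb(C,A_1)\ti\Ebb(C,A_2)\lra\Ebb(C,A),\qquad(\del_1,\del_2)\longmapsto i_{1\ast}\del_1+i_{2\ast}\del_2 \]
as its two-sided inverse, where $+$ is the addition of \cref{DefAddExtension}.

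First I would check $\Psi\ci\Phi=\id$. For $\del\in\Ebb(C,A)$, the functoriality of $\Ebb$ (\cref{CorFunctoriality}) lets me rewrite $i_{k\ast}p_{k\ast}\del=(i_k\ci p_k)\sas\del$, and then \cref{PropAddExtMor}~(1) (additivity of $\afr\sas\del$ in the endomorphism $\afr$) gives
\[ \Psi(\Phi(\del))=(i_1\ci p_1)\sas\del+(i_2\ci p_2)\sas\del=(i_1\ci p_1+i_2\ci p_2)\sas\del=(\id_A)\sas\del=\del. \]

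For $\Phi\ci\Psi=\id$ I would first record the complementary additivity statement, that each $\afr\sas\co\Ebb(C,A)\to\Ebb(C,A\ppr)$ is additive for the operation of \cref{DefAddExtension} (dually each $\cfr\uas$). This is a short computation: expanding the definition of $+$ and using the codiagonal identity $\afr\ci\nabla_A=\nabla_{A\ppr}\ci(\afr\oplus\afr)$, the functoriality of $\Ebb$, the commutation $\afr\sas\cfr\uas=\cfr\uas\afr\sas$ of \cref{PropEFtr}, and \cref{LemForAddExtension}~(2), one gets
\[ \afr\sas(\del+\del\ppr)=(\nabla_{A\ppr})\sas(\afr\oplus\afr)\sas(\Delta_C)\uas(\del\oplus\del\ppr)=(\nabla_{A\ppr})\sas(\Delta_C)\uas(\afr\sas\del\oplus\afr\sas\del\ppr)=\afr\sas\del+\afr\sas\del\ppr. \]
Granting this, the $k$-th component of $\Phi(\Psi(\del_1,\del_2))$ is $p_{k\ast}(i_{1\ast}\del_1+i_{2\ast}\del_2)=(p_k\ci i_1)\sas\del_1+(p_k\ci i_2)\sas\del_2$, and since exactly one of $p_k\ci i_1,\,p_k\ci i_2$ is $\id_{A_k}$ while the other is $0$, \cref{RemZeroElement} (which gives $0\sas(-)={}_{A_k}0_C$) together with $\del_k+{}_{A_k}0_C=\del_k$ (\cref{CorAddExtMor}) yield $p_{k\ast}(i_{1\ast}\del_1+i_{2\ast}\del_2)=\del_k$. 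Hence $\Phi\ci\Psi=\id$, the map in $(\ref{CanBij})$ is bijective, and (2) follows by the same argument in the dual.

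I do not expect a genuine obstacle here: all the real content is already packaged into the operations $\afr\sas$, $\cfr\uas$ and the addition on $\Ebb(C,A)$, so this step is pure bookkeeping. The one point deserving care is the logical order: additivity of $\afr\sas$ in the \emph{extension} variable, needed for $\Phi\ci\Psi$, is \emph{not} \cref{PropAddExtMor} (which only treats the \emph{morphism} variable) and has to be derived first, as above. Finally, once (1) and (2) hold, combining them with \cref{PropAddExtMor} shows that every $\Ebb(\cfr,\afr)$ is a group homomorphism, so $\Ebb$ factors through $\Ab$, which is precisely axiom (ET1).
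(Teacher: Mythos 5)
Your proposal is correct, and it uses the same inverse map as the paper: the paper's proof also exhibits $G(\del_1,\del_2)=\spmatrix{1}{0}\sas\del_1+\spmatrix{0}{1}\sas\del_2$ as a two-sided inverse, and your verification of $\Psi\ci\Phi=\id$ via \cref{CorFunctoriality} and \cref{PropAddExtMor} is word-for-word the paper's argument. The difference lies in the other composite. You first establish the auxiliary lemma that $\afr\sas$ is additive in the \emph{extension} variable (via the codiagonal identity, \cref{PropEFtr} and \cref{LemForAddExtension}~(2)), and then kill the cross terms using \cref{RemZeroElement} and \cref{CorAddExtMor}; this lemma is indeed not \cref{PropAddExtMor}, and your derivation of it is sound and non-circular, since it only invokes material preceding \cref{PropEAddFtr}. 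The paper instead avoids stating that lemma at this stage: it expands the definition of $+$ directly, commutes the push-forwards and pull-backs, collapses the matrix composite $[1\ 0]\ci\nabla_{A_1\oplus A_2}\ci(\spmatrix{1}{0}\oplus\spmatrix{0}{1})=[1\ 0]$, and then uses the identity $[1\ 0]\sas(\del_1\oplus\del_2)=[1\ 0]\uas\del_1$, obtained from \cref{PropProdExSeq1} applied to $\If_{\Sf_1}\co\Sf_1\to\Sf_1$ and the morphism $\Sf_2\to\Of$, to conclude $(\Delta_C)\uas[1\ 0]\uas\del_1=\del_1$ (in the paper, additivity of $\Ebb(\cfr,\afr)$ in the extension variable only appears afterwards, as a formal consequence in \cref{CorEAddFtr}). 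Your route buys a reusable statement that makes the biadditivity in \cref{CorEAddFtr} immediate, at the cost of one extra computation; the paper's route stays closer to the simplicial ingredients already proved and defers the general additivity to the formal argument. Either way the proposition follows, and your dualization for part (2) matches the paper.
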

\begin{proof}
{\rm (1)} Let us denote the map $(\ref{CanBij})$ by $F$ for simplicity. It suffices to show that 
\[ G\co\Ebb(C,A_1)\ti\Ebb(C,A_2)\to\Ebb(C,A_1\oplus A_2)\ ;\ (\del_1,\del_2)\mapsto \spmatrix{1}{0}\sas\del_1+\spmatrix{0}{1}\sas\del_2 \]
gives its inverse. $G\ci F=\id$ follows from Proposition~\ref{PropAddExtMor}, since we have
\[ \spmatrix{1}{0}\sas[1\ 0]\sas\del+\spmatrix{0}{1}\sas[0\ 1]\sas\del=(\spmatrix{1}{0}[1\ 0]+\spmatrix{0}{1}[0\ 1])\sas\del=(\id_{A_1\oplus A_2})\sas\del=\del \]
for any $\del\in\Ebb(C,A_1\oplus A_2)$.

For any $(\del_1,\del_2)\in\Ebb(C,A_1)\ti\Ebb(C,A_2)$, we have
\[ [1\ 0]\sas(\del_1\oplus\del_2)=[1\ 0]\uas\del_1 \]
by Proposition~\ref{PropProdExSeq1} applied to $\If\co\Sf_1\to\Sf_1$ and $\Sf_2\to\Of$. Thus by Lemma~\ref{LemForAddExtension}, we obtain
\begin{eqnarray*}
[1\ 0]\sas(\spmatrix{1}{0}\sas\del_1+\spmatrix{0}{1}\sas\del_2)
&=&[1\ 0]\sas(\nabla_{A_1\oplus A_2})\sas(\Delta_C)\uas(\spmatrix{1}{0}\sas\del_1\oplus\spmatrix{0}{1}\sas\del_2)\\
&=&[1\ 0]\sas(\nabla_{A_1\oplus A_2})\sas(\Delta_C)\uas(\spmatrix{1}{0}\oplus\spmatrix{0}{1})\sas(\del_1\oplus\del_2)\\
&=&(\Delta_C)\uas\Big([1\ 0]\ci\nabla_{A_1\oplus A_2}\ci(\spmatrix{1}{0}\oplus\spmatrix{0}{1})\Big)\sas(\del_1\oplus\del_2)\\
&=&(\Delta_C)\uas[1\ 0]\sas(\del_1\oplus\del_2)
\ = (\Delta_C)\uas[1\ 0]\uas\del_1\ =\ \del_1.
\end{eqnarray*}
Similarly we have $[0\ 1]\sas(\spmatrix{1}{0}\sas\del_1+\spmatrix{0}{1}\sas\del_2)=\del_2$, and thus $F\ci G((\del_1,\del_2))=(\del_1,\del_2)$.
{\rm (2)} can be shown dually.
\end{proof}

\begin{cor}\label{CorEAddFtr}
$\Ebb(C,A)$ has a natural structure of additive group, whose addition is given by Definition~\ref{DefAddExtension}, with the zero element ${}_A0_C$.
%\[ \del_1+\del_2=(\Delta_C)\uas(\nabla_A)\sas(\del_1\oplus\del_2) \]
%for each $\del_1,\del_2\in\Ebb(C,A)$.
Moreover,
\[ \Ebb\co(h\C)\op\ti h\C\to\Ab \]
becomes a biadditive functor.
\end{cor}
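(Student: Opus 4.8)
The plan is to deduce the statement from the calculus of $(-)\sas$, $(-)\uas$ and $\oplus$ developed above: principally Proposition~\ref{PropEAddFtr}, Lemma~\ref{LemForAddExtension}, Proposition~\ref{PropAddExtMor} and Corollary~\ref{CorAddExtMor}, together with the functoriality statements Corollary~\ref{CorFunctoriality} and Proposition~\ref{PropEFtr}. First I would verify that $(\Ebb(C,A),+)$ is an abelian group. Commutativity is built into Definition~\ref{DefAddExtension}; that ${}_A0_C$ is a unit and that $-\del=(-\id_A)\sas\del=(-\id_C)\uas\del$ is an inverse for $\del$ are Corollary~\ref{CorAddExtMor}. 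The one axiom needing an argument is associativity. For this I would unwind $(\del_1+\del_2)+\del_3$ from Definition~\ref{DefAddExtension}, using Lemma~\ref{LemForAddExtension} {\rm (2)} and its dual to pull $(\nabla_A)\sas$ and $(\Delta_C)\uas$ through $\oplus$, Proposition~\ref{PropEFtr} to interchange $(-)\sas$ with $(-)\uas$, and the functoriality of $(-)\sas$, $(-)\uas$ (Corollary~\ref{CorFunctoriality}, and dually), to arrive at
\[ (\del_1+\del_2)+\del_3=(\nabla_A^{(3)})\sas(\Delta_C^{(3)})\uas(\del_1\oplus\del_2\oplus\del_3), \]
where $\nabla_A^{(3)}\co A\oplus A\oplus A\to A$ and $\Delta_C^{(3)}\co C\to C\oplus C\oplus C$ are the threefold codiagonal and diagonal in $h\C$ (which make sense since $h\C$ is additive). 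The same computation produces the identical expression for $\del_1+(\del_2+\del_3)$, because the two bracketings of the codiagonal, of the diagonal, and of $\del_1\oplus\del_2\oplus\del_3$ are matched compatibly by the associativity isomorphisms of $\oplus$ in $h\C$. Hence $+$ is associative, and $\Ebb(C,A)$ is an abelian group with zero element ${}_A0_C$.

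Next I would show $\Ebb$ takes values in $\Ab$. Since $\Ebb(\cfr,\afr)=\afr\sas\ci\cfr\uas$, it is enough to see that $\afr\sas\co\Ebb(C,A)\to\Ebb(C,A\ppr)$ and $\cfr\uas\co\Ebb(C,A)\to\Ebb(C\ppr,A)$ are group homomorphisms. Using $\afr\ci\nabla_A=\nabla_{A\ppr}\ci(\afr\oplus\afr)$ in $h\C$, together with Corollary~\ref{CorFunctoriality}, Proposition~\ref{PropEFtr} and Lemma~\ref{LemForAddExtension} {\rm (2)}, one computes
\[ \afr\sas(\del_1+\del_2)=(\nabla_{A\ppr})\sas(\Delta_C)\uas(\afr\sas\del_1\oplus\afr\sas\del_2)=\afr\sas\del_1+\afr\sas\del_2, \]
and dually $\cfr\uas$ is additive; so $\Ebb(\cfr,\afr)$ is a homomorphism. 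Additivity of $\Ebb$ in each morphism variable, namely $\Ebb(\cfr,\afr+\afr\ppr)=\Ebb(\cfr,\afr)+\Ebb(\cfr,\afr\ppr)$ and $\Ebb(\cfr+\cfr\ppr,\afr)=\Ebb(\cfr,\afr)+\Ebb(\cfr\ppr,\afr)$, then follows from Proposition~\ref{PropAddExtMor} (whose proof applies verbatim to morphisms between distinct objects) combined with the homomorphism property just obtained. This exhibits $\Ebb\co(h\C)\op\ti h\C\to\Ab$ as a biadditive functor.

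I do not anticipate a real obstacle: every step is a diagram chase inside an already-established calculus, and conceptually Proposition~\ref{PropEAddFtr} --- that $\Ebb(C,-)$ and $\Ebb(-,A)$ carry biproducts to products --- already makes the abelian-group structure inevitable, just as for $\mathrm{Ext}$-groups. The only point demanding care is the associativity computation, where one must keep precise track of the various identifications among $A\oplus A\oplus A$ and $C\oplus C\oplus C$ and their compatible (co)diagonals, and remember to invoke Proposition~\ref{PropEFtr} each time a pushforward is commuted past a pullback.
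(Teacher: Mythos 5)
Your proposal is correct, but it takes a more computational route than the paper. The paper's proof is a one-liner: it declares the corollary a formal consequence of the functoriality of $\Ebb\co(h\C)\op\ti h\C\to\Sets$, of Corollary~\ref{CorAddExtMor}, and of Proposition~\ref{PropEAddFtr}. The mechanism behind that one-liner is the classical fact that a $\Sets$-valued functor on an additive category which carries biproducts to products (this is exactly what Proposition~\ref{PropEAddFtr} provides, with the computation in its proof identifying $\del_1\oplus\del_2$ with the pair $(\del_1,\del_2)$ under the canonical bijection) endows each value with a canonical commutative monoid structure; associativity, the unit law, and biadditivity then come for free from the corresponding identities among (co)diagonals and projections in $h\C$, while inverses are supplied by Corollary~\ref{CorAddExtMor}. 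You instead verify associativity by hand, reducing both bracketings to $(\nabla_A^{(3)})\sas(\Delta_C^{(3)})\uas(\del_1\oplus\del_2\oplus\del_3)$ via Lemma~\ref{LemForAddExtension}, Proposition~\ref{PropEFtr} and functoriality, and you obtain biadditivity by noting that the proof of Proposition~\ref{PropAddExtMor} works verbatim for morphisms between distinct objects; in your argument Proposition~\ref{PropEAddFtr} plays only a motivational role. Both routes are valid: the paper's is slicker and explains why Proposition~\ref{PropEAddFtr} was proved at all, whereas yours is self-contained and makes the omitted details explicit, at the cost of having to track carefully the identification of the two bracketings of $\del_1\oplus\del_2\oplus\del_3$ and of the threefold (co)diagonals --- a point you acknowledge and which is indeed handled inside the established calculus (coproducts of exact sequences in $\Fun(\Delta^1\ti\Delta^1,\C)$ together with Lemma~\ref{LemET2}), so it is a gloss rather than a gap.
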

\begin{proof}
This is a formal consequence of the functoriality of $\Ebb\co(h\C)\op\ti h\C\to\Sets$, Corollary~\ref{CorAddExtMor} and Proposition~\ref{PropEAddFtr}.
\end{proof}

\subsection{External triangulation of $h\C$}

\begin{dfn}\label{DefReal}
Let $A,C\in\C_0$ be any pair of objects. For each $\del\in\E(C,A)$, take any
\[
\Sf=\ 
\xy
(-7,7)*+{A}="0";
(7,7)*+{B}="2";
(-7,-7)*+{O}="4";
(7,-7)*+{C}="6";
{\ar^{x} "0";"2"};
{\ar_{i} "0";"4"};
{\ar^{y} "2";"6"};
{\ar_{j} "4";"6"};
\endxy
\]
such that $\del=\und{\Sf}$, and put $\sfr(\del)=[A\ov{\ovl{x}}{\lra}B\ov{\ovl{y}}{\lra}C]$, where the equivalence class of the sequences is the one in Definition~\ref{DefEquivSeq}. Well-definedness follows from Proposition~\ref{PropInvert}.
\end{dfn}

\begin{lem}\label{LemET}
$\sfr$ is an additive realization of $\Ebb$.
\end{lem}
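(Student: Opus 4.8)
The plan is to verify, for $\sfr$ as introduced in \cref{DefReal}, the three defining conditions (i), (ii), (iii) of an additive realization from \cref{DefAddReal}; well-definedness of $\sfr$ is not at stake, having already been recorded in \cref{DefReal} (it rests on \cref{PropInvert}: a morphism ${}_{1_A}\Cf_{1_C}\co\Sf\to\Sf\ppr$ of exact sequences has a middle arrow $b$ that is a homotopy equivalence, so $\ovl{b}$ is an isomorphism in $h\C$ identifying the two underlying sequences in the sense of \cref{DefEquivSeq}).

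The substantive condition is (i), and there the work is essentially done by \cref{LemET2}. Given $\del\in\E(C,A)$ and $\del\ppr\in\E(C\ppr,A\ppr)$ with representatives of $\sfr(\del),\sfr(\del\ppr)$ as in $(\ref{PairReal})$, I would choose exact sequences ${}_A\Sf_C$ and ${}_{A\ppr}\Sf\ppr_{C\ppr}$ with $\und{\Sf}=\del$, $\und{\Sf\ppr}=\del\ppr$. Since condition (i) is unaffected by replacing the sequences $A\ov{\xfr}{\lra}B\ov{\yfr}{\lra}C$, $A\ppr\ov{\xfr\ppr}{\lra}B\ppr\ov{\yfr\ppr}{\lra}C\ppr$ by equivalent ones in the sense of \cref{DefEquivSeq} (one simply conjugates the sought $\bfr$ by the connecting isomorphisms), I may assume these are the underlying sequences $A\ov{\ovl{x}}{\lra}B\ov{\ovl{y}}{\lra}C$, $A\ppr\ov{\ovl{x\ppr}}{\lra}B\ppr\ov{\ovl{y\ppr}}{\lra}C\ppr$ of $\Sf$ and $\Sf\ppr$. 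By \cref{DefEFtr} the hypothesis $\E(C,\afr)(\del)=\E(\cfr,A\ppr)(\del\ppr)$ reads $\afr\sas\del=\cfr\uas\del\ppr$, so \cref{LemET2} yields a morphism of exact sequences ${}_a\Cf_c\co\Sf\to\Sf\ppr$ with $\ovl{a}=\afr$, $\ovl{c}=\cfr$. Passing to $h\C$, $\Cf$ induces a morphism from $A\ov{\ovl{x}}{\lra}B\ov{\ovl{y}}{\lra}C$ to $A\ppr\ov{\ovl{x\ppr}}{\lra}B\ppr\ov{\ovl{y\ppr}}{\lra}C\ppr$ whose middle component $\bfr=\ovl{b}$, with $b=\Cf|_{\{(0,1)\}\ti\Delta^1}$, satisfies $\bfr\ci\ovl{x}=\ovl{x\ppr}\ci\afr$ and $\ovl{y\ppr}\ci\bfr=\cfr\ci\ovl{y}$ --- exactly what (i) requires.

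Conditions (ii) and (iii) are then a matter of unwinding definitions. For (ii), by \cref{DefZeroElement} the zero element ${}_A0_C$ is the class of the splitting exact sequence ${}_A\Nf_C$ of \cref{PropForSplit}, with underlying sequence $A\ov{\ovl{u_A}}{\lra}A\ti C\ov{\ovl{p_C}}{\lra}C$; under the identification $A\oplus C=A\am C\cong A\ti C$ of \cref{RemIdentify} via $\ovl{\ups_{A,C}}$, the defining $2$-simplices of $u_A$, $p_C$, $\ups_{A,C}$ give $\ovl{p_A}\ci\ovl{u_A}=\id_A$, $\ovl{p_C}\ci\ovl{u_A}=\ovl{z}=0$ (the lower leg $z$ of $u_A$ factors through a zero object), $\ovl{p_A}\ci\ovl{\ups_{A,C}}=[1\ 0]$ and $\ovl{p_C}\ci\ovl{\ups_{A,C}}=[0\ 1]$, so $\ovl{u_A}$ corresponds to $\left[\bsm1\\0\esm\right]$ and $\ovl{p_C}$ to $[0\ 1]$ under the identification; hence $\sfr({}_A0_C)=[A\ov{\left[\bsm1\\0\esm\right]}{\lra}A\oplus C\ov{[0\ 1]}{\lra}C]$. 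For (iii), with $\del=\und{\Sf}$ and $\del\ppr=\und{\Sf\ppr}$, \cref{RemIdentify} together with \cref{PropSumExtension}~(3) shows $\del\oplus\del\ppr$ is represented by $\Sf\am\Sf\ppr$, whose underlying sequence $A\am A\ppr\ov{x\am x\ppr}{\lra}B\am B\ppr\ov{y\am y\ppr}{\lra}C\am C\ppr$ becomes in $h\C$ --- where finite coproducts are biproducts and coproducts of morphisms are direct sums --- the sequence $A\oplus A\ppr\ov{\xfr\oplus\xfr\ppr}{\lra}B\oplus B\ppr\ov{\yfr\oplus\yfr\ppr}{\lra}C\oplus C\ppr$, whence $\sfr(\del\oplus\del\ppr)=[A\oplus A\ppr\ov{\xfr\oplus\xfr\ppr}{\lra}B\oplus B\ppr\ov{\yfr\oplus\yfr\ppr}{\lra}C\oplus C\ppr]$ as required.

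I expect no real obstacle: the content of (i) is wholly absorbed into \cref{LemET2}, and what remains is bookkeeping --- identifying the homotopy-category diagram induced by the cube ${}_a\Cf_c$, keeping the (harmless) freedom in the choice of representatives of $\sfr(\del)$ under control, and carrying out the identifications via $\ups_{A,C}$ in (ii) and via \cref{RemIdentify} in (iii).
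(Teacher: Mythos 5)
Your proposal is correct and matches the paper's proof: condition (i) is exactly the paper's appeal to \cref{LemET2}, and (ii), (iii) are the same unwinding of \cref{DefZeroElement}, \cref{PropForSplit} and \cref{RemIdentify}. The only (immaterial) difference is that in (ii) you use the product-type representative ${}_A\Nf_C$ and transport along $\ovl{\ups_{A,C}}$, whereas the paper uses the coproduct-type ${}_A\Nf\ppr_C$ directly, which avoids that identification.
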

\begin{proof}
Let us confirm the conditions {\rm (i),(ii),(iii)} in Definition~\ref{DefAddReal}.

{\rm (i)} This follows from Lemma~\ref{LemET2}.

{\rm (ii)}
Let $A,C\in\C_0$ be any pair of objects. Since ${}_A0_C={}_A\und{\Nf\ppr}_C$% by definition
, we have
\[ \sfr({}_A0_C)=[A\ov{\ovl{i}_A}{\lra}A\am C\ov{\ovl{j}_C}{\lra}C]=[A\ov{\left[\bsm 1\\0\esm\right]}{\lra}A\am C\ov{[0\ 1]}{\lra}C]. \]

{\rm (iii)}
Let $\del=\und{\Sf}\in\Ebb(C,A),\del\ppr=\und{\Sf\ppr}\in\Ebb(C\ppr,A\ppr)$ be any pair of elements, with
\[
\Sf=\ 
\xy
(-6,6)*+{A}="0";
(6,6)*+{B}="2";
(-6,-6)*+{O}="4";
(6,-6)*+{C}="6";
{\ar^{x} "0";"2"};
{\ar_{i} "0";"4"};
%{\ar|*+{_z} "0";"6"};
{\ar^{y} "2";"6"};
{\ar_{j} "4";"6"};
\endxy
\quad
\text{and}
\quad
\Sf\ppr=\ 
\xy
(-6,6)*+{A\ppr}="0";
(6,6)*+{B\ppr}="2";
(-6,-6)*+{O\ppr}="4";
(6,-6)*+{C\ppr}="6";
{\ar^{x\ppr} "0";"2"};
{\ar_{i\ppr} "0";"4"};
%{\ar|*+{_{z\ppr}} "0";"6"};
{\ar^{y\ppr} "2";"6"};
{\ar_{j\ppr} "4";"6"};
\endxy.
\]
By definition we have 
\[ \sfr(\del)=[A\ov{\xfr}{\lra}B\ov{\yfr}{\lra}C],\ \ \sfr(\del\ppr)=[A\ppr\ov{\xfr\ppr}{\lra}B\ppr\ov{\yfr\ppr}{\lra}C\ppr], \]
and $\del\oplus\del\ppr=\und{\Sf\am\Sf\ppr}$ as in Remark~\ref{RemIdentify}.
It follows
\[ \sfr(\del\oplus\del\ppr)=[A\am A\ppr\ov{\ovl{x\am x\ppr}}{\lra}B\am B\ppr\ov{\ovl{y\am y\ppr}}{\lra}C\am C\ppr]=[A\oplus A\ppr\ov{\ovl{x}\oplus\ovl{x\ppr}}{\lra}B\oplus B\ppr\ov{\ovl{y}\oplus\ovl{y\ppr}}{\lra}C\oplus C\ppr ]. \]

\end{proof}

\begin{thm}\label{ThmET}
$(h\C,\Ebb,\sfr)$ is an extriangulated category.
\end{thm}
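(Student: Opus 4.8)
The plan is to verify, in order, the axioms (ET1)--(ET4) and their duals listed in \cref{DefExtriangulation}, using the results already established. The axioms (ET1) and (ET2) are already done: (ET1) is precisely \cref{CorEAddFtr}, and (ET2) is \cref{LemET}. So the work consists of (ET3), (ET3)$\op$, (ET4), (ET4)$\op$, and by duality of the setup it suffices to establish (ET3) and (ET4).

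For (ET3): we are given $\del=\und{\Sf}$, $\del\ppr=\und{\Sf\ppr}$, realizations $\sfr(\del)=[A\ov{\ovl{x}}{\lra}B\ov{\ovl{y}}{\lra}C]$ and $\sfr(\del\ppr)=[A\ppr\ov{\ovl{x\ppr}}{\lra}B\ppr\ov{\ovl{y\ppr}}{\lra}C\ppr]$, and a commutative square $\bfr\ci\ovl{x}=\ovl{x\ppr}\ci\afr$ in $h\C$. First I would lift $\afr,\bfr$ to morphisms $a\in\C_1(A,A\ppr)$, $b\in\C_1(B,B\ppr)$ in $\C$ and choose a $2$-simplex witnessing $\bfr\ci\ovl{x}=\ovl{x\ppr}\ci\afr$, assembling the square $\Tf=\SQ{s_0(x\ppr)\text{-type}}{\cdot}$ with corners $A,B,A\ppr,B\ppr$. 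Then \cref{PropMorphExSeq} produces a morphism of exact sequences $\Cf\co\Sf\to\Sf\ppr$ whose top-left face is exactly $\Tf$; writing $c=\Cf|_{\{(1,1)\}\ti\Delta^1}$, the cube $\Cf$ exhibits $\ovl{c}\ci\ovl{y}=\ovl{y\ppr}\ci\bfr$ and, via \cref{LemET2}, gives $\afr\sas\del=\ovl{c}\uas\del\ppr$, i.e.\ $\E(C,\afr)(\del)=\E(\ovl{c},A\ppr)(\del\ppr)$. This is the desired $\cfr=\ovl{c}$. The only subtlety is keeping track of which face of the cube is prescribed, but \cref{PropMorphExSeq} is tailored to exactly this.

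For (ET4): we are given $\del\in\E(D,A)$, $\del\ppr\in\E(F,B)$ with $\sfr(\del)=[A\ov{\ffr}{\lra}B\ov{\ffr\ppr}{\lra}D]$ and $\sfr(\del\ppr)=[B\ov{\gfr}{\lra}C\ov{\gfr\ppr}{\lra}F]$. Choose exact sequences $\Sf$ realizing $\del$ (with $\ovl{i}=\ffr$, $\ovl{p}=\ffr\ppr$) and $\Tf$ realizing $\del\ppr$ (with $\ovl{\gfr}$ the left leg). Lift $\gfr$ to a morphism $B\to C$ in $\C$ and form the push-out $\Sf_0=g\sas\Sf$ of $\Sf$ along it using \cref{LemExSeqPO}\,(1); this yields the object $E$ as the middle term of $\Sf_0$, an exact sequence realizing some $\del\pprr\in\E(E,A)$, and a morphism of exact sequences $\Df\co\Sf\to\Sf_0$ whose top-left face is the push-out square $\Pf$ (with corners $A,B,C,E$ and map $\ffr$ on top, $\gfr$ on the left). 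The push-out square $\Pf$ gives the commutative square $\hfr\ci\ffr=\dfr\ci\ffr\ppr$ at the top of \eqref{DiagET4}, where $\hfr=\ovl{x_0}\ci\gfr$ and $\dfr$ is the induced map $D\to E$. The cube $\Df$ also provides, on its bottom face, the short exact sequence realizing $\del\ppr$ pushed out along $\ffr\ppr$, which is compatibility (i): $\sfr(\E(F,\ffr\ppr)(\del\ppr))=[D\ov{\dfr}{\lra}E\ov{\efr}{\lra}F]$, where $\efr$ is the remaining leg. Compatibility (ii), $\E(\dfr,A)(\del\pprr)=\del$, and (iii), $\E(E,\ffr)(\del\pprr)=\E(\efr,B)(\del\ppr)$, are then read off by applying \cref{LemET2} to the appropriate faces of $\Df$ (the back-left rectangle for (ii), relating $\del$ and $\dfr\uas\del\pprr$; the front rectangle for (iii)). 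The octahedral row $A\ov{\gfr\ppr\ci}{\dots}$ and the commutativity of the lower squares of \eqref{DiagET4} follow from pasting $\Tf$ onto $\Df$ via \cref{PropComposeCubes} (or by a direct rectangle argument in $\C$ using \cref{PropComposeSquares}).

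\textbf{The main obstacle} I expect is (ET4): it is the one axiom that is genuinely a statement about how two extensions compose, so it requires producing the full $3\times 3$-type diagram \eqref{DiagET4} coherently in $h\C$ from quasi-categorical data, and verifying all three compatibility identities simultaneously. The key technical point is that $g\sas\Sf$ from \cref{LemExSeqPO} comes equipped with a cube $\Df$ rich enough to encode all of (i)--(iii) at once — one must extract the correct faces and invoke \cref{LemET2} on each. By contrast (ET3) is a short diagram-chase once \cref{PropMorphExSeq} is invoked, and (ET3)$\op$, (ET4)$\op$ follow by the evident self-duality of the definition of an exact quasi-category and of all the constructions in \cref{Section_ExactSequences}. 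Once these are in place, $(h\C,\Ebb,\sfr)$ satisfies every clause of \cref{DefExtriangulation}, completing the proof.
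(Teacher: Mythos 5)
Your handling of (ET1)--(ET3) coincides with the paper's: (ET1) is \cref{CorEAddFtr}, (ET2) is \cref{LemET}, and (ET3) is exactly \cref{PropMorphExSeq} combined with \cref{LemET2}, after lifting the commutative square in $h\C$ to a square in $\C$ (routine). The problem is (ET4), where your central construction does not parse. You set $\Sf_0=g\sas\Sf$ ``using \cref{LemExSeqPO}(1)'', but that lemma only pushes out an exact sequence ${}_A\Sf_D$ along a morphism out of its \emph{first} term $A$, whereas your $g$ lifts $\gfr$ and starts at the \emph{middle} term $B$; no operation ``$g\sas\Sf$'' exists in the paper's framework. Your description is also internally inconsistent: an exact sequence realizing an element of $\E(E,A)$ has $E$ as its \emph{end} term and $C$ as its middle term, not ``$E$ as the middle term'', and the alleged push-out square with corners $A,B,C,E$, $\ffr$ on top and $\gfr$ on the left is not even a square, since $\gfr$ does not start at $A$. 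The workable route (the paper's) is: compose $f$ and $g$ into an ingressive $h$ via a $2$-simplex $\chi$; take an exact sequence $\Sf\pprr$ with fiber $h$ (using that every ingressive admits a cofiber, \cite[Remark~3.2]{B1}), which realizes $\del\pprr\in\E(E,A)$; apply \cref{PropMorphExSeq} to $\Sf$, $\Sf\pprr$ and the square $\SQ{s_0(h)}{\chi}$ to get ${}_{1_A}\Cf_d\co\Sf\to\Sf\pprr$; observe by the dual of \cref{LemForSym} that the face of $\Cf$ with corners $B,C,D,E$ is a push-out; and only then feed \emph{that} push-out into \cref{LemExSeqPO} applied to $\Sf\ppr$ along $f\ppr$, producing $\Df\co\Sf\ppr\to f\ppr\sas\Sf\ppr$ and hence (i), while (ii) follows from the existence of $\Cf$ together with \cref{LemET2}.

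Even granting this correction, your claim that (iii), $\E(E,\ffr)(\del\pprr)=\E(\efr,B)(\del\ppr)$, can be ``read off'' from a face of the cube is a genuine gap. By \cref{LemET2}, (iii) requires a morphism of exact sequences ${}_{f}\Cf\ppr_{e}\co\Sf\pprr\to\Sf\ppr$, i.e.\ a cube whose two end faces are $A\to C\to E$ and $B\to C\to F$; no such cube occurs as a face or restriction of $\Cf$ or $\Df$. In the paper this is the longest step: one pastes transposes of $\Cf$ and $\Df$ into a map $\Delta^1\ti\Delta^1\ti\Delta^2\to\C$ as in \cref{PropComposeCubes}, extracts suitable $4$-simplices from it, corrects the zero objects with \cref{LemReplZero}, and only then assembles $\Cf\ppr$. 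You correctly anticipated that (ET4) is the hard axiom, but the mechanism you propose for it would not go through as written.
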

\begin{proof}
{\rm (ET1)} and {\rm (ET2)} are shown in Corollary~\ref{CorEAddFtr} and Lemma~\ref{LemET}, respectively.
{\rm (ET3)} follows from Proposition~\ref{PropMorphExSeq} and Lemma~\ref{LemET2}. Dually for {\rm (ET3)$\op$}. It remains to show {\rm (ET4)} and {\rm (ET4)$\op$}.

Since {\rm (ET4)$\op$} can be shown dually, it is enough to show {\rm (ET4)}.
Let $\del=\und{\Sf}\in\E(D,A)$ and $\del\ppr=\und{\Sf\ppr}\in\E(F,B)$ be any pair of elements given by the following exact sequences.
\[
\Sf=\ 
\xy
(-7,7)*+{A}="0";
(7,7)*+{B}="2";
(-7,-7)*+{O}="4";
(7,-7)*+{D}="6";
{\ar^{f} "0";"2"};
{\ar_{i} "0";"4"};
{\ar|*+{_z} "0";"6"};
{\ar^{f\ppr} "2";"6"};
{\ar_{j} "4";"6"};
(3,7)*+{}="00";
(7,3)*+{}="01";
{\ar@/_0.2pc/@{-}_{^{\xi}} "00";"01"};
(-3,-7)*+{}="10";
(-7,-3)*+{}="11";
{\ar@/_0.2pc/@{-}_{_{\eta}} "10";"11"};
\endxy,\quad
\Sf\ppr=\ 
\xy
(-7,7)*+{B}="0";
(7,7)*+{C}="2";
(-7,-7)*+{O\ppr}="4";
(7,-7)*+{F}="6";
{\ar^{g} "0";"2"};
{\ar_{i\ppr} "0";"4"};
{\ar|*+{_{z\ppr}} "0";"6"};
{\ar^{g\ppr} "2";"6"};
{\ar_{j\ppr} "4";"6"};
(3,7)*+{}="00";
(7,3)*+{}="01";
{\ar@/_0.2pc/@{-}_(0.4){^{\xi\ppr}} "00";"01"};
(-3,-7)*+{}="10";
(-7,-3)*+{}="11";
{\ar@/_0.2pc/@{-}_(0.4){_{\eta\ppr}} "10";"11"};
\endxy
\]
By definition, we have $\sfr(\del)=[A\ov{\ovl{f}}{\lra}B\ov{\ovl{f\ppr}}{\lra}D]$ and $\sfr(\del\ppr)=[B\ov{\ovl{g}}{\lra}C\ov{\ovl{g\ppr}}{\lra}F]$.
Since $f,g$ are ingressive, we may take a $2$-simplex
\[
\xy
(-7,7)*+{A}="0";
(7,7)*+{B}="4";
(7,-7)*+{C}="6";
{\ar^{f} "0";"4"};
{\ar_{h} "0";"6"};
{\ar^{g} "4";"6"};
(3,7)*+{}="10";
(7,3)*+{}="11";
{\ar@/_0.2pc/@{-}_{_{\chi}} "10";"11"};
\endxy
\]
in which $h$ is ingressive by the definition of exact quasi-category.
As in \cite[Remark~3.2]{B1}, $h$ should appear as the fiber of some exact sequence as below.
\[
\Sf\pprr=\ 
\xy
(-7,7)*+{A}="0";
(7,7)*+{C}="2";
(-7,-7)*+{O\pprr}="4";
(7,-7)*+{E}="6";
{\ar^{h} "0";"2"};
{\ar_{i\pprr} "0";"4"};
{\ar|*+{_{z\pprr}} "0";"6"};
{\ar^{h\ppr} "2";"6"};
{\ar_{j\pprr} "4";"6"};
(3,7)*+{}="00";
(7,3)*+{}="01";
{\ar@/_0.2pc/@{-}_(0.3){^{\xi\pprr}} "00";"01"};
(-3,-7)*+{}="10";
(-7,-3)*+{}="11";
{\ar@/_0.2pc/@{-}_(0.3){_{\eta\pprr}} "10";"11"};
\endxy
\]
Put $\del\pprr=\und{\Sf\pprr}$. We have $\sfr(\del\pprr)=[A\ov{\ovl{h}}{\lra}C\ov{\ovl{h\ppr}}{\lra}E]$ by definition.
Applying Proposition~\ref{PropMorphExSeq} to $\Sf,\Sf\pprr$ and
\[
\Tf=\SQ{s_0(h)}{\chi}=\ 
\xy
(-7,7)*+{A}="0";
(7,7)*+{B}="2";
(-7,-7)*+{A}="4";
(7,-7)*+{C}="6";
{\ar^{f} "0";"2"};
{\ar_{1_A} "0";"4"};
{\ar|*+{_h} "0";"6"};
{\ar^{g} "2";"6"};
{\ar_{h} "4";"6"};
(3,7)*+{}="00";
(7,3)*+{}="01";
{\ar@/_0.2pc/@{-}_{^{\chi}} "00";"01"};
(-3,-7)*+{}="10";
(-7,-3)*+{}="11";
{\ar@/_0.2pc/@{-}_(0.3){_{s_0(h)}} "10";"11"};
\endxy
\]
we obtain a morphism of the form
%\begin{equation}\label{FirstCompat}
\[
{}_{1_A}\Cf_d=\CB{s_0(\xi\pprr)}{\Ycal_{\ff}}{\Zcal_{\ff}}{s_0(\eta\pprr)}{\Ycal_{\bb}}{\Zcal_{\bb}}=\ 
\xy
(-8,9)*+{A}="0";
(8,9)*+{B}="2";
(1,3)*+{O}="4";
(17,3)*+{D}="6";
(-8,-7)*+{A}="10";
(8,-7)*+{C}="12";
(1,-13)*+{O\pprr}="14";
(17,-13)*+{E}="16";
{\ar^{f} "0";"2"};
{\ar_{i} "0";"4"};
{\ar^{f\ppr} "2";"6"};
{\ar_(0.3){j} "4";"6"};
%{\ar_(0.4){z} "0";"6"};
%
{\ar_{1_A} "0";"10"};
{\ar^(0.7){g}|!{(9,3);(13,3)}\hole "2";"12"};
{\ar_(0.3){o} "4";"14"};
{\ar^{d} "6";"16"};
{\ar^(0.3){h}|!{(1,-3);(1,-7)}\hole "10";"12"};
{\ar_{i\pprr} "10";"14"};
{\ar^{h\ppr} "12";"16"};
{\ar_{j\pprr} "14";"16"};
%{\ar_{z\pprr} "10";"16"};
%
\endxy\ \ \co\Sf\to\Sf\pprr.
\]
%\end{equation}
By the dual of Lemma~\ref{LemForSym} {\rm (1)}, the square
\[
\Pf=\big(\Cf|_{\Delta^1\ti\{1\}\ti\Delta^1}\big)^t=\ 
\xy
(-7,7)*+{B}="0";
(7,7)*+{C}="2";
(-7,-7)*+{D}="4";
(7,-7)*+{E}="6";
{\ar^{g} "0";"2"};
{\ar_{f\ppr} "0";"4"};
%{\ar|*+{_p} "0";"6"};
{\ar^{h\ppr} "2";"6"};
{\ar_{d} "4";"6"};
%
%(3,7)*+{}="00";
%(7,3)*+{}="01";
%{\ar@/_0.2pc/@{-}_{^{\nu}} "00";"01"};
%%
%(-3,-7)*+{}="10";
%(-7,-3)*+{}="11";
%{\ar@/_0.2pc/@{-}_{_{\mu}} "10";"11"};
%
\endxy
\]
is a push-out. By Lemma~\ref{LemExSeqPO}, using this push-out we obtain a morphism of exact sequences
\[
{}_{f\ppr}\Df_{1_F}=\CB{\Xcal_{\ff}\ppr}{\Ycal_{\ff}\ppr}{s_2(\xi\ppr)}{\Xcal_{\bb}\ppr}{s_1(\eta\ppr)}{s_2(\eta\ppr)}=\ 
\xy
(-8,9)*+{B}="0";
(8,9)*+{C}="2";
(1,3)*+{O\ppr}="4";
(17,3)*+{F}="6";
(-8,-7)*+{D}="10";
(8,-7)*+{E}="12";
(1,-13)*+{O\ppr}="14";
(17,-13)*+{F}="16";
{\ar^{g} "0";"2"};
{\ar_{i\ppr} "0";"4"};
{\ar^{g\ppr} "2";"6"};
{\ar_(0.3){j\ppr} "4";"6"};
%{\ar_(0.4){z} "0";"6"};
%
{\ar_{f\ppr} "0";"10"};
{\ar^(0.7){h\ppr}|!{(9,3);(13,3)}\hole "2";"12"};
{\ar_(0.3){1_{O\ppr}} "4";"14"};
{\ar^{1_F} "6";"16"};
{\ar^(0.3){d}|!{(1,-3);(1,-7)}\hole "10";"12"};
{\ar_{i_0} "10";"14"};
{\ar^{e} "12";"16"};
{\ar_{j\ppr} "14";"16"};
%{\ar_{z\ppr} "10";"16"};
%
\endxy\ \ \co\Sf\ppr\to f\ppr\sas\Sf\ppr.
\]
By the construction so far, we have a commutative diagram in $h\C$ 
\[
\xy
(-21,14)*+{A}="0";
(-7,14)*+{B}="2";
(7,14)*+{D}="4";
(21,14)*+{}="6";
(-21,0)*+{A}="10";
(-7,0)*+{C}="12";
(7,0)*+{E}="14";
(21,0)*+{}="16";
(-7,-14)*+{F}="22";
(7,-14)*+{F}="24";
%
%(-7,-28)*+{}="32";
%(7,-28)*+{}="34";
%
{\ar^{\ovl{f}} "0";"2"};
{\ar^{\ovl{f^{\prime}}} "2";"4"};
%{\ar@{-->}^{\delta} "4";"6"};
%
{\ar@{=} "0";"10"};
{\ar_{\ovl{g}} "2";"12"};
{\ar^{\ovl{d}} "4";"14"};
{\ar_{\ovl{h}} "10";"12"};
{\ar_{\ovl{h^{\prime}}} "12";"14"};
%{\ar@{-->}^{\delta\pprr} "14";"16"};
%
{\ar_{\ovl{g^{\prime}}} "12";"22"};
{\ar^{\ovl{e}} "14";"24"};
{\ar@{=} "22";"24"};
%
%{\ar@{-->}_{\delta\ppr} "22";"32"};
%{\ar@{-->}^{\ovl{f\ppr}\sas\delta\ppr} "24";"34"};
%
{\ar@{}|\circlearrowright "0";"12"};
{\ar@{}|\circlearrowright "2";"14"};
{\ar@{}|\circlearrowright "12";"24"};
\endxy
\]
satisfying $\sfr(\ovl{f\ppr}\sas\del\ppr)=[D\ov{\ovl{d}}{\lra}E\ov{\ovl{e}}{\lra}F]$.
Besides, the existence of $\Cf$ shows $\del=\ovl{d}\uas\del\pprr$ by Lemma~\ref{LemET2}.

It remains to show $\ovl{f}\sas\del\pprr=\ovl{e}\uas\del\ppr$. By Lemma~\ref{LemET2}, it suffices to show the existence of a morphism ${}_{f}\Cf\ppr_{e}\co\Sf\pprr\to\Sf\ppr$. Remark that we have cubes
\begin{eqnarray*}
&\CB{\Ycal_{\ff}}{s_0(\xi\pprr)}{s_0(\eta\pprr)}{\Zcal_{\ff}}{\Zcal_{\bb}}{\Ycal_{\bb}}=\ 
\xy
(-8,9)*+{A}="0";
(8,9)*+{A}="2";
(1,3)*+{O}="4";
(17,3)*+{O\pprr}="6";
(-8,-7)*+{B}="10";
(8,-7)*+{C}="12";
(1,-13)*+{D}="14";
(17,-13)*+{E}="16";
{\ar^{1_A} "0";"2"};
{\ar_{i} "0";"4"};
{\ar^{i\pprr} "2";"6"};
{\ar_(0.3){o} "4";"6"};
%{\ar_(0.4){} "0";"6"};
%
{\ar_{f} "0";"10"};
{\ar^(0.7){h}|!{(9,3);(13,3)}\hole "2";"12"};
{\ar_(0.3){j} "4";"14"};
{\ar^{j\pprr} "6";"16"};
{\ar^(0.3){g}|!{(1,-3);(1,-7)}\hole "10";"12"};
{\ar_{f\ppr} "10";"14"};
{\ar^{h\ppr} "12";"16"};
{\ar_{d} "14";"16"};
%{\ar_{} "10";"16"};
%
\endxy&,\\
&\CB{s_2(\eta\ppr)}{s_2(\xi\ppr)}{\Ycal_{\ff}\ppr}{s_1(\eta\ppr)}{\Xcal_{\bb}\ppr}{\Xcal_{\ff}\ppr}=\ 
\xy
(-8,9)*+{B}="0";
(8,9)*+{C}="2";
(1,3)*+{D}="4";
(17,3)*+{E}="6";
(-8,-7)*+{O\ppr}="10";
(8,-7)*+{F}="12";
(1,-13)*+{O\ppr}="14";
(17,-13)*+{F}="16";
{\ar^{g} "0";"2"};
{\ar_{f\ppr} "0";"4"};
{\ar^{h\ppr} "2";"6"};
{\ar_(0.3){d} "4";"6"};
%{\ar_(0.4){} "0";"6"};
%
{\ar_{i\ppr} "0";"10"};
{\ar^(0.7){g\ppr}|!{(9,3);(13,3)}\hole "2";"12"};
{\ar_(0.3){i_0} "4";"14"};
{\ar^{e} "6";"16"};
{\ar^(0.3){j\ppr}|!{(1,-3);(1,-7)}\hole "10";"12"};
{\ar_{1_{O\ppr}} "10";"14"};
{\ar^{1_F} "12";"16"};
{\ar_{j\ppr} "14";"16"};
%{\ar_{} "10";"16"};
%
\endxy&
\end{eqnarray*}
as transpositions of $\Cf$ and $\Df$. 
As in Proposition~\ref{PropComposeCubes}, we obtain a map $\Delta^1\ti\Delta^1\ti\Delta^2\to\C$ 
\[
\xy
(-8,16)*+{A}="0";
(8,16)*+{A}="2";
(1,10)*+{O}="4";
(17,10)*+{O\pprr}="6";
(-8,0)*+{B}="10";
(8,0)*+{C}="12";
(1,-6)*+{D}="14";
(17,-6)*+{E}="16";
(-8,-16)*+{O\ppr}="20";
(8,-16)*+{F}="22";
(1,-22)*+{O\ppr}="24";
(17,-22)*+{F}="26";
{\ar^{1_A} "0";"2"};
{\ar_{i} "0";"4"};
{\ar^{i\pprr} "2";"6"};
{\ar_(0.3){o} "4";"6"};
%{\ar_(0.4){} "0";"6"};
%
{\ar_{f} "0";"10"};
{\ar^(0.7){h}|!{(9,10);(13,10)}\hole "2";"12"};
{\ar_(0.3){j} "4";"14"};
{\ar^{j\pprr} "6";"16"};
{\ar^(0.3){g}|!{(1,4);(1,0)}\hole "10";"12"};
{\ar_{f\ppr} "10";"14"};
{\ar^{h\ppr} "12";"16"};
{\ar_(0.7){d} "14";"16"};
%{\ar_{} "10";"16"};
%
{\ar_{i\ppr} "10";"20"};
{\ar^(0.7){g\ppr}|!{(9,-6);(13,-6)}\hole "12";"22"};
{\ar_(0.3){i_0} "14";"24"};
{\ar^{e} "16";"26"};
{\ar^(0.3){j\ppr}|!{(1,-12);(1,-16)}\hole "20";"22"};
{\ar_{1_{O\ppr}} "20";"24"};
{\ar^{1_F} "22";"26"};
{\ar_{j\ppr} "24";"26"};
%{\ar_{} "20";"26"};
%
\endxy
\]
compatibly with these cubes. Especially it contains $4$-simplices
\begin{eqnarray*}
\Thh=\ \Penta{A}{B}{C}{E}{F}{f}{g}{h\ppr}{e}&,&
\Xi=\ \Penta{A}{O}{D}{O\ppr}{F}{i}{j}{i_0}{j\ppr},\\
\Phi=\ \Penta{A}{B}{D}{O\ppr}{F}{f}{f\ppr}{i_0}{j\ppr}&,&
\Psi=\ \Penta{A}{O}{O\pprr}{E}{F}{i}{o}{j\pprr}{e}
\end{eqnarray*}
such that $\Sf\pprr=\SQ{d_1d_4(\Psi)}{d_1d_4(\Thh)}, \Sf\ppr=\SQ{d_0d_2(\Phi)}{d_0d_3(\Thh)}$ and
\[ d_2d_3(\Thh)=d_2d_2(\Phi),\ d_1d_1(\Thh)=d_1d_1(\Psi),\ d_1d_2(\Xi)=d_1d_2(\Phi). \]
Applying Lemma~\ref{LemReplZero} to $d_2(\Xi)$ and $d_3(\Psi)$, we obtain a $3$-simplex $\mu$ such that
$d_1(\mu)=d_1d_2(\Xi)$ %=d_1d_2(\Phi)
and
$d_2(\mu)=d_1d_3(\Psi)$. %=d_2d_1(\Psi)
If we put
\[ \Cf\ppr=\CB{d_3(\Thh)}{s_1d_1d_3(\Thh)}{d_1(\Thh)}{d_2(\Phi)}{\mu}{d_1(\Psi)}, \]
then this give a morphism ${}_f\Cf\ppr_e\co\Sf\pprr\to\Sf\ppr$ as desired.
\end{proof}

\begin{dfn}
 Inspired from~\cite{S}, we call \emph{topological} any extriangulated category which is equivalent to the homotopy category of an exact quasi-category.
\end{dfn}

\begin{lem}\label{LemExtClosed}
Let $\C$ be an exact quasi-category, and let $\D$ be a full subcategory of $\C$ closed by homotopy equivalences of objects in $\C$.
Assume that $\D$ is extension-closed in $\C$, meaning that, $\D$ contains the zero objects of $\C$ and that, for any exact sequence $_A\Sf_C$ in $\C$ with $A,C\in\D_0$, the middle term $B$ also belongs to $\D_0$.
Define $(\D_{\dag})_1$ by considering those morphisms in $\D$ that are ingressive in $\C$ and for which some cofiber belongs to $\D$. (For $n\ge 2$, we define $(\D_{\dag})_n\se(\C_{\dag})_n\cap\D_n$ to be the subset consisting of $n$-simplices whose edges are in $(\D_{\dag})_1$. The face and degeneracy maps in $\D_{\dag}$ are restrictions of those in $\D$.)
Define $\D^{\dag}$ dually.
Then $(\D,\D_{\dag},\D^{\dag})$ is an exact quasi-category.
\end{lem}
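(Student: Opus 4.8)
I would verify that $(\D,\D_{\dag},\D^{\dag})$ satisfies the axioms (Ex0)--(Ex3) of \cref{DefExactQuasiCat}, leaning throughout on two observations. First, $\D$ being a full subcategory of the quasi-category $\C$ (so that $\D_n$ consists of all $n$-simplices of $\C$ whose vertices lie in $\D_0$), inner horns in $\D$ are filled inside $\C$ and their fillers already lie in $\D$; hence $\D$ is an essentially small quasi-category, the inclusion $\D\hookrightarrow\C$ is fully faithful, $h\D$ is the full subcategory of $h\C$ spanned by $\D_0$, and a limit or colimit diagram of $\C$ that takes all its values in $\D$ is also a limit, resp.\ colimit, diagram of $\D$. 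Second, the \emph{landing principle}: if $A\infl B\defl C$ is an exact sequence of $\C$ with $A,C\in\D_0$ then $B\in\D_0$ --- this is precisely extension-closedness. Together with \cref{LemExSeqPO}, whose output $a\sas\Sf$ retains the \emph{same} cofiber term as $\Sf$, the landing principle is the tool that will let me recognise that the various (co)limits the axioms demand, once formed in $\C$, actually live in $\D$.

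For (Ex0): $\D$ is pointed because it contains the zero objects of $\C$, which remain zero by fullness. Binary products and coproducts of $A,C\in\D_0$ I would extract from the split exact sequences $A\infl A\ti C\defl C$ and $A\infl A\am C\defl C$ of \cref{PropForSplit}: the landing principle puts $A\ti C$ and $A\am C$ in $\D_0$, and fullness makes them the product and coproduct in $\D$, so $\D$ has all finite (co)products. Then $h\D$, a full subcategory of the additive category $h\C$ that contains $0$ and is closed under biproducts by the above, is additive. At this point I would also check that $\D_{\dag}$ is a subcategory: it contains every homotopy equivalence (ingressive in $\C$, with cofiber a zero object, hence in $\D_0$), and it is closed under composition, since for ingressive $x,x\ppr$ the cofiber of $x\ppr\ci x$ fits into an exact sequence $\mathrm{cofib}(x)\infl\mathrm{cofib}(x\ppr\ci x)\defl\mathrm{cofib}(x\ppr)$ of $\C$ (\cite{B1}), to which the landing principle applies; dually for $\D^{\dag}$.

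For (Ex1): a morphism $O\to X$ out of a zero object is ingressive in $\C$ by (Ex1) for $\C$, and a cofiber of it is $X$ (realised by the trivial exact sequence of \cref{ExTrivExSeq}), which lies in $\D_0$; dually for $X\to O$. For (Ex2): let $x\co A\infl B$ be $\D_{\dag}$-ingressive --- so $x$ is ingressive in $\C$ and its cofiber $C$ lies in $\D_0$ --- and let $a\co A\to A\ppr$ be a morphism in $\D$. Completing $x$ to an exact sequence $\Sf={}_A\Sf_C$ and forming the push-out $\Pf$ of $x$ along $a$ in $\C$ (which exists by (Ex2) for $\C$), \cref{LemExSeqPO} produces from these an exact sequence $a\sas\Sf$ of the shape $A\ppr\infl B_0\defl C$ with the same end term $C\in\D_0$; the landing principle gives $B_0\in\D_0$, so $x_0\co A\ppr\infl B_0$ is $\D_{\dag}$-ingressive, and $\Pf$, a push-out in $\C$ all of whose corners lie in $\D$, is a push-out in $\D$. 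Pull-backs of egressive morphisms are handled by the dual clause of \cref{LemExSeqPO}.

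The one genuinely delicate axiom is (Ex3), and this is where I expect the main obstacle to lie. The difficulty is that a square that is a push-out \emph{in $\D$} need not be a push-out \emph{in $\C$}, since $\D$ is not a priori closed under push-outs, so an ambigressive push-out of $\D$ cannot simply be transported upward. To circumvent this, given such a square $\Sf$ in $\D$ --- with $\D_{\dag}$-ingressive top $x$ and $\D^{\dag}$-egressive left $i$ --- I would \emph{also} form in $\C$ the push-out $\Sf^{\mathrm{po}}$ of the span determined by $x$ and $i$; by the (Ex2) analysis $\Sf^{\mathrm{po}}$ has all corners in $\D$ and is again a push-out in $\D$, so its universal property in $\D$ yields an equivalence $\Sf\simeq\Sf^{\mathrm{po}}$ under that span --- an equivalence in $\C$ by fullness --- whence $\Sf$ is itself an (ambigressive) push-out in $\C$. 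Now (Ex3) for $\C$ makes $\Sf$ an ambigressive pull-back in $\C$; fullness upgrades its pull-back squareness to $\D$; its bottom map is ingressive in $\C$ with cofiber $\mathrm{cofib}(x)\in\D_0$ (cofibers being preserved along push-outs, by the pasting lemma \cite[Lemma~4.4.2.1]{L1}), hence $\D_{\dag}$-ingressive, and dually its right map is $\D^{\dag}$-egressive; thus $\Sf$ is an ambigressive pull-back in $\D$, and the reverse implication is dual. Everything else being bookkeeping built on the landing principle and \cref{LemExSeqPO}, this comparison-with-the-ambient-(co)limit step --- forced precisely by $\D$ not being closed under the relevant (co)limits --- is the real point.
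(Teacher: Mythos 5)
Your proposal is correct and takes essentially the same approach as the paper: additivity of $\D$ from (split) extension-closedness, closure of $\D_{\dag}$ (and dually $\D^{\dag}$) under composition via the exact sequence relating the cofibers of $f$, $g$ and $g\ci f$ (which the paper realizes by its explicit diagram of ambigressive push-outs), and the remaining axioms inherited from $\C$. The paper compresses those remaining verifications into ``readily follow from those for $\C$,'' while you spell them out --- in particular your careful comparison in (Ex3) of a push-out formed in $\D$ with the ambient push-out in $\C$, which lands in $\D$ by extension-closedness --- and these added details are accurate, so there is no gap.
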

\begin{proof}
Stability under (split) extensions implies that $\D$ is additive.
To see that $\D_{\dag}$ is a subcategory, let
\[
\TwoSP{X}{Y}{Z}{f}{g}{h}{}\]
any $2$-simplex in $\D$ in which $f,g$ belong to $(\D_{\dag})_1$.
Consider the diagram of ambigressive push-outs
\[
\xy
(-18,12)*+{X}="0";
(-6,12)*+{Y}="2";
(6,12)*+{Z}="4";
(-18,0)*+{O}="10";
(-6,0)*+{Z\ppr}="12";
(6,0)*+{Y\ppr}="14";
(-6,-12)*+{O\ppr}="22";
(6,-12)*+{X\ppr}="24";
{\ar^{f} "0";"2"};
{\ar^{g} "2";"4"};
{\ar "0";"10"};
{\ar "2";"12"};
{\ar "4";"14"};
{\ar "10";"12"};
{\ar "12";"14"};
{\ar "12";"22"};
{\ar "14";"24"};
{\ar "22";"24"};
\endxy
\]
where $Z\ppr$ and $X\ppr$ belong to $\D$ by assumption.
The bottom square is an exact sequence, hence $Y\ppr$ also belongs to $\D$.
The outer upper rectangle shows that $X\ov{h}{\lra} Z$ belongs to $\D_{\dag}$.
Dually, $\D^{\dag}$ is a subcategory of $\D$.
The remaining axioms readily follow from those for $\C$.
\end{proof}

\begin{prop}
 Every extension-closed full subcategory, containing zero, of a topological extriangulated category is topological.
\end{prop}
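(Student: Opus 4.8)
The plan is to reduce to the case of a literal homotopy category and then invoke \cref{LemExtClosed}. Let $\Bcal$ be an extension-closed full subcategory, containing zero, of a topological extriangulated category $\Tcal$, and fix an equivalence $\Tcal\simeq h\C$ with $\C$ an exact quasi-category. Transporting along this equivalence (which, being an equivalence of extriangulated categories, preserves extriangles, hence extension-closed subcategories together with their inherited structure) and replacing $\Bcal$ by its closure under isomorphisms, we may assume that $\Bcal$ is a replete extension-closed full subcategory of $h\C$ containing zero. I would then let $\D\se\C$ be the full sub-quasi-category spanned by those objects $X\in\C_0$ whose image in $h\C$ lies in $\Bcal$.

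The first step is to check, via \cref{LemExtClosed}, that $(\D,\D_{\dag},\D^{\dag})$ is an exact quasi-category. Here $\D$ is closed under homotopy equivalences of objects because $\Bcal$ is replete, and it contains the zero objects of $\C$ because $\Bcal$ contains zero. For the extension-closedness hypothesis, let ${}_A\Sf_C$ be an exact sequence in $\C$ with $A,C\in\D_0$; by \cref{ThmET}~(2) its image in $h\C$ is an extriangle $A\ov{\ovl{i}}{\infl}B\ov{\ovl{p}}{\defl}C\ov{\und{\Sf}}{\dra}$, so extension-closedness of $\Bcal$ forces $B\in\Bcal$, i.e.\ $B\in\D_0$. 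Thus \cref{LemExtClosed} applies, and by \cref{ThmET}~(1) the homotopy category $h\D$ carries an extriangulated structure.

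The second step is to show that the inclusion $\D\hookrightarrow\C$ induces an equivalence of extriangulated categories $h\D\ov{\sim}{\lra}\Bcal$. Since $\D$ is full in $\C$, the induced functor $h\D\to h\C$ is fully faithful, and by construction (using repleteness of $\Bcal$) its essential image is precisely $\Bcal$; so it remains to see that it carries the extriangulated structure of $h\D$ to the one $\Bcal$ inherits from $h\C$, i.e.\ that for $A,C\in\Bcal$ the canonical map $\E_{h\D}(C,A)\to\E_{h\C}(C,A)$ is bijective and compatible with realizations. Compatibility with realizations is tautological, both sending $\und{\Sf}$ to $[A\ov{\ovl{i}}{\lra}B\ov{\ovl{p}}{\lra}C]$. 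For surjectivity, an exact sequence ${}_A\Sf_C$ in $\C$ with $A,C\in\Bcal$ has its middle term in $\D$ by the first step, its ingressive leg has cofiber $C\in\D$ and its egressive leg has fibre $A\in\D$, so these legs lie in $\D_{\dag}$ and $\D^{\dag}$; and as $\D$ is full while the relevant push-out (resp.\ pull-back) already exists in $\C$ with all vertices in $\D$, that square is an exact sequence of $\D$ representing the given class. For injectivity, any morphism ${}_{1_A}\Cf_{1_C}$ in $\C$ between two exact sequences of $\D$ is a cube whose eight vertices all lie in $\D_0$ (the two middle terms by extension-closedness, the four zero objects and the copies of $A,C$ trivially), hence is a cube in $\D$; so an equivalence in $\C$ between exact sequences coming from $\D$ is already an equivalence in $\D$. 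This proves that $\Bcal\simeq h\D$ as extriangulated categories, so $\Bcal$ is topological.

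The point I expect to require the most care is this comparison of $\E$-groups: one must verify that exact sequences of $\C$ with end terms in $\Bcal$, and the cubes between them, can be computed inside the full sub-quasi-category $\D$. This comes down to two elementary but essential facts — that a push-out (resp.\ pull-back) square in $\C$ whose vertices all lie in a full sub-quasi-category is again a push-out (resp.\ pull-back) there, and that any simplex of $\C$ whose vertices lie in $\D$ is a simplex of $\D$ — which are precisely what is needed to reconcile the indirect definition of $\D_{\dag}$ in \cref{LemExtClosed} with the notion of an exact sequence of $\D$.
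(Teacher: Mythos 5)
Your overall route is the one the paper takes: pass to a replete extension-closed $\Bcal\se h\C$, let $\D\se\C$ be the full sub-quasi-category spanned by the objects of $\Bcal$, check extension-closedness of $\D$ via \cref{ThmET}~(2), and invoke \cref{LemExtClosed} to make $\D$ an exact quasi-category with $h\D\simeq\Bcal$. Your first step is exactly the paper's proof, and your second step supplies an identification of extriangulated structures that the paper compresses into ``the claim follows since $h\D=\mathcal{D}$'', so the extra care is welcome. However, there is a genuine gap in that second step, located precisely at the point you flag as delicate. Your ``canonical map'' $\E_{h\D}(C,A)\to\E_{h\C}(C,A)$, and the claim that compatibility of realizations is tautological, presuppose that an exact sequence of $\D$ --- an ambigressive pull-back \emph{computed in $\D$}, with legs in $\D_{\dag},\D^{\dag}$ --- is an exact sequence of $\C$. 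The two facts you list at the end only give the detection direction (a pull-back or push-out of $\C$ whose vertices lie in $\D$ is one in $\D$), which is what your surjectivity argument uses; they do not give the preservation direction, and for a general full sub-quasi-category that direction is false. So as written, the comparison map is not yet well defined and the realization compatibility is not yet a tautology.

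The missing step is true here, but only because of how $\D_{\dag},\D^{\dag}$ are defined in \cref{LemExtClosed}, and it needs a short argument: given an exact sequence of $\D$ with egressive leg $y\co B\to C$ and corner $O\to C$, the morphism $y$ is egressive in $\C$ and, by {\rm (Ex2)} for $\C$, the cospan $O\to C\ov{y}{\lla}B$ has a pull-back square in $\C$; its fourth vertex is a fibre of $y$ up to homotopy equivalence, hence lies in $\D$ because $y\in\D^{\dag}$, so by your detection fact that square is also a pull-back in $\D$. Comparing it with the given square --- two pull-backs in $\D$ over the same cospan --- shows the given square is equivalent, as a cone, to a pull-back cone of $\C$, hence is itself an ambigressive pull-back in $\C$ (its legs are already ingressive/egressive in $\C$ by the definition of $\D_{\dag},\D^{\dag}$). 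With this inserted (or the dual push-out version), your step 2 goes through: well-definedness, surjectivity and injectivity of the comparison of $\E$-groups, and agreement of realizations, completing the identification $h\D\simeq\Bcal$ as extriangulated categories.
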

\begin{proof}
 Let $\C$ be an exact quasi-category and let $\mathcal{D}$ be a full subcategory of $h\C$ that we assume extension-closed.
 By \cite[Remark 2.18]{NP}, $\mathcal{D}$ inherits an extriangulated structure from that of $h\C$.
 Define $\D$ to be the full subcategory of $\C$ given by $\mathcal{D}$.
 More explicitly, $\D$ is given by the pull-back, in the category of simplicial sets:
 \[
\xy
(-10,6)*+{\D}="0";
(10,6)*+{\C}="2";
(-10,-6)*+{N(\mathcal{D})}="4";
(10,-6)*+{N(h\C)}="6";
{\ar "0";"2"};
{\ar "0";"4"};
{\ar "2";"6"};
{\ar "4";"6"};
\endxy
\]
Then $\D$ is extension-closed in $\C$.
Indeed, any exact sequence $X\to Y\to Z$ in $\C$ with $X,Z\in\D_0$ induce, by \cref{ThmET}, an extriangle $X\infl Y\defl Z\dashrightarrow$ in $h\C$.
By assumption, we have $Y\in\mathcal{D}$.
We can thus apply \cref{LemExtClosed}: $\D$ is an exact quasi-category.
The claim follows since $h\D=\mathcal{D}$.
\end{proof}

We plan to investigate the following
\begin{question}\label{QuestionTopoExtricat}
Is every topological extriangulated category equivalent to some extension-closed full subcategory of a triangulated category?
\end{question}
Our next result is an elementary tool that might help answering \cref{QuestionTopoExtricat}.

Let $F\co\C\to\D$ be an exact functor of exact quasi-categories \cite[Definition 4.1]{B1}.
In other words, $F$ is a map of simplicial sets that preserves zero objects, ingressive morphisms, egressive morphisms, push-outs along ingressive morphisms and pull-backs along egressive morphisms.

Because $F$ is a map of simplicial sets, it commutes to faces and degeneracies and thus induces a functor $hF\co h\C\to h\D$ which coincide with $F$ on objects.
It is defined on morphisms by $hF\, \ovl{a} = \ovl{Fa}$.

\begin{dfn}[{\cite[Definition 2.31]{BTS}}]
 A functor $G\co(\mathcal{C},\E,\sfr)\to(\mathcal{D},\E\ppr,\sfr\ppr)$ between extriangulated categories is \emph{exact} (or \emph{extriangulated}) if it is additive, and if there is a natural transformation $\Gamma\co\E\tc\E\ppr(G\op-,G-)$ such that, for any $\sfr$-triangle
 $A\ov{i}{\infl}B\ov{p}{\defl}C\ov{\delta}{\dashrightarrow}$ in $\mathcal{C}$, its image
 $GA\ov{Gi}{\infl}GB\ov{Gp}{\defl}GC\ov{\Gamma_{C,A}(\delta)}{\dashrightarrow}$ is an $\sfr\ppr$-triangle in $\mathcal{D}$.
\end{dfn}

\begin{prop}\label{propExactFunctors}
 Let $F\co\C\to\D$ be an exact functor of exact quasi-categories.
 Then $hF\co h\C\to h\D$ is an exact functor of extriangulated categories.
\end{prop}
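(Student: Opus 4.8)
The plan is to produce the comparison natural transformation $\Gamma$ by sending the equivalence class of an exact sequence to the class of its image under $F$, and then to verify, against the constructions of \cref{Section_ExtriangulatedStructure}, that the pair $(hF,\Gamma)$ satisfies the definition of an exact functor of extriangulated categories.

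First I would record the elementary stability properties of $F$. An ambigressive pull-back is a pull-back square along an egressive morphism, two of whose remaining edges are ingressive, respectively egressive; since $F$ preserves pull-backs along egressive morphisms together with ingressive and egressive morphisms, $F$ carries exact sequences to exact sequences. Being a map of simplicial sets, $F$ also carries any cube $\Cf\co\Sf\to\Sf\ppr$ that is a morphism of exact sequences (a map $\Delta^1\ti\Delta^1\ti\Delta^1\to\C$ restricting to $\Sf$ and $\Sf\ppr$) to a morphism of exact sequences $F\Cf\co F\Sf\to F\Sf\ppr$, and since $hF\,\ovl{a}=\ovl{Fa}$ it carries a cube with $\ovl{a}=\id$, $\ovl{c}=\id$ to one of the same kind. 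Hence $F$ respects the relation $\sim$ of \cref{DefEquivExSeq}, so $\und{\Sf}\mapsto\und{F\Sf}$ gives a well-defined map $\Gamma_{C,A}\co\E(C,A)\to\E(FC,FA)$ for all $A,C\in\C_0$. I would also note that, as $X\to O$ is egressive and $O\to X$ is ingressive for any zero object $O$, the biproduct $A\oplus C$ is simultaneously a pull-back of $A\to O\lla C$ and a push-out of $A\lla O\to C$; thus $F$ preserves finite products and coproducts, $hF$ is additive, and $F$ sends splitting exact sequences to splitting ones, so $\Gamma$ sends ${}_A0_C$ to ${}_{FA}0_{FC}$.

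Next I would establish naturality and additivity. Given $a\in\C_1(A,A\ppr)$ and a push-out square $\Pf$ as in \cref{LemExSeqPO}, its image $F\Pf$ is again a push-out along the ingressive morphism $Fx$, and since $F$ commutes with the degeneracy maps, $F$ applied to a witnessing morphism $\Df\co\Sf\to a\sas\Sf$ has exactly the shape required by \cref{LemExSeqPO}(1) for $Fa$ and $F\Pf$; therefore $F(a\sas\Sf)$ is one admissible choice of $(Fa)\sas(F\Sf)$, and by \cref{CorFunctoriality}(1) any two such choices are equivalent, so $\Gamma_{C,A\ppr}(\afr\sas\del)=(hF\afr)\sas\Gamma_{C,A}(\del)$. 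The dual argument with $c\uas\Sf$ gives compatibility in the second variable, so $\Gamma$ is a natural transformation $\E\tc\E(hF\op-,hF-)$. For additivity of each $\Gamma_{C,A}$ I would use that $F$ preserves coproducts of exact sequences, hence commutes with $\oplus$ on $\E$, with the codiagonals $\nabla$ and diagonals $\Delta$ of \cref{ExProdExSeq}, and (by the naturality just proved) with $(-)\sas$ and $(-)\uas$; feeding these into \cref{DefAddExtension} yields $\Gamma(\del_1+\del_2)=\Gamma(\del_1)+\Gamma(\del_2)$, and together with $\Gamma({}_A0_C)={}_{FA}0_{FC}$ this shows $\Gamma$ is additive.

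Finally I would check the triangle condition. By \cref{ThmET}(2), every $\sfr$-triangle in $h\C$ is, up to the equivalence of \cref{DefEquivSeq} (which $hF$ plainly preserves, carrying the connecting isomorphism to an isomorphism), of the form $A\ov{\ovl{x}}{\infl}B\ov{\ovl{y}}{\defl}C\ov{\und{\Sf}}{\dashrightarrow}$ for an exact sequence $\Sf$ in $\C$; applying $F$ and then \cref{ThmET}(2) for $\D$, the sequence $FA\ov{\ovl{Fx}}{\infl}FB\ov{\ovl{Fy}}{\defl}FC\ov{\und{F\Sf}}{\dashrightarrow}$ is an $\sfr$-triangle in $h\D$, and since $hF\,\ovl{x}=\ovl{Fx}$, $hF\,\ovl{y}=\ovl{Fy}$ and $\und{F\Sf}=\Gamma_{C,A}(\und{\Sf})$, this is precisely the image of the original triangle. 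Hence $hF$ is an exact functor of extriangulated categories. The only genuinely delicate point will be the naturality step, where one must identify which face of $F\Df$ is the push-out square and invoke the well-definedness up to $\sim$ of $a\sas\Sf$; everything else is a transcription of the constructions of \cref{Section_ExtriangulatedStructure} through the simplicial map $F$.
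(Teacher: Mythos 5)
Your proposal is correct and follows essentially the same route as the paper: define $\Gamma_{C,A}(\und{\Sf})=\und{F\Sf}$, get well-definedness by applying the simplicial map $F$ to equivalence cubes, deduce additivity of $hF$ from preservation of (co)products via push-outs/pull-backs along morphisms to and from zero objects, and prove naturality by applying $F$ to the cube $\Df$ witnessing $a\sas\Sf$ (dually for $c\uas\Sf$) together with the uniqueness of $a\sas\Sf$ up to $\sim$. Your extra verifications (additivity of each $\Gamma_{C,A}$ and the explicit check that extriangles go to extriangles via \cref{ThmET}) are points the paper leaves implicit, since componentwise additivity follows from naturality and biadditivity, and the triangle condition is immediate from the definitions of $\sfr$ and $\Gamma$.
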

\begin{proof}
Because $F$ preserves push-outs of ingressive morphisms, it preserves coproducts, and similarly for products.
It follows that $hF$ is additive.
Define $\Gamma$ as follows: For any $A,C\in\C_0$, and any $\delta=\und{\Sf}\in\E(C,A)$, let $\Gamma_{C,A}(\delta) = \und{F\Sf}$. The map $\Gamma_{C,A}$ is well-defined since if $_a\Cf_c$ is a cube in $\C$ from $\Sf$ to $\Sf\ppr$ with $\ovl{a} = \id_A$ and $\ovl{c}=\id_C$ in $h\C$, then $_{Fa}\Cf\ppr_{Fc}=F\Cf$ is a cube in $\D$ from $F\Sf$ to $F\Sf\ppr$ with $\ovl{Fa}=\id_{FA}$ and $\ovl{Fc}=\id_{FC}$ in $h\D$.
It remains to show that $\Gamma = (\Gamma_{C,A})$ is natural.
Let $\delta=\und{\Sf}\in\E(C,A)$ and let $a\in\C_1(A,A\ppr)$. Consider a cube 
\[
\Df=
\xy
(-8,9)*+{A}="0";
(8,9)*+{B}="2";
(1,3)*+{O}="4";
(17,3)*+{C}="6";
(-8,-7)*+{A\ppr}="10";
(8,-7)*+{B_0}="12";
(1,-13)*+{O}="14";
(17,-13)*+{C}="16";
{\ar^{x} "0";"2"};
{\ar_{i} "0";"4"};
{\ar^{y} "2";"6"};
{\ar^(0.3){j} "4";"6"};
{\ar_{a} "0";"10"};
{\ar^(0.6){b_0}|!{(9,3);(13,3)}\hole "2";"12"};
{\ar^(0.4){1_O} "4";"14"};
{\ar^{1_C} "6";"16"};
{\ar^(0.3){x_0}|!{(1,-3);(1,-7)}\hole "10";"12"};
{\ar_{i_0} "10";"14"};
{\ar^{y_0} "12";"16"};
{\ar_{j} "14";"16"};
\endxy
\]
defining $a\sas\Sf$ as in \cref{LemExSeqPO}.
In particular, the back square is a push-out, with $x$ ingressive.
Since $F$ preserves push-outs along ingressive morphisms, the cube $F\Df$ witnesses the fact that $F(a\sas\Sf)=(Fa)\sas F\Sf$.
Dually, for any $C\ppr\ov{c}{\lra}C$, we have $F(c\uas\Sf)=(Fc)\uas F\Sf$.
Whence $\Gamma_{C\ppr,A\ppr}\circ \ovl{c}\uas \ovl{a}\sas = (hF\ovl{c})\uas(hF\ovl{a})\sas \circ \Gamma_{C,A}$.
\end{proof}

\subsection{Compatibility with the triangulation in stable case}\label{Subsection_StableCase}

In this subsection, let $\C$ be a stable quasi-category, which is 
a particular case of an exact quasi-category, as in \cite[Example~3.3]{B1}. In this case it is known that the homotopy category $h\C$ can be equipped with a structure of a triangulated category (\cite{L2}).
Let us show that the structure of an extriangulated category on $h\C$ obtained in Theorem \ref{ThmET} is compatible with it.

First let us briefly review the construction of the shift functor. For any object $A\in\C_0$, we choose an exact sequence
\[
\Uf_A=\SQ{\eta_A}{\ze_A}=\ 
\xy
(-6,6)*+{A}="0";
(6,6)*+{O_A}="2";
(-6,-6)*+{O_A\ppr}="4";
(6,-6)*+{\Sig A}="6";
{\ar^{} "0";"2"};
{\ar_{} "0";"4"};
%{\ar|*+{} "0";"6"};
{\ar^{} "2";"6"};
{\ar_{} "4";"6"};
\endxy
\quad\Bigg(\text{respectively,}\ 
\Vf_A=\ 
\xy
(-6,6)*+{\Om A}="0";
(6,6)*+{O_A\pprr}="2";
(-6,-6)*+{O_A^{\prime\prime\prime}}="4";
(6,-6)*+{A}="6";
{\ar^{} "0";"2"};
{\ar_{} "0";"4"};
%{\ar|*+{} "0";"6"};
{\ar^{} "2";"6"};
{\ar_{} "4";"6"};
\endxy\Bigg)
\]
in which $O_A,O_A\ppr$ (resp. $O_A\pprr,O_A^{\prime\prime\prime}$) are zero objects.

If we put $\mu_A=\und{\Uf_A}\in\E(\Sig A,A)$ (resp. $\nu_A=\und{\Vf_A}\in\E(A,\Om A)$), then we have an $\sfr$-triangle
\begin{equation}\label{ShiftsTria}
A\to0\to\Sig A\ov{\mu_A}{\dra},\quad (\text{resp.}\ \ \Om A\to0\to A\ov{\nu_A}{\dra}).
\end{equation}
Especially, as in \cite[Proposition~3.3]{NP} we obtain natural isomorphisms of functors induced by the Yoneda's lemma
\[ (\mu_A)\ssh\co(h\C)(-,\Sig A)\ov{\cong}{\ltc}\E(-,A) \]
which assigns $(\mu_A)\ssh(\cfr)=\cfr\uas(\mu_A)$ to each $\cfr\in(h\C)(C,\Sig A)$. Thus for any $\afr\in(h\C)(A,A\ppr)$, there exists a morphism $\Sig\afr\in(h\C)(\Sig A,\Sig A\ppr)$ uniquely so that
\[
\xy
(-14,6)*+{(h\C)(-,\Sig A)}="0";
(14,6)*+{\E(-,A)}="2";
(-14,-6)*+{(h\C)(-,\Sig A\ppr)}="4";
(14,-6)*+{\E(-,A\ppr)}="6";
{\ar@{=>}^(0.56){(\mu_A)\ssh} "0";"2"};
{\ar@{=>}_{(\Sig\afr)\ci-} "0";"4"};
{\ar@{=>}^{\afr\sas} "2";"6"};
{\ar@{=>}_(0.56){(\mu_{A\ppr})\ssh} "4";"6"};
{\ar@{}|\circlearrowright "0";"6"};
\endxy
\]
becomes commutative. Then the correspondence
\begin{itemize}
\item for any object $A\in h\C$, associate $\Sig A\in h\C$ using the chosen $\Uf_A$,
\item for any morphism $\afr\in(h\C)(A,A\ppr)$, associate the morphism $\Sig\afr\in(h\C)(\Sig A,\Sig A\ppr)$ by the above
\end{itemize}
gives the endofunctor $\Sig\co h\C\to h\C$. By Lemma~\ref{LemET2}, morphism $\Sig\ovl{a}$ for $a\in\C_1(A,A\ppr)$ is equal to the one given by $\Sig\ovl{a}=\ovl{s}$ where $s\in\C_1(\Sig A,\Sig A\ppr)$ admits some morphism ${}_a\Cf_{s}\co\Uf_A\to\Uf_{A\ppr}$ of exact sequences. Similarly for the functor $\Om\co h\C\to h\C$, which gives a quasi-inverse of $\Sig$.

Using these structures, we can complete any morphism $\ovl{z}\in(h\C)(C,\Sig A)$ into a triangle diagram of the form $A\to B\to C\ov{\ovl{z}}{\lra}\Sig A$ compatibly with the external triangulation in the following way. Indeed, this is how to relate triangulations with external triangulations (\cite[Proposition~3.22]{NP}).
\begin{itemize}
\item For each $\ovl{z}\in(h\C)(C,\Sig A)$, take
\begin{equation}\label{DistTriET}
\sfr((\mu_A)\ssh(\ovl{z}))=[A\ov{\ovl{x}}{\lra}B\ov{\ovl{y}}{\lra}C]
\end{equation}
to obtain $A\ov{\ovl{x}}{\lra}B\ov{\ovl{y}}{\lra}C\ov{\ovl{z}}{\lra}\Sig A$.
\end{itemize}

\begin{rem}
In fact, the existence of $\sfr$-triangles of the form $(\ref{ShiftsTria})$ is equivalent to that the extriangulated category $(h\C,\E,\sfr)$ admits a compatible triangulation. Indeed, existence of such $\sfr$-triangles is equivalent to that $(h\C,\E,\sfr)$ is \emph{Frobenius} with trivial projective-injectives in the sense of \cite[Definition~7.1]{NP} and \cite[Subsection~3.3]{ZZ}, and thus $(h\C,\Sig,\triangle)$ becomes a triangulated category by \cite[Corollary~7.6]{NP} or \cite[Example~3.15]{ZZ}, in which $\triangle$ is defined to be the class of triangles isomorphic to those $A\ov{\ovl{x}}{\lra}B\ov{\ovl{y}}{\lra}C\ov{\ovl{z}}{\lra}\Sig A$ obtained in the above way.
\end{rem}

On the other hand, the distinguished triangles in the triangulation given in \cite{L2} are those isomorphic to
$A\ov{\ovl{x\ppr}}{\lra}B\ppr\ov{\ovl{y\ppr}}{\lra}C\ov{\ovl{z}}{\lra}\Sig A$ which fits in some rectangle 
\begin{equation}\label{Recta}
\Rf=\RT{\Xcal}{\Ycal}{\Zcal}=\ 
\xy
(-16,7)*+{A}="0";
(0,7)*+{B\ppr}="2";
(16,7)*+{O_A}="4";
(-16,-7)*+{O_A\ppr}="10";
(0,-7)*+{C}="12";
(16,-7)*+{\Sig A}="14";
{\ar^{x\ppr} "0";"2"};
{\ar_{} "2";"4"};
%{\ar@/^1.0pc/^{} "0";"4"};
%
{\ar_{} "0";"10"};
{\ar_{y\ppr} "2";"12"};
{\ar^{} "4";"14"};
{\ar^{} "10";"12"};
{\ar^{z} "12";"14"};
%{\ar@/_1.0pc/_{y} "10";"14"};
%
\endxy
\end{equation}
such that $\Rf_{\mathrm{out}}=\Uf_A$ and that $\Rf_{\mathrm{right}}$ is a pull-back.
Thus to see the compatibility, it is enough to show the following.
\begin{prop}\label{PropCompatStable}
For any morphism $z\in\C_1(C,\Sig A)$ and any rectangle $(\ref{Recta})$ in which $\Rf_{\mathrm{out}}=\Uf_A$ holds and $\Rf_{\mathrm{right}}$ is a pull-back, we have $\sfr((\mu_A)\ssh(\ovl{z}))=[A\ov{\ovl{x\ppr}}{\lra}B\ppr\ov{\ovl{y\ppr}}{\lra}C]$.
\end{prop}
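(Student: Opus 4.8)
\emph{Plan.} The strategy is to recognise the left square $\Rf_{\mathrm{left}}$ of the rectangle $(\ref{Recta})$ as an exact sequence ${}_A\Sf_C$ whose class equals $(\mu_A)\ssh(\ovl{z})=\ovl{z}\uas\mu_A$; since the diagonal sequence of $\Rf_{\mathrm{left}}$ in $h\C$ is $A\ov{\ovl{x\ppr}}{\lra}B\ppr\ov{\ovl{y\ppr}}{\lra}C$, the desired identity $\sfr((\mu_A)\ssh(\ovl{z}))=[A\ov{\ovl{x\ppr}}{\lra}B\ppr\ov{\ovl{y\ppr}}{\lra}C]$ is then immediate from \cref{DefReal}.

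First I would verify that $\Rf_{\mathrm{left}}$ is an exact sequence ${}_A\Sf_C$ with $\Sf=\Rf_{\mathrm{left}}$. The egressive leg $O_A\to\Sig A$ of $\Uf_A=\Rf_{\mathrm{out}}$ is egressive; its base change $y\ppr\co B\ppr\to C$ along $z$, which is precisely the square $\Rf_{\mathrm{right}}$ (a pull-back by hypothesis), is therefore egressive by {\rm (Ex2)}. Next, since $\Rf_{\mathrm{right}}$ is a pull-back and $\Rf_{\mathrm{out}}=\Uf_A$ is a pull-back (being an exact sequence), \cite[Lemma~4.4.2.1]{L1} yields that $\Rf_{\mathrm{left}}$ is a pull-back. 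As $O_A\ppr$ is a zero object, the morphism $O_A\ppr\to C$ is ingressive by {\rm (Ex1)}. Hence $\Rf_{\mathrm{left}}$ is an ambigressive pull-back with a zero object in its lower-left corner, i.e.\ an exact sequence ${}_A\Sf_C$; its middle-term sequence is $A\ov{x\ppr}{\lra}B\ppr\ov{y\ppr}{\lra}C$.

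It then remains to produce a morphism of exact sequences ${}_{1_A}\Cf_z\co\Rf_{\mathrm{left}}\to\Uf_A$ directly out of the rectangle, and to read off the conclusion. Let $\phi\co\Delta^1\ti\Delta^1\to\Delta^2$ be the (monotone) map of posets determined by $\phi(0,0)=\phi(0,1)=0$, $\phi(1,0)=1$, $\phi(1,1)=2$, and set $\Cf=\Rf\ci(\id_{\Delta^1}\ti\phi)\co\Delta^1\ti\Delta^1\ti\Delta^1\to\C$. Inspecting faces and edges: the face $\Cf|_{\Delta^1\ti\Delta^1\ti\{0\}}$ equals $\Rf_{\mathrm{left}}$ because $\phi(-,0)$ is the inclusion $[0,1]\hookrightarrow\Delta^2$; the face $\Cf|_{\Delta^1\ti\Delta^1\ti\{1\}}$ equals $\Rf_{\mathrm{out}}=\Uf_A$ because $\phi(-,1)$ is the edge $[0,2]$ of $\Delta^2$, so that the two guide $2$-simplices of this square are $d_1$ of the $\Zcal$-face and $d_2$ of the $\Xcal$-face of $\Rf$, which are exactly the triangles of $\Rf_{\mathrm{out}}$; the edge $\Cf|_{\{(0,0)\}\ti\Delta^1}$ is the degenerate edge $1_A$ (since $\phi(0,0)=\phi(0,1)$); and the edge $\Cf|_{\{(1,1)\}\ti\Delta^1}$ is the bottom-right edge $z$ of $\Rf$ (since $\phi(1,0)=1$, $\phi(1,1)=2$). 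Thus $\Cf$ is a morphism of exact sequences ${}_{1_A}\Cf_z\co\Rf_{\mathrm{left}}\to\Uf_A$. By \cref{LemET2} with $\afr=\id_A$ and $\cfr=\ovl{z}$, together with \cref{CorFunctoriality}, we get $\und{\Rf_{\mathrm{left}}}=(\id_A)\sas\und{\Rf_{\mathrm{left}}}=\ovl{z}\uas\und{\Uf_A}=\ovl{z}\uas\mu_A=(\mu_A)\ssh(\ovl{z})$, and \cref{DefReal} concludes.

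\emph{Main obstacle.} The delicate point is the combinatorial bookkeeping in the construction of $\Cf$: one must be careful that precomposition by $\id_{\Delta^1}\ti\phi$ sends the back face of the cube to the \emph{outer} square $\Rf_{\mathrm{out}}=\Uf_A$ rather than to the right square $\Rf_{\mathrm{right}}$ (the naive ``sum'' map $(i,j)\mapsto i+j$ would give $\Rf_{\mathrm{right}}$ instead), and that the prescribed guide $2$-simplices of this back face genuinely reassemble into $\Uf_A$. Everything else—the egressivity of $y\ppr$, the pasting of pull-backs, and the identification of $\und{\Rf_{\mathrm{left}}}$ via \cref{LemET2}—is routine given the results already established.
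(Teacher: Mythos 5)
Your proof is correct and follows essentially the same route as the paper: you recognize $\Rf_{\mathrm{left}}$ as an exact sequence realizing $[A\ov{\ovl{x\ppr}}{\lra}B\ppr\ov{\ovl{y\ppr}}{\lra}C]$, extract from $\Rf$ a morphism of exact sequences ${}_{1_A}\Cf_{z}\co\Rf_{\mathrm{left}}\to\Uf_A$, and conclude via \cref{LemET2}. The only (harmless) difference is cosmetic: the paper writes the cube explicitly as $\CB{s_0(\ze_A)}{\Zcal}{\Ycal}{s_0(\eta_A)}{s_1(\eta_A)}{\Xcal}$, whereas you obtain it by precomposing $\Rf$ with $\id_{\Delta^1}\ti\phi$ for your poset map $\phi$, and you additionally spell out the (Ex1)/(Ex2)/pasting argument that $\Rf_{\mathrm{left}}$ is exact, which the paper leaves implicit.
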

\begin{proof}
In the rectangle $\Rf$, we see that $\Sf\ppr=\Rf_{\mathrm{left}}$ is an exact sequence, and satisfies $\sfr(\und{\Sf\ppr})=[A\ov{\ovl{x\ppr}}{\lra}B\ppr\ov{\ovl{y\ppr}}{\lra}C]$ by definition. Moreover, $\Rf$ gives a morphism of exact sequences
\[ {}_{1_A}\Cf_{z}=\CB{s_0(\ze_A)}{\Zcal}{\Ycal}{s_0(\eta_A)}{s_1(\eta_A)}{\Xcal}\co\Sf\ppr\to\Uf_A, \]
which shows $(\mu_A)\ssh(\ovl{z})=(\ovl{z})\uas\und{\Uf_A}=\und{\Sf\ppr}$ by Lemma~\ref{LemET2}.
\end{proof}

\end{document}